\title{Skein Theory for Affine A Subfactor Planar Algebras}
\author{Melody Molander}
\date{}
\begin{document}

\maketitle
\begin{abstract}
 The Kuperberg Program asks to find presentations of planar algebras and use these presentations to prove results about their corresponding categories purely diagrammatically. This program has been completed for index less than 4 and is ongoing research for index greater than 4. We give generators-and-relations presentations for the affine $A$ subfactor planar algebras of index 4. Exclusively using the planar algebra language, we give new proofs to how many such planar algebras exist. Categories corresponding to these planar algebras are monoidally equivalent to cyclic pointed fusion categories. We give a proof of this by defining a functor yielding a monoidal equivalence between the two categories. The categories are also monoidally equivalent to a representation category of a cyclic subgroup of $SU(2)$. We give a new proof of this fact, explicitly using the diagrammatic presentations found. This gives novel diagrammatics for these representation categories. 
\end{abstract}
{
\tableofcontents}
\section{Introduction}
 
A \textit{von Neumann algebra} is a unital algebra of bounded operators on a Hilbert space closed under a specific topology and possessing an \textit{adjoint} that can be thought of as a generalized matrix transpose. When the Hilbert space is finite-dimensional, von Neumann algebras are necessarily direct sums of the algebras of $n\times n$ matrices over $\mathbb{C}$ (with varying $n$). \textit{Factors}, which are von Neumann algebras with center isomorphic to $\mbb{C}$, are building blocks of von Neumann algebras. A unital inclusion of factors, $N\subseteq M$, is called a \textit{subfactor}. The \textit{standard invariant} of a subfactor is the 2-category capturing bimodule data coming from $M$ and $N$. When $N\subseteq M$ is \textit{finite depth} and both $N$ and $M$ are the \textit{hyperfinite $II_1$ factor}, the standard invariant is a complete invariant of subfactors \cite{Pop94}. 

Subfactors have another invariant, a real number called the \textit{index}. Jones \cite{Jon83} found that the set of all indices of subfactors is the set $\{4\cos^2\left(\frac{\pi}{n}\right)|n\geq 3\}\cup [4,\infty]$. Amazingly, techniques used in his proof of this led to the \textit{Jones polynomial} link invariant \cite{Jon85} and spawned the field of quantum topology.

Subfactor theory can be thought of as a noncommutative version of Galois theory. Whereas Galois theory studies inclusions of fields $F \subseteq E$, subfactors are unital inclusions, $N\subseteq M$, of factors, which are highly noncommutative algebras. Both fields and factors satisfy that any map between them are unital inclusion or zero, so analyzing maps between fields or factors is the same as studying field extensions or subfactors. The index $[M:N]$ of a subfactor measures the size of these extensions in a similar notion to the degree $[E:F]$ of a field extension. 

The standard invariant is analogous to the Galois group. Let $\mc{R}$ be the hyperfinite $II_1$ factor and $G$ be a finite group. There is exactly one way, up to conjugacy, in which $G$ can act on $\mc{R}$ by outer automorphisms \cite{Jon80, Con77}. Let $N=\mc{R}^G$, the fixed points of the action. $N$ can be shown to be a factor. There will be a Galois correspondence between intermediary subfactors $N\subseteq M \subseteq \mc{R}$ and subgroups $H\leq G$ \cite{NT60a, NT60b}. Just like in Galois theory, the automorphisms fixing $N$ of $M$, $\te{Aut}_N(M)$, equals $G$. Additionally, $|G|=[\mc{R}:N]$. However, not all examples of finite index subfactors come from groups.

Decades of work have been done to classify subfactors of small index. For an overview see \cite{JMS14, AMP23}. A \textit{subfactor planar algebra}, introduced by Jones \cite{Jon99}, is a diagrammatic way to view the standard invariant of a subfactor. 2-morphisms can be drawn as pictures in the plane, reminiscent to braid diagrams. Multiplication of 2-morphisms is given by vertical stacking. Tensor product is given by horizontal concatenation. Existence of some subfactors were proved using subfactor planar algebras \cite{BPMS12, MP15}. The planar algebra framework has opened up a new point of view for the study of subfactors.

Kuperberg posed a program to see how far the planar algebra language can be pushed in the understanding of the standard invariants of subfactors. 

\textit{The Kuperberg Program:} Give a presentation by generators-and-relations for every subfactor planar algebra and prove as much as possible about the planar algebra using only this presentation.

For subfactors of index less than 4 there is an ADE classification by their \textit{principal graphs}. See \cite{Ocn88} for an outline of the proof. The Kuperberg program has been completed for all possible indices less than 4. Type $A$ subfactor planar algebras are well-understood as the \textit{Temperley-Lieb} planar algebras. Type $D_{2n}$ was completed by Morrison, Peters, and Snyder \cite{MPS10} in 2010. Type $E_6$ and $E_8$ presentations were given shortly after by Bigelow \cite{Big10}. 

For index larger than 4, constructing presentations of known subfactors is an ongoing effort. A presentation for the subfactor planar algebras with principal graphs \textit{Haagerup} and its dual was constructed by Peters \cite{Pet10} and \textit{Extended Haagerup} and its dual was constructed by Bigelow, Morrison, Peters, and Snyder \cite{BPMS12}. Type \textit{2221} was constructed by Han \cite{Han10} and types \textit{4442, 3333, 3311, and 2221} were constructed by Morrison and Penneys \cite{MP15}.

The aim of this paper is to address some of the index 4 subfactor planar algebras. Popa \cite{Pop94} found that the principal graphs of index 4 subfactor planar algebras are exactly the simply-laced affine ADE Dynkin diagrams. We give the presentations of all affine $A$ subfactor planar algebras of index 4.

To satisfy the Kuperberg program, we then use these presentations to prove we have found all planar algebras of type $A$ as well as show equivalences to other interesting categories. After finding the planar algebra presentations given in this paper, the author saw similarities to the presentation given in \cite{Cze24}. In the paper, Czenky gives diagrammatics for cyclic pointed fusion categories, $\te{Vec}_{\mbb{Z}_m}^\omega$. The fusion categories $\te{Vec}_{\mbb{Z}_m}^\omega$ are not strict when the associator $\omega$ is not the identity. However, every monoidal category is monoidally equivalent to a strict monoidal category. From the planar algebra presentations we construct in this paper, we show the monoidal equivalence of the nonstrict $\te{Vec}_{\mbb{Z}_m}^\omega$ to the strict categories coming from the planar algebras explicitly. As a result, this shows how an associator of a fusion category gets concealed in the relations of the planar algebras.

While this paper was in preparation, we discovered that Reynolds' thesis \cite{Rey23} also gives presentations of diagrammatic categories with affine $A$ fusion rules. However, in \cite{Rey23}, Reynolds approaches these diagrammatics through the representation theory of a finite subgroup of $SU(2)$. In this paper, we define a functor yielding a monoidal equivalence of these two categories. In particular, we show purely through the defined diagrammatic categories that the category arising from affine $A_{m}$ planar algebras, where $m$ is finite, is monoidally equivalent to the representation category $\te{Rep}(C_m^\zeta)$, where $C_m^\zeta$ is a cyclic subgroup of $SU(2)$. Both this equivalence and the equivalence to $\te{Vec}_{\mbb{Z}_m}^\omega$ are known isomorphic categories (see \cite{GHJ89}), however, the diagrammatic proofs of their monoidal equivalences are novel.

\subsection{Acknowledgements}
The author would like to thank her advisor Stephen Bigelow as well as Dave Penneys for presenting this problem to her, their guidance, and many enlightening conversations.
\section{Background}

\subsection{Subfactors}

We begin our background with a discussion on subfactors. We refer the interested reader to \cite{JS97} for further details. Let $\mc{H}$ be a Hilbert space. A \textit{von Neumann algebra} is a unital *-subalgebra of $B(\mc{H})$, $M$, that equals its double commutant, $M''$, i.e., $M=M''$. When the center of $M$ is the smallest possible, that is, isomorphic to $\mbb{C}$, we say that $M$ is a \textit{factor}. When $\mc{H}$ is separable, von Neumann \cite{vN49} showed every von Neumann algebra, $M \subseteq B(\mc{H})$ has an essentially unique decomposition as a direct integral of factors. Understanding von Neumann algebras thus boils down to analyzing factors. 

There are three types of factors: type $I,II,$ and $III$.  For this paper, we consider a class of type $II$ factors called \textit{type $II_{1}$}. A factor of \textit{type $II_1$}, $M$, in $B(\mc{H})$, is an infinite-dimensional factor that admits a normalized trace function. From now on, we use \emph{factor} to mean a type $II_1$ factor.

To study factors, we consider maps between factors. A factor has no non-trivial closed two-sided ideal, so any map between factors is a unital inclusion or the zero map. Therefore, understanding maps between factors is equivalent to studying a unital inclusion of factors, $N\subseteq M$, which is called a \textit{subfactor}. 

There is a special type $II_1$ factor called the \textit{hyperfinite $II_1$ factor}, denoted $\mc{R}$. Connes \cite{Con76} proved that if $N\subseteq \mc{R}$ is a subfactor then $N$ is isomorphic to $\mc{R}$ or is finite-dimensional. In the infinite-dimensional case, since $N\cong \mc{R}$, the only information important from the subfactor $N\subseteq \mc{R}$ is not the von Neumann algebras $N$ and $\mc{R}$ themselves, but their inclusion. 

There is a measure of size of a subfactor. The \textit{index} of a subfactor $N\subseteq M$ is the von Neumann dimension of $L^2(M)$ as a left $N$-module \cite{Jon83}. Remarkably, Jones \cite{Jon83} found that the possible indices of subfactors lies in the set

\begin{align*}
    \bigg\{4\cos^2\left(\f{\pi}{n}\right):n\geq 3\bigg\}\cup [4,\infty]
\end{align*}

\noindent and further that every value in this set corresponds to an index of a subfactor. When $M$ is irreducible as an $N-M$ bimodule, we say the subfactor is \textit{irreducible}.

\subsection{Category Theory}

       For the rest of this paper, we work over the field of complex numbers, $\mbb{C}$, and assume all subfactors are finite-index and irreducible,  unless stated otherwise. We refer the reader to \cite{EGNO15} for a more detailed study of these topics. 
       
\subsubsection{2-Categories and The Standard Invariant}       
       A \textit{2-category}, $\mc{C}$, contains objects, morphisms (also called 1-morphisms), and 2-morphisms, which are morphisms between 1-morphisms. See \cite{EGNO15} section 2.12 for a formal definition. In particular, there are tensor products for 1-morphisms and 2-morphisms under certain compatible criteria. For 1-morphisms, the tensor product is composition. We can tensor two 2-morphisms $f \in \te{Hom}(X,Y)$, $g \in \te{Hom}(Z,W)$ as $f\otimes g\in \te{Hom}(X \otimes Z, Y \otimes W)$ if $X,Z$ and $Y,W$ are composable ($X \otimes Z$ and $Y\otimes W$ exists as 1-morphisms). Diagrammatically, the tensor product is denoted by placing the 1-morphisms or 2-morphisms side-by-side. We can also multiply compatible 1-morphisms or 2-morphisms. In the graphical calculus language, this is vertical stacking: $f\cdot g$ is stacking $f$ on top of $g$. Additionally, there are associators, and left and right unitors satisfying pentagon and triangle axioms.

    Given a subfactor $N\subseteq M$, we can analyze its $N-M$, $M-N$, $N-N$, and $M-M$ bimodules. As an example, $N$ can be considered as an $N-N$ bimodule, $\leftindex_N{N}_N$, and $M$ can be considered as an $M-M$, $N-M$, $M-N$, and $N-N$ bimodule: $\leftindex_M{M}_M$, $\leftindex_N{M}_M$, $\leftindex_M{M}_N$, and $\leftindex_N{M}_N$ respectively. We can tensor bimodules over the ring between them. For example, $\leftindex_N{M}_M\otimes_M \leftindex_M{M}_N\cong \leftindex_N{M}_N$.
    
    From this data we can form a powerful subfactor invariant. The \textit{standard invariant} of a subfactor $N\subseteq M$ is a unitary (strictly) pivotal 2-category, $\mc{C}$, with
     two objects: $N$ and $M$.  The 1-morphisms are the bimodules tensor-generated by $\leftindex_N{M}_M$ and $\leftindex_M{M}_N$. The 2-morphisms are the bimodule intertwiners. Further, the identities $\te{id}_N$ and $\te{id}_M$ are \textit{simple}. (i.e., $\te{End}(\te{id}_N)\cong \te{End}(\te{id}_M)\cong \mbb{C}$). As the category is strictly pivotal, every 1-morphism $X$ has a dual which we will denote $\ol{X}$ satisfying that $\ol{\ol{X}}=X$ and the properties of duals found in section 2.10 of \cite{EGNO15}. Since the standard invariant is unitary, this means there is an anti-linear map, $*$, that takes any 2-morphism, say $f\in \te{Hom}(X,Y)$ to a 2-morphism $f^* \in \te{Hom}(Y,X)$ such that the induced inner product $\langle f,g \rangle = \te{tr}(f^*g)$ is positive definite. 
    
    The standard invariant 2-category has diagrammatics that will be used to build the planar algebra. Define $X=\leftindex_N{M}_M$ and $Y=\ol{X}=\leftindex_M{M}_N$. We can denote the object $N$ by ``unshaded" and the object $M$ by ``shaded". The generators $X$ and $Y$ are denoted: \begin{align*}
   X= \Xinbox = \hspace{0.2cm} \X  \te{ and } Y = \Yinbox = \Y
\end{align*}
(we often drop the outside box and star and assume the location of the star is the center left of the diagram).

\noindent Tensoring 1-morphisms (and 2-morphisms) together corresponds to horizontal concatenation. For example,
\begin{align*}
    Y \otimes X = \Y \hspace{0.2cm} \X 
\end{align*}
We see then that the compatibility criteria for the tensor product will be fulfilled when the diagrams are placed side-by-side and the interior shading agrees.  The evaluation and coevaluation maps of $X$ and its dual are well-known maps in subfactor theory. The evaluation and coevaluation maps of $\ol{X}$ are denoted 

    \begin{align*}
        \evaluationmap: X \otimes \ol{X}= \leftindex_N{M}_M\otimes_M \leftindex_M{M}_N \to \leftindex_N{N}_N \te{ and }  \coevaluationmap: \leftindex_N{N}_N \to \leftindex_N{M}_N=\leftindex_N{M}\otimes_M M_N, 
    \end{align*}

    \noindent respectively. The evaluation map is called \textit{conditional expectation}  and the coevaluation map is the inclusion map of the subfactor $N\subseteq M$. The evaluation and coevaluation maps of $X$: 

    \begin{align*}
        \evaluationmapX: \leftindex_M{M}_N\otimes_N\leftindex_N{M}_M\to \leftindex_M{M}_M \te{ and }\coevaluationmapX: \leftindex_M{M}_M \to \leftindex_M{M}_N\otimes_N \leftindex_N{M}_M
    \end{align*}

    \noindent arises from the multiplication map and Jones' basic construction \cite{Jon83}, respectively. 
    
    A 2-morphism $f$ from $A$ to $B$ is a diagram with a distinguised star with bottom boundary data matching $A$ and top boundary data matching $B$. For example,

    \begin{align}\label{temperleyliebdiagrams}
      f=\fmorph \te{ and } g= \gmorph
    \end{align}

    \noindent are 2-morphisms from $X\otimes \ol{X} \otimes X$ to itself. We say a morphism is of \textit{type} $(k,+)$ (respectively $(k,-)$) if there are $k$ points on the top and bottom and the region to the right of the star is unshaded (respectively shaded). When the context is clear, we drop the star and the outside box and potentially the boundary of the box itself. Vertical composition (or multiplication) of 2-morphisms is given by vertical stacking. For example,

    \begin{align}\label{temperleyliebmultiplication}
        fg=\fgmorph=\gmorph, \quad \te{ and } \quad g^2=\gsquared=\gbubble
    \end{align}

    \noindent In the latter case, we have a bubble, which can be ``popped" for a factor of $\delta>0$, where $\delta^2$ is the index of the subfactor. That is,

    \begin{align*}
        \gbubble = \delta \te{ }\gmorph.
    \end{align*}

    \noindent Diagrams are considered equal if they are isotopic.

    \subsubsection{Fusion Categories}
    The $M-M$ bimodules and $N-N$ bimodules give a pair of pivotal categories called the \textit{even parts}. When these categories have finitely many simple objects we say that the subfactor is \textit{finite depth}. In this case, the even parts are fusion categories.  

    A \textit{monoidal category}, $(\mc{C},\otimes, \alpha, \mbb{1}, \lambda, \rho)$ is a category $\mc{C}$, a bifunctor, $\otimes: \mc{C}\times \mc{C} \to \mc{C}$ called the \textit{tensor product}, $\mbb{1}\in \mc{C}$ the \textit{unit}, and $\alpha_{X,Y,Z}$, $\lambda_X$, and $\rho_X$ the associators, and left and right unitors satisfying pentagon and triangle axioms. A monoidal category is \textit{rigid} if every object has a right and left dual. We say $\mc{C}$ is \textit{$\mbb{C}$-linear} if for all objects $X,Y \in \mc{C}$, $\te{Hom}(X,Y)$ has a $\mbb{C}$-module structure such that composition of morphisms is $\mbb{C}$-bilinear. 
    
    Let $X_1, X_2$ be objects in a category, $\mc{C}$. If there exists an object $Y$ in $\mc{C}$, with morphisms $\rho_1:Y \to X_1$, $\rho_2:Y\to X_2$, $\iota_1: X_1 \to Y$, $\iota_2:X_2\to Y$ such that $\rho_1\iota_1=\te{id}_{X_1}$, $\rho_2\iota_2=\te{id}_{X_2}$ and $\iota_1\rho_1+\iota_2\rho_2=\te{id}_Y$, we call $Y$ the \textit{direct sum} of $X_1$ and $X_2$ and denote $Y=X_1 \oplus X_2$. We call $\mc{C}$ \textit{additive} if it is $\mbb{C}$-linear and for every two objects $X_1, X_2$ in $\mc{C}$, there exists their direct sum $X_1\oplus X_2$ in $\mc{C}$. The  category is said to be \textit{semisimple} if every object is a direct sum of finitely many simple objects. 

    The \textit{kernel}, $\te{Ker}(f)$, of a morphism $f:X \to Y$ of an additive category $\mc{C}$ is a pair, $(K,k)$, where $K$ is an object in $\mc{C}$ and $k:K \to K$ is a morphism such that $fk=0$ and for any other pair $(K',k')$ satisfying $k':K'\to X$ $fk'=0$, there exists a unique morphism $\ell: K'\to K$ such that $k\ell=k'$. The \textit{cokernel}, $\te{Coker(f)}$, of a morphism $f:X \to Y$ is a pair, $(C,c)$, where $C$ is an object and $c:Y \to C$ is a morphism such that $cf=0$ and for any other pair $(C',c')$ satisfying this criteria, there exists a unique morphism $\ell:C\to C'$ where $\ell c =c'$. An \textit{abelian} category $\mc{C}$ is an additive category in which for every morphism $\varphi: X \to Y$ there exists a sequence 
    \begin{align*}
        K \os{k}{\to} X \os{i}{\to} I \os{j}{\to} Y \os{c}{\to} C
    \end{align*}
    such that
    $ji=\varphi$, $(K,k)=\te{Ker}(\varphi)$, $(C,c)=\te{Coker}(\varphi)$, $(I,i)=\te{Coker}(k)$, and $(I,j)=\te{Ker}(c)$. 

    A \textit{fusion category}, $\mc{C}$, over $\mbb{C}$ is a rigid, semisimple, $\mbb{C}$-linear, abelian category with only finitely many isomorphism classes of simple objects and the unit object $\mbb{1}$ is simple \cite{ENO05}. 

    Given a monoidal category $\mc{C}$, we can construct its \textit{additive envelope}, $\mc{A}(\mc{C})$, which has objects finite formal sums $X_1 \oplus ... \oplus X_n$ of objects in $\mc{C}$ and morphisms defined in the following way: for every $X_1 \oplus ... \oplus X_m$, $Y_1 \oplus ... \oplus Y_n$ in $\mc{A}(\mc{C})$, 
    \begin{align*}
        \te{Hom}_{\mc{A}(\mc{C})}(X_1 \oplus ... \oplus X_m, Y_1 \oplus ... \oplus Y_n)=\oplus_{i,j} \te{Hom}(X_i,Y_j)
    \end{align*}
    and composition of morphisms is given by matrix multiplication. 
    
    A \textit{monoidal subcategory} of a monoidal category $\mc{C}$ is a monoidal category $(\mc{D}, \otimes, \alpha, \mbb{1}, \lambda, \rho)$ where $\mc{D}\subset \mc{C}$ is a full unital subcategory closed under tensor product. For any object $X \in \mc{C}$, the \textit{monoidal subcategory generated by $X$} is the category with objects $X^{\otimes n}$ for all $n\in \mbb{N}$ and is denoted $\langle X \rangle$. 

    Let $(\mc{C},\otimes, \alpha, \mbb{1}, \lambda, \rho)$ and $(\mc{C}',\otimes', \alpha', \mbb{1}', \lambda', \rho')$ be two monoidal categories. A \textit{monoidal functor} $\mc{C} \to \mc{C}'$ is a pair $(F,J)$ where $F:\mc{C}\to \mc{C}'$ is a functor and $J_{X,Y}:F(X)\otimes'F(Y)\to F(X\otimes Y)$ is a natural isomorphism such that $F(\mbb{1})$ is isomorphic to $\mbb{1}'$ and that the following diagram commutes:

\begin{center}
    \begin{tikzcd}[row sep=large, column sep = large]
    (F(X)\otimes'F(Y))\otimes'F(Z) \arrow[r, "\alpha'_{F(X), F(Y),F(Z)}"] \arrow[d, "J_{X,Y}\otimes'\te{id}_{F(Z)}"]
     & F(X)\otimes'(F(Y)\otimes'F(Z)) \arrow[d, "\te{id}_{F(X)}\otimes' J_{Y,Z}"] \\
    F(X\otimes Y)\otimes'F(Z) \arrow[d, "J_{X\otimes Y,Z}"]
    &  F(X)\otimes'F(Y\otimes Z) \arrow[d, "J_{X,Y\otimes Z}"]\\
    F((X\otimes Y)\otimes Z)\arrow[r, "F(\alpha_{X,Y,Z})"]& F(X\otimes (Y \otimes Z)) 
\end{tikzcd}
\end{center}

\noindent A monoidal functor $F:\mc{C}\to \mc{C'}$ is \textit{full} if the induced map $\til{F}:\te{Hom}(X,Y)\to \te{Hom}(F(X),F(Y))$ is surjective and is \textit{faithful} if $\til{F}$ is injective. We call $F$ \textit{essentially surjective} if for all objects $Y$ in $\mc{C}'$ there is an object $X$ in $\mc{C}$ where $F(X) \cong Y$. The functor $F$ is an \textit{equivalence} if it is full, faithful, and essentially surjective. If further $F$ is a monoidal functor, we say $F$ is a \textit{monoidal equivalence}.

 
\subsection{Planar Algebras}

 The data of the standard invariant of a subfactor can be equivalently viewed as a \textit{planar algebra}, first defined by Jones \cite{Jon99}. When $N$ and $M$ are both the hyperfinite $II_1$ subfactor and $N\subseteq M$ is finite depth, then the standard invariant, and thus its planar algebra, is a complete invariant of the subfactor \cite{Pop94}. For a more detailed introduction, see \cite{Pet10}. 

 A \textit{planar algebra} is a collection of vector spaces with an action of a \textit{shaded planar operad}.  The \textit{shaded planar operad} has checkerboard shaded elements which look like:
 \begin{itemize}
     \item an outer square, $D$,
     \item $k\in \mbb{Z}_{\geq 0}$ empty inner squares inside of $D$,
     \item a possibly empty collection of nonintersecting strands between the squares such that every boundary has the same number of strands on its top and bottom,
     \item a distinguished star on the outside of each square.
 \end{itemize}
The below figure is an example of an element of the shaded planar operad. 

\begin{align}\label{shadedtangleexample}
    \shadedtangleexample
\end{align}

\noindent Two shaded tangles are equal if they are isotopic. Planar tangles can be composed by insertion under the appropriate conditions. That is, if the shading and number of strands on the boundary agree when the stars match up, we can compose two tangles. As there may be multiple squares in which the composition can be done, we label the squares along with a subscript in the composition indicating which square to insert into. Below we give an example:

\begin{align*}
    \shadedtangleexamplelabelled \text{ }\circ_1 \text{ }\shadedtanglecompositiona=\shadedtanglecompositionb
\end{align*}

Notice that the star of each square in a tangle can be in an unshaded or shaded region. We say a square in a tangle has \textit{type} $(k,+)$ (respectively $(k,-)$) if its star is in an unshaded (respectively shaded) region and it has $k$ strands on its top and bottom. For example, the top square in figure (\ref{shadedtangleexample}) has type $(3,+)$ and the bottom square has type $(1,+)$. The outside square of the tangle has its type defined as done for 2-morphisms in the previous section. So the tangle in figure (\ref{shadedtangleexample}) has type $(1,+)$. Composition then is only defined when squares have the same type. 

Let $\{\mc{P}_{k,\pm}\}_{k\in \mbb{Z}_{\geq 0}}$ be a collection of vector spaces. Each tangle can be associated to a multilinear map between these vector spaces. For example, the tangle, $T$, in figure (\ref{shadedtangleexample}) corresponds to a map $Z_T:\mc{P}_{3,+}\otimes \mc{P}_{1,+}\to \mc{P}_{1,+}$.

Let $T$ be a tangle. Then denote $\mc{D}_T$ as the collection of its inner boxes. Define $\partial(D)$ to be the type of $D\in \mc{D}_T$. A \textit{planar algebra}, $\mc{P}$, is a collection of vector spaces $\{\mc{P}_{k,\pm}\}_{k \in \mbb{Z}_{\geq 0}}$ together with multilinear maps $Z_T:\otimes_{D \in \mc{D}_T} \mc{P}_{\partial(D)} \to \mc{P}_{\partial(T)}$ satisfying
\begin{enumerate}
    \item isotopic tangles produce the same maps,
    \item the tangle with just straight strands and no boxes is the identity,
    \item the composition of tangle diagrams corresponds to the composition of their multilinear maps.
\end{enumerate}

    Define a \textit{homomorphism} $\theta:\mc{P}\to \mc{Q}$ between planar algebras $\mc{P}$ and $\mc{Q}$ to be a family of linear maps, $\theta_{n,\pm}: \mc{P}_{n,\pm} \to \mc{Q}_{n,\pm}$ that preserve shading and such that $\theta(Z_T(f))=Z_T(\theta\circ f)$ for every tangle $T$. If each $\theta_{n,\pm}$ is bijective then $\theta$ is said to be an \textit{isomorphism}.
    
 We define the vector space $\mc{P}_{k,\pm}$ of a planar algebra $\mc{P}$ to be its \textit{$k$th-box spaces}. When considering the standard invariant, we choose $\mc{P}_{1,+}=\te{End}(X)$, $\mc{P}_{1,-}=\te{End}(\ol{X})$, $\mc{P}_{2,+}=\te{End}(X\otimes \ol{X})$, $\mc{P}_{2,-}=\te{End}(\ol{X}\otimes X)$ and so on. 


 \subsubsection{The Temperley-Lieb Planar Algebra}

The most popular example of a planar algebra is the \textit{Temperley-Lieb} planar algebra, $\mc{TL}$. This planar algebra was introduced in 1971 \cite{TL71} and then diagrammatically described by Kauffman \cite{Kau87} in 1987. 

The vector spaces, which we will denote denote $\mc{TL}_{k,\pm}$, consist of diagrams of non-intersecting strands with $k$ points on the bottom and top and the appropriate shading. Examples of elements in $\mc{TL}_{3,+}$ were seen in figure (\ref{temperleyliebdiagrams}). We also have two elements 
\begin{align*}
    \emptyset_+=\wsquare,\te{ and }\emptyset_-=\bsquare
\end{align*}
in $\mc{TL}_{0,+}$ and $\mc{TL}_{0,-}$ respectively. 

The generators $X$ and $\ol{X}$ of 1-morphisms are the same diagrammatic diagrams as the 2-morphisms $\te{id}_X$ and $\te{id}_{\ol{X}}$, in $\mc{TL}_{1,+}$ and $\mc{TL}_{1,-}$, respectively. In fact, when $P$ is a 1-morphism that satisfies $P^2=P$, then $\te{id}_P=P$. In this case, we often abuse notation and call the 2-morphism $\te{id}_P$, just $P$.

The planar operad action on the vector spaces in $\mc{TL}$ can be seen through diagrammatics. The figure below corresponds to a linear map $Z_T: \mc{TL}_{3,+} \otimes \mc{TL}_{3,+} \to \mc{TL}_{3,+}$. Then plugging in diagrams $f$ and $g$ from (\ref{temperleyliebdiagrams}) into the bottom and top squares respectively gives $g \in \mc{TL}_{3,+}$, just as shown in (\ref{temperleyliebmultiplication}).

\begin{align*}
    \multiplicationtangle
\end{align*}

\noindent This tangle is called the \textit{multiplication tangle (of type $(3,+)$)}. Analogous tangles of all types give that vector spaces in planar algebras can actually be viewed as algebras. 

Each algebra $\mc{TL}_{k,\pm}$ is generated (as an algebra) by a basis denoted $\te{id}_{k, \pm}, e_{1,\pm},...,e_{{k-1},\pm}$, where $e_{i,\pm}$ has the appropriate shading and is straight strands in all but the $i$ and $i+1$ position, where there is a cup and cap. As an example, $g$ from figure (\ref{temperleyliebdiagrams}) is the generator $e_{1,+}$ for the algebra $\mc{TL}_{3,+}$. The element $\te{id}_{k,\pm}$ is $k$ straight strands with the appropriate shading.

Another commonly used tangle is the \textit{(right) trace tangle (of type $(k,\pm)$)}, $\te{tr}_{k,\pm}^{R}$, for which we have drawn the $(2,-)$ trace tangle below.

\begin{align*}
    \tracetangle
\end{align*}

\noindent Analogous tangles can be drawn for the left trace tangle, $\te{tr}_{k,\pm}^{L}$. We commonly say \textit{the trace tangle} and denote $\te{tr}$ which means the right trace tangle of the appropriate type. Notice that plugging in any diagram from $\mc{TL}$ to the trace tangle will result in a diagram in $\mc{TL}_{0,\pm}$ with no strands on the boundary. We call such a diagram a \textit{closed diagram}.

Two other commonly used tangles are given below.

\begin{align*}
    \dualtangle \te{ and }\clicktangle
\end{align*}

The left-side is the \textit{the dual tangle (of type $(2, -)$)}. We denote the image of a diagram $D$ under the tangle by $\ol{D}$. This tangle rotates a diagram by $\pi$. The right-hand side is \textit{the click tangle (of type $(3,+)$).}, denoted $\mc{F}$, also referred to as the \textit{Fourier transform}. This tangle ``clicks" the star clockwise to the next region, i.e., the star now lies between the top two strands going to the top of the outer box. Analogous tangles of all types exist.


\subsubsection{Subfactor Planar Algebras and Their Principal Graphs}

A planar algebra, $\mc{P}$, arising from a subfactor is called a \textit{subfactor planar algebra}. These planar algebras satisfy a few additional properties:
\begin{enumerate}
    \item $\mc{P}$ is \textit{evaluable}: The space of closed diagrams, $\mc{P}_{0,\pm}$ is one-dimensional. (This means every closed diagram evaluates to a number.)
    \item $\mc{P}$ is \textit{spherical}: the left and right traces of the 1-box spaces are equal ($\te{tr}_{1,\pm}^R=\te{tr}_{1,\pm}^{L}$).
    \item $\mc{P}$ is \textit{positive definite}: there exists an antilinear \textit{adjoint map} on each box-space, $*:\mc{P}_{k,\pm}\to \mc{P}_{k,\pm}$ which is compatible with vertical reflection (reflection over a horizontal line) on planar tangles and the bilinear form on $\mc{P}_{k,\pm}$ induced by the adjoint, $\langle f,g\rangle =\te{tr}(f^*g)$ is positive definite. 
\end{enumerate}

Subfactor planar algebras are a diagrammatic axiomatization of the standard invariant of a subfactor. Jones \cite{Jon99} proved that given a finite index subfactor, its standard invariant forms a subfactor planar algebra. Using a different axiomatization of the standard invariant called \textit{$\lambda$-lattices}, Popa \cite{Pop95} proved that given a subfactor planar algebra, $\mc{P}$, there is a subfactor whose standard invariant forms $\mc{P}$. This was then proved using the planar algebra language by Guionnet, Jones, and Shlyakhtenko \cite{GHJ10}. 

Subfactor planar algebras encode the index of a subfactor. The value of a closed circle, $\delta>0$, satisfies that $\delta^2$ is the index of the subfactor. The Temperley-Lieb planar algebra is indeed a subfactor planar algebra as long as $\delta\geq 2$. Any subfactor planar algebra of index greater than or equal to 4 contains a copy of Temperley-Lieb.

By considering a single object in a 2-category, we can arrive at a notion of an \textit{unshaded planar algebra} and \textit{unshaded subfactor planar algebras}. These satisfy all the same properties as planar algebras and subfactor planar algebras, however they are not axiomatizing a standard invariant of a subfactor.

An element, $P$, in any box-space of a subfactor planar algebra satisfying that $P^2=P^*=P$ is a \textit{projection}. From a planar algebra $\mc{P}$, we can create a create a 2-category, $\mc{C}_{\mc{P}}$. This category is the additive envelope of the category where 
\begin{enumerate}
    \item Objects are projections in all box spaces
    \item A morphism in $\te{Hom}(P,Q)$ where $P$ and $Q$ are two projections is any diagram in the planar algebra with bottom boundary $Q$ and top boundary $P$. 
    \item The tensor product is horizontal concatenation
    \item The dual is rotation by $\pi$ (or equivalently $-\pi$)
    \item The unit objects are $\emptyset_+=\wsquare$ and $\emptyset_-=\bsquare$.
\end{enumerate}

A projection is \textit{minimal} if $\te{Hom}(P,P)$ is one-dimensional.  As a consequence, using that $P^2=P$,  every element of $\te{Hom}(P,P)$ is a multiple of $\te{id}_P=P$. We say two projections, $P$ and $Q$ are \textit{isomorphic} if and only if there exists an $f \in \te{Hom}(P,Q)$ (giving $f^*\in \te{Hom}(Q,P)$) such that $f^*f=P$ and $ff^*=Q$. For any planar algebra, $\mc{P}$, the simples of $\mc{C}_\mc{P}$ are precisely the minimal projections of $\mc{P}$. Further, $\mc{C}_\mc{P}$ is semisimple. This means that the tensor product of any two objects can be decomposed as a direct sum of minimal projections.

In Temperley-Lieb, in each box space, $\mc{TL}_{k,\pm}$, there is a unique minimal projection, denoted $f^{(k,\pm)}$, called the \textit{Jones-Wenzl projection}. These projections satisfy that for all the multiplicative generators other than the identity, $e_{i,\pm}$, $f^{(k,\pm)}e_{i,\pm}=e_{i,\pm}f^{(k,\pm)}=0$. These projections have a recursive relation, called \textit{Wenzl's relation} \cite{Wen87}: 
\begin{align*}
    &f^{(k,\pm)}\otimes X^{\pm} \cong f^{(k+1, \pm)} \oplus f^{(k-1,\pm)}, \te{ where,}\\
    &f^{(0,+)}=\emptyset_+=\wsquare, f^{(0,-)}=\emptyset_-=\bsquare, 
    f^{(1,+)}=X=\X, \te{ and } f^{(1,-)}=\ol{X}=\Y
\end{align*}

The \textit{$n$th quantum number} is defined as 
\begin{align*}
    [n]_q=\frac{q^n-q^{-n}}{q-q^{-1}}=q^{n-1}+q^{n-3}+...+q^{-(n-3)}+q^{-(n-1)}
\end{align*}
Then we can define $q$ such that $\delta=[2]_q=q+q^{-1}$. The Jones-Wenzl projections satisfy that $\te{tr}(f^{(k,\pm)})=[k+1]_q$. In particular, $\te{tr}(f^{(1,\pm)})=\te{tr}(X)=\te{tr}(\ol{X})=[2]_q=\delta$, and $\te{tr}(f^{(0,\pm)})=\te{tr}(\emptyset_+)=\te{tr}(\emptyset_-)=[1]_q=1$.

Through the category $\mc{C}_\mc{P}$ of a subfactor planar algebra, we can view another invariant of subfactors called the \textit{principal and dual principal graphs}. The principal graphs can be though of as a part of a ``multiplication table"-like graph that encodes the decomposition of simple objects when tensored by $X$ or $\ol{X}$. 

Vertices of the principal graphs are isomorphism classes of minimal projections. To find the vertices connected to a vertex labelled by a minimal projection, $P$, we do the following. By the semisimplicity of $\mc{C}_\mc{P}$, $P\otimes X^{\pm}$ is isomorphic to a direct sum of minimal projections:

\begin{align}\label{formulaforprincipalgraph}
    P \otimes X^{\pm} \cong \oplus_{Q \in N(P)} Q
\end{align}

Where $N(P)$ is some finite set of minimal projections. Then $N(P)$ is the set of neighbors of $P$ in the principal graph. The number of edges connecting $P$ and some $Q\in N(P)$ is the number of times $Q$ appears in the summand of (\ref{formulaforprincipalgraph}). This means that if $P$ and $Q$ are minimal projections, there are $\te{dim}(\te{Hom}(P\otimes X^{\pm}, Q))=\te{dim}(\te{Hom}(P,Q\otimes X^{\pm})$ edges between $P$ and $Q$. What this boils down to is that $P\otimes X^{\pm}$ is isomorphic to the direct sum of its neighbors in the prinicipal graph.

For Temperley-Lieb, Wenzl's relation gives that the principal graphs will be the $A_\infty$ Dynkin diagram:

\begin{align*}
    \temperleyliebprincipalgraph \quad \te{ and } \quad \temperleyliebdualprincipalgraph
\end{align*}

This paper deals exclusively with index 4 subfactor planar algebras (which are necessarily of the hyperfinite $II_1$ factor). In this case, Popa \cite{Pop94} gives that the principal graphs are the simply-laced affine Dynkin diagrams. Specifically, there are $n$ of type $\til{A}_{2n-1}$ for $n\geq 1$, 1 of type $\til{A}_\infty$, 1 of type $A_\infty$ (Temperley-Lieb), $n-2$ of type $\til{D}_{n}$ for $n\geq 4$, and 1 each of  $\til{E}_6$, $\til{E}_7$, and $\til{E}_8$. For this index, the principal and dual principal graphs are always identical.

To conclude, the decomposition of the planar algebra into minimal projections can be viewed as a \textit{Bratteli diagram}. This is an infinite graph, where row $0$ starts at the top. 

The vertices in row $k$ will correspond to the minimal projections in the decomposition of $X\otimes \ol{X} \otimes \os{(k-1)}{...}\otimes X^{\pm}$ ($k$ projections tensored together alternating between $X$ and its dual). The labels of the vertices will correspond to the number of times they appear in the decomposition.

 The number of edges from the vertex corresponding to $f^{(\lambda, \pm)}$ in  row $k$ to the vertex corresponding to $f^{(\mu,\pm)}$ in row $k+1$ is $N_{\lambda,\mu}$ where $f^{(\lambda, \pm)} \otimes X^{\pm} \cong \oplus_{\mu} N_{\lambda, \mu}f^{(\mu)}$. 

We can make the Bratteli diagram for the Temperley-Lieb planar algebra (which we can also call the \textit{$A_\infty$ planar algebra}). Since the principal graph and dual principal graph are the same, we will drop the $\pm$. Using Wenzl's relation we get the vertices and their labels:
\begin{align*}
\text{Row 0: } & \emptyset \cong f^{(0)}\\
\text{Row 1: } & X \cong f^{(1)}\\
   \text{Row 2: } & X \otimes X \cong f^{(1)} \otimes X \cong f^{(0)} \oplus f^{(2)} \\
    \text{Row 3: } & X \otimes X \otimes X \cong f^{(1)}\otimes (f^{(1)}\otimes f^{(1)}) \cong (\emptyset \oplus f^{(2)})\otimes X \cong f^{(1)}\oplus f^{(1)} \oplus f^{(3)} \\
       \text{Row 4: }  & X \otimes X \otimes X \otimes X \cong f^{(0)} \oplus f^{(0)} \oplus f^{(2)} \oplus f^{(2)} \oplus f^{(2)}\oplus f^{(4)}
\end{align*}
\noindent as well as that there will be exactly 1 edge between a vertex in row $k$ and $k+1$ when the vertices differ by 1. This gives that the first five rows of its Bratteli diagram will look like

\begin{align*}
    \bratteli
\end{align*}

\noindent From the Bratteli diagram, we get that the dimension of the $k$th box space, $\mc{P}_{k,\pm}$, is the sum of the squares of the $k$th row.

\section{Necessary Relations for Affine \texorpdfstring{$A$}{A} Finite Subfactor Planar Algebras}\label{necessaryshadedsection}

    Our first goal is to find all diagrammatic presentations of subfactor planar algebras whose principal graph and dual principal graph are $\til{A}_{2n-1}$ for all $n\in \mathbb{N}$. Fix $n\in \mbb{N}$ and define $\mc{P}$ to be a subfactor planar algebra with principal and dual principal graphs the $\til{A}_{2n-1}$ Dynkin diagram. Let $\mc{C}_{\mc{P}}$ be the 2-category created from $\mc{P}$. Define $X=\X$, $Y=\Y$, $\emptyset_+=\wsquare$, and $\emptyset_-=\bsquare$. We indicate $\strand \os{(n)}{...} \strand$ to denote $n$ parallel strands, with color, shading and orientation assumed from context. 

    Recall from the background discussion that the vertices of principal and dual principal graphs of a subfactor planar algebra $\mc{P}$ correspond to equivalence classes of minimal projections in the category $\mc{C}_\mc{P}$. For $n\geq 2$, let $\mc{P}$ have principal graphs with the following labelling:

     \begin{equation}\label{eq: Afinshadearbgraphs}
        \Gamma_{+}: \arbAfingraphpos, \te{ and } \Gamma_{-}: \arbAfingraphneg
    \end{equation}

    and when $n=1$, let $\mc{P}$ have principals graphs:

    \begin{equation}\label{eq: Afinshadearbgraphsone}
        \Gamma_{+}: \arbAfingraphposone \te{ and } \Gamma_{-}: \arbAfingraphnegone
    \end{equation}

    \noindent where each $P_i, P_i', Q_j, Q_j'$ for $1\leq i \leq n$ and $1\leq j \leq n-1$ are representative of their respective equivalence class and the 2 above an edge indicates that there are two edges. 

    Let $P$ be any representative for a minimal projection on a principal graph for some subfactor planar algebra with index $\delta^2$. Let $N(P)$ denote all the neighbors of $P$ in the principal graph. Then taking the trace of both sides of the formula (\ref{formulaforprincipalgraph}) gives:
        \begin{equation}\label{eq: thetraceformula}
             \delta \te{tr}(P)=\sum_{Q \in N(P)}\te{tr}(Q)
        \end{equation}
        where $\delta >0$. We call (\ref{eq: thetraceformula}) \textit{the trace formula}.
        
        By Perron-Frobenius, if $\Gamma$ is a finite connected graph with vertex set $\Gamma_V$, then there exists a function $t:\Gamma_V\to (0,\infty)$ and a value $d\in (0,\infty)$ such that for every vertex $v\in \Gamma_V$, $dt(v)=\sum_{w\in N(v)} t(w)$. Furthermore, $d$ and $t$ are unique up to multiplying $t$ by a positive real number. For the trace formula, since $\te{tr}(\emptyset)=1$, the value of $d$ is unique, so $d=\delta$, and $t$ must equal $\te{tr}$ in order for $dt(v)=\sum_{w\in N(v)} t(w)$ to hold. 
        
\begin{lem}\label{lem: a-fin-graph-index}
    $\mc{P}$ has index 4 and all minimal projections have trace 1. 
\end{lem}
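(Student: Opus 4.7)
The plan is to apply the Perron--Frobenius observation from the paragraph preceding the lemma to the principal graph $\Gamma_+$ (and symmetrically to $\Gamma_-$), which by hypothesis is the $\til{A}_{2n-1}$ Dynkin diagram. First I would note that this graph is $2$-regular in all cases: for $n \geq 2$ it is a cycle on $2n$ vertices, and for $n = 1$ it is a pair of vertices joined by a double edge (the ``$2$ above the edge'' in (\ref{eq: Afinshadearbgraphsone})). In either case the constant function $v \mapsto 1$ on the vertex set satisfies $2 \cdot 1 = \sum_{w \in N(v)} 1$, so the all-ones function is a strictly positive eigenvector of the adjacency operator with eigenvalue $2$.

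Next, the trace formula (\ref{eq: thetraceformula}) says that $\te{tr}$ is itself a strictly positive eigenfunction of the adjacency operator with eigenvalue $\delta$. By the uniqueness statement in Perron--Frobenius invoked immediately before the lemma, $\delta$ must equal the maximal eigenvalue, which is $2$, and $\te{tr}$ must be a positive scalar multiple of the all-ones function. In particular the index of $\mc{P}$ equals $\delta^2 = 4$.

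Finally, the empty diagram $\emptyset_+$ labels one of the vertices in (\ref{eq: Afinshadearbgraphs})--(\ref{eq: Afinshadearbgraphsone}), and $\te{tr}(\emptyset_+) = 1$ since $\mc{P}_{0,+}$ is one-dimensional and spanned by $\emptyset_+$. This pins the scalar down to $1$, so every minimal projection appearing in $\Gamma_+$ has trace $1$. Running the identical argument on $\Gamma_-$ (using $\emptyset_-$ to fix the normalization) covers the minimal projections of the other shading.

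The only mild obstacle is book-keeping for the degenerate $n = 1$ case, where the ``cycle'' becomes a bigon with a double edge rather than a genuine polygon; the uniform $2$-regularity observation above handles both cases simultaneously, so no separate treatment is actually needed.
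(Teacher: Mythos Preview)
Your proof is correct and follows essentially the same approach as the paper's: both observe that every vertex of $\til{A}_{2n-1}$ has exactly two neighbors, verify that the constant function $1$ with $\delta=2$ satisfies the trace formula, and then invoke the Perron--Frobenius uniqueness (together with the normalization $\te{tr}(\emptyset_\pm)=1$) to conclude. Your write-up is simply more explicit about the $n=1$ bigon case and the normalization step, but the underlying argument is identical.
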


    \begin{proof} All minimal projections have exactly two neighbors in their principal graph $\Gamma_+$ or $\Gamma_-$. Therefore, if the trace of all the minimal projections are 1 and $\delta=2$, the trace formula holds. By Perron-Frobenius, these values are unique, so $\mc{P}$ has index 4 and all minimal projections have trace 1.  \end{proof}

\begin{lem}\label{lem: Afinshadeuniquerep}
        If $\mc{P}$ is a subfactor planar algebra with labelling of its principal and dual principal graphs given in (\ref{eq: Afinshadearbgraphs}) with $n\geq 2$, then there exists a unique representative of  $[P_1], [Q_1]$ in $\mc{P}_{1,+}$ and $[P_1'], [Q_1']$ in $\mc{P}_{1,-}$.
\end{lem}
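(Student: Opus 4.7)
The plan is to compute $\mc{P}_{1,+}$ directly as a $*$-algebra and observe that it contains only two non-trivial minimal projections, which must then be the unique representatives of $[P_1]$ and $[Q_1]$.

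First, I would extract the decomposition of $X$ from the principal graph. Since $X=\emptyset_+\otimes X$, applying formula (\ref{formulaforprincipalgraph}) at the vertex $\emptyset_+$ gives $X\cong\bigoplus_{R\in N(\emptyset_+)}R$. For $n\geq 2$, the vertex $\emptyset_+$ in $\til{A}_{2n-1}$ has exactly two neighbors, $P_1$ and $Q_1$, each joined by a single edge, so $X\cong P_1\oplus Q_1$.

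Next, I would compute $\te{End}(X)$. Because $[P_1]$ and $[Q_1]$ are distinct vertices of $\Gamma_+$, they represent non-isomorphic simple objects in $\mc{C}_\mc{P}$, so $\te{Hom}(P_1,Q_1)=\te{Hom}(Q_1,P_1)=0$; together with $\te{End}(P_i)\cong\mbb{C}$ by minimality, this gives $\mc{P}_{1,+}=\te{End}(X)\cong\mbb{C}\oplus\mbb{C}$ as a $*$-algebra. The non-zero projections in $\mbb{C}\oplus\mbb{C}$ are $(1,0)$, $(0,1)$, and $(1,1)=\te{id}_X=P_1+Q_1$, so $(1,0)$ and $(0,1)$ are the only non-trivial minimal projections in $\mc{P}_{1,+}$. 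They must realize the classes $[P_1]$ and $[Q_1]$, and they are the only representatives of these classes in $\mc{P}_{1,+}$ since any partial isometry between them would lie in $\te{Hom}(P_1,Q_1)=0$. The same argument applied to $\ol{X}\cong P_1'\oplus Q_1'$ from $\Gamma_-$ handles $\mc{P}_{1,-}$.

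I do not expect a significant obstacle: once the principal-graph data is translated into the decomposition $X\cong P_1\oplus Q_1$, the conclusion reduces to the structural fact that the $2$-dimensional commutative $*$-algebra $\mbb{C}\oplus\mbb{C}$ has only two non-trivial minimal projections. The hypothesis $n\geq 2$ is essential here: for $n=1$, the graph $\til{A}_1$ has a double edge, so $X\cong P_1\oplus P_1$ and $\te{End}(X)\cong M_2(\mbb{C})$, which admits infinitely many rank-one projections representing $[P_1]$, and uniqueness would fail.
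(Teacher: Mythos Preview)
Your argument is correct and rests on the same core observation as the paper—the decomposition $X\cong P_1\oplus Q_1$ into non-isomorphic simples—but the packaging differs. The paper works computationally: it writes out the direct-sum maps $\iota_i,\rho_i$ and an explicit isomorphism $f,g$ between $X$ and $P_1\oplus Q_1$, builds the idempotents $R=f\iota_1\rho_1g$ and $S=f\iota_2\rho_2g$ in $\te{End}(X)$, shows that any $P_1^\dagger\in\mc{P}_{1,+}$ isomorphic to $P_1$ satisfies $P_1^\dagger=\lambda R$ for some scalar $\lambda$, and then uses equality of traces to force $\lambda=1$ (and likewise for $Q_1$). You instead identify $\mc{P}_{1,+}=\te{End}(X)\cong\mbb{C}\oplus\mbb{C}$ as a $*$-algebra and simply list its minimal projections. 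Your route is shorter and more conceptual; the paper's has the minor virtue of never invoking the abstract structure of $\te{End}(X)$, instead manipulating morphisms directly so that the concrete element $R\in\mc{P}_{1,+}$ is exhibited by hand. Your closing remark on why $n=1$ fails (one gets $\te{End}(X)\cong M_2(\mbb{C})$ with infinitely many rank-one projections) is a nice addition; the paper treats that case separately in Lemma~\ref{lem: nonep1q1reps}.
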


    \begin{proof} The proof will be identical for $\mc{P}_{1,+}$ and $\mc{P}_{1,-}$, so we will just prove the result for $\mc{P}_{1,+}$. First we prove uniqueness. Suppose $P_1^{\dagger}\cong P_1^{\dagger \dagger}\cong P_1$ and $P_1^{\dagger}, P_1^{\dagger \dagger}\in \mc{P}_{1,+}$. By the principal graph, $\smallwsquare \otimes X \cong X \cong P_1 \oplus Q_1$. By definition of direct sum, there exists morphisms in $\mc{C}_{\mc{P}}$, $\rho_1:P_1 \oplus Q_1 \ra P_1$, $\rho_2: P_1 \oplus Q_1 \ra Q_1$, $\iota_1: P_1 \ra P_1\oplus Q_1$, and $\iota_2: Q_1 \ra P_1 \oplus Q_1$, satisfying $\rho_1\iota_1=\te{id}_{P_1}$, $\rho_2\iota_2=\text{id}_{Q_1}$, $\rho_2\iota_1=\rho_1\iota_2=0$ and $\iota_1\rho_1+\iota_2\rho_2=\te{id}_{P_1\oplus Q_1}$. By definition of isomorphism there exists morphisms in $\mc{C}_{\mc{P}}$, $f:P_1 \oplus Q_1 \ra X$, $g:X \ra P_1 \oplus Q_1$ such that $f g = \te{id}_{X}$ and $g f=\te{id}_{P_1 \oplus Q_1}$. Thus 

    \begin{equation} \label{eq: Afinshadep1q1unique}
        f \te{id}_{P_1\oplus Q_1}  g =f g = \te{id}_{X}=f\iota_1 \rho_1  g + f \iota_2 \rho_2  g
    \end{equation}

    \noindent Call the map $R=f\iota_1 \rho_1 g \in \te{Hom}\left(X, X\right)$ and $S=f \iota_2\rho_2 g \in \te{Hom}\left(X, X\right)$, so we can rewrite (\ref{eq: Afinshadep1q1unique}) as $\te{id}_X=R+S$. Since $P_1^{\dagger}\in \mc{P}_{1,+}$ and is a projection, $\te{id}_X P_1^{\dagger} \te{id}_X = P_1^{\dagger} \in \te{Hom}(X,X)$. On the other hand, using that $\te{id}_{X}=R+S$ we get 

        \begin{equation}\label{eq: Afinshadep1dagger}
            P_1^{\dagger}=(R+S)P_1^{\dagger}(R+S)=R P_1^{\dagger} R + R P_1^{\dagger}  S + S  P_1^{\dagger} R + S P_1^{\dagger} S
        \end{equation}

    \noindent $S$ is a map that factors through $Q_1$ and not $P_1$ and as $Q_1$ and $P_1^{\dagger}$ are nonisomorphic minimal projections $R P_1^{\dagger} S = S  P_1^{\dagger} R=S P_1^{\dagger} S=0$. What's left to compute is $R P_1^{\dagger} R=f \iota_1 \rho_1 g P_1^{\dagger} f \iota_1 \rho_1 g$. Notice that $\rho_1 g P_1^{\dagger} f \iota_1\in \te{Hom}(P_1, P_1)$ and as this hom-space is one-dimensional, this gives that $\rho_1 g P_1^{\dagger} f \iota_1=\lambda P_1$ for some $\lambda \in \mathbb{C}$. Equation (\ref{eq: Afinshadep1dagger}) becomes $RP_1^{\dagger}R=\lambda (f \iota_1 \rho_1 g)=\lambda R=P_1^{\dagger}$. We can deduce that $\lambda$ is nonzero since $P_1^{\dagger}$ has nonzero trace. We can do the same thing for $P_1^{\dagger \dagger}$ and get that there exists a $\mu \in \mathbb{C}\backslash\{0\}$ such that $\mu R=P_1^{\dagger \dagger}$. Thus $\f{1}{\lambda}P_1^{\dagger}=\f{1}{\mu}P_1^{\dagger \dagger}$. $P_1^{\dagger}$ and $P_1^{\dagger\dagger}$ have the same trace, so $\f{1}{\lambda}\te{tr}(P_1^{\dagger})=\f{1}{\mu}\te{tr}(P_1^{\dagger\dagger})$, giving that $\lambda=\mu$ and $P_1^{\dagger}=P_1^{\dagger \dagger}$, proving uniqueness. A similar proof will show uniqueness for the representative of $[Q_1]$ in $\mc{P}_{1,+}$. 

    Next, we prove existence. We claim that $R$ is the correct choice for the unique representative $[P_1]$ in $\mc{P}_{1,+}$ and $S$ is the correct choice for the unique representative of $[Q_1]$ in $\mc{P}_{1,+}$. It easy to see that $R^2=R$. As $R \in \te{Hom}\left(X, X\right)$, $R \in \mc{P}_{1,+}$. What's left to show is that $R\cong P_1$. Defining $h_1=\rho_1 g\in \te{Hom}\left(X,P_1\right)$ and $k_1=f \iota_1 \in \te{Hom}\left(P_1, X\right)$ gives $R\cong P_1$. So indeed $R$ is the unique representative of $[P_1]$ in $\mc{P}_{1,+}$. A similar proof shows $S$ is the unique representative of $[Q_1]$ in $\mc{P}_{1,+}$. \end{proof}

\begin{lem}\label{lem: nonep1q1reps}
    When $n=1$, there exists $R_1, Q_1 \in \mc{P}_{1,+}$ isomorphic to $P_1$ and $R_1', Q_1'\in \mc{P}_{1,-}$ isomorphic to $P_1'$ with $R_1Q_1=Q_1R_1=0$, $\ol{R}_1\ol{Q}_1=\ol{Q}_1\ol{R}_1=0$, $X=R_1+Q_1$, $Y=R_1'+Q_1'$ and $\ol{R}_1=R_1'$ and $\ol{Q}_1=Q_1'$. 
\end{lem}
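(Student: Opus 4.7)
The plan is to mirror the strategy of Lemma \ref{lem: Afinshadeuniquerep}, but to account for the fact that the $\til{A}_1$ principal graphs each carry a double edge from $\emptyset$ to $P_1$ (resp.\ $P_1'$). By the neighbor rule (\ref{formulaforprincipalgraph}), this reads $X \cong \smallwsquare \otimes X \cong P_1 \oplus P_1$, so $X$ splits as two isomorphic copies of $P_1$. Unlike the previous lemma there is no uniqueness to expect, only existence of a splitting; I would then transport it to the shaded side by duality to enforce the compatibility $\ol{R}_1 = R_1'$ and $\ol{Q}_1 = Q_1'$.

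Concretely, I would unpack the direct sum $P_1 \oplus P_1$ to obtain inclusions $\iota_1, \iota_2 : P_1 \to P_1 \oplus P_1$ and projections $\rho_1, \rho_2 : P_1 \oplus P_1 \to P_1$ with $\rho_j \iota_i = \delta_{ij} \te{id}_{P_1}$ and $\iota_1 \rho_1 + \iota_2 \rho_2 = \te{id}_{P_1 \oplus P_1}$, together with mutually inverse isomorphisms $f: P_1 \oplus P_1 \to X$ and $g: X \to P_1 \oplus P_1$. Set $R_1 := f \iota_1 \rho_1 g$ and $Q_1 := f \iota_2 \rho_2 g$; these lie in $\te{Hom}(X,X) = \mc{P}_{1,+}$. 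A short calculation using the direct sum identities gives $R_1^2 = R_1$, $Q_1^2 = Q_1$, $R_1 Q_1 = Q_1 R_1 = 0$, and $R_1 + Q_1 = fg = \te{id}_X = X$, while the pair $(\rho_1 g, f \iota_1)$ exhibits $R_1 \cong P_1$ (and likewise $Q_1 \cong P_1$) by the computation $(\rho_1 g)(f \iota_1) = \rho_1 \iota_1 = \te{id}_{P_1}$ and $(f \iota_1)(\rho_1 g) = R_1$.

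For the dual side I would simply set $R_1' := \ol{R}_1$ and $Q_1' := \ol{Q}_1$. Since the dual tangle is a linear (anti)homomorphism $\mc{P}_{1,+} \to \mc{P}_{1,-}$ fixing the identity, these are again idempotents with $\ol{R}_1 \cdot \ol{Q}_1 = \ol{Q}_1 \cdot \ol{R}_1 = 0$ and $R_1' + Q_1' = \ol{R_1 + Q_1} = \ol{X} = Y$. Because duality preserves minimality of projections and the unique non-trivial equivalence class of minimal projections in $\mc{P}_{1,-}$ is $[P_1']$, dualizing the isomorphism $R_1 \cong P_1$ yields $R_1' \cong \ol{P_1} \in [P_1']$, so $R_1' \cong P_1'$, and likewise $Q_1' \cong P_1'$. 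The identities $\ol{R}_1 = R_1'$ and $\ol{Q}_1 = Q_1'$ then hold by definition. The only subtlety (rather than a true obstacle) is that $P_1$ appears with multiplicity two in $X$, so the splitting into $R_1$ and $Q_1$ is not canonical; this is precisely why I perform the construction once on $\mc{P}_{1,+}$ and define the $\mc{P}_{1,-}$ representatives via duality, instead of repeating it independently on each side and then trying to match up the choices.
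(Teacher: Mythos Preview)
Your proposal is correct and follows essentially the same approach as the paper: both use the principal graph to get $X\cong P_1\oplus P_1$, extract orthogonal idempotents $R_1,Q_1\in\mc{P}_{1,+}$ summing to $X$, and then \emph{define} $R_1',Q_1'$ as the duals $\ol{R}_1,\ol{Q}_1$ rather than repeating the construction on $\mc{P}_{1,-}$. The paper is terser, simply asserting existence ``by definition of direct sum,'' whereas you spell out the $f\iota_i\rho_i g$ construction from Lemma~\ref{lem: Afinshadeuniquerep}; your final remark about non-canonicity of the splitting is apt and explains why the dual side must be handled by transport.
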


    \begin{proof} When $n=1$, the principal graph gives that $X \cong P_1 \oplus P_1$. By definition of direct sum, this means that there exists some $R_1, Q_1\in \mc{P}_{1,+}$ with $R_1Q_1=Q_1R_1=0$ and $X=R_1+Q_1$. Taking the duals of $R_1$ and $Q_1$ we get $\ol{R}_1\ol{Q}_1=\ol{Q}_1\ol{R}_1= 0$ and $Y=\ol{R}_1+\ol{Q}_1$ and $\ol{R}_1, \ol{Q}_1\in \mc{P}_{1,-}$. Since $R_1$ and $Q_1$ are minimal projections isomorphic to $P_1$ this gives that $\ol{R}_1$ and $\ol{Q}_1$ are minimal projections. Since $Y=\ol{R}_1+\ol{Q}_1$ we obtain that $\ol{R}_1\cong \ol{Q}_1 \cong P_1'$. \end{proof}
    
    From now on, for $n\geq 2$, choose $P_1$ and $Q_1$ to be the unique representatives of $[P_1]$ and $[Q_1]$ in $\mc{P}_{1,+}$ and let $P_1'$, $Q_1'$ be the unique representatives of $[P_1']$ and $[Q_1']$ in $\mc{P}_{1,-}$ respectively. For $n=1$ choose $P_1=R_1$, $P_1'=\ol{R}_1$, $Q_1$, and $Q_1'=\ol{Q}_1$ from Lemma \ref{lem: nonep1q1reps}.  From Lemmas \ref{lem: nonep1q1reps} and \ref{lem: Afinshadeuniquerep} , $X= P_1 + Q_1$ and $Y=P_1'+Q_1'$. We already know when $n=1$ that we can asssume $P_1'=\ol{P}_1$ and $Q_1'=\ol{Q}_1$. We prove this result for $n\geq 2$ by first finding bases for $\mc{P}_{1,+}$ and $\mc{P}_{1,-}$.

\begin{lem} \label{lem: Afinshade-basis}
        For $n\geq 2$, $\{P_1, Q_1\}$ forms a basis of $\mc{P}_{1,+}$ and $\{P_1', Q_1'\}$ forms a basis of $\mc{P}_{1,-}$.
\end{lem}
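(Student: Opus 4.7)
The plan is to show that $\mc{P}_{1,+}$ is two-dimensional and that $\{P_1, Q_1\}$ is a linearly independent pair; the same argument transfers verbatim to $\mc{P}_{1,-}$ with $\{P_1', Q_1'\}$, so I will only describe the positively-shaded case.

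For the dimension count, I would use the identification $\mc{P}_{1,+} = \te{End}(X)$ together with the decomposition $X \cong P_1 \oplus Q_1$ coming from the principal graph $\Gamma_+$. Because $P_1$ and $Q_1$ label distinct vertices of $\Gamma_+$ they represent non-isomorphic simples, so by semisimplicity of $\mc{C}_\mc{P}$,
\begin{align*}
\te{End}(X) \;\cong\; \te{End}(P_1)\oplus \te{Hom}(P_1,Q_1)\oplus \te{Hom}(Q_1,P_1)\oplus \te{End}(Q_1) \;\cong\; \mbb{C}\oplus 0 \oplus 0\oplus \mbb{C},
\end{align*}
where the middle two summands vanish because the two simples are non-isomorphic and the outer two are each one-dimensional by minimality. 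Hence $\dim \mc{P}_{1,+}=2$.

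Next I would verify linear independence of $P_1$ and $Q_1$ in $\mc{P}_{1,+}$. The existence argument in Lemma \ref{lem: Afinshadeuniquerep} realizes $P_1$ and $Q_1$ as $\iota_1\rho_1$ and $\iota_2\rho_2$ respectively for the direct-sum data of $X\cong P_1\oplus Q_1$, which (using $\rho_i\iota_j=0$ for $i\neq j$) yields $P_1Q_1 = Q_1P_1 = 0$. Suppose $aP_1+bQ_1=0$ for some $a,b\in \mbb{C}$. Multiplying on the left by $P_1$ and using $P_1^2=P_1$ gives $aP_1=0$; taking the trace and invoking $\te{tr}(P_1)=1$ from Lemma \ref{lem: a-fin-graph-index} forces $a=0$, and symmetrically $b=0$. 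Since the two projections are linearly independent and live in a $2$-dimensional space, they form a basis of $\mc{P}_{1,+}$.

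The hardest part is genuinely just assembling the ingredients — the real work has already been done in the preceding two lemmas (uniqueness of the representatives, which ensures $P_1,Q_1$ land in $\mc{P}_{1,+}$, and the trace-$1$ normalization that lets scalar multiples be detected by the trace). The only minor subtlety to flag is making sure one uses the \emph{canonical} representatives from Lemma \ref{lem: Afinshadeuniquerep} (so that $P_1,Q_1\in \mc{P}_{1,+}$ with $P_1+Q_1=\te{id}_X$ and $P_1Q_1=Q_1P_1=0$), since otherwise the relation $P_1Q_1=0$ used above is not automatic.
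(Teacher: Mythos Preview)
Your proposal is correct and follows essentially the same approach as the paper: the paper computes $\dim\mc{P}_{1,+}=2$ via the Bratteli diagram (which is exactly your $\te{End}(X)$ decomposition phrased differently), and then proves linear independence by the same multiply-by-$P_1$ argument, concluding $\lambda=0$ directly from $P_1\neq 0$ rather than via the trace. The only cosmetic difference is that the paper invokes ``nonisomorphic minimal projections'' to get $P_1Q_1=0$ immediately, whereas you trace it back to the direct-sum data of Lemma~\ref{lem: Afinshadeuniquerep}.
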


    \begin{proof} Without loss of generality, we will show $\{P_1, Q_1\}$ forms a basis of $\mc{P}_{1,+}$. Recall from the background discussion that the dimension of $\mc{P}_{1,+}$ is the sums of the squares in row 1 of the Brattelli diagram. Since $X\cong P_1 \oplus Q_1$, the dimension of $\mc{P}_{1,+}$ is 2. 

    What's left to check is that $P_1$ and $Q_1$ are linearly independent.  Since $Q_1$ and $P_1$ are nonisomorphic minimal projections, neither $Q_1$ or $P_1$ are zero and $P_1Q_1=Q_1P_1=0$, $P_1^2=P_1$, and $Q_1^2=Q_1$. Therefore if there exists $\lambda, \mu \in \mbb{C}$ where $\lambda P_1+\mu Q_1=0$, multiplying by $P_1$ gives $\lambda=0$ and multiplying by $Q_1$ gives $\mu=0$. \end{proof}

\begin{lem}
        Either $\ol{P}_1=P_1'$ and $\ol{Q}_1=Q_1'$ or $\ol{P}_1=Q_1'$ and $\ol{Q}_1=P_1'$.
\end{lem}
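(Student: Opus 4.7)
The plan is to show that $\overline{P}_1$ and $\overline{Q}_1$ are both minimal projections in $\mc{P}_{1,-}$ each isomorphic to either $P_1'$ or $Q_1'$, and then to invoke the uniqueness of representatives from Lemma \ref{lem: Afinshadeuniquerep} to promote these isomorphisms to equalities. For $n=1$ the conclusion is already built into Lemma \ref{lem: nonep1q1reps}, so all the content lies in the case $n\geq 2$.

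First I would verify that the dual tangle (rotation by $\pi$) behaves well on the 1-box algebra. Rotating a vertical stack $fg$ by $\pi$ reverses the stacking order, so $\overline{fg} = \overline{g}\,\overline{f}$; and since the composition of the adjoint $*$ (vertical reflection) with $\pi$-rotation equals horizontal reflection in either order, we have $\overline{f^*} = (\overline{f})^*$. Applying these to $P_1$ and $Q_1$ gives $\overline{P}_1^{\,2} = \overline{P}_1$, $\overline{P}_1^{\,*} = \overline{P}_1$, and $\overline{P}_1\,\overline{Q}_1 = \overline{Q_1 P_1} = 0 = \overline{Q}_1\,\overline{P}_1$, with the analogous identities for $\overline{Q}_1$. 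Thus $\overline{P}_1, \overline{Q}_1 \in \mc{P}_{1,-}$ are mutually orthogonal nonzero projections (nonzero because the dual tangle is invertible), and they are minimal because the anti-isomorphism $\overline{(\,\cdot\,)}:\mc{P}_{1,+}\to \mc{P}_{1,-}$ preserves the dimension of corner algebras, so $\te{End}(\overline{P}_1) \cong \te{End}(P_1) \cong \mbb{C}$.

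Next, linearity of the dual tangle together with $X = P_1 + Q_1$ gives $Y = \overline{X} = \overline{P}_1 + \overline{Q}_1$, so $Y$ decomposes as an orthogonal direct sum of the two minimal projections $\overline{P}_1$ and $\overline{Q}_1$ in $\mc{P}_{1,-}$. Since we also have $Y \cong P_1' \oplus Q_1'$, the essential uniqueness of decomposition into simples in the semisimple category $\mc{C}_\mc{P}$ forces $\{[\overline{P}_1], [\overline{Q}_1]\} = \{[P_1'], [Q_1']\}$ as multisets of isomorphism classes. Because Lemma \ref{lem: Afinshadeuniquerep} guarantees that each class has a unique representative in $\mc{P}_{1,-}$, these isomorphisms promote to genuine equalities, yielding exactly the two cases in the statement.

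The main obstacle is not algebraic but foundational: one must justify that $\pi$-rotation is an anti-homomorphism on $\mc{P}_{1,+}$ and intertwines the adjoint as described. Both identities are routine consequences of the planar operad axioms — each pair of expressions corresponds to the same tangle up to isotopy — but they are the crucial ingredient that transports the defining identities of $P_1$ and $Q_1$ over to their rotations. Once those are secured, the remainder is immediate orthogonality bookkeeping followed by a direct appeal to Lemma \ref{lem: Afinshadeuniquerep}.
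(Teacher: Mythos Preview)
Your proposal is correct and follows essentially the same approach as the paper: both arguments show that $\overline{P}_1$ and $\overline{Q}_1$ are minimal projections in $\mc{P}_{1,-}$ (via the compatibility of the dual tangle with multiplication and $*$), observe they are nonisomorphic, and then invoke Lemma \ref{lem: Afinshadeuniquerep} to turn ``isomorphic to $P_1'$ or $Q_1'$'' into an equality. The only cosmetic difference is that you route the matching of isomorphism classes through the decomposition $Y=\overline{P}_1+\overline{Q}_1$ and semisimplicity, whereas the paper argues directly that a minimal projection in $\mc{P}_{1,-}$ must equal $P_1'$ or $Q_1'$; these are equivalent observations.
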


    \begin{proof} The case $n=1$ was done in Lemma \ref{lem: nonep1q1reps}. In Lemma \ref{lem: Afinshadeuniquerep} we found that $P_1$ and $Q_1$ satisfy $X=P_1+Q_1$ and that $Y=P_1'+Q_1'$. By definition of projection, $P_1^2=P_1=P_1^*$. For any diagram $A\in \mc{P}$ it is clear that $\ol{A}^2=\ol{A^2}$ and $\ol{A}^*=\ol{A^*}$, thus
    \begin{align*}
        \ol{P_1^2}=\ol{P}_1=\ol{P}_1^2=\ol{P_1^*}=\ol{P}_1^*
    \end{align*}
    which gives that $\ol{P}_1\in \mc{P}_{1,-}$ is a projection. Similarly, $\ol{Q}_1 \in \mc{P}_{1,-}$ is a projection. 
    
    Let $R$ be any minimal projection. Clearly, $f\in \te{Hom}(P_1, R)$ if and only if $\ol{f}\in \te{Hom}(\ol{R}, \ol{P}_1)$, which gives that the dimension of the hom-space from $\ol{P}_1$ to itself is 1 and from $\ol{P}_1$ to any nonisomorphic minimal projection is 0. Thus, $\ol{P}_1$ is a minimal projection in $\mc{P}_{1,-}$. By Lemma \ref{lem: Afinshadeuniquerep} this gives that $\ol{P}_1=P_1'$ or $\ol{P}_1=Q_1'$. Likewise, $\ol{Q}_1$ is a minimal projection. The dimension of $\te{Hom}(\ol{P}_1, \ol{Q}_1)$ is 0 since $\te{Hom}(P_1, Q_1)$ has dimension 0, so $\ol{P}_1$ and $\ol{Q}_1$ are nonisomorphic minimal projections. Therefore, either $\ol{P}_1=P_1'$ and $\ol{Q}_1=Q_1'$ or $\ol{P}_1=Q_1'$ and $\ol{Q}_1=P_1'$. \end{proof}

    Due to the principal graph's symmetry we can choose that $\ol{P}_1=P_1'$ and $\ol{Q}_1=Q_1'$. Recall that in a planar algebra, the dual of a diagram is rotation by $\pi$. For our diagrammatic descriptions of subfactor planar algebras with principal graph $\til{A}_{2n-1}$, we will want to have diagrams $P_1$, $P_1'$, $Q_1$, $Q_1'$ such that $P_1, Q_1 \in \mc{P}_{1,+}$, $P_1', Q_1'\in \mc{P}_{1,-}$, and rotating $P_1$ and $Q_1$ $\pi$ gives $P_1'$ and $Q_1'$ respectively. We will introduce diagrammatic notation that aids the visualization of this phenomena by denoting

    \begin{equation} \label{eq: AfinshadelabelsP1Q1}
        P_1=\redr, Q_1=\bluer, P_1'=\ol{P}_1=\redl, \te{ and } Q_1'=\ol{Q}_1=\bluel.
    \end{equation}

    \begin{lem} \label{lem: Afinshadesaddlerels}
        For $\mc{P}$ we have the following:
        \begin{enumerate}[label=(\roman*)]
            \item $\redsaddlesin=P_1 \otimes \ol{P}_1$,   $\bluesaddlesin=Q_1\otimes \ol{Q}_1$, $\redsaddlesout=\ol{P}_1\otimes P_1$, and $\bluesaddlesout=\ol{Q}_1\otimes Q_1$,
            \item $\redsaddlesin \cong \emptyset_{+}$, $\redsaddlesout \cong \emptyset_{-}$, $\bluesaddlesin \cong \emptyset_{+}$, and $\bluesaddlesout \cong \emptyset_{-}$
        \end{enumerate}
        Thus we can conclude $P_1 \otimes \ol{P}_1 \cong \emptyset_{+}\cong Q_1 \otimes \ol{Q}_1 \cong \emptyset_{+}$ and $\ol{P}_1\otimes P_1 \cong \emptyset_{-} \cong \ol{Q}_1\otimes Q_1$.
    \end{lem}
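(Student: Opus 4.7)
The plan is to handle part (i) by unpacking the diagrammatic notation established in (\ref{eq: AfinshadelabelsP1Q1}), and then deduce part (ii) from the trace normalization of Lemma \ref{lem: a-fin-graph-index} together with Frobenius reciprocity.

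For part (i), the tensor product in a planar algebra is horizontal concatenation, so $P_1 \otimes \ol{P}_1$ is the diagram obtained by placing $\redr$ to the left of $\redl$; up to planar isotopy this is precisely $\redsaddlesin$, with the red interiors of the two single-strand diagrams meeting along the inside of the saddle. The identifications $\bluesaddlesin = Q_1 \otimes \ol{Q}_1$, $\redsaddlesout = \ol{P}_1 \otimes P_1$, and $\bluesaddlesout = \ol{Q}_1 \otimes Q_1$ are identical up to swapping colors and/or the order of the two factors, with ``out'' recording that the colored regions now lie on the exterior rather than the interior of the saddle.

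For part (ii), it suffices to prove $P_1 \otimes \ol{P}_1 \cong \emptyset_+$; the other three assertions will follow by the same argument after exchanging $P_1 \leftrightarrow Q_1$ and/or $P_1 \leftrightarrow \ol{P}_1$, with the only adjustment being that the unit object for $\ol{P}_1 \otimes P_1$ and $\ol{Q}_1 \otimes Q_1$ is $\emptyset_-$ rather than $\emptyset_+$, as dictated by the outer shading. Since traces are multiplicative across tensor products in a pivotal category and both $P_1$ and $\ol{P}_1$ are minimal projections of trace $1$ by Lemma \ref{lem: a-fin-graph-index},
\[
\te{tr}(P_1 \otimes \ol{P}_1) \;=\; \te{tr}(P_1)\,\te{tr}(\ol{P}_1) \;=\; 1.
\]
Semisimplicity of $\mc{C}_\mc{P}$ lets me decompose $P_1 \otimes \ol{P}_1 \cong \bigoplus_i R_i$ into minimal projections, and since each $R_i$ has trace $1$, taking traces of both sides forces the direct sum to consist of exactly one summand.

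To identify that lone summand as $\emptyset_+$, I apply Frobenius reciprocity: using $\ol{\ol{P}_1} = P_1$ and minimality of $P_1$,
\[
\dim \te{Hom}(P_1 \otimes \ol{P}_1,\, \emptyset_+) \;=\; \dim \te{Hom}(P_1,\, \emptyset_+ \otimes P_1) \;=\; \dim \te{End}(P_1) \;=\; 1,
\]
so $\emptyset_+$ appears in the decomposition and, by the trace count, is the entire decomposition. The proof is not technically involved; I expect the principal obstacle to be careful bookkeeping of shading conventions, particularly making sure Frobenius reciprocity is applied with the correct duality and that the unit object $\emptyset_\pm$ paired with each tensor product matches its outer shading.
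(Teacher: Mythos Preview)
Your argument for part (i) has a genuine gap. The saddle diagram $\redsaddlesin$ is a red cup stacked on a red cap---a morphism in $\mc{P}_{2,+}$ that factors through $\emptyset_+$---whereas $P_1\otimes\ol{P}_1=\redr\,\redl$ is two parallel red strands, the identity on a two-strand object. These are \emph{not} isotopic; the equality between them is precisely the nontrivial ``saddle relation'' the lemma is establishing. The paper proves it by computing the norm of the difference: writing $D=\redsaddlesin$, one checks $D^*=D$, $D(P_1\otimes\ol{P}_1)=D$, and $\te{tr}(D)=\te{tr}(P_1\otimes\ol{P}_1)=1$, so
\[
\bigl\langle D-(P_1\otimes\ol{P}_1),\,D-(P_1\otimes\ol{P}_1)\bigr\rangle
=\te{tr}\bigl(-D+P_1\otimes\ol{P}_1\bigr)=0,
\]
and positive definiteness forces $D=P_1\otimes\ol{P}_1$.

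Your argument for part (ii)---or rather for the conclusion $P_1\otimes\ol{P}_1\cong\emptyset_+$---is correct and takes a different route from the paper. The paper exhibits the isomorphism explicitly: with $f=\redcupin$ one has $f^*f=\emptyset_+$ (the red bubble evaluates to $1$) and $ff^*=D$, which equals $P_1\otimes\ol{P}_1$ by part (i). Your approach via the trace count plus Frobenius reciprocity is a clean abstract alternative, with the advantage that it does not rely on part (i); the trade-off is that it does not itself produce the explicit cup/cap isomorphism, and it leaves part (i) still to be proved, which matters because the equality in (i) is later used as one of the defining relations of the planar algebra presentation.
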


    \begin{proof} We will prove the first equality or isomorphism of each part. The rest will follow analogously. Define $D$ to be the lefthand side of the first equality of part (i). For part (i), using that the trace of $P_1$ and $\ol{P}_1$ are 1, we can see that
   \begin{align*}
          \Biggl<D-(P_1 \otimes \ol{P}_1), D-(P_1 \otimes \ol{P}_1)\Biggr> = \te{tr}\left(-D+(P_1 \otimes \ol{P}_1)\right) = 0
      \end{align*}
    Since $\mc{P}$ is a subfactor planar algebra, the inner product $<,>$ is positive-definite, which gives that $D-(P_1 \otimes \ol{P}_1)=0$ and thus $D=P_1\otimes \ol{P}_1$. 

    For part (ii), in the category $\mc{C}_\mc{P}$, define $f=\redcupin$, which is a morphism from $\emptyset_{-}$ to $P_1 \otimes \ol{P}_1$. Clearly, $f^*$ is then $\redcapin$. Multiplying these we get $ff^*=D$ and $f^*f=\emptyset_{+}$. Therefore $D \cong \emptyset_{+}$. \end{proof}

    Using the previous lemma and its proof we can see that for any diagram $T$ in the planar algebra, $T^*$ is vertical flip and star the inside of any box, then extend this definition anti-linearly and on diagrams. 

\begin{lem}
        In $\mc{P}$, $\te{tr}(X)=2\emptyset_{+}$ and $\te{tr}(Y)=2\emptyset_{-}$. 
\end{lem}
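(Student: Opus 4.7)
The plan is to unwind the statement using linearity of the trace tangle together with the decompositions $X = P_1 + Q_1$ and $Y = P_1' + Q_1'$ already established in Lemmas \ref{lem: nonep1q1reps} and \ref{lem: Afinshade-basis}. Since the trace tangle $\te{tr}_{1,\pm}$ lands in $\mc{P}_{0,\pm}$, and $\mc{P}$ is evaluable so $\mc{P}_{0,+} = \mbb{C} \cdot \emptyset_+$ and $\mc{P}_{0,-} = \mbb{C} \cdot \emptyset_-$, every trace is a scalar multiple of the empty diagram in the appropriate shading. The content of the lemma is just identifying the scalar.

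First I would apply the trace tangle to both sides of the (diagrammatic) equality $X = P_1 + Q_1$. Because $Z_{\te{tr}_{1,+}}$ is a linear map, this gives $\te{tr}(X) = \te{tr}(P_1) + \te{tr}(Q_1)$. By Lemma \ref{lem: a-fin-graph-index}, each minimal projection has trace $1$, and since $\mc{P}_{0,+}$ is one-dimensional with basis $\emptyset_+$, the identity $\te{tr}(P_1) = 1$ as a scalar is the same as $\te{tr}(P_1) = \emptyset_+$ as a diagram, and likewise $\te{tr}(Q_1) = \emptyset_+$. Summing gives $\te{tr}(X) = 2\emptyset_+$.

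The shaded case for $Y$ is symmetric: apply the same argument to $Y = P_1' + Q_1'$, using evaluability in $\mc{P}_{0,-}$ and that $\te{tr}(P_1') = \te{tr}(Q_1') = 1$, to conclude $\te{tr}(Y) = 2\emptyset_-$. There is no real obstacle here; the only subtlety worth pointing out is the (standard) identification of the scalar $\te{tr}(P) \in \mbb{C}$ with the closed diagram $\te{tr}(P) \cdot \emptyset_\pm \in \mc{P}_{0,\pm}$, which is exactly what evaluability provides.
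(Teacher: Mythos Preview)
Your proposal is correct and matches the paper's own proof, which simply takes the trace of the identities $X = P_1 + Q_1$ and $Y = \ol{P}_1 + \ol{Q}_1$ and uses that each minimal projection has trace $1$. The only quibble is a citation slip: the decomposition $X = P_1 + Q_1$ comes from Lemmas~\ref{lem: Afinshadeuniquerep} and~\ref{lem: nonep1q1reps}, not from Lemma~\ref{lem: Afinshade-basis}.
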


    \begin{proof} This follows immediately from taking the traces of the identities: $X=P_1+Q_1$ and $Y=\ol{P}_1+\ol{Q}_1$. \end{proof}

\begin{lem} \label{lem: Afinshadedtensordecomp}
        Let $n\geq 2$. In $\mc{P}$ with principal graphs given by (\ref{eq: Afinshadearbgraphs}) we get:
        \begin{enumerate}[label=(\roman*)]
            \item $P_k \cong P_1 \otimes \ol{Q}_1 \otimes P_1 \otimes \os{(k-2)}{...} \otimes R_1$,
            \item $P_k'\cong \ol{P}_1 \otimes Q_1 \otimes \ol{P}_1 \os{(k-2)}{...} \otimes \ol{R}_1$,
            \item $Q_\ell \cong Q_1 \otimes \ol{P}_1 \otimes Q_1 \otimes \os{(\ell-2)}{...} \otimes \ol{R}_2$, and
            \item $Q_\ell'\cong \ol{Q}_1 \otimes P_1 \otimes \ol{Q}_1 \os{(\ell-2)}{...} \otimes R_2$
        \end{enumerate} 
        for $1\leq k \leq n$ and $1\leq \ell \leq n-1$ where $R_1$ and $R_2$ are either $P_1$ or $\ol{Q}_1$, depending on the parity of $n$, and $R_1 \neq R_2$.
\end{lem}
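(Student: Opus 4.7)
The plan is to establish all four decompositions by a parallel induction on $k$ (for $P_k$) and on $\ell$ (for $Q_\ell$), walking along the two branches of the cyclic principal graph $\til{A}_{2n-1}$ emanating from $\emptyset_+$: the ``$P$-branch'' $\emptyset_+ \to P_1 \to P_2 \to \cdots \to P_n$ and the ``$Q$-branch'' $\emptyset_+ \to Q_1 \to Q_2 \to \cdots \to Q_{n-1}$. The dual formulas (ii) and (iv) then follow by applying the planar-algebra dual (rotation by $\pi$) to (i) and (iii), together with the earlier identifications $\ol{P}_1 = P_1'$ and $\ol{Q}_1 = Q_1'$, since dualization reverses the order of a tensor product and exchanges each factor with its bar.

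The base cases $k = 1$ and $\ell = 1$ are trivial. For $k = 2$, the neighbors of $P_1$ in $\Gamma_+$ are $\emptyset_+$ and $P_2$, so the principal-graph formula (\ref{formulaforprincipalgraph}) gives $P_1 \otimes \ol{X} \cong \emptyset_+ \oplus P_2$. Writing $\ol{X} = \ol{P}_1 + \ol{Q}_1$ and using $P_1 \otimes \ol{P}_1 \cong \emptyset_+$ from Lemma \ref{lem: Afinshadesaddlerels} isolates $P_2 \cong P_1 \otimes \ol{Q}_1$; the analogous computation, with $Q_1 \otimes \ol{Q}_1 \cong \emptyset_+$, gives $Q_2 \cong Q_1 \otimes \ol{P}_1$. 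In the inductive step, assume the formula for $P_k$ (respectively $Q_\ell$) for some $2 \leq k < n$. The principal graph gives $P_k \otimes X^{\pm} \cong P_{k-1} \oplus P_{k+1}$, where $X^{\pm}$ is $X$ or $\ol{X}$ chosen so that its left shading matches the right shading of the inductive expression for $P_k$. Decomposing $X^{\pm}$ as the sum of its two minimal sub-projections via $X = P_1 + Q_1$ or $\ol{X} = \ol{P}_1 + \ol{Q}_1$, and pairing the dual of the rightmost factor of $P_k$ with the appropriate summand, creates a bubble that collapses under Lemma \ref{lem: Afinshadesaddlerels} and reproduces exactly the inductive expression for $P_{k-1}$. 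The remaining summand must therefore be isomorphic to $P_{k+1}$, and by construction it is the alternating tensor product for $P_k$ extended by one more factor. The same argument along the $Q$-branch yields the inductive step for $Q_{\ell+1}$. The terminal cases $k = n$ and $\ell = n-1$ are handled identically; the last factor $R_1$ of $P_n$ and the corresponding factor in $Q_{n-1}$ are then read off from the parity of $n$.

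The main obstacle is the bookkeeping: at each step of the induction we must identify which of the two minimal summands of $X^{\pm}$ produces the bubble-cancellation that matches the hypothesis for $P_{k-1}$ (or $Q_{\ell-1}$), so that the other summand is unambiguously the new projection $P_{k+1}$ (or $Q_{\ell+1}$) with the correctly extended alternating pattern. Once the shading and alternation are tracked carefully, the induction proceeds mechanically along each branch of the principal graph, and the duality statements in (ii) and (iv) are then a formal consequence of (i) and (iii).
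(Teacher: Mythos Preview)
Your inductive argument for parts (i) and (iii) is essentially the paper's: walk along the branches of $\Gamma_+$, decompose $X$ or $Y$ into its two minimal subprojections, use Lemma~\ref{lem: Afinshadesaddlerels} to collapse one summand to the previous projection, and conclude that the remaining summand is the next one.

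However, your claim that (ii) and (iv) follow from (i) and (iii) by applying the planar-algebra dual does not work. Rotation by $\pi$ sends a type-$(k,+)$ diagram to a type-$(k,+)$ diagram when $k$ is even and to a type-$(k,-)$ diagram when $k$ is odd. Concretely, for $k=2$ part (i) gives $P_2 \cong P_1 \otimes \ol{Q}_1$, and dualizing yields $\ol{P_2} \cong Q_1 \otimes \ol{P}_1$, which lives in $\mc{P}_{2,+}$ and is isomorphic to $Q_2$ by (iii), not to $P_2' \in \mc{P}_{2,-}$. More generally, no identification $\ol{P_k} \cong P_k'$ has been established at this point in the paper; for even $k$ these projections do not even lie in the same box space, and for odd $k$ the isomorphism $\ol{P_k} \cong P_k'$ is essentially equivalent to the statement of (ii) you are trying to prove. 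The paper instead proves all four parts by the same induction, running (ii) and (iv) along the dual principal graph $\Gamma_-$ starting from $\ol{P}_1 = P_1'$ and $\ol{Q}_1 = Q_1'$; your proof is easily repaired by doing the same.
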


    \begin{proof} Proving each part is nearly identical, so we will just prove (i). We will prove (i) by induction. When $k=1$, the claim is clear. By the principal graph, $P_1 \otimes Y \cong \emptyset_{+} \oplus P_2$. In Lemma \ref{lem: Afinshadeuniquerep} we showed $X=P_1+Q_1$, so we get $Y=\ol{P}_1+\ol{Q}_1$. Then
    \begin{align*}
        P_1 \otimes Y = P_1 \otimes (\ol{P}_1+\ol{Q}_1)= P_1 \otimes \ol{P}_1 + P_1 \otimes \ol{Q}_1.
    \end{align*}
    By Lemma \ref{lem: Afinshadesaddlerels} we know $P_1 \otimes \ol{P}_1 \cong \emptyset_{+}$, which gives $P_1 \otimes \ol{Q}_1 \cong P_2$.    

    Next, assume the claim is true up to some $2\leq k\leq n-1$. When $k$ is even, $R_1=\ol{Q}_1$, that is, $P_k \cong P_1 \otimes \ol{Q}_1 \otimes \os{(k-1)}{...} \otimes \ol{Q}_1$. Then, by the principal graph, $P_k \otimes X \cong P_{k-1} \oplus P_{k+1}$. Since $X=P_1 + Q_1$, we get that 
    \begin{align*}
        P_k \otimes X = P_k \otimes (P_1 + Q_1) = P_k \otimes P_1 + P_k \otimes Q_1 \cong  P_1 \otimes \ol{Q}_1 \otimes \os{(k-1)}{...} \otimes \ol{Q}_1 \otimes P_1 +  P_1 \otimes \ol{Q}_1 \otimes \os{(k-1)}{...} \otimes \ol{Q}_1 \otimes Q_1
    \end{align*}
    and again, from Lemma \ref{lem: Afinshadesaddlerels}, we know $\ol{Q}_1 \otimes Q_1 \cong \emptyset_{-}$, which gives $P_1 \otimes \ol{Q}_1 \otimes \os{(k-2)}{...}\otimes P_1 \cong P_{k-1}$. Therefore, $P_1 \otimes \ol{Q}_1 \otimes \os{(k-1)}{...} \otimes \ol{Q}_1 \otimes P_1 \cong P_{k+1}$, as desired.

    When $k$ is odd, $R_1=P_1$, so $P_k \cong P_1 \otimes \ol{Q}_1 \otimes \os{(k-1)}{...} \otimes P_1$. By the principal graph, $P_k \otimes Y \cong P_{k-1}\oplus P_{k+1}$. Since $Y=\ol{P}_1+\ol{Q}_1$, we get
    \begin{align*}
        P_k \otimes Y \cong P_k \otimes (\ol{P}_1+\ol{Q}_1) = P_k \otimes \ol{P}_1 + P_k \otimes \ol{Q}_1 \cong P_1 \otimes \ol{Q}_1 \otimes \os{(k-1)}{...} \otimes P_1 \otimes \ol{P}_1 + P_1 \otimes \ol{Q}_1 \otimes \os{(k-1)}{...} \otimes P_1 \otimes \ol{Q}_1
    \end{align*}
    As $P_1 \otimes \ol{P}_1 \cong \emptyset_{+}$, we get $P_1 \otimes \ol{Q}_1 \otimes \os{(k-1)}{...} \otimes P_1 \otimes \ol{P}_1 \cong P_1 \otimes \ol{Q}_1 \otimes \os{(k-2)}{...} \otimes \ol{Q}_1\cong P_{k-1}$, we get $P_{k+1}\cong P_1 \otimes \ol{Q}_1 \otimes \os{(k-1)}{...} \otimes P_1 \otimes \ol{Q}_1$. Hence, we have the desired claim. \end{proof}

\begin{lem}\label{lem: AfinshadePnisotoQn}
         Let $n\geq 2$. In $\mc{P}$ with principal graphs given by (\ref{eq: Afinshadearbgraphs}) we get:
         \begin{enumerate}[label=(\roman*)]
             \item $P_n \cong Q_{n-1} \otimes \ol{R}_2$
             \item $P_n'\cong Q_{n-1} \otimes R_2$
         \end{enumerate}
         where $R_2$ is $P_1$ or $\ol{Q}_1$, depending on the parity of $n$. 
\end{lem}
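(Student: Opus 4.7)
The plan is to exploit the cyclic structure of the principal graph. Because $\Gamma_+=\til{A}_{2n-1}$ is a cycle of length $2n$, the vertex $P_n$ sits diametrically opposite $\emptyset_+$ and has exactly two neighbors: $P_{n-1}$ and $Q_{n-1}$. In particular $Q_{n-1}$ is adjacent to $P_n$ in $\Gamma_+$, so formula (\ref{formulaforprincipalgraph}) applied at $Q_{n-1}$ yields
\begin{align*}
Q_{n-1}\otimes X^{\pm}\cong Q_{n-2}\oplus P_n,
\end{align*}
where $X^{\pm}$ is whichever of $X$ or $Y$ has left shading matching the right shading of $Q_{n-1}$; this choice is dictated by the parity of $n$.

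First I would decompose $X^{\pm}$ into single-strand summands using Lemma \ref{lem: Afinshadeuniquerep}, namely $X=P_1+Q_1$ and $Y=\ol{P}_1+\ol{Q}_1$, refining the previous isomorphism into
\begin{align*}
Q_{n-1}\otimes X^{\pm}\cong (Q_{n-1}\otimes A)\oplus (Q_{n-1}\otimes B)
\end{align*}
for a specific pair $A,B\in\{P_1,Q_1,\ol{P}_1,\ol{Q}_1\}$. By Lemma \ref{lem: Afinshadedtensordecomp} I know the rightmost single-strand factor of $Q_{n-1}$ explicitly; tensoring $Q_{n-1}$ on the right with the dual of that last factor activates a saddle relation from Lemma \ref{lem: Afinshadesaddlerels} that caps the last two strands off to $\emptyset_{\pm}$ and collapses that summand to $Q_{n-2}$. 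The other summand admits no such simplification and, by the decomposition above, must therefore be isomorphic to $P_n$. Declaring $\ol{R}_2$ to be this surviving single-strand projection yields $P_n\cong Q_{n-1}\otimes\ol{R}_2$, and the parity-of-$n$ description $R_2\in\{P_1,\ol{Q}_1\}$ is forced by the parity-dependent shading at the right end of $Q_{n-1}$.

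Part (ii) is handled by running the same argument in the dual principal graph $\Gamma_-$, which for $\til{A}_{2n-1}$ has the same cyclic shape, so $Q_{n-1}'$ and $P_n'$ are again neighbors; equivalently, I could apply the duality tangle to part (i) and use Lemma \ref{lem: Afinshadedtensordecomp}(iv) to re-express the result in the form stated. The main obstacle here is not conceptual but bookkeeping: one must carefully distinguish the single-strand projection that peels via a saddle relation (the dual of the last factor of $Q_{n-1}$) from the surviving one that realizes $P_n$, and verify that the shadings on both sides of the stated isomorphism match in each parity case. No genuinely new planar-algebraic input beyond the saddle relations of Lemma \ref{lem: Afinshadesaddlerels} and the cycle adjacency of $P_n$ and $Q_{n-1}$ is needed.
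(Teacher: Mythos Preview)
Your proposal is correct and follows essentially the same argument as the paper's own proof: both use the adjacency of $Q_{n-1}$ and $P_n$ in the principal graph to write $Q_{n-1}\otimes X^{\pm}\cong Q_{n-2}\oplus P_n$, decompose $X^{\pm}$ into its two single-strand summands, and use the saddle relations of Lemma~\ref{lem: Afinshadesaddlerels} together with the explicit form of $Q_{n-1}$ from Lemma~\ref{lem: Afinshadedtensordecomp} to identify which summand collapses to $Q_{n-2}$ and which survives as $P_n$. The paper carries out the parity-of-$n$ bookkeeping explicitly in two cases, whereas you correctly flag this as routine bookkeeping; otherwise the proofs are the same.
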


    \begin{proof} Both parts will be similar, so we just prove part (i). First assume $n$ is even. So $\ol{R}_2=\ol{P}_1$. By the principal graph, $Q_{n-1}\otimes Y\cong Q_{n-2} \oplus P_n$. On the other hand, from Lemma \ref{lem: Afinshadedtensordecomp} and the fact that $Y=\ol{P}_1+\ol{Q}_1$ we obtain,
    \begin{align*}
        Q_{n-1}\otimes Y \cong Q_1 \otimes \ol{P}_1 \otimes Q_1 \otimes \os{(n-3)}{...} \otimes Q_1 \otimes Y \cong Q_1 \otimes \ol{P}_1 \otimes Q_1 \otimes \os{(n-3)}{...} \otimes Q_1 \otimes \ol{P}_1 + Q_1 \otimes \ol{P}_1 \otimes Q_1 \otimes \os{(n-3)}{...} \otimes Q_1 \otimes \ol{Q}_1
    \end{align*}
    Since $Q_1 \otimes \ol{Q}_1\cong \emptyset_{+}$, we have that 
    \begin{align*}
        Q_1 \otimes \os{(n-3)}{...} \otimes Q_1 \otimes \ol{Q}_1 \cong Q_1 \otimes \os{(n-4)}{...} \otimes \ol{P}_1 \cong Q_{n-2}
    \end{align*}
    which gives then that $P_n \cong Q_1 \otimes \ol{P}_1 \otimes Q_1 \otimes \os{(n-3)}{...} \otimes Q_1 \otimes \ol{P}_1$, as we wished. 

    When $n$ is odd, $\ol{R}_2=Q_1$. The principal graph nows gives $Q_{n-1}\otimes X\cong Q_{n-2}\oplus P_n$. Similar to the even case, using that $X=P_1+Q_1$, we get
    \begin{align*}
        Q_{n-1}\otimes X \cong Q_1 \otimes \ol{P}_1\otimes Q_1 \otimes \os{(n-3)}{...}\otimes \ol{P}_1 \otimes X\cong Q_1 \otimes \ol{P}_1\otimes Q_1 \otimes \os{(n-3)}{...}\otimes \ol{P}_1 \otimes P_1 + Q_1\otimes \ol{P}_1\otimes  Q_1 \otimes \os{(n-3)}{...}\otimes \ol{P}_1 \otimes Q_1. 
    \end{align*}
    Since $\ol{P}_1\otimes P_1 \cong \emptyset_{-}$, we get $Q_1 \otimes \ol{P}_1\otimes Q_1 \otimes \os{(n-4)}{...} \otimes Q_1 \cong Q_{n-2}$
      which then gives that $P_n\cong Q_1 \otimes \ol{P}_1\otimes Q_1 \otimes \os{(n-3)}{...} \otimes \ol{P}_1\otimes Q_1 $, completing the proof. \end{proof}
    
    We are now ready to prove the necessary relations for any subfactor planar algebra with principal graph $\til{A}_{2n-1}$. For diagrams $U$, $U^*$, $V$, and $V^*$ we intend to indicate that the strands are alternating in color. A purple strand indicates a strand of color blue or red. We also intend for these diagrams to have checkerboard shading. We shade the regions that must be shaded and use a ``checkerboard" to indicate a region that should have the appropriate shading, which will depend on $n$. 

    \begin{defn}[Elements in Affine $A$ Subfactor Planar Algebras] \label{eq: elementsofaffineAfiniteshaded}
        We will show in the following theorem that the $\til{A}_{2n-1}$ subfactor planar algebras have elements 
          \begin{equation}
        P_1=\redr, Q_1=\bluer, U=\Ushade, U^*=\Uadshade, V=\Vshade, V^*=\Vadshade.
    \end{equation}
    \end{defn}

    \begin{defn}[The Relations of Affine $A$ Finite Subfactor Planar Algebras] \label{relationsofAffineAfinite}
           We will show in the following theorem that the relations of the $\til{A}_{2n-1}$ finite subfactor planar algebras are:  
    \begin{enumerate}[label=(\roman*)]
        \item (the bubble relations) $\rshadeinbubble =\bshadeinbubble =\emptyset_{+}$, and $\rshadeoutbubble=\bshadeoutbubble=\emptyset_{-}$
        \item (strand decompositions) $X=\X=\redr+\bluer$ and $Y=\Y=\redl+\bluel$,
        \item (color disagreements) $\rbshader=\brshader=0$
        \item (the saddle relations) $\redsaddlesin= \redr \otimes \redl=\rrshademid$, $\redsaddlesout =\redl \otimes \redr=\rrshadeout$, $\bluesaddlesin=\bluer \otimes \bluel=\bbshademid$, and $\bluesaddlesout= \bluel \otimes \bluer = \bbshadeout$,
        
        \item (the unitary relations) $UU^*= \UUadshade = \redr \otimes \bluel \otimes \os{(n-1)}{...} \otimes \purpleshade$ = \rbpshadein, \\ $U^*U= \UadUshade =\bluer \otimes \redl \otimes \os{(n-1)}{...} \otimes \purpleshade=\brpshadein$, \\
        $V^*V=\VadVshade = \bluel\otimes \redr  \otimes \os{(n-1)}{...} \purpleshade=\brpshadeout$, and \\
        $VV^*=\VVadshade = \redl \otimes \bluer \otimes \os{(n-1)}{...} \otimes \purpleshade= \rbpshadeout$, and
        \item (the click relations) for some $n$th root of unity $\sigma_n$, $\mc{F}(U)=\sigma_n V^*=\sigma_n \mc{F}^{-1}(U)$, and $\mc{F}(U^*)=V=\sigma_n \mc{F}^{-1}(U^*)$, or diagrammatically, $\Ubclickshade =\sigma_n \Vadshade =\sigma_n \Urclickshade $, and \\
        $\Uadrclickshade=\Vshade=\sigma_n \Uadbclickshade$.
    \end{enumerate}
    \end{defn}

    We now show that the elements and relations defined above are necessarily elements and relations in an $\til{A}_{2n-1}$ subfactor planar algebra.

\begin{thm} \label{thm: Afinshadenecessary} (Necessary relations for $\til{A}_{2n-1}$). Fix $n\in \mathbb{N}$. If $\mc{P}$ is a subfactor planar algebra with principal graph and dual principal graph $\til{A}_{2n-1}$, then $\mc{P}$ has the elements from Definition (\ref{eq: elementsofaffineAfiniteshaded}) and relations from Definition (\ref{relationsofAffineAfinite}). Further, we get the equivalence classes of minimial projections are when $n\geq 2$:
    \begin{equation} \label{eq: Afinshadegraphs}
        \Gamma_{+}:\Afinshadegraphpos  \te{ and } \Gamma_{-} \Afinshadegraphneg
    \end{equation}
    and when $n=1$,
    \begin{equation}\label{eq: Afinshadegraphsone}
        \Gamma_{+}:\Afinshadegraphposone  \te{ and } \Gamma_{-} \Afinshadegraphnegone
    \end{equation}
    \end{thm}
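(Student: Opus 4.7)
The strategy is to dispatch most of the relations as quick consequences of the lemmas already proved, and then concentrate on constructing the isomorphisms $U$ and $V$ and deriving the click relation, which carries the essential content of the theorem.

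The bubble relations (i), strand decompositions (ii), color disagreements (iii), and saddle relations (iv) all follow at once. Specifically, (i) follows from Lemma \ref{lem: a-fin-graph-index} (every minimal projection has trace $1$, so a bubble closes to the empty diagram); (ii) is exactly the identity $X=P_1+Q_1$ and its dual $Y=\ol{P}_1+\ol{Q}_1$ established between Lemmas \ref{lem: Afinshadeuniquerep} and \ref{lem: Afinshade-basis}; (iii) expresses the orthogonality $P_1Q_1=Q_1P_1=0$ coming from $P_1,Q_1$ being distinct minimal projections (Lemma \ref{lem: Afinshade-basis}), together with its dual; and (iv) is Lemma \ref{lem: Afinshadesaddlerels} rewritten in the color notation (\ref{eq: AfinshadelabelsP1Q1}).

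To define $U$, observe that by Lemma \ref{lem: Afinshadedtensordecomp}(i) the alternating product $P_1\otimes\ol{Q}_1\otimes P_1\otimes\cdots$ of length $n$ is isomorphic to the minimal projection $P_n$, and by Lemma \ref{lem: AfinshadePnisotoQn}(i) so is $Q_{n-1}\otimes\ol{R}_2\cong Q_1\otimes\ol{P}_1\otimes\cdots$. Because $P_n$ is a minimal projection, the hom-space between these two representatives is one-dimensional, so there is a partial isometry $U$ (unique up to a phase) whose source and target projections $U^*U$ and $UU^*$ are exactly the two alternating patterns above. This is precisely the content of the unitary relations (v) for $U$. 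Constructing $V^*$ from the two alternating decompositions of $P_n'$ via Lemmas \ref{lem: Afinshadedtensordecomp}(ii) and \ref{lem: AfinshadePnisotoQn}(ii) is identical. The equivalence-class listings in (\ref{eq: Afinshadegraphs}) and (\ref{eq: Afinshadegraphsone}) are then a direct translation of these same lemmas, using the alternating tensor expressions as concrete representatives.

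The hard part is the click relation (vi). The key observation is that $\mc{F}(U)$ is a nonzero intertwiner between the rotated source and target of $U$, and under the identifications $\ol{P}_1=P_1'$, $\ol{Q}_1=Q_1'$ these rotations are precisely the source and target of $V^*$. Hence $\mc{F}(U)$ and $V^*$ both lie in a one-dimensional hom-space between isomorphic copies of a minimal projection, so they must be nonzero scalar multiples of each other: $\mc{F}(U)=\sigma_n V^*$ for some nonzero $\sigma_n$. The same argument applied to $\mc{F}^{-1}(U)$ yields another nonzero scalar multiple of $V^*$; comparing the two using sphericality and the pivotal structure forces the two scalars to agree, giving $\mc{F}(U)=\sigma_n\mc{F}^{-1}(U)$. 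To pin down $\sigma_n$, iterate $\mc{F}^2$: since the full rotation $\mc{F}^{2n}$ acts trivially on the minimal projection $P_n$, one reads off $\sigma_n^n=1$, so $\sigma_n$ is an $n$th root of unity. The $V,V^*$ version of (vi) then follows by taking adjoints. I expect the step of identifying the rotated projections with the source and target of $V^*$, and the careful comparison of $\mc{F}(U)$ and $\mc{F}^{-1}(U)$ using pivotality, to be the main obstacle.
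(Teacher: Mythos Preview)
Your overall plan is the same as the paper's, and relations (i)--(iv), the construction of $U$, and the principal graph labellings are handled exactly as the paper does.  The click relation argument, however, has a real gap as stated.

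You construct $V$ independently as a partial isometry between the two alternating representatives of $P_n'$, unique up to a phase, and then write $\mc{F}(U)=c_1V^*$ and $\mc{F}^{-1}(U)=c_2V^*$ for scalars $c_1,c_2$.  You then claim that ``sphericality and the pivotal structure force the two scalars to agree,'' yielding $\mc{F}(U)=\sigma_n\mc{F}^{-1}(U)$.  But if $c_1=c_2$ this would give $\mc{F}(U)=\mc{F}^{-1}(U)$, not the desired relation; and in fact the two scalars are \emph{not} equal in general---their ratio is exactly the nontrivial root of unity $\sigma_n$.  What actually resolves this is the phase freedom in $V$: the paper first introduces an auxiliary isometry $W$ (your $V$ before normalization), obtains $\mc{F}(U)=\alpha W^*$ and $\mc{F}(W^*)=\beta U$, and then \emph{defines} $V=\beta W$.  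With this choice one gets $\mc{F}^{-1}(U)=V^*$ on the nose and $\mc{F}(U)=\alpha\beta\,V^*=\sigma_nV^*$.  The role of sphericality is not to make two scalars coincide, but to show $|\alpha|=|\beta|=1$ (via $\te{tr}(UU^*)=\te{tr}(\mc{F}^{-1}(U)\mc{F}(U^*))$), which is needed to verify that the rescaled $V$ still satisfies the unitary relations $VV^*=P_n'$, $V^*V=Q_n'$.  Your sketch omits this step entirely.  Once this normalization is done correctly, iterating $\mc{F}^2$ and using $\mc{F}^{2n}=\te{id}$ gives $(\alpha\beta)^n=1$, and the second click relation follows by taking adjoints, as you say.
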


    \begin{proof} By Lemmas \ref{lem: Afinshadeuniquerep} and  \ref{lem: nonep1q1reps} we know that there exists elements $P_1$ and $Q_1$ in $\mc{P}$ that can be denoted $P_1=\redr$ and $Q_1=\bluer$, $X=P_1+Q_1$, and $Y=\ol{P}_1+\ol{Q}_1$,  which gives relation (ii). Further by Lemma \ref{lem: a-fin-graph-index}, we get relation (i). When $n\geq 2$, $P_1$ and $Q_1$ are nonisomorphic minimal projections so $P_1Q_1=Q_1P_1=0$, which is also true for $n=1$. This gives relation (iii). Relation (iv) was given in part (i) of Lemma \ref{lem: Afinshadesaddlerels}. 

    Before showing the existence of elements $U$, $U^*$, $V$, and $V^*$, notice that Lemma \ref{lem: Afinshadedtensordecomp} gave us the equivalence classes of minimal projections shown in (\ref{eq: Afinshadegraphs}) for $n\geq 2$. Choose the representatives of each of the minimal projections to be this choice. Thus $P_n= P_1 \otimes \ol{Q}_1 \otimes P_1 \os{(n-2)}{...}\otimes R_1$, and $P_n'=\ol{P}_1\otimes Q_1 \otimes \ol{P}_1\otimes \os{(n-2)}{...}\otimes \ol{R}_1$ where $R_1$ is either $P_1$ or $\ol{Q}_1$ depending on the parity of $n$. Define $Q_n=Q_1 \otimes \ol{P}_1\otimes Q_1 \otimes \os{(n-2)}{...} \otimes \ol{R}_2$ and $Q_n'=\ol{Q}_1\otimes P_1 \otimes \ol{Q}_1 \otimes \os{(n-2)}{...}\otimes R_2$ where $R_2$ is either $P_1$ or $\ol{Q}_1$ and $R_2\neq R_1$. Then notice that Lemma \ref{lem: AfinshadePnisotoQn} gives that
    \begin{align*}
        P_n=\rbpshadein \cong \brpshadein=Q_n \text{ and } P_n'=\rbpshadeout \cong \brpshadeout=Q_n'
    \end{align*}
    Therefore there exists morphisms $U$, $U^*$, $W$, and $W^*$ in $\mc{P}$ such that $U\in \te{Hom}(Q_n,P_n)$, $U^*\in \te{Hom}(P_n,Q_n), W \in \te{Hom}(Q_n',P_n')$, and $W^*\in \te{Hom}(P_n',Q_n')$, such that $UU^*=\te{id}_{P_n}=P_n$, $U^*U=\te{id}_{Q_n}=Q_n$, $WW^*=\te{id}_{P_n'}=P_n'$, and $W^*W= \te{id}_{Q_n'}=Q_n'$, where the second equalities of all of these is due to $P_n$, $P_n'$ (and thus $Q_n$ and $Q_n'$) being projections. Diagrammatically, this means there are elements
    \begin{align*}
        \Ushade, \te{ } \Uadshade, \te{ } \Wshade, \te{ }\Wadshade
    \end{align*}
    in $\mc{P}$ and further, they satisfy relation (v). 

    Next, note that $\mc{F}(U)\in \te{Hom}(P_n',Q_n')$, and since $Q_n'\cong P_n'$, this hom-space is one-dimensional. Therefore there is an $\alpha \in \mathbb{C}$ such that $\mc{F}(U)=\alpha W^*$. Similarly, there exists a $\beta \in \mbb{C}$ where $\mc{F}(W^*)=\beta U$. Taking $\mc{F}^{-1}$ of both sides of each equation and moving the scalar the other side gives $\alpha^{-1} U =\mc{F}^{-1}(W^*)$ and $\beta^{-1}W^*=\mc{F}^{-1}(U)$. Taking the adjoint of both sides, which is vertically flipping and interchanging $U$ and $U^*$ as well as $W$ and $W^*$, gives $\mc{F}^{-1}(U^*)=\ol{\alpha}W$ and $\mc{F}^{-1}(W)=\ol{\beta}U^*$. Applying the click again, we obtain $\mc{F}(W)=\ol{\alpha}^{-1}U^*$ and $\mc{F}(U^*)=\ol{\beta}^{-1}W$. Since $\mc{F}(U)=\alpha W^*$, $\mc{F}^2(U)=\alpha \mc{F}(W^*)=\alpha\beta U$. Then because $\mc{F}^{2n}$ is the identity map, $\mc{F}^{2n}(U)=(\alpha\beta)^n(U)$. So $\alpha \beta$ is an $n$th root of unity. Define $\sigma_n=\alpha \beta$ $V=\beta W$, and $V^*=\ol{\beta}W^*$, and diagrammatically denote these as
    \begin{align*}
        \Vshade \te{ and } \Vadshade
    \end{align*}

    Next we show that $V$ and $V^*$ satisfy the requirements of relation (iv). To do so, we first show that $\ol{\beta}=\beta^{-1}$. This is true since, using sphericality, we can move the rightmost strand of $\te{tr}(UU^*)$ to the left, which is equivalent to $\te{tr}(\mc{F}^{-1}(U)\mc{F}(U^*))$. Therefore,
    \begin{align*}
        \te{tr}(Q_n)=\te{tr}(UU^*)=\te{tr}(\mc{F}^{-1}(U)\mc{F}(U^*))=\te{tr}(\beta^{-1}W^*\ol{\beta}^{-1}W)=\beta^{-1}\ol{\beta}^{-1}\te{tr}(W^*W)=\beta^{-1}\ol{\beta^{-1}}\te{tr}(Q_n')
    \end{align*}
    By relation (i), we get that $\te{tr}(Q_n')=1=\te{tr}(Q_n)$, and thus $\beta^{-1}\ol{\beta}^{-1}=1$. Hence, $\beta^{-1}=\ol{\beta}$. By instead taking trace of $U^*U$, we can do the same process as before and get $\ol{\alpha}=\alpha^{-1}$. Now we can see that $VV^*=WW^*=P_n'$ and $V^*V=W^*W=Q_n'$ giving that relation (iv) is satisfied for $V$ and $V^*$ as well. 

    Finally, we prove relation (v). Using the equalities found earlier we can see that 
    \begin{align*}
        \mc{F}(U)&=\alpha W^*=\alpha \beta\beta^{-1} W^*=\alpha \beta V^*=\sigma_n V^*, \te{ and}\\
        \mc{F}(U^*)&=\ol{\beta}^{-1}W=\beta W =V
    \end{align*}
    which give the first equalities of each of the equations in relation (v). Further,
    \begin{align*}
        \mc{F}(V)&=\beta\mc{F}(W)=\beta \ol{\alpha}^{-1}U^*=\beta \alpha U^*=\sigma_n U^*, \te{ and}\\
        \mc{F}(V^*)&=\beta^{-1}\mc{F}(W^*)=\beta^{-1}\beta U=U
    \end{align*}
    Taking $\mc{F}^{-1}$ of these last two equalities we get $V=\sigma_n \mc{F}^{-1}(U^*)$ and $V^*=\mc{F}^{-1}(U)$. This completes showing relation (vi) and hence the entire theorem. \end{proof}

    \begin{rem}
        We can always color a strand in its entirety by red or blue. Suppose a strand is colored more than one color. Then look at where the colors switch. If the switch is between red and blue then use color disagreement to make the diagram 0. If the switch is between black and red (or blue) then multiplying $\redr$ by $\X=\redr+\bluer$ (or the opposite shading) then using the color disagreement gives we can color the entire portion of strand red. Doing this process at every color change for every strand along with using strand decomposition to ensure there are no strictly black strands will give a diagram whose strands are either all red or all blue in their entirety. 
    \end{rem}
    
    In Theorem \ref{thm: Afinshade-generators} we will prove that the elements $P_1$, $Q_1$, $U$, $U^*$, $V$, and $V^*$ actually generate the planar algebra $\mc{P}$. Next, we will know show that if we have these elements as generators and the relations from Theorem \ref{thm: Afinshadenecessary}, we obtain the planar algebra $\mc{P}$.

\section{Sufficient Relations for Affine \texorpdfstring{$A$}{A} Finite Subfactor Planar Algebras}\label{Afinsufficientshaded}

   \begin{thm} \label{thm: Afinshadesufficient} (Sufficient relations for $\til{A}_{2n-1}$)
        Fix $n \in \mbb{N}$ and an $n$th root of unity $\sigma_n$. Let $\mc{P}$ be the planar algebra with generators $P_1$, $Q_1$, $U$, $U^*$, $V$, $V^*$ from Definition \ref{eq: elementsofaffineAfiniteshaded}
        with relations (i) through (vi) given in Definition \ref{relationsofAffineAfinite}. Define $P_1$ and $Q_1$ to be self-adjoint, $U^*$ to be the adjoint of $U$, and $V^*$ to be the adjoint of $V$, then extend the * operation anti-linearly and on diagrams. Then $\mc{P}$ is a subfactor planar algebra whose principal graph and dual principal graph are the $\til{A}_{2n-1}$ Dynkin diagrams given in Theorem \ref{thm: Afinshadenecessary}.
    \end{thm}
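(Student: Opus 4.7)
The plan is to verify the three subfactor planar algebra axioms (evaluability, sphericality, positive definiteness) and then identify the principal graphs using the fusion rules implied by the relations. I would reverse-engineer the argument of Section~3: build a canonical spanning set for diagrams in each $\mathcal{P}_{k,\pm}$ using the relations, exhibit the candidate minimal projections from Lemmas \ref{lem: Afinshadedtensordecomp} and \ref{lem: AfinshadePnisotoQn}, and show that the resulting fusion graph agrees with $\tilde{A}_{2n-1}$.

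First, I would establish a finite spanning set for each box space. By strand decomposition (ii), any diagram expands as a sum over red/blue colorings of its strands; the color disagreement relation (iii) then kills every summand whose colors disagree across a Temperley--Lieb connection, so surviving diagrams have strands of a single well-defined color throughout (using the recoloring procedure of the Remark following Theorem~\ref{thm: Afinshadenecessary}). The saddle relations (iv) collapse each monochromatic cup-cap pair to a tensor factor of $P_1$ or $Q_1$, while the unitary relations (v) cancel any adjacent $UU^*$, $U^*U$, $VV^*$, or $V^*V$, and the click relations (vi) convert $U \leftrightarrow V^*$ and $U^* \leftrightarrow V$ under rotation, standardizing the position of the remaining $U, V$ boxes. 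Candidate minimal projections $P_k, Q_k$ are the alternating tensor products of $P_1, \overline{Q}_1, \ldots$ described in Lemma \ref{lem: Afinshadedtensordecomp}, and $U, V$ witness the isomorphisms $P_n \cong Q_n$ and $P_n' \cong Q_n'$ of Lemma \ref{lem: AfinshadePnisotoQn}. The fusion rules $P_k \otimes X \cong P_{k-1} \oplus P_{k+1}$ (and their analogues for $Q_\ell$) then follow from the saddle and unitary relations, producing precisely the $\tilde{A}_{2n-1}$ Dynkin diagrams.

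Evaluability and sphericality follow rapidly: every closed diagram reduces via the above procedure, through the bubble relations (i), to a scalar multiple of $\emptyset_+$ or $\emptyset_-$, so $\mathcal{P}_{0,\pm}$ is one-dimensional and $\delta = \mathrm{tr}(X) = \mathrm{tr}(P_1) + \mathrm{tr}(Q_1) = 2$; the left-right symmetry of the bubble and click relations yields sphericality. The main obstacle is positive definiteness of $\langle f, g \rangle = \mathrm{tr}(f^*g)$. I would compute the Gram matrix of this form on the spanning set from the first step, reducing each entry via the relations to a product of traces of $P_1$, $Q_1$, and $\emptyset_\pm$, each of value $1$. The delicate issue is ensuring that any linear dependencies among the spanning set live inside the radical of the form, so that the quotient remains positive definite with dimensions matching the Bratteli diagram of $\tilde{A}_{2n-1}$. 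An alternative strategy is to transfer positivity from a known unitary model: once the monoidal equivalence with the representation category of a cyclic subgroup of $SU(2)$ promised in the introduction is constructed, positive definiteness is inherited from that $C^*$-category through the equivalence, and the minimal projections above remain distinct and minimal in $\mathcal{C}_{\mathcal{P}}$.
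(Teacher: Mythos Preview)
Your proposal has a genuine gap in the evaluability step. You claim that since every closed diagram reduces via the relations to a scalar multiple of $\emptyset_{\pm}$, the space $\mathcal{P}_{0,\pm}$ is one-dimensional. But this only shows $\mathcal{P}_{0,\pm}$ is \emph{at most} one-dimensional; it does not rule out the possibility that the relations are inconsistent and force $\emptyset_{\pm}=0$, collapsing the entire planar algebra. Concretely, your reduction procedure involves choices (which box to pick, which strands to saddle), and different choices could a priori produce different scalars, in which case the relations would identify them and kill $\emptyset_{\pm}$. The paper handles this by constructing an explicit well-defined function $f:\mathcal{P}_0\to\mathbb{C}$ sending $\emptyset_{\pm}\mapsto 1$, using a van Kampen--style labeling of regions by elements of the dihedral group $D_n$ and checking invariance under every relation (including the click relations, where the $\sigma_n$ dependence enters nontrivially). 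This is the standard consistency check for a presentation by generators and relations, and you cannot skip it.

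Your treatment of positive definiteness is also problematic. The Gram matrix approach is vague as stated, and your alternative of transferring positivity from $\mathrm{Rep}(C_m^\zeta)$ is circular and incomplete: the equivalence in Section~\ref{repsu2section} is established only for the unshaded planar algebra with $\omega=1$, not for arbitrary $\sigma_n$, and in any case that equivalence is proved \emph{after} the planar algebra structure is in place. The paper instead invokes the general tree-basis construction of Morrison--Peters--Snyder, which gives an explicit positive orthogonal basis for each box space once one knows the category of projections is semisimple and all minimal projections have positive trace (here, trace $1$). That argument applies uniformly for every $\sigma_n$ and does not require an external model.
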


         Throughout the rest of this section, $\mc{P}$ will refer to the planar algebra defined through generators and relations given in the previous theorem. Fix $n\in\mbb{N}$ and a root of unity $\sigma_n$. Before proving Theorem \ref{thm: Afinshadesufficient}, we will prove a collection of facts following from this generators and relations presentation. It is clear that for any diagram $T$, $T^*$ is obtained by vertical flipping and taking adjoints of boxes.

    \begin{lem}
        The adjoint is well-defined.
    \end{lem}

    \begin{proof} Relations (i), (ii), (iii), and (iv) satisfy that taking their adjoint gives the same equality back clearly. For relation (v), taking the adjoint of the $UU^*$ equality gives the $U^*U$ equality and vice versa, taking the adjoint of $VV^*$ gives $V^*V$ and vice versa. Finally, for relation (vi), taking the adjoint of the first equality, $\mc{F}(U)=\sigma_n V^*=\sigma_n \mc{F}^{-1}(U)$, gives $\mc{F}^{-1}(U^*)=\sigma_n^{-1}V=\sigma_n^{-1}\mc{F}(U^*)$. Multiplying this second set of equalities by a $\sigma_n$, we obtain the second set of equalities in relation (vi). Taking the adjoint again we will arrive at the first set of equalities. Therefore, the adjoint is well-defined. \end{proof}

    The next step in proving Theorem \ref{thm: Afinshadesufficient} is to show the the 0-box space, $\mc{P}_0$ is one-dimensional. We do this in two parts. First, we show $\mc{P}_0$ is at least one-dimensional by defining a surjection from $\mc{P}_0$ to $\mbb{C}$ that sends $\emptyset_{+}$ and $\emptyset_{-}$ to 1. Then, to show that $\mc{P}_0$ is at most one-dimensional, we give an evaluation algorithm for the 0-box space. 
    
    To define the function we use the fact that the strands in any diagram of a planar algebra partitions the square picture into regions. From this fact we state the following remark. This comes from the theory of van Kampen diagrams.

    \begin{rem}
        For any diagram in $\mc{P}$ there exists a unique function from the regions of the diagram to the dihedral group $D_n=\langle r,b|r^2=b^2=(rb)^n=1\rangle$ such that the region on the left-most boundary next to the star is assigned 1 and neighboring regions separated by a red (respectively blue) strand are assigned elements of the form $x$ and $xr$ (respectively $x$ and $xb$). 
    \end{rem}

    We now define a surjection from $\mc{P}_0$ to $\mbb{C}$. Lemma \ref{lem: Afinshade-atleast1dim} will show $f$ is a well-defined surjection. It will then follow that $\mc{P}_0$ is at least one-dimensional. 
    
    \begin{tcolorbox}[breakable, pad at break*=0mm]
    Define a function $f:\mc{P}_0\to \mbb{C}$ by the following. By strand decomposition and color disagreement it suffices to define $f$ on diagrams where every strand is either red or blue in its entirety and then extend $f$ linearly. Let $D \in \mc{P}_0$ be such a diagram.

    \begin{enumerate}
        \item If there are no $U, U^*, V,$ and $V^*$-boxes, define $f(D)=1$. 
        \item If there is a $U$, $U^*$, $V$, or $V^*$ box, enumerate them all. Say $U_1,...,U_a, U^*_1,...,U^*_b, V_1,...,V_c,V^*_1,...,V^*_d$. 
        \item Define a number, $\ell_j^J$, based on the following table where $w_j^J$ is the element of $D$ that corresponds to the region where $J_j$'s star lies.
        
        \begin{tabular}{l | l}
            When $J=U$ or $V$, &
            When $J=U^*$ or $V^*$,\\
            \hline
            & \\
            $\bullet$ if $w_j^J=(rb)^m$, then $\ell_j^J=m$, & $\bullet$ if $w_j^J=(rb)^m$, then $\ell_j^J=-m$,\\
            & \\
            $\bullet$ if $w_j^J=b(rb)^m$, then $\ell_j^J=m+1$, & $\bullet$ if $w_j^J=b(rb)^m$, then $\ell_j^J=-(m+1)$\\ 
            & \\
            $\bullet$ if $w_j^J=(br)^m$, then $\ell_j^J=-m$, and  & $\bullet$ if $w_j^J=(br)^m$, then $\ell_j^J=m$, and  \\ 
            & \\
             $\bullet$ if $w_j^J=r(br)^m$, then $\ell_j^J=-m$. & $\bullet$ if $w_j^J=r(br)^m$, then $\ell_j^J=m$.
        \end{tabular}
        
            \item Define $\ell^{D}=\left(\sum_{j=1}^a \ell_j^U+\sum_{j=1}^b \ell_j^{U^*}\right)+\left(\sum_{j=1}^c \ell_j^V + \sum_{j=1}^d \ell_j^{V^*}\right) \te{ }(\te{mod }n)$
            \item Define $f(D)=\sigma^{\ell^{D}}$
            \item Extend $f$ linearly to evaluate on all of $\mc{P}_0$.
    \end{enumerate}

    \end{tcolorbox}
       There are some things to immediately note about this function that will be said without proof.

       \begin{rem}
           The number of $U$ and $U^*$ boxes, as well as the number of $V$ and $V^*$ boxes, are equal. That is $a=b$ and $c=d$.
       \end{rem}

       \begin{rem}
           The table in step 4 gives a well-defined function from $D_n$ to the integers. 
       \end{rem}

    \begin{lem}\label{lem: Afinshade-atleast1dim}
        $f$ is a well-defined surjection.
    \end{lem}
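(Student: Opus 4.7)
The plan is to dispatch surjectivity in one line and then devote the rest of the argument to well-definedness by checking invariance under isotopy and under each of the six relations in Definition \ref{relationsofAffineAfinite}.

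Surjectivity is immediate: the diagrams $\emptyset_{+}$ and $\emptyset_{-}$ contain no $U, U^*, V, V^*$ boxes, so step 1 of the definition gives $f(\emptyset_{\pm}) = \sigma_n^0 = 1$, and $\mbb{C}$-linearity lets us realize every $\lambda \in \mbb{C}$ as $f(\lambda \emptyset_{+})$.

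For well-definedness I would first check isotopy invariance. The dihedral labeling propagates by the rule ``cross a red (resp.\ blue) strand, multiply by $r$ (resp.\ $b$)''; since $r^2 = b^2 = 1$ in $D_n$, the label $w_j^J$ at every box star is an isotopy invariant of the diagram, so each $\ell_j^J$ and hence $f$ itself is. Next I would dispose of the purely strand-level relations. The bubble relations (i), color-disagreement relation (iii), strand decomposition (ii), and saddle relations (iv) involve no $U, U^*, V, V^*$ box; after applying the Remark preceding the lemma to reduce every input to a linear combination of fully (red/blue)-colored diagrams, each local move either preserves the region label at every box star (leaving $\ell^D$ unchanged) or produces a color disagreement that sends both sides to $0$.

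The substantive cases are (v) and (vi). For the unitary relations (v), I would consider any closed diagram in which $UU^*$ appears and replace it by the box-free diagram on the right-hand side of (v). Since that diagram contains no $U,U^*,V,V^*$ boxes it contributes $0$ to $\ell^D$, so I must show $\ell_j^U + \ell_j^{U^*} \equiv 0 \pmod n$. The rigid shading of $UU^*$ places the stars of $U$ and $U^*$ in regions whose dihedral labels are related by a fixed element of $D_n$, and a direct case check using the table in step 3 confirms the cancellation; the $V,V^*$ version is analogous.

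The main obstacle, and the point for which the table was engineered, is the click relations (vi). The Fourier transform $\mc{F}$ rotates a box so that its star lands one region clockwise, which is precisely the operation of left-multiplying the dihedral label $w_j^J$ by $r$ or $b$. The four rows of the table in step 3 are tuned so that this rotation shifts $\ell_j^U$ by exactly $+1 \pmod n$ and $\ell_j^{U^*}$ by exactly $-1 \pmod n$, so that the extra $\sigma_n$ factor on the right-hand side of $\mc{F}(U) = \sigma_n V^*$ exactly absorbs the change in $\ell^D$ when $U$ is relabeled as $V^*$. The bulk of the proof is therefore a case analysis over the four dihedral-element forms $(rb)^m, b(rb)^m, (br)^m, r(br)^m$ appearing in the table, together with checking that the defining relation $(rb)^n = 1$ in $D_n$ is compatible with reduction of the $\sigma_n$-exponent modulo $n$.
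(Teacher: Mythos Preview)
Your proposal is correct and follows essentially the same approach as the paper: dispatch surjectivity via $f(\emptyset_{\pm})=1$, note that relations (i)--(iv) leave all box-star labels unchanged, and handle (v) and (vi) by tracking how the $\ell$-values behave. The one place where the paper is cleaner is relation (v): rather than saying the stars of $U$ and $U^*$ lie in regions ``related by a fixed element'' and appealing to a case check, the paper simply observes that in the stacked diagram $UU^*$ the two stars lie in the \emph{same} region, so the table gives $\ell^U+\ell^{U^*}=0$ on the nose; you may want to sharpen your argument to that. (Also a small slip: moving across a strand changes the label by \emph{right}-multiplication in $D_n$, not left; this matters since $D_n$ is non-abelian, but the case analysis you outline for (vi) is unaffected and matches the paper's.)
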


    \begin{proof} To show that $f$ is well-defined we must show that $f$ is invariant under the relations of $\mc{P}$.

    $f$ is obviously invariant under the strand decomposition and color disagreements. 
    
    If $D$ and $D'$ are identical except in a local region where $D$ looks like the left-hand side of a saddle relation and $D'$ has the right-hand side of the saddle relation, then its clear that the labellings of the regions by elements of $D_n$ will be identical. The same is true for the bubble relations. Thus $f$ is invariant under these two relations.

    Let $D$ and $D'$ are identical except in a local region $D$ is $UU^*$ written as left-hand diagram in the unitary relations and $D'$ is the right-hand diagram with no boxes. First notice that the stars of $U$ and $U^*$ lie in the same region. Their associated $\ell$-values will cancel. What's left to check is that the regions outside of this local neighborhood are identically labelled. Any regions to the top and bottom of this local region will be labelled the same since the top and bottom boundary data is the same for both $D$ and $D'$, so $f$ is invariant under this relation. Similar arguments can be made for the rest of the unitary relations. 

    For the final relation, let $D$, $D'$, and $D''$ be diagrams in $\mc{P}_0$, identical except in a local region $D$ looks like the left hand side of the first click relation $\mathcal{F}(U)$, $D'$ looks like $V^*$, and $D''$ is $\mc{F}^{-1}(U)$. Again, we need to check that the regions outside this neighborhood are labelled the same as well as that the $\ell$-values coming from these boxes are the same for $D'$ and $D''$ and are one less than $D$. 

    The former statement is clear. The strands on the top and bottom of the diagrams are the same colors in all 3 neighborhoods. The right-most region will then also be the same label.  
    
    Now consider the latter statement. Let $x$ be the group element in the left-most region of the local neighborhood. We break this into cases.  

    If $x=(rb)^m$, then for $D$, $xb=r(br)^{(m-1)}$ Therefore, the $\ell$-value of $U$ in $D$ be $-m+1$. For $D'$, $x=(rb)^m$ is the group element labelled the region of $V^*$'s star, so the $\ell$-value for $V^*$ will be $-m$. For $D'$, $xr=r(br)^{m}$, which will give $U$ and $\ell$-value of $-m$. 

    If $x=b(rb)^m$, then for $D$, $xb=b(rb)^mb=(br)^m$, and the $\ell$-value for $U$ will be $-m$. For $D'$, $V^*$'s $\ell$-value is $-m-1$. For $D''$, $xr=(br)^{m+1}$, so $U$ has an $\ell$-value of $-m-1$. 

    If $x=(br)^m$, then for $D$, $xb=b(rb)^m$, giving $U$ an $\ell$-value of $m+1$. For $D'$, $V^*$ has an $\ell$-value of $m$. For $D''$, $xr=b(rb)^{m-1}$, which gives $U$ an $\ell$-value of $m$.

    If $x=r(br)^m$, then for $D$, the region of $U$'s star is labelled $xb=(rb)^{m+1}$, which gives $U$ an $\ell$-value of $m+1$. For $D'$, the $\ell$-value of $V^*$ is $m$. For $D''$, region of $U$'s star is labelled $xr=(rb)^m$, which gives $U$ and $\ell$-value of $m$.
    
    Thus, in all these cases, when multiplying the $D'$ and $D''$ by another value of $\sigma_n$, the resulting values of the function for $D$, $D'$, and $D''$ will be identical. A similar computation can be done for the second click relation. 

    Therefore $f$ is invariant under the relations of $\mc{P}$ and is well-defined. \end{proof} 

    \begin{cor}
        $\mc{P}_0$ is at least one-dimensional.
    \end{cor}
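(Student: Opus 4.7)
The plan is to leverage the surjection $f:\mc{P}_0\to\mbb{C}$ constructed and verified in the preceding lemma. Since $\mbb{C}$ is one-dimensional as a vector space over itself, the existence of any surjective linear map from $\mc{P}_0$ onto $\mbb{C}$ forces $\dim \mc{P}_0 \geq 1$. This is the entire content of the corollary, so there is essentially no obstacle; the work was already done in proving $f$ is well-defined.

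Concretely, I would argue as follows. First, recall that by construction $f$ is defined on every diagram with strands fully colored and then extended linearly to all of $\mc{P}_0$; the previous lemma establishes invariance under each of the relations (i)--(vi), so $f$ descends to a linear map on $\mc{P}_0$. Second, to witness nontriviality explicitly, consider the empty diagrams $\emptyset_+$ and $\emptyset_-$: each has no $U, U^*, V, V^*$ boxes, so step (1) in the definition of $f$ gives $f(\emptyset_+) = f(\emptyset_-) = 1 \neq 0$. In particular, $\emptyset_+$ (or $\emptyset_-$) is a nonzero element of $\mc{P}_0$, so the space is at least one-dimensional.

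The only thing to be careful about is the logical structure: the corollary says \emph{at least} one-dimensional, not exactly one-dimensional. The matching upper bound $\dim \mc{P}_0 \leq 1$ will come later from the evaluation algorithm the author has announced, so at this stage the proof should simply cite the surjectivity of $f$ and conclude. In two lines: \emph{Since $f:\mc{P}_0\to\mbb{C}$ is a surjective linear map by Lemma \ref{lem: Afinshade-atleast1dim}, we have $\dim\mc{P}_0 \geq \dim\mbb{C} = 1$.} This is the cleanest form.
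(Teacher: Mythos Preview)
Your proposal is correct and matches the paper's approach exactly: the paper also simply observes that $f$ is well-defined by the preceding lemma, notes $f(\emptyset_+)=1$ so that $f$ is a surjection, and concludes $\mc{P}_0$ is at least one-dimensional.
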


    \begin{proof} $f$ is a well-defined function. Further, $f(\emptyset_{+})=1$, so $f$ is a surjection. Therefore $\mc{P}_0$ is at least one-dimensional. \end{proof}

    Next, we define an algorithm to evaluate any element in $\mc{P}_0$. In doing so, we show that $\mc{P}_0$ is at most one-dimensional.
    
    \begin{tcolorbox}[breakable, pad at break*=0mm]
        Evaluation Algorithm for $\mc{P}_0$: 
        \\
    By strand decomposition and color disagreement it suffices to define the algorithm on diagrams where every strand is either red or blue in its entirety then extend linearly. Let $D\in \mc{P}_0$ be such a diagram. 
        
        \begin{enumerate}
            \item If $D$ has no $U$, $U^*$, $V$, and $V^*$ boxes, skip to step 10.
            \item If there is a $U$, $U^*$, $V$, or $V^*$-box, pick one of them and label it $W_{\te{up}}$. 
            \item Due to the checkerboard shading and the alternating colors of the strands, the strands of $W_{\te{up}}$ cannot have both its end on itself. Since $D$ is in the 0-box space, all the strands of $D$ cannot go to the boundary, so for the left-most strand of the bottom of $W_{\te{up}}$ must lie on another box. Label this box $W_{\te{down}}$.
            \item Click $W_{\te{down}}$ so that the connected strand is the top leftmost strand. This may multiply the diagram by some factor of $\sigma_n$ as well as change the type of box. We will continue to call this box $W_{\te{down}}$. 
            \item Isotope $W_{\te{down}}$ so that is is directly below $W_{\te{up}}$.

            \item The strands adjacent to the connected strands of $W_{\te{up}}$ and $W_{\te{down}}$ must be of the same color as well as agree with shading. The strand adjacent on the right of the connected strand of $W_{\te{down}}$ must be able to be isotoped so that with the strand right of the connected strand of $W_{\te{up}}$, the saddle relation can be performed so that the ends of one of the strands lies on both $W_{\te{up}}$ and $W_{\te{down}}$. 

            \item Inductively repeat the process of connecting the strands adjacent on the right of the previous strand of $W_{\te{up}}$ until $W_{\te{up}}$ and $W_{\te{down}}$ are connected by $n$ adjacent strands. 

            \item The bottom $n$ strands of $W_{\te{up}}$ are now connected to the top $n$ strands of $W_{\te{down}}$ so $W_{\te{up}}$ and $W_{\te{down}}$ must be adjoints of each other. Therefore we can use the unitary relations to rewrite $D$ as a diagram with two less boxes. Repeat steps 2-7 until we are left with 0 or 1 boxes. 

            \item If 1 box remains, its strands must connect to itself, so $D=0$. 

            \item If 0 boxes remain, $D$ is now a closed diagram consisting of only red or blue strands and is maybe multiplied by a scalar. Starting with the innermost loop, repeatedly pop the loops using the bubble relations. $D$ will then evaluate to a multiple of $\emptyset_{+}$ or $\emptyset_{-}$.
        \end{enumerate}
    \end{tcolorbox}

    With the evaluation algorithm defined, now the following lemma and its corollary becomes instant. 

    \begin{lem}\label{lem: Afinshade-suff-oned}
        $\mc{P}_0$ is at most one-dimensional.
    \end{lem}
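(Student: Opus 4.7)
The plan is to deduce the at-most-one-dimensional claim directly from the evaluation algorithm stated just above: the algorithm takes an arbitrary $D \in \mc{P}_{0,\pm}$ and rewrites it as a scalar multiple of $\emptyset_{\pm}$ (or as $0$) using only the defining relations of $\mc{P}$, so the conclusion follows once I verify that the algorithm is well-defined and terminates.

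First I would use strand decomposition (relation (ii)) and color disagreement (relation (iii)) to reduce $D$ to a linear combination of diagrams in which every strand is entirely red or entirely blue. Then for each such diagram $D'$, I would run through steps 2--10 of the algorithm. Termination is immediate because each full pass of steps 2--8 removes exactly two boxes ($W_{\te{up}}$ and its partner $W_{\te{down}}$); so after at most $\lfloor k/2 \rfloor$ passes, where $k$ is the initial number of boxes in $D'$, only $0$ or $1$ boxes remain. Step 4 is justified by the click relations (vi), at the cost of a factor of some power of $\sigma_n$; the repeated zippering in steps 6--7 is justified by the saddle relations (iv); the cancellation of the matched pair in step 8 is justified by the unitary relations (v); and the final loop-popping in step 10 is justified by the bubble relations (i). If a single box remains at the end (step 9), all of its strands must close up on itself, and the alternating color pattern together with the checkerboard shading forces an internal mismatch that makes the diagram zero by relation (iii).

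The subtle step, and what I expect to be the main obstacle, is verifying that after the clicking and zippering, the boxes $W_{\te{up}}$ and $W_{\te{down}}$ genuinely form a unitary pair so that step 8 applies. Two things must be checked: (a) the color of the strand adjacent on the right of the previously connected strand agrees between $W_{\te{up}}$ and $W_{\te{down}}$, so that the next saddle relation is type-compatible, and (b) after zippering all $n$ strands, the boundary data of the stacked box $W_{\te{up}} \cdot W_{\te{down}}$ matches one of the four patterns $UU^*$, $U^*U$, $VV^*$, $V^*V$ appearing in relation (v), rather than some other configuration. Both claims reduce to a bookkeeping argument using the alternating red/blue coloring of the bottoms of $U, U^*, V, V^*$ together with the checkerboard shading: once the leftmost connected strand is fixed in color and shading, the pattern of every adjacent strand on both boxes is forced, and the only box whose top boundary matches the bottom of $W_{\te{up}}$ is its unitary adjoint.

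Combining these steps, every element of $\mc{P}_{0,\pm}$ reduces via the algorithm to a scalar multiple of $\emptyset_\pm$, giving $\dim \mc{P}_{0,\pm} \leq 1$ as required.
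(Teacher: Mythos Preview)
Your proposal is correct and takes essentially the same approach as the paper: the paper presents the evaluation algorithm and then declares the lemma ``instant,'' while you spell out in more detail why each step (clicking via (vi), zippering via (iv), cancelling via (v), popping via (i)) is valid and why the zippered pair is necessarily a unitary pair. There is no substantive difference in method.
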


    \begin{cor}
        $\mc{P}_0$ is one-dimensional.
    \end{cor}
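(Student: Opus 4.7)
The plan is very short: the corollary is an immediate pinch between the two bounds already set up. The preceding corollary, established via the well-defined surjection $f:\mc{P}_0\to\mbb{C}$ of Lemma \ref{lem: Afinshade-atleast1dim} (which sends $\emptyset_+\mapsto 1$), gives $\dim \mc{P}_0 \geq 1$. Lemma \ref{lem: Afinshade-suff-oned} supplies the opposite inequality $\dim \mc{P}_0 \leq 1$ through the evaluation algorithm, which reduces an arbitrary diagram to a scalar multiple of $\emptyset_+$ or $\emptyset_-$ by repeatedly pairing up each $W_{\te{up}}$ with its matching $W_{\te{down}}$, cancelling via the unitary relations, and finally popping all remaining loops via the bubble relations. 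Combining the two inequalities yields $\dim \mc{P}_0 = 1$.

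The only thing I would sanity-check before writing the one-line proof is that the algorithm does not leave us with two independent basis elements $\emptyset_+$ and $\emptyset_-$. But the bubble relations identify a closed red (or blue) loop with $\emptyset_+$ or $\emptyset_-$ depending only on the outer shading, and $f$ is well-defined on both shadings at once, so there is no genuine obstacle: in whichever shaded box space $\mc{P}_{0,\pm}$ we are working, the algorithm outputs a single scalar times the (unique) empty diagram of that shading, and $f$ certifies this scalar is the right invariant. Thus the proof is simply: apply the preceding corollary and Lemma \ref{lem: Afinshade-suff-oned}, and conclude $\dim \mc{P}_0 = 1$.
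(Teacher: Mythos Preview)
Your proposal is correct and matches the paper's approach exactly: the corollary is stated without proof in the paper, being immediate from combining the preceding corollary ($\dim \mc{P}_0 \geq 1$, via the surjection $f$) with Lemma~\ref{lem: Afinshade-suff-oned} ($\dim \mc{P}_0 \leq 1$, via the evaluation algorithm). Your sanity check about $\emptyset_+$ versus $\emptyset_-$ is a reasonable aside but unnecessary, since $\mc{P}_0$ here tacitly refers to each of $\mc{P}_{0,+}$ and $\mc{P}_{0,-}$ separately, and the argument applies uniformly to both.
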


    \begin{lem}\label{lem: saddles-are-minimal} All of the diagrams in the saddle relations:
        \begin{enumerate}[label=(\roman*)]
            \item $P_1 \otimes P'_1$,
            \item $Q_1 \otimes Q'_1$,
            \item $P'_1\otimes P_1$, and
            \item $Q'_1 \otimes Q_1$.
        \end{enumerate}
        are minimal projections. Further, (i) and (ii) are isomorphic to $\emptyset_{+}$ and (iii) and (iv) are isomorphic to $\emptyset_{-}$.
    \end{lem}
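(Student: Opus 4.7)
The plan is to dispatch case (i) in detail and observe that the other three follow by verbatim analogous arguments after the obvious color and shading swaps. Set $D := P_1 \otimes P_1'$ and let $f \in \te{Hom}(\emptyset_+, D)$ be the ``red cup'' morphism, i.e., the bottom half of the saddle $\redsaddlesin$ with its inherited checkerboard shading; then $f^* \in \te{Hom}(D, \emptyset_+)$ is the matching red cap.

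The key diagrammatic calculation is that $ff^*$ (stacking $f$ on top of $f^*$) reassembles exactly the saddle $\redsaddlesin$, which by the saddle relation (iv) equals $D$; dually, $f^*f$ is a closed red loop bounding a shaded disk, which by the bubble relation (i) equals $\emptyset_+$. From $ff^* = D$ and $f^*f = \emptyset_+$ one extracts idempotency,
\[
D^2 = (ff^*)(ff^*) = f(f^*f)f^* = f\,\emptyset_+\,f^* = ff^* = D,
\]
and self-adjointness, $D^* = (ff^*)^* = ff^* = D$; so $D$ is a projection, and the pair $(f, f^*)$ directly witnesses $D \cong \emptyset_+$ in the sense of the definition of isomorphism of projections recalled in the background. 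Minimality then follows because $\te{End}(D) \cong \te{End}(\emptyset_+) = \mc{P}_0$, and this space is one-dimensional by the corollary to Lemma \ref{lem: Afinshade-suff-oned}.

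Case (ii) is identical with red replaced by blue throughout, invoking the blue bubble and blue saddle relations; cases (iii) and (iv) are identical after reversing the checkerboard shading, which interchanges $\emptyset_+$ with $\emptyset_-$ and swaps $\redsaddlesin \leftrightarrow \redsaddlesout$ (respectively $\bluesaddlesin \leftrightarrow \bluesaddlesout$), so the isomorphism lands in $\emptyset_-$ instead. The main subtlety, and the point requiring care, is that at this stage of the paper we do not yet have $P_1^2 = P_1$ as an available identity — the strand decomposition and color disagreement only yield $P_1^2 + Q_1^2 = P_1 + Q_1$ and its higher-power variants, which are not strong enough to conclude idempotency of $P_1$ on their own. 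The argument above sidesteps this by extracting idempotency of $D$ from the saddle/bubble pair rather than from any pointwise manipulation of $P_1$.
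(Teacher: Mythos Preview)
Your proof is correct, but it takes a different route from the paper's. The paper first establishes minimality of $D = P_1 \otimes P_1'$ by directly analyzing $\te{Hom}(D,D)$: it invokes the evaluation algorithm to kill any $U,U^*,V,V^*$ boxes inside a putative endomorphism $f$, observes that the remaining boundary data (two red strands) admits no box and forces $f$ to be a scalar multiple of $D$ (or of the cup/cap, which by the saddle relation is the same thing), and concludes $\dim\te{Hom}(D,D)\le 1$. Only afterwards does it note that $D$ factors through $\emptyset_+$ and is nonzero, hence $D\cong\emptyset_+$. Your argument inverts this: you first exhibit the explicit isomorphism $D\cong\emptyset_+$ via the cup/cap pair $(f,f^*)$, and then pull minimality back from the already-established fact $\dim\mc{P}_{0,+}=1$. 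Your approach is shorter and avoids appealing to the evaluation algorithm; the paper's approach is more self-contained at the level of the $2$-box space and illustrates how the algorithm is used in practice.

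One correction to your closing remark: the identity $P_1^2 = P_1$ \emph{is} available from the relations at this stage. Multiplying the strand decomposition $X = P_1 + Q_1$ on the right by $P_1$ and using that $X$ acts as the identity in $\mc{P}_{1,+}$ gives $P_1 = P_1^2 + Q_1P_1$, and color disagreement kills the second term. So the paper's assertion that ``it is clear that $P_1\otimes P_1'$ is a projection'' is legitimate, and your detour through $D^2 = f(f^*f)f^* = D$, while perfectly valid, is not strictly necessary.
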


    \begin{proof} We will just show $P_1 \otimes P_1'$ is a minimal projection. The rest of the items have nearly identical arguments. It is clear that $P_1\otimes P_1'$ is a projection. Let $f \in \text{Hom}(P_1\otimes P_1', P_1\otimes P_1')$. If $f\neq 0$, then using the evaluation algorithm to reduce the number of boxes in $f$ connected by a strand we can assume $f$ can be written to be a collection of boxes not connected to any other box and has no closed components. Thus the strands of all the boxes in $f$ must go to the top and bottom of the diagram, as $f\neq 0$. However, there are only red strands on the boundary and all boxes for have at least one blue strand. Thus $f$ cannot contain a box. 

    Assume $f$ has no boxes. Then it is clear the only options for $f$ are scalar multiples of $P_1\otimes P_1'$ or scalar multiples of the cup/cap diagram on the left side of the saddle relation. Both of these will result in $f$ being a scalar multiple of itself, due to the saddle relations. Therefore the dimension of $\te{Hom}(P_1\otimes P_1', P_1\otimes P_1')$ is at most 1. Since the trace of $P_1\otimes P_1'$ is 1, we get that the dimension of $\te{Hom}(P_1\otimes P_1', P_1\otimes P_1')$ is 1, so $P_1\otimes P_1'$ is a minimal projection. 
    
    We want to show that $\emptyset_{+}\cong P_1 \otimes P_1'\cong Q_1 \otimes Q_1'$ and $\emptyset_{-}\cong P_1'\otimes P_1 \cong Q_1'\otimes Q_1$. Since the dimension of $\mathcal{P}_{0,\pm}$ is 1, we know that both $\emptyset_{+}$ and $\emptyset_{-}$ are minimal projections. By the saddle relations, $P_1\otimes P_1'$, $P_1'\otimes P_1$, $Q_1 \otimes Q_1'$, and $Q_1'\otimes Q_1$ are all factoring through either $\emptyset_+$ or $\emptyset_{-}$. Since all of these diagrams are nonzero, this gives the desired isomorphisms. \end{proof}

    Next we show that the $\til{A}_{2n-1}$ Dynkin diagram are the correct principal and dual principal graphs for this planar algebra. 

    \begin{lem}\label{lem: Afinshade-suff-pgraph}
        The principal graphs of $\mc{P}_0$ are the graphs given in (\ref{eq: Afinshadegraphs}) for $n\geq 2$ and (\ref{eq: Afinshadegraphsone}) for $n=1$. 
    \end{lem}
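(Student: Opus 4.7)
The plan is to mirror the proof of the necessary direction (Lemma \ref{lem: Afinshadedtensordecomp}, Lemma \ref{lem: AfinshadePnisotoQn}, and Theorem \ref{thm: Afinshadenecessary}), but now working purely from the generators-and-relations presentation and using the already-established evaluability of $\mc{P}$ (Corollary to Lemma \ref{lem: Afinshade-suff-oned}) to control hom-space dimensions. First I would define the candidate vertices of $\Gamma_\pm$: set $\emptyset_\pm$, $P_1, Q_1, P_1', Q_1'$ as before, and for $2 \leq k \leq n$ define $P_k$ and $Q_k$ by the tensor formulas from Lemma \ref{lem: Afinshadedtensordecomp}, with $P_k', Q_k'$ analogous. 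Using strand decomposition (relation (ii)), the saddle relations (iv), and Lemma \ref{lem: saddles-are-minimal}, each $P_k, Q_k, P_k', Q_k'$ is manifestly self-adjoint and idempotent.

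Next I would show these are minimal. Since $\mc{P}_0$ is one-dimensional, the trace pairing is nondegenerate in the sense that $\dim \te{Hom}(A,B) = \te{tr}(A^* \otimes B \text{ closed up})$ can be read off diagrammatically. For any $f \in \te{Hom}(P_k, P_k)$, the evaluation algorithm applied to the closed diagram $\te{tr}(f)$ (or equivalently, applied to $f$ viewed as a planar element with all boundary strands matched into a minimal projection pattern) reduces $f$ to a scalar multiple of $\te{id}_{P_k}$: the alternating red/blue strand pattern plus color disagreement forbids introducing any $U,U^*,V,V^*$ boxes that are not paired, so only the saddle/bubble relations act, and the hom-space is forced to be one-dimensional. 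The same argument shows $\te{Hom}(P_k, Q_\ell) = 0$ whenever $k \neq \ell$ and shows that the $P_k$'s and $Q_\ell$'s represent distinct isomorphism classes in the linear portion of the graph.

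To obtain the edges, I would expand $P_k \otimes X$ and $P_k \otimes Y$ using $X = P_1 + Q_1$ and $Y = P_1' + Q_1'$, then collapse via the saddle relations $P_1 \otimes P_1' \cong \emptyset_+$, etc. This gives $P_k \otimes X^\pm \cong P_{k-1} \oplus P_{k+1}$ for $1 \leq k \leq n-1$, and similarly for $Q_\ell$, reproducing the two linear chains of the Dynkin diagram. For the closure into an affine (cyclic) graph, I would invoke the unitary relations (v): $U \in \te{Hom}(Q_n, P_n)$ and $U^* \in \te{Hom}(P_n, Q_n)$ satisfy $UU^* = P_n$ and $U^*U = Q_n$, giving $P_n \cong Q_n$; similarly $V, V^*$ give $P_n' \cong Q_n'$. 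This glues the two chains into the $\til{A}_{2n-1}$ cycle. The $n=1$ case is a degeneration where $X \cong P_1 \oplus P_1$ collapses the cycle into a doubled edge, handled separately but analogously using Lemma \ref{lem: nonep1q1reps}'s analogue in the presented planar algebra.

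The last step is completeness: every minimal projection in $\mc{P}$ is isomorphic to one on this list. Any minimal projection $R$ lies in some box space $\mc{P}_{k,\pm}$, and using strand decomposition plus color disagreement any diagram in $\mc{P}_{k,\pm}$ decomposes as a sum of tensor products of $P_1, Q_1, P_1', Q_1'$ interspersed with saddles; the saddles identify such tensor words with the enumerated $P_j, Q_j, P_j', Q_j'$, so $R$ must appear as a summand of one of our catalogued projections and hence be isomorphic to one of them. The main obstacle I anticipate is the completeness step: carefully arguing that no ``extra'' minimal projection can appear requires a clean bookkeeping of which alternating red/blue/dual words are equivalent under the saddle relations, and that the evaluation algorithm applied to a putative intertwiner between a new projection and one on the list always produces a scalar -- essentially a diagrammatic Bratteli-diagram argument using the presentation rather than the subfactor data.
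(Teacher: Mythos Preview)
Your proposal is correct and follows essentially the same approach as the paper: define the $P_k, Q_\ell, P_k', Q_\ell'$ as alternating tensor products, verify they are projections, prove minimality by reducing any $f \in \te{Hom}(P_k,P_k)$ (via the box-elimination method of the evaluation algorithm) to a diagram with no boxes and then using the alternating color pattern to force $f = \lambda P_k$, prove pairwise nonisomorphism by a case analysis on boundary colors, obtain $P_n \cong Q_n$ from $U$ being a nonzero intertwiner, and read off the edges from $P_k \otimes X^{\pm}$ using strand decomposition and the saddle relations.

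One remark: the ``completeness'' step you flag as the main obstacle is not actually needed and the paper does not carry it out. The principal graph is by definition built from the simples appearing in the tensor powers of $X$, so once you have shown that $\emptyset_\pm \otimes X^\pm$, $P_k \otimes X^\pm$, and $Q_\ell \otimes X^\pm$ all decompose into direct sums of projections already on your list, induction on the number of tensor factors guarantees no new isomorphism class can appear. Your bookkeeping worry about alternating red/blue words is thus subsumed by the fusion-rule computation you already outlined.
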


    \begin{proof} Define $P_k=P_1 \otimes \ol{Q}_1 \otimes \os{(k-1)}{...} \otimes R_1$, $P_{k}'=\ol{P}_1 \otimes Q_1 \otimes \os{(k-1)}{...} \otimes \ol{R}_1$, $Q_{\ell}=Q_1\otimes \ol{P}_1 \otimes \os{(\ell-1)}{...}\otimes \ol{R}_2$, and $Q_{\ell}'=\ol{Q}_1\otimes P_1 \otimes \os{(\ell-1)}{...} \otimes R_2$ for $1\leq k \leq n$ and $1\leq \ell \leq n-1$, where $R_1, R_2\in \{P_1,\ol{Q}_1\}$ and depend on $n$. Additionally, define $P_0=\emptyset_+$ and $P'_0=\emptyset_{-}$, which were already shown to be minimal projections since $\mc{P}_{0,\pm}$ is one-dimensional. As the adjoint distributes over $\otimes$ and $P_1$, $P_1'=\ol{P}_1$, $Q_1$, and $Q_1'=\ol{Q}_1$ are clearly projections, for all $1\leq k\leq n$ and $1 \leq\ell\leq n-1$ $P_k$, $P_k'$, $Q_\ell$, and $Q_\ell'$ are projections.
        
    To then show that these are in fact minimal projections, we want to show that for all $1\leq k \leq n$, $1\leq \ell \leq n-1$, the dimensions of $\te{Hom}(P_k,P_k)$, $\te{Hom}(P_k',P_k')$, $\te{Hom}(Q_\ell, Q_\ell)$, and $\te{Hom}(Q_\ell',Q_\ell')$ are 1. Notice that taking the trace of all these projections gives 1, so the dimension of these hom-spaces is greater than 0. We will just prove the dimension of the hom-space of $P_k$ is 1. The rest of the dimensions can be found nearly identically. 

    Fix $k$ and some $f\in \te{Hom}(P_k, P_k)$. Diagrammatically we have 

    \begin{align*}
        f= \homPkPkshade
    \end{align*}

    If $f$ has two $U$, $U^*$, $V$, and $V^*$ boxes connected by a strand, we can use the method outlined in the evaluation algorithm for $\mc{P}_0$ to write $f$ with two less boxes. Thus it is sufficient to think of $f$ as either having zero boxes, or having a collection of boxes each not connected to each other. In the latter case, this means all the strands of these boxes must be connected to the boundary or themselves. If $k<n$, then the number of points on the boundary of the $f$-box is less than $2n$, so all the boxes inside of $f$ would have to have some strands connecting to themselves, which would make $f=0$. If $k=n$ then the number of points on the boundary of the $f$-box is $2n$. If there is more than one box inside of $f$ then one of the boxes must have at least one strand connecting to itself, making $f=0$. If $f$ has exactly one box, then since the strands of any box alternates in color and the top and bottom leftmost strand on the boundary of $f$ are both red, for all the strands of the box to connect to the boundary of $f$ would require some color disagreement, making $f=0$. 
     Therefore, for $f$ to be nonzero, $f$ must have zero $U$, $U^*$, $V$, and $V^*$ boxes. 

    Assume $f\neq 0$, which by the preceding paragraph means that $f$ has zero boxes. The strands on the boundary of $f$ are alternating in color on the top and on the bottom, so $f$ cannot have any caps or cups. Therefore, $f$ must be some straight strands along with some circles, which can be popped for a scalar, say $\lambda \in \mbb{C}$. The only possibility for $f\neq 0$ with straight strands would mean the straight strands are $P_k$. Therefore $f=\lambda P_k$, giving that the dimension of $\te{Hom}(P_k,P_k)$ is 1. 

    Next we show that these are nonisomorphic minimal projections by showing that there are no nonzero morphisms between any of these distinct projections. We break this into cases depending on if the projections start with the same color strand. Define $P_0 =Q_0=\emptyset_{+}$ and $P'_0=Q'_0=\emptyset_{-}$. Fix $0\leq k\leq n$ and $0 \leq \ell \leq n-1$. 

    \textit{Case 1:} Suppose that $f$ is a nonzero morphism between the $P_k$ or $P_k'$ projection and the $Q_\ell$ or $Q_\ell'$ projection. Notice that on the boundary of $f$, the top and bottom leftmost strands will be of different colors. Without loss of generality, assume that $f$ has no closed component. Using the method outline in the evaluation algorithm for $\mc{P}_0$, write $f$ as a diagram where all of the boxes are not connected to each other. If the strands of the boxes are connected to themselves, then $f=0$. So every boxes' strands must connect to the boundary as $f\neq 0$. The number of strands on the boundary is $k+\ell$. If $k+\ell<2n$ then there are not enough points on the boundary for even one box. As $f\neq 0$, we then know $f$ contains no boxes. If $k+\ell=2n$ then $k=\ell=n$. However, $\ell\leq n-1$, so this case is eliminated. Now we are left with considering $f$ with no boxes inside. However, since the leftmost stands are of opposite colors and the colors on the top and bottom alternate, there is no such nonzero diagram $f$.

    \textit{Case 2:} Suppose that $f$ is a nonzero morphism between two minimal projections starting with the same color on the leftside. Without loss of generality, lets assume $f$ is a morphism from $P_{k_1}$ to $P_{k_2}$ where $k_1,k_2 \in \{0,...,n\}$. Again, we only need to consider the case where $f$ has no closed component and all of the strands of the boxes connect to the boundary. The number of strands on the boundary is $k_1+k_2$ which is strictly less than $2n$ unless $k_1=k_2$ which would imply the projections are equal. Thus all that is left to consider when $f$ has no boxes. There can be no caps or cups in $f$ due to the alternating colors of the strands. Therefore, the only options is for $f$ to be alternating strands and again $k_1=k_2$. 

    Naturally define $Q_n$ and $Q_n'$. Using the same methods as before we can show $Q_n$ and $Q_n'$ are minimal projection nonisomorphic to $P_k$, $P_k'$, $Q_\ell$, and $Q_\ell'$ for $0\leq k,\ell \leq n-1$. However, it is true that $Q_n \cong P_n$ and $Q_n' \cong P_n'$. To see this, notice that $U$ and $V$ are morphisms between $Q_n$ and $P_n$ and $Q_n'$ and $P_n'$ respectively. Since the trace of $UU^*$ and $VV^*$ are both nonzero, $U$ and $V$ are nonzero morphisms. Therefore there's a nonzero morphism between minimal projections giving that $Q_n \cong P_n$ and $Q_n' \cong P_n'$, as desired.

    What is left to show is that these nonisomorphic minimal projections satisfy the fusion rules for the $\til{A}_{2n-1}$ Dynkin diagram with labellings given by (\ref{eq: Afinshadearbgraphs}) or (\ref{eq: Afinshadearbgraphsone}). First, let $n\geq 2$ and define $Q_n=Q_{n-1}\otimes \ol{R}_3$ where $\ol{R}_3\neq \ol{R}_2$ and $R_3\in \{P_1,\ol{Q}_1\}$, and define $Q_n'$ similarly. Using the strand decompositions, $\emptyset_{+} \otimes X \cong P_1 \oplus Q_1$ and $\emptyset_{-} \otimes Y \cong \ol{P}_1 \oplus \ol{Q}_1$. Now we will show inductively that $P_k \otimes R$ (where $R$ is $Y$ when $k$ is odd and $X$ when $k$ is even) is isomorphic to $P_{k-1}\oplus P_{k+1}$ for $1\leq k\leq n-1$. Similar arguments will then show the necessary tensor decompositions for $P_k'$, $Q_k$, and $Q_k'$.

    For the base case, notice that $P_1 \otimes Y\cong P_1 \otimes (P_1' \oplus Q_1') \cong (P_1 \otimes P_1') \oplus (P_1 \otimes Q_1')\cong  \emptyset_{+} \oplus P_2$ using strand decomposition and Lemma \ref{lem: saddles-are-minimal}. Next assume $P_k\otimes R \cong P_{k-1}\oplus P_{k+1}$ up to some $k<n-1$. Then if $n-1$ is even,
    \begin{align*}
        P_{n-1} \otimes X \cong P_{n-1} \otimes (P_1 + Q_1)\cong (P_{n-1} \otimes P_1) \oplus (P_{n-1} \otimes Q_1)
    \end{align*}
    When $n-1$ is even, the rightmost strand of $P_{n-1}$ will be $\ol{Q}_1$. Therefore $P_{n-1}\otimes Q_1 \cong P_{n-2}$ and $P_{n-1}\otimes P_1 \cong P_n$. Thus $P_{n-1}\otimes X \cong P_{n}\oplus P_{n-2}$. If $n-1$ is odd,
    \begin{align*}
        P_{n-1}\otimes Y \cong P_{n-1} \otimes (\ol{P}_1 +\ol{Q}_1) \cong (P_{n-1} \otimes \ol{P}_1)\oplus (P_{n-1}\otimes \ol{Q}_1) 
    \end{align*}
     When $n-1$ is odd, the rightmost strand of $P_{n-1}$ is $P_1$ so $P_{n-1}\otimes \ol{P}_1\cong \emptyset_{+}$ and $P_{n-1}\otimes \ol{Q}_1 \cong P_n$. Therefore $P_{n-1}\cong P_{n-2}\oplus P_n$, concluding our inductive proof. 

     Next I show that $P_n\otimes R \cong P_{n-1}\oplus Q_{n-1}$. Following the method of the above paragraph gives $P_{n}\otimes R \cong P_{n-1} \oplus (P_{n}\otimes S)$ where $S=P_1$ if $n$ is even and $S=\ol{Q}_1$ when $n$ is odd. Further, $P_n \cong Q_n$, so $P_n \otimes S \cong Q_n \otimes S$ which will be isomorphic to $Q_{n-1}\otimes \emptyset_+$ or $Q_{n-1}\otimes \emptyset_{-}$ depending on the parity of $n$. Therefore $P_n \otimes R \cong P_{n-1} \oplus Q_{n-1}$. 

     For $n=1$, $\emptyset_{+}\otimes X\cong X=P_1+Q_1$ by definition of tensor and strand decomposition. Further, using that $P_1 \cong Q_1$ we get $X\cong P_1\oplus Q_1 \cong P_1 \oplus P_1$. Additionally, $P_1 \otimes Y\cong P_1 \otimes (\ol{P}_1 + \ol{Q}_1)$ by the strand decomposition, which is isomorphic to $P_1\otimes \ol{P}_1+P_1\otimes \ol{Q}_1\cong \emptyset_{+}\oplus \emptyset_{+}$ by Lemma \ref{lem: saddles-are-minimal}. 
     
     Similarly, for any $n\in \mbb{N}$, one can show the necessary tensor decompositions for the dual principal graph which concludes the lemma. \end{proof}

\begin{lem}\label{lem: Afinshade-spherical}
         $\mc{P}$ is spherical.
\end{lem}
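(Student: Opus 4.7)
The plan is to reduce the equality $\te{tr}^L_{1,\pm} = \te{tr}^R_{1,\pm}$ to a check on a basis of $\mc{P}_{1,+}$ and $\mc{P}_{1,-}$, and then to verify that check by a direct application of the bubble relations from Definition \ref{relationsofAffineAfinite}(i).

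First, I would establish that $\mc{P}_{1,+}$ is spanned by $\{P_1,Q_1\}$ and $\mc{P}_{1,-}$ by $\{P_1',Q_1'\}$. For any $f\in\mc{P}_{1,+}$, the strand decomposition $X=P_1+Q_1$ and the fact that $X$ is the multiplicative identity of $\mc{P}_{1,+}$ give
\[
f \;=\; XfX \;=\; P_1fP_1 + P_1fQ_1 + Q_1fP_1 + Q_1fQ_1.
\]
By Lemma \ref{lem: Afinshade-suff-pgraph}, $P_1$ and $Q_1$ are nonisomorphic minimal projections in $\mc{C}_\mc{P}$, so the cross terms $P_1fQ_1$ and $Q_1fP_1$ vanish and the diagonal terms lie in $\te{End}(P_1)=\mbb{C}P_1$ and $\te{End}(Q_1)=\mbb{C}Q_1$. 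Hence $f=\alpha P_1+\beta Q_1$, and the same argument applies in $\mc{P}_{1,-}$.

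Second, by linearity of the two traces, it suffices to check $\te{tr}^L = \te{tr}^R$ on each of $P_1, Q_1, P_1', Q_1'$. Take $P_1=\redr$ as a representative case. The right trace $\te{tr}^R(P_1)$ closes the top and bottom endpoints around the right side; since the star region is to the left (unshaded), the resulting red loop has shaded interior, and by relation (i) equals $\emptyset_+$, i.e.\ the scalar $1$. The left trace $\te{tr}^L(P_1)$ closes around the left side, producing a red loop whose interior contains the unshaded star region, so its exterior is shaded, and by the matching form of relation (i) this equals $\emptyset_-$, again the scalar $1$ via the canonical identification $\mc{P}_{0,\pm}\cong\mbb{C}$. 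The identical computation using the blue bubble relations handles $Q_1$, and the analogous one with the opposite outer shading handles $P_1'$ and $Q_1'$ in $\mc{P}_{1,-}$.

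I do not expect a substantive obstacle here: once the basis is in hand, sphericality is an immediate consequence of the bubble relations. The only mild subtlety is keeping track of which of the shading-in and shading-out variants of the bubble relation applies to the left versus the right trace, but both variants for both colors are supplied by Definition \ref{relationsofAffineAfinite}(i), so each of the four verifications goes through in one step.
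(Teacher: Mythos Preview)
Your proposal is correct and follows essentially the same route as the paper: both reduce sphericality to showing that $\mc{P}_{1,+}=\te{span}\{P_1,Q_1\}$ and $\mc{P}_{1,-}=\te{span}\{P_1',Q_1'\}$ via the decomposition $f=P_1fP_1+P_1fQ_1+Q_1fP_1+Q_1fQ_1$ together with the minimal-projection structure established in Lemma~\ref{lem: Afinshade-suff-pgraph}. You are slightly more explicit than the paper in spelling out the final step---checking the left and right traces agree on each basis element via the bubble relations of Definition~\ref{relationsofAffineAfinite}(i)---which the paper leaves implicit.
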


     \begin{proof} To show this lemma it suffices to prove that $\mc{P}_{1,+} =\te{ span}\{P_1, Q_1\}$ and $\mc{P}_{1,-}=\{\ol{P}_1, \ol{Q}_1\}$. The inclusion of $\te{ span}\{P_1, Q_1\}$ into $\mc{P}_{1,+}$ is obvious. Let $T$ be an element in $\mc{P}_{1,+}$. Then $T$ is a linear combination of diagrams from $\te{Hom}(P_1,P_1)$, $\te{Hom}(Q_1,Q_1)$,  $\te{Hom}(P_1,Q_1)$, and  $\te{Hom}(Q_1, P_1)$. Since $P_1$ and $Q_1$ are nonisomorphic minimal projections we get that the only hom-spaces with nonzero dimension are $\te{Hom}(P_1,P_1)$ and $\te{Hom}(Q_1,Q_1)$. Further, these hom-spaces are both one-dimensional. Therefore $T$ is in $\te{ span}\{P_1, Q_1\}$, so $\mc{P}_{1,+} =\te{ span}\{P_1, Q_1\}$. The proof of $\mc{P}_{1,-}$ is the same. \end{proof}

     \begin{lem}\label{lem: Afinshade-posdef}
         $\mc{P}$ is positive definite.
     \end{lem}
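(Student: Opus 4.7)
The plan is to reduce positive-definiteness of $\langle f, g \rangle = \te{tr}(f^*g)$ on each box space $\mc{P}_{k,\pm}$ to the elementary positive-definiteness of the Hilbert--Schmidt form on a matrix algebra. The key input is the semisimple decomposition of $\mc{C}_{\mc{P}}$ established in Lemma \ref{lem: Afinshade-suff-pgraph}.

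First I would verify that every minimal projection $R$ in $\mc{C}_{\mc{P}}$ has $\te{tr}(R) = 1$. The bubble relations give $\te{tr}(\emptyset_\pm) = 1$ and $\te{tr}(P_1) = \te{tr}(Q_1) = 1$ immediately, so by strand decomposition $\te{tr}(X) = \te{tr}(Y) = 2$. Induction along the principal graph of Lemma \ref{lem: Afinshade-suff-pgraph}, using additivity of trace on direct sums and the tensor decompositions $P_k \otimes R \cong P_{k-1} \oplus P_{k+1}$ (and analogues), extends this to every $P_k, Q_\ell, P_k', Q_\ell'$.

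Second, for each $(k,\pm)$ let $V_{k,\pm}$ denote the alternating tensor product of $X$'s and $\ol{X}$'s of length $k$. By semisimplicity, $V_{k,\pm} \cong \bigoplus_S m_S \cdot S$, where $S$ ranges over the minimal projections appearing at level $k$ of the Bratteli diagram. This yields an algebra isomorphism
\[
\mc{P}_{k,\pm} \;=\; \te{End}(V_{k,\pm}) \;\cong\; \bigoplus_{S} M_{m_S}(\mbb{C}).
\]
I would choose the isomorphism to be $*$-compatible by producing, for each simple $S$, an orthonormal family of intertwiners $\iota_j^S \in \te{Hom}(S, V_{k,\pm})$ satisfying $(\iota_j^S)^* \iota_i^T = \delta_{S,T}\delta_{i,j}\,\te{id}_S$. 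The matrix units $e_{ij}^S := \iota_i^S \circ (\iota_j^S)^*$ then span $\mc{P}_{k,\pm}$ and are interchanged by the planar adjoint via $(e_{ij}^S)^* = e_{ji}^S$, so $*$ corresponds to conjugate-transpose on each block.

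Third, expanding $f = \sum_{S,i,j} a_{ij}^S e_{ij}^S$ and computing matrix-unit-wise gives
\[
\te{tr}(f^*f) \;=\; \sum_{S}\sum_{i,j} |a_{ij}^S|^2\, \te{tr}(e_{ii}^S) \;=\; \sum_{S}\te{tr}(S)\sum_{i,j} |a_{ij}^S|^2 \;=\; \sum_{S,i,j} |a_{ij}^S|^2,
\]
using Step 1, which is strictly positive whenever $f \neq 0$.

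The hard part is Step 2: producing the orthonormal intertwiners. For any nonzero $\iota \in \te{Hom}(S, V_{k,\pm})$, minimality of $S$ forces $\iota^*\iota = \lambda_\iota\,\te{id}_S$ for some $\lambda_\iota \in \mbb{C}$, and cyclicity of trace together with Step 1 gives $\lambda_\iota = \te{tr}(\iota\iota^*)$. Positivity of this scalar is itself a small instance of the statement being proved, so the argument must proceed by induction on $k$: the base $k=0$ is immediate since $\mc{P}_0$ is one-dimensional with $\te{tr}(\emptyset_\pm^*\emptyset_\pm)=1>0$, and the inductive step uses the Bratteli-style inclusion of $V_{k,\pm}$ into $V_{k+1,\mp}$, the saddle and bubble relations, and Lemma \ref{lem: saddles-are-minimal} to extract positive normalization constants at each new level from those at the previous level. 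Once this bootstrap is completed, the identification $\mc{P}_{k,\pm} \cong \bigoplus_S M_{m_S}(\mbb{C})$ becomes a $*$-algebra isomorphism and Step 3 closes the proof.
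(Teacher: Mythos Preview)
Your proposal is correct and follows essentially the same route as the paper. The paper's proof invokes \cite{MPS10}, whose tree-basis construction is precisely the inductive build-up of orthonormal intertwiners along the Bratteli diagram that you sketch in Step~2, and the paper likewise reduces everything to checking that each minimal projection has trace~$1$; you have simply unpacked what \cite{MPS10} does rather than citing it.
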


     \begin{proof} In \cite{MPS10} the authors give and explicit positive definite basis of each of the box spaces for any planar algebra whose corresponding category of projections is semisimple. We can adapt this proof slightly to instead give a positive definite basis of $\mc{P}_{2m,\pm}$ which is now identified with the hom-space from $\emptyset$ to alternating tensor products of $X$ and $\overline{X}$. The trees they create would also alternate on the boundary between $X$ and $\overline{X}$.  Additionally, in our case, $X$ is not a minimal projection, however, they do not need this in their proof. The authors then show in Theorem 4.18 that their defined tree diagrams with boundary labelled entirely by $X$ give a positive orthogonal basis for the space $\te{Hom}(\emptyset, X^{\otimes k})$, which corresponds to our vector spaces $\mc{P}_{2m,\pm}$. The only thing left to check is that all of the minimal projections have positive trace. Indeed, $\te{tr}(P_1)=\te{tr}(Q_1)=\te{tr}(\ol{P}_1)=\te{tr}(\ol{Q}_1)=1$ and the rest of the minimal projections are tensor products of these elements, so the minimal projections all have trace 1, so $\mc{P}$ is positive definite. \end{proof}

     \begin{proof}[Proof of Theorem \ref{thm: Afinshadesufficient}] All of the work has already been done. From Lemma \ref{lem: Afinshade-suff-oned} we get that the space of closed diagrams is one-dimensional. Lemma \ref{lem: Afinshade-spherical} gives that $\mc{P}$ is spherical and Lemma \ref{lem: Afinshade-suff-pgraph} gives that the principal graph is the $\til{A}_{2n-1}$ Dynkin diagram as shown in Lemma \ref{lem: Afinshade-suff-pgraph}. Finally, we get that $\mc{P}$ is positive definite from Lemma \ref{lem: Afinshade-posdef}, which completes the proof. \end{proof}

     Now we can return to Theorem \ref{thm: Afinshadenecessary} and show that the elements listed are in fact generators so the Theorem is a presentation of the planar algebra. 

     \begin{thm}\label{thm: Afinshade-generators}
         The elements $P_1, Q_1, U, U^*, V,$ and $V^*$ from Theorem \ref{thm: Afinshadenecessary} generate $\mc{P}$. 
     \end{thm}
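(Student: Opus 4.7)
The plan is to prove this by comparing $\mc{P}$ with the abstract planar algebra produced by Theorem \ref{thm: Afinshadesufficient} via a universal-property argument, then closing the gap by positive definiteness and a dimension count. Let $\mc{Q} \subseteq \mc{P}$ denote the planar subalgebra generated by $P_1, Q_1, U, U^*, V, V^*$, and let $\mc{A}$ denote the planar algebra presented by these generators and relations (i)--(vi) of Definition \ref{relationsofAffineAfinite}, where $\sigma_n$ is chosen to be the specific root of unity associated with $\mc{P}$ in Theorem \ref{thm: Afinshadenecessary}. By Theorem \ref{thm: Afinshadesufficient}, $\mc{A}$ is itself a subfactor planar algebra with principal graph $\til{A}_{2n-1}$. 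Since the named elements of $\mc{P}$ satisfy the defining relations (verified in Theorem \ref{thm: Afinshadenecessary}), the universal property of planar algebra presentations furnishes a surjective planar algebra homomorphism $\phi : \mc{A} \twoheadrightarrow \mc{Q}$ sending abstract generators to their $\mc{P}$-counterparts.

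Next I would show $\phi$ is injective. Compose it with the inclusion $\mc{Q} \hookrightarrow \mc{P}$ to obtain a planar algebra homomorphism $\psi : \mc{A} \to \mc{P}$. Any planar algebra homomorphism commutes with every planar tangle, so $\psi$ sends $\emptyset_{\pm}$ to $\emptyset_{\pm}$ and preserves the value of closed diagrams; in particular $\te{tr}_{\mc{P}}(\psi(x)) = \te{tr}_{\mc{A}}(x)$ for every $x \in \mc{A}$. The adjoint is defined in each of $\mc{A}$ and $\mc{P}$ as vertical reflection combined with $*$ on generators, and $\psi$ sends generators to generators, so $\psi$ intertwines the two adjoints and therefore preserves the inner product $\langle x, y\rangle = \te{tr}(x^* y)$. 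If $\psi(x) = 0$ then $\langle x, x\rangle_{\mc{A}} = 0$, and positive definiteness of $\mc{A}$ (Lemma \ref{lem: Afinshade-posdef}) forces $x = 0$. Hence $\psi$, and therefore $\phi$, is injective, so $\phi : \mc{A} \to \mc{Q}$ is an isomorphism of planar algebras.

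It remains to match dimensions. Both $\mc{A}$ and $\mc{P}$ are subfactor planar algebras with principal graph $\til{A}_{2n-1}$ (the former by Lemma \ref{lem: Afinshade-suff-pgraph}, the latter by hypothesis), so their Bratteli diagrams coincide and the sum-of-squares formula for box-space dimensions yields $\dim \mc{A}_{k,\pm} = \dim \mc{P}_{k,\pm}$ for every $k$ and every sign. Combined with $\mc{Q} \cong \mc{A}$ and $\mc{Q} \subseteq \mc{P}$, this forces $\mc{Q} = \mc{P}$. The main obstacle I anticipate is invoking the universal property of the presentation cleanly: one must realize $\mc{A}$ as the quotient of a free planar algebra on the specified generators by the planar ideal generated by the relations, so that any planar algebra containing elements satisfying these relations receives a canonical homomorphism from $\mc{A}$. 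This is standard in the subfactor planar algebra literature but is not made wholly explicit earlier in the paper; once granted, the injectivity step via positive definiteness of the trace is routine.
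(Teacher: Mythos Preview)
Your proof is correct and follows essentially the same route as the paper: compare the abstractly presented planar algebra $\mc{A}$ from Theorem \ref{thm: Afinshadesufficient} with the given $\mc{P}$, and conclude by a box-space dimension count using that both have principal graph $\til{A}_{2n-1}$. The paper compresses your universal-property surjection and positive-definiteness injectivity step into a single sentence (invoking the MPS10 basis to match dimensions), whereas you make these steps explicit; this is a welcome clarification rather than a different argument.
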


     \begin{proof} From Theorem \ref{thm: Afinshadesufficient}, we know $P_1, Q_1, U, U^*, V$, and $V^*$ generate $\mc{P}$, a subfactor planar algebra with principal graph $\til{A}_{2n-1}$. Let $\mc{Q}$ be the subfactor planar algebra from Theorem \ref{thm: Afinshadenecessary}. The elements $P_1, Q_1, U, U^*, V$, and $V^*$ will generate $\mc{Q}$ if for all $k\in \mathbb{Z}_{\geq 0}$, $\te{dim}(\mc{P}_{k,\pm})=\te{dim}(\mc{Q}_{k,\pm})$. Returning to the explicit positive definite basis for each of the box spaces created in \cite{MPS10}, these bases rely solely on the principal graph and the traces of the minimal projections. As these are equal in both $\mc{P}$ and $\mc{Q}$, we get that $\te{dim}(\mc{P}_{k,\pm})=\te{dim}(\mc{Q}_{k,\pm})$ for all $k\geq 0$. Thus $P_1$, $Q_1$, $U$, $U^*$, $V$, and $V^*$ are generators of $\mc{Q}$ since they are generators of $\mc{P}$. \end{proof}

\section{Distinctness and Fulfillment of Presentations for Affine \texorpdfstring{$A$}{A} Finite Subfactor Planar Algebras}

 From the Theorems \ref{thm: Afinshadesufficient} and \ref{thm: Afinshade-generators} we find that Theorem \ref{thm: Afinshadenecessary} with generators $P_1, Q_1, U, U^*, V,$ and $V^*$ give $n$ presentations of the subfactor planar algebra with principal graph $\til{A}_{2n-1}$. It is then quite natural to ask if any of these are isomorphic and if there are any presentations left to be found. Indeed, none of these are isomorphic and there are no more, so we can prove the below theorem, which is a known result by Popa \cite{Pop94}; however, our proof is strictly using the planar algebra presentations. 

\begin{thm}\label{thm: Afinshade-nnoniso}
    There are exactly $n$ nonisomorphic subfactor planar algebras with principal graph $\til{A}_{2n-1}$. 
\end{thm}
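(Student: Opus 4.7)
The strategy is to combine the preceding structural results with a numerical invariant that pins down $\sigma_n$. Theorems \ref{thm: Afinshadenecessary}, \ref{thm: Afinshade-generators}, and \ref{thm: Afinshadesufficient} together show that every subfactor planar algebra with principal graph $\til{A}_{2n-1}$ is isomorphic to one of the $n$ planar algebras $\mc{P}_\sigma$ generated by $P_1, Q_1, U, U^*, V, V^*$ subject to relations (i) through (vi), as $\sigma$ ranges over the $n$-th roots of unity. This immediately gives the upper bound of $n$ isomorphism classes.

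For distinctness, I would extract $\sigma$ from $\mc{P}_\sigma$ as the value of a canonical closed diagram, namely
\[
\mc{I}(\mc{P}) \,:=\, \te{tr}\bigl(\mc{F}^2(U)\cdot U^*\bigr).
\]
Applying the click relations yields $\mc{F}^2(U) = \mc{F}(\sigma V^*) = \sigma\,\mc{F}(V^*) = \sigma U$, so using Lemma \ref{lem: a-fin-graph-index} and $UU^* = P_n$ one obtains $\mc{I}(\mc{P}_\sigma) = \sigma\cdot\te{tr}(P_n) = \sigma$. This value is independent of the phase freedom in choosing $U$, since replacing $U$ by $e^{i\alpha}U$ also multiplies $U^*$ by $e^{-i\alpha}$ and the two phases cancel inside the diagram. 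To show $\mc{I}$ is an isomorphism invariant, suppose $\theta:\mc{P}_\sigma\to\mc{P}_{\sigma'}$ is an isomorphism. Since $\te{Hom}(Q_n, P_n)$ is one-dimensional, $\theta(U)$ is a scalar multiple of either $U'$ or of $(U')^*$ in $\mc{P}_{\sigma'}$, depending on whether the induced principal graph automorphism preserves or swaps the equivalence classes $[P_n]$ and $[Q_n]$. The relation $UU^* = P_n$ forces this scalar to have modulus one. In $\mc{P}_{\sigma'}$ both $\mc{F}^2(U') = \sigma' U'$ and $\mc{F}^2((U')^*) = \sigma' (U')^*$ hold, so direct computation in each case yields $\theta\bigl(\mc{I}(\mc{P}_\sigma)\bigr) = \sigma'$. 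Since closed diagrams are preserved by $\theta$, we conclude $\sigma = \sigma'$.

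The main obstacle is verifying the isomorphism invariance against all possible graph automorphisms of $\til{A}_{2n-1}$: the cycle has a nontrivial dihedral automorphism group, and in principle $\theta$ could compose with any bipartition-preserving symmetry of the graph. The saving grace is that $\mc{I}$ is insensitive to the swap $P_n\leftrightarrow Q_n$, because $\mc{F}^2$ scales both $U$ and $U^*$ by the same factor $\sigma$; this symmetry of the click relations is exactly what makes $\sigma$ survive as a planar-algebraic invariant regardless of how $\theta$ permutes the generators.
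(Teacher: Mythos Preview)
Your argument is correct and is at heart the same as the paper's, though you package the distinctness step more cleanly. The paper's Lemma \ref{lem: Afinshade-noniso} runs the case analysis on whether $\theta_{1,+}(P_1)=\tilde P_1$ or $\theta_{1,+}(P_1)=\tilde Q_1$ and chases the single-click relations $\mc{F}(U^*)=V$, $\mc{F}^{-1}(U^*)=\sigma^{-1}V$ to a contradiction; you instead combine two clicks into $\mc{F}^2(U)=\sigma U$ and read off $\sigma$ as the closed diagram $\te{tr}(\mc{F}^2(U)\,U^*)$. Both approaches ultimately rest on the same two-case check, but your scalar-invariant formulation makes the phase cancellation in $U\mapsto e^{i\alpha}U$ manifest and avoids tracking $V,V^*$ separately.

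Two small points of wording. First, your dichotomy ``preserves or swaps $[P_n]$ and $[Q_n]$'' is slightly off: since $P_n\cong Q_n$ these are the same isomorphism class; the genuine dichotomy is whether $\theta(P_1)=\tilde P_1$ or $\theta(P_1)=\tilde Q_1$ in $\mc{P}_{1,+}$, which then determines whether $\theta(U)$ lands in $\te{Hom}(\tilde Q_n,\tilde P_n)$ or $\te{Hom}(\tilde P_n,\tilde Q_n)$. Second, the paper's notion of planar algebra isomorphism does not assume $*$-compatibility, so the claim that the scalar has modulus one is not quite right; what you actually use (and what follows from $\theta(UU^*)=\theta(P_n)$) is that the product of the scalars in front of $\theta(U)$ and $\theta(U^*)$ equals $1$, which suffices. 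Finally, your worry about the full dihedral automorphism group is unnecessary: any planar algebra isomorphism fixes $\emptyset_+$, so the only nontrivial graph symmetry in play is the reflection $P_k\leftrightarrow Q_k$, exactly the two cases you already handle.
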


    First we will prove that none of the $n$ presentations given are isomorphic.

\begin{lem} \label{lem: Afinshade-noniso}
    Let $\mc{P}$ and $\mc{Q}$ be planar algebras given by generators from Definition \ref{eq: elementsofaffineAfiniteshaded} and relations from Definition \ref{relationsofAffineAfinite}   , using $n$th roots of unity $\sigma$ and $\til{\sigma}$ respectively. If $\sigma\neq \til{\sigma}$ then $\mc{P}$ and $\mc{Q}$ are nonisomorphic. 
\end{lem}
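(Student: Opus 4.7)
The plan is to assume there is a planar algebra isomorphism $\theta:\mc{P}\to \mc{Q}$ and derive $\sigma=\til{\sigma}$. Since $\theta$ preserves the identity $1$-strand $X$, it carries the decomposition $X=P_1+Q_1$ to an orthogonal decomposition of $X$ in $\mc{Q}_{1,+}$. By Lemma \ref{lem: Afinshade-basis} (or its analog for $\mc{Q}$), $\mc{Q}_{1,+}$ is two-dimensional with the only nonzero orthogonal idempotents summing to $X$ being $\til{P}_1,\til{Q}_1$. Hence either $\theta(P_1)=\til{P}_1$, $\theta(Q_1)=\til{Q}_1$ (the \emph{direct case}) or the two are swapped (the \emph{swap case}). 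Since $\theta$ commutes with $\otimes$ and the dual tangle, one then reads off $\theta(P_n),\theta(Q_n),\theta(P_n'),\theta(Q_n')$ in each case: in the direct case they equal $\til{P}_n,\til{Q}_n,\til{P}_n',\til{Q}_n'$, while in the swap case they are pairwise interchanged.

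Because each hom-space between two isomorphic minimal projections is one-dimensional, $\theta(U),\theta(U^*),\theta(V),\theta(V^*)$ must each be a scalar multiple of the corresponding generator in $\mc{Q}$ (or of its adjoint in the swap case). In the direct case I write $\theta(U)=\alpha\til{U}$, $\theta(U^*)=\gamma \til{U}^*$, $\theta(V)=\beta \til{V}$, $\theta(V^*)=\delta \til{V}^*$ for nonzero scalars; in the swap case I replace each element by its adjoint: $\theta(U)=\alpha\til{U}^*$, etc. Applying $\theta$ to the unitary relations $UU^*=P_n$ and $VV^*=P_n'$ forces $\alpha\gamma=1$ and $\beta\delta=1$.

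The final step is to apply $\theta$ to the click relations $\mc{F}(U)=\sigma V^*$ and $\mc{F}(U^*)=V$, using that $\theta$ commutes with the click tangle $\mc{F}$ and that in $\mc{Q}$ we have $\mc{F}(\til{U})=\til{\sigma}\til{V}^*$ and $\mc{F}(\til{U}^*)=\til{V}$. This yields two scalar identities of the form $\alpha\til{\sigma}=\sigma\delta$ and $\gamma=\beta$ in the direct case, and analogous identities in the swap case. Combined with $\alpha\gamma=\beta\delta=1$, these force $\sigma=\til{\sigma}$ in both cases, giving the desired contradiction.

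The main obstacle I anticipate is the bookkeeping in the swap case: one has to verify that the swap of $P_1\leftrightarrow Q_1$ propagates through the tensor-product definitions of $P_n, Q_n, P_n', Q_n'$ and therefore forces $\theta(U)$ to land in $\te{Hom}(\til{P}_n,\til{Q}_n)$ rather than $\te{Hom}(\til{Q}_n,\til{P}_n)$, so that $\theta(U)$ is a multiple of $\til{U}^*$ rather than $\til{U}$. Once this is set up correctly, the same algebraic manipulation of the click relations goes through and the conclusion $\sigma=\til{\sigma}$ follows identically. An alternative would be to exhibit an explicit closed diagram evaluating to $\sigma$ in $\mc{P}$ (for instance $\te{tr}(V\,\mc{F}(U))=\sigma$) and invoke that $\theta$ preserves scalar evaluations of closed diagrams, but this still requires resolving the same ambiguity about how $\theta$ acts on $U$ and $V$, so the work is essentially the same.
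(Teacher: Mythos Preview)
Your proposal is correct and follows essentially the same route as the paper: reduce to the direct and swap cases for $\theta|_{\mc{P}_{1,+}}$, write $\theta$ on the box generators as scalar multiples of the corresponding generators in $\mc{Q}$ (or their adjoints in the swap case), and compare the click relations under $\theta$ to force $\sigma=\til{\sigma}$. The only cosmetic difference is that the paper, in the direct case, applies both $\mc{F}$ and $\mc{F}^{-1}$ to $U^*$ (avoiding the unitary relations), whereas you use the unitary relations to get $\alpha\gamma=\beta\delta=1$ and then two applications of $\mc{F}$; both variants yield the same conclusion.
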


    \begin{proof} When $n=1$ there is only one 1th root of unity, so we can assume $n\geq 2$. Call the boxes of $\mc{P}$: $U$, $U^*$, $V$, and $V^*$ and the boxes of $\mc{Q}$: $\til{U}$, $\til{U}^*$, $\til{V}$, and $\til{V}^*$. Call the distinct minimal projections in $\mc{P}_{1,+}$ and $\mc{Q}_{1,+}$ $P_1$, $Q_1$, $\til{P}_1$, and $\til{Q}_1$ respectively. For sake of contradiction let $\theta: \mc{P}\to \mc{Q}$ be a planar algebra isomorphism. Then $\theta_{1,+}:\mc{P}_{1,+}\to \mc{Q}_{1,+}$ is a bijective map. 

    A planar algebra isomorphism must preserve minimal projections so either $\theta_{1,+}(P_1)=\til{P}_1$ and $\theta_{1,+}(Q_1)=\til{Q}_1$ or $\theta_{1,+}(P_1)=\til{Q}_1$ and $\theta_{1,+}(Q_1)=\til{P}_1$. From knowing $\theta_{1,+}$, we can completely determine $\theta_{1,-}$. Let $T$ be the 1-click tangle on the $(1,+)$-box space. Then $\theta_{1,-}(Z_{T}(P_1))=\theta_{1,-}(\ol{P}_1)$
    which is equivalent to $Z_{T}\left(\theta_{1,+}(P_1)\right)$.
    
    A planar algebra isomorphism also preserves tensor product. Consider when $\theta_{1,+}(P_1)=\til{P}_1$. Since $\te{Hom}(P_1\otimes \ol{Q}_1 \otimes \os{(n-1)}{...} \otimes R_1,Q_1\otimes \ol{P}_1 \otimes \os{(n-1)}{...} \otimes R_1)$ is one-dimensional, there exists a $\rho_1 \in \mbb{C}$ such that $\theta(U^*)=\rho_1 \til{U}^*$ and similarly, there is a $\rho_2 \in \mbb{C}$ with $\theta(V)=\rho_2 \til{V}$. By definition of isomorphism, $\mc{F}(\theta(U^*))=\mc{F}(\rho_1 \til{U}^*)=\rho_1 \mc{F}(\til{U}^*)=\rho_1 \til{V}$  is equivalent to $\theta(\mc{F}(U^*))=\theta(V)=\rho_2 \til{V}$ giving that $\rho_1=\rho_2$. Since $\theta$ is an isomorphism and $U^*$ is a generator of a one-dimensional hom-space, $\rho_1\neq 0$. Again, using the definition of isomorphism, $\mc{F}^{-1}(\theta(U^*))=\mc{F}^{-1}(\rho_1\til{U}^*)=\rho_1\til{\sigma}^{-1}\til{V}$ is equivalent to $\theta(\mc{F}^{-1}(U^*))=\sigma^{-1}\theta(V)=\sigma^{-1}\rho_1\til{V}$. Therefore, $\til{\sigma}^{-1}=\sigma^{-1}$, which is a contradiction. 

    Consider when $\theta_{1,+}(P_1)=\til{Q}_1$. Similar to case (ii), there exists $\rho_1, \rho_2, \rho_3,$ and $\rho_4\in \mbb{C}-\{0\}$ where $\theta(U)=\rho_1\til{U}^*$, $\theta(V^*)=\rho_2\til{V}$, $\theta(U^*)=\rho_3 \til{U}$, and $\theta(V)=\rho_4 \til{V}^*$. Using the click relations we get $\theta(\mc{F}(U))=\theta(\sigma V^*)=\sigma\rho_2 \til{V}$ is equivalent to $\mc{F}(\theta(U))=\mc{F}(\rho_1\til{U}^*)=\rho_1\til{V}$, so $\sigma=\rho_1/\rho_2$. However, $\theta(\mc{F}(V^*))=\theta(U)=\rho_1\til{U}^*$ equals $\mc{F}(\theta(V^*))=\mc{F}(\rho_2\til{V})=\rho_2\til{\sigma}\til{U}^*$ so $\til{\sigma}=\rho_1/\rho_2$, implying that $\til{\sigma}=\sigma$. Therefore if $\sigma\neq \til{\sigma}$, $\mc{P}$ and $\mc{Q}$ are nonisomorphic. \end{proof}
    
     \begin{proof}[Proof of Theorem \ref{thm: Afinshade-nnoniso}] By Lemma \ref{lem: Afinshade-noniso}, none of the $n$ presentations found are isomorphic. Thus we have at least $n$ nonisomorphic subfactor planar algebras with principal graph $\til{A}_{2n-1}$. By Theorems \ref{thm: Afinshadenecessary} and \ref{thm: Afinshade-generators} if we have a subfactor planar algebra with principal graph $\til{A}_{2n-1}$ then it must have one of the presentations listed. Thus there are exactly $n$ nonisomorphic subfactor planar algerbas with principal graph $\til{A}_{2n-1}$. \end{proof}
      
\section{Necessary Relations for Affine \texorpdfstring{$A$}{A} Finite Unshaded Planar Algebras}

    While the affine $A$ principal graphs for subfactor planar algebras must have an even number of vertices on their principal graph, we will see in this section that there is no such requirement for unshaded subfactor planar algebras. However, we will also see that depending on the parity of the number of vertices, the number of nonisomorphic planar algebras will vary. Fix $n \in \mathbb{N}$ and define $\mc{P}$ to be a unshaded subfactor planar algebra with principal graph the $\til{A}_{2n-1}$ Dynkin diagram and let $\mc{P'}$ be a unshaded subfactor planar algebra with principal graph the $\til{A}_{2n}$ Dynkin diagram. The next goal is to find all diagrammatic presentations of these planar algebras. We find that many of these results have similar proofs to the shaded case and thus will omit repetitive proofs as well as omit proofs for $\mc{P}'$ that are essentially the same as $\mc{P}$. Define
    \begin{align*}
        X= \strand.
    \end{align*}
    By Lemma \ref{lem: a-fin-graph-index} we already know $\mc{P}$ has index 4. The same proof shows that $\mc{P}'$ has index 4. 
    
    \ind Vertices of a principal graph of an unshaded subfactor planar algebra $\mc{P}$ correspond to equivalence classes of minimal projections in the category $\mc{C}_\mc{P}$. Let $\mc{P}$ have the principal graph $\til{A}_{2n-1}$ with the following labelling when $n\geq 2$
    \begin{equation}\label{eq: arbAfingraph}
             \arbAfingraph
    \end{equation}
    and when $n=1$,
       \begin{equation}\label{eq: arbAfingraphone}
             \arbAfingraphone
    \end{equation}
    and let $\mc{P}'$ have principal graph $\til{A}_{2n}$ with the following labelling
    \begin{equation}\label{eq: arbAfingrapheven}
        \arbAfingrapheven
    \end{equation}
    where each $P_i$, $1\leq i \leq n$, and $Q_j$, $1\leq j \leq n$, are representatives of their respective equivalence class. It should be clear from context if we are referring to the projections in $\mc{P}$ or $\mc{P}'$. 
    
    By Lemma \ref{lem: a-fin-graph-index}, the trace of all the minimal projections are 1 in $\mc{P}$ (and $\mc{P}'$) and by Lemma \ref{lem: Afinshadeuniquerep}, for all but those with principal graph $\til{A}_1$, there is a unique representative of $[P_1]$ and $[Q_1]$ in $\mc{P}_1$ (respectively $\mc{P}'_1)$, which we will call $P_1$ and $Q_1$ and $X=P_1+Q_1$. Further, Lemma \ref{lem: Afinshade-basis} gives that $\{P_1, Q_1\}$ form a basis of $\mc{P}_1$ (respectively $\mc{P}_1'$) for all but the planar algebras with principal graph $\til{A}_1$. 

    \begin{lem}\label{lem: A-fin-strand-decomp}
         For all unshaded subfactor planar algebras with affine $A$ principal graphs, except $\til{A}_1$, the distinct minimal projections in the 1-box space, $P_1$ and $Q_1$ satisfy that either they are self-dual or are dual to each other. 
    \end{lem}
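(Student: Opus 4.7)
The plan is to mirror the duality argument used in the shaded case (the lemma immediately before Theorem \ref{thm: Afinshadenecessary}), exploiting the fact that in the unshaded setting the dual $\ol{(\cdot)}$ (rotation by $\pi$) sends $\mc{P}_1$ to itself (resp.\ $\mc{P}'_1$ to itself), so uniqueness of representatives forces $\ol{P}_1$ and $\ol{Q}_1$ to land in $\{P_1, Q_1\}$.

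First I would verify that $\ol{P}_1$ and $\ol{Q}_1$ are projections in the 1-box space. Because rotation commutes with multiplication and with the adjoint on diagrams, $\ol{P}_1^{\,2} = \ol{P_1^{\,2}} = \ol{P}_1$ and $\ol{P}_1^{\,*} = \ol{P_1^{\,*}} = \ol{P}_1$, and likewise for $Q_1$. This is the same opening move as in the shaded lemma, and relies on no shading data.

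Next I would show that $\ol{P}_1$ and $\ol{Q}_1$ are minimal and pairwise nonisomorphic. The assignment $f \mapsto \ol{f}$ is a bijection $\te{Hom}(A,B) \to \te{Hom}(\ol{B},\ol{A})$, so $\te{dim}\,\te{Hom}(\ol{P}_1,\ol{P}_1) = \te{dim}\,\te{Hom}(P_1,P_1) = 1$ and $\te{dim}\,\te{Hom}(\ol{P}_1,\ol{Q}_1) = \te{dim}\,\te{Hom}(Q_1,P_1) = 0$. Hence $\ol{P}_1$ and $\ol{Q}_1$ are two nonisomorphic minimal projections in the 1-box space. By the uniqueness of representatives of $[P_1]$ and $[Q_1]$ in $\mc{P}_1$ (resp.\ $\mc{P}'_1$), noted just before the lemma as the unshaded analog of Lemma \ref{lem: Afinshadeuniquerep}, this forces $\ol{P}_1, \ol{Q}_1 \in \{P_1, Q_1\}$.

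Finally I would rule out the degenerate assignment $\ol{P}_1 = \ol{Q}_1$: applying $\ol{(\cdot)}$ once more would give $P_1 = Q_1$, contradicting their distinctness. So the only two possibilities are $(\ol{P}_1, \ol{Q}_1) = (P_1, Q_1)$, giving the self-dual case, or $(\ol{P}_1, \ol{Q}_1) = (Q_1, P_1)$, giving the case where the two projections are dual to each other. The main subtlety rather than obstacle is justifying that uniqueness of the 1-box representatives transfers to the unshaded setting; the proof of Lemma \ref{lem: Afinshadeuniquerep} uses only $X \cong P_1 \oplus Q_1$, semisimplicity of $\mc{C}_\mc{P}$, and one-dimensionality of the endomorphism algebras of $P_1$ and $Q_1$, all of which hold in the unshaded case, so the argument transfers verbatim.
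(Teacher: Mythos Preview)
Your argument is correct and closely parallels the paper's shaded-case lemma, as you intended. The paper's own proof of this unshaded lemma takes a slightly different, more algebraic route: rather than invoking the hom-space bijection $f\mapsto\ol{f}$ and the uniqueness-of-representatives statement, it uses directly that $\{P_1,Q_1\}$ is a basis of $\mc{P}_1$, writes $\ol{P}_1=\lambda P_1+\mu Q_1$ and $\ol{Q}_1=\nu P_1+\xi Q_1$, and then appeals to the idempotent and orthogonality relations $P_1^2=P_1$, $Q_1^2=Q_1$, $P_1Q_1=Q_1P_1=0$ (preserved under $\ol{(\cdot)}$) to force $\ol{P}_1,\ol{Q}_1\in\{P_1,Q_1\}$. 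Your approach is a bit more categorical and reuses the shaded machinery verbatim; the paper's is a one-line computation inside the two-dimensional algebra $\mc{P}_1$. Both buy the same conclusion with comparable effort, and your remark that the uniqueness argument transfers unchanged to the unshaded setting is exactly right.
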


    \begin{proof} Without loss of generality, consider $\mc{P}$. Since $\{P_1, Q_1\}$ form a basis of $\mc{P}_1$, there exists $\lambda,\mu,\nu,\xi \in \mbb{C}$ so that $\ol{P}_1=\lambda P_1 + \mu Q_1$ and $\ol{Q}_1=\nu P_1+\xi Q_1$. Using that $P_1^2=P_1$, $Q_1^2=Q_1$, $P_1Q_1=Q_1P_1=0$, and that these properties must hold for their duals, we get that $\ol{P}_1$ and $\ol{Q}_1$ are distinct minimal projections in the 1-box space. Thus either $\ol{P}_1=P_1$ and $\ol{Q}_1=Q_1$ or $\ol{P}_1=Q_1$ and $\ol{Q}_1=P_1$. \end{proof}

\begin{lem}\label{lem: Aunshadenonep1q1}
    For unshaded subfactor planar algebras with principal graph $\til{A}_1$, there exists $R_1,Q_1 \in \mc{P}_1$ isomorphic to $P_1$ with $R_1Q_1=Q_1R_1=0$ and $X=R_1+Q_1$, where either $R_1$ and $Q_1$ are self-dual or dual to each other.
\end{lem}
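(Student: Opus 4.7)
The plan is to first mimic the shaded argument of Lemma \ref{lem: nonep1q1reps} to obtain $R_1, Q_1$ with the first three properties, and then to analyze how the duality (rotation by $\pi$) acts on such decompositions of $X$ inside the matrix algebra $\te{End}(X)$. Since the principal graph is $\til{A}_1$, we have $X \cong P_1 \oplus P_1$, and the definition of direct sum produces orthogonal minimal projections $R_1, Q_1 \in \mc{P}_1$ with $R_1 Q_1 = Q_1 R_1 = 0$, $X = R_1 + Q_1$, and $R_1 \cong Q_1 \cong P_1$, exactly as in the shaded case. Because $P_1$ is minimal and $X \cong P_1 \oplus P_1$, we moreover have $\mc{P}_1 = \te{End}(X) \cong M_2(\mbb{C})$, and $R_1, Q_1$ are rank-$1$ self-adjoint projections summing to the identity in this matrix algebra.

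In the unshaded setting $\ol{X} = X$, and I would observe that $\ol{\cdot}$ is a $\mbb{C}$-linear anti-involution on $\mc{P}_1$ that commutes with the adjoint $*$, since rotation by $\pi$ and vertical flipping are commuting involutions of the plane (they compose to horizontal flipping). Consequently $\ol{R}_1, \ol{Q}_1$ are again orthogonal self-adjoint rank-$1$ projections summing to $I$. I would then split into three cases. If $\ol{R}_1 = R_1$ then automatically $\ol{Q}_1 = Q_1$, and both are self-dual. If $\ol{R}_1 \neq R_1$ but $R_1 \ol{R}_1 = 0$, then two self-adjoint rank-$1$ projections in $M_2(\mbb{C})$ with zero product have orthogonal $1$-dimensional images, so $\ol{R}_1 = I - R_1 = Q_1$ and $R_1, Q_1$ are dual to each other.

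The interesting case is when $\ol{R}_1 \neq R_1$ and $R_1 \ol{R}_1 \neq 0$. Here I would replace $R_1, Q_1$ with the spectral projections of the self-dual positive Hermitian element $A := R_1 + \ol{R}_1 \in \mc{P}_1$. Choosing unit vectors $v, w$ spanning the images of $R_1, \ol{R}_1$, a short matrix computation shows that $A$ has trace $2$ and eigenvalues $1 \pm |\langle v, w\rangle|$, which are distinct because $0 < |\langle v, w\rangle| < 1$ in this case. Let $R_1', Q_1'$ be the corresponding orthogonal rank-$1$ self-adjoint spectral projections of $A$. Then $R_1', Q_1'$ live in $\mc{P}_1$, are minimal (rank-$1$ in $M_2(\mbb{C})$), sum to $X$, and are isomorphic to $P_1$; uniqueness of the spectral decomposition for distinct eigenvalues combined with $\ol{A} = A$ forces $\ol{R_1'} = R_1'$ and $\ol{Q_1'} = Q_1'$, giving a self-dual pair.

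The main obstacle is precisely this third case: when the initial $R_1, Q_1$ are neither fixed nor swapped by the dual, one must genuinely replace them, and the replacement argument relies on the identification $\te{End}(X) \cong M_2(\mbb{C})$ together with self-adjointness of projections in a subfactor planar algebra so that the spectral theorem actually produces orthogonal self-adjoint spectral projections. A naive attempt to show that an arbitrary choice of $R_1, Q_1$ is already self-dual or swapped fails, and the symmetrization via $R_1 + \ol{R}_1$ is what repairs it.
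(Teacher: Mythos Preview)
Your proof is correct and follows the same overall strategy as the paper: identify $\mc{P}_1 \cong M_2(\mbb{C})$ via $X \cong P_1 \oplus P_1$, and exploit that the dual map is a $\mbb{C}$-linear anti-involution on this matrix algebra. The difference is one of explicitness. The paper simply asserts the structural fact that any anti-involution of $M_2(\mbb{C})$ admits a pair of complementary orthogonal idempotents that are either fixed or swapped, and stops there. You instead supply a constructive proof of (a $*$-compatible version of) that fact: starting from an arbitrary decomposition $X = R_1 + Q_1$, you handle the trivial cases directly and in the remaining case replace $R_1, Q_1$ by the spectral projections of the self-dual Hermitian element $A = R_1 + \ol{R}_1$, using distinctness of the eigenvalues $1 \pm |\langle v,w\rangle|$ and uniqueness of the spectral decomposition to force $\ol{R_1'} = R_1'$, $\ol{Q_1'} = Q_1'$. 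Your explicit use of the compatibility $\ol{(\cdot)}^* = \ol{(\cdot)^*}$ is exactly what guarantees the spectral projections are genuine self-adjoint projections (hence minimal projections isomorphic to $P_1$), a point the paper leaves implicit. The paper's version is terser; yours is self-contained and makes transparent why the ``neither fixed nor swapped'' situation for an initial choice of $R_1$ can always be repaired.
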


    \begin{proof} From the principal graph, $X \cong P_1 \oplus P_1$ which gives that the dimension of $\mc{P}_1$ is 4. Further, $\mc{P}_1$ is an associative algebra isomorphic to $\te{Hom}(P_1 \oplus P_1, P_1 \oplus P_1)$. As $P_1$ is minimal projection, this hom-space is isomorphic to $M_2(\mbb{C})$. The dual map of is an anti-involution of $\mc{P}$. For any anti-involution of $M_2(\mbb{C})$, there exists nonzero $R_1,Q_1$ in $M_2(\mbb{C})$ that are either fixed or swapped such that $R_1^2=R_1$, $Q_1^2=Q_1$, $R_1+Q_1=I$, and $R_1Q_1=Q_1R_1=0$. As $\mc{P}_1$ is isomorphic to $M_2(\mbb{C})$ this implies that there exists elements in $\mc{P}_1$, abusing notation we will still call them $R_1$ and $Q_1$, such that $X\cong R_1 \oplus Q_1$, $R_1Q_1=Q_1R_1=0$, $R_1^2=R_1$, $Q_1^2=Q_1$. \end{proof}

    When $n=1$ let $P_1$ and $Q_1$ refer to the elements $R_1$ and $Q_1$ in the previous , respectively. 
    
    For our diagrammatic descriptions of unshaded subfactor planar algebras with principal graph of type affine $A$ we will want to have diagrams $P_1$ and $Q_1$ so that either they are dual to each other (rotation by $\pi$ changes a $P_1$ to a $Q_1$ and vice versa) or self-dual (rotation by $\pi$ of $P_1$ is $P_1$ and of $Q_1$ is $Q_1$). We will call the former case \textit{the arrow case} and denote
    \begin{align*}
        P_1=\up \text{ and } Q_1=\down.
    \end{align*} 
    We will call the latter case \textit{the color case} and denote 
    \begin{align*}
        P_1=\red \text{ and } Q_1=\blue.
    \end{align*} 
    
    A consequence of Theorem \ref{thm: A-finite-necessary-arrow-even} is that for the $\til{A}_{2n}$ planar algebra $\mc{P}'$ we cannot have the color case, whereas both cases will exist for the $\til{A}_{2n-1}$ planar algebra, $\mc{P}$. However, we will continue forward assuming the possibility for both cases for both $\mc{P}$ and $\mc{P}'$. Next we prove a collection of useful facts following from these decompositions of $X$. 

     \begin{lem}\label{lem: A-fin-gen-facts}
         For $\mc{P}$ and $\mc{P}'$ we have the following:
         \begin{paracol}{2}
            \begin{enumerate}[label=(\roman*)] 
            \item If $\strand = \up + \down$ then:
                \begin{enumerate}
                    \item $\up \raisebox{0.3cm}{{\scriptsize *}}=\up $ and $\down \raisebox{0.3cm}{{\scriptsize *}}=\down$,
                    \item $\udsaddle = \te{ }\updown$ and $\dusaddle=\te{ }\downup$
                    \item $\udsaddle \cong \dusaddle \cong \te{ }\updown\te{ } \cong\te{ } \downup \te{ } \cong\te{ } \emptyset$
                    \item $\upbubble=1, \downbubble=1,$ and $\bubble =2$
                \end{enumerate}
            \end{enumerate}
            \switchcolumn
            \begin{enumerate}[label=(\roman*)] 
                \setcounter{enumi}{1}
                \item If $\strand = \red + \blue$ then:
                \begin{enumerate}
                    \item $\red\raisebox{0.3cm}{{\scriptsize *}}=\red$ and $\blue \raisebox{0.3cm}{{\scriptsize *}}=\blue$
                    \item $\rsaddle= \te{ }\rtwo$ and $\bsaddle =\te{ }\btwo$
                    \item $\rsaddle\cong\bsaddle\cong \te{ }\rtwo \te{ }\cong \te{ } \btwo \te{ }\cong \emptyset$  
                    \item $\rbubble=1, \bbubble=1,$ and $ \bubble =2$
                \end{enumerate}
            \end{enumerate}
        \end{paracol}
     \end{lem}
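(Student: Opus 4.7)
The plan is to treat the arrow case (i) and the color case (ii) in parallel, since each sub-claim in (i) has a direct analogue in (ii); I will describe the argument for (i) and indicate that (ii) is obtained by replacing $\bar{P}_1=Q_1$ with $\bar{P}_1=P_1$ and $\bar{Q}_1=Q_1$ throughout. Sub-claim (a) is immediate: by Lemma \ref{lem: Afinshadeuniquerep} (and Lemma \ref{lem: Aunshadenonep1q1} when $n=1$) the elements $P_1=\up$ and $Q_1=\down$ are projections in $\mc{P}_1$, so by the definition of projection in a subfactor planar algebra $P_1^*=P_1$ and $Q_1^*=Q_1$.

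For sub-claim (b) I would mimic the inner-product argument of Lemma \ref{lem: Afinshadesaddlerels}(i). Let $D=\udsaddle$ and consider $D-(P_1\otimes\bar{P}_1)=D-\updown$. Using that $\te{tr}(P_1)=\te{tr}(Q_1)=1$ (from Lemma \ref{lem: a-fin-graph-index}), I would compute
\begin{align*}
\langle D-\updown, D-\updown\rangle=\te{tr}\bigl((D-\updown)^*(D-\updown)\bigr)=0
\end{align*}
by expanding and using self-adjointness from (a) together with the bubble/closed-loop evaluations; positive definiteness of the inner product on $\mc{P}$ then forces $D=\updown$. The computation for $\dusaddle=\downup$ is identical after reflecting orientations.

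For sub-claim (c), following the template of Lemma \ref{lem: Afinshadesaddlerels}(ii), I would write explicit evaluation/coevaluation maps in $\mc{C}_\mc{P}$. Define $f:\emptyset\to \updown$ by the cup diagram and observe $f^*$ is the corresponding cap. Using (b) one checks $ff^*=\updown$ and $f^*f=\emptyset$, giving $\updown\cong\emptyset$; the other three isomorphisms are proved by the same cup/cap construction with appropriate orientation or color. Sub-claim (d) then follows immediately: Lemma \ref{lem: a-fin-graph-index} gives $\te{tr}(P_1)=\te{tr}(Q_1)=1$, so $\upbubble=\downbubble=1$ and $\bubble=\te{tr}(P_1+Q_1)=2$.

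The main obstacle, such as it is, is bookkeeping rather than substance: one must be careful that in the arrow case $\bar{P}_1=Q_1$ while in the color case $\bar{P}_1=P_1$, so the ``matching'' saddle diagrams look different, and one must verify the inner-product computation in (b) still closes up to a scalar that can be cancelled against the trace values. Since all auxiliary facts (Lemmas \ref{lem: a-fin-graph-index}, \ref{lem: Afinshadeuniquerep}, \ref{lem: Afinshade-basis}, \ref{lem: Aunshadenonep1q1}, \ref{lem: A-fin-strand-decomp}) are already established, no new techniques beyond the shaded-case template are required, and the color-case argument of (ii) proceeds verbatim with the self-dual identifications.
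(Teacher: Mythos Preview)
Your proposal is correct and follows essentially the same route as the paper: part (a) from the projection property, part (b) by the inner-product computation of Lemma~\ref{lem: Afinshadesaddlerels}, part (c) via explicit cup/cap morphisms as in that same lemma, and part (d) by taking the trace of $X=P_1+Q_1$. The paper's own proof is a terse paragraph pointing to exactly these ingredients, so there is nothing to add.
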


     \begin{proof} As $P_1$ and $Q_1$ are minimal projections, $P_1^*=P_1$ and $Q_1^*=Q_1$, giving part (a) in both cases. For part (b), we can essentially follow the inner product computation done in Lemma \ref{lem: Afinshadesaddlerels}. For part (c) we follow a similar idea to Lemma \ref{lem: Afinshadesaddlerels} by defining $f$ to be the upper strand of the leftmost diagram of the isomorphism. Finally, part (d) follows from taking the trace of $X=P_1+Q_1$ and using that the trace of the minimal projections are 1.
    \end{proof}

        From the previous lemma we get that a for any diagram $T$, its adjoint, $T^*$, in the arrow case is vertically flip, reverse the arrows, and star the inside of any boxes, then extend this definition anti-linearly and on diagrams. For the color case, $T^*$ is vertically flip and star the inside of any boxes. 

        The proof of the below lemma is nearly identical to the proofs of Lemma \ref{lem: Afinshadedtensordecomp} and Lemma \ref{lem: AfinshadePnisotoQn} and thus will be omitted. 
\begin{lem}\label{lem: Afin-minproj}
    \begin{enumerate}[label=(\roman*)]
        \item For $\mc{P}$ with $n\geq 2$, using the labelling of the minimal projections in (\ref{eq: arbAfingraph}), we have that:
    \begin{enumerate}
        \item in the arrow case, $P_k\cong P_1^{\otimes k}$, $Q_\ell\cong Q_1^{\otimes \ell}$, and $P_1^{\otimes n} \cong Q_1^{\otimes n}$, and
        \item in the color case, $P_k \cong P_1 \otimes Q_1 \otimes \os{(k-1)}{...} \otimes R$, $Q_\ell \cong Q_1 \otimes P_1 \os{(\ell-1)}{...} \otimes S$ where $R,S$ are $P_1$ or $Q_1$ depending on the parity of $k$ and $\ell$. Further, $P_1 \otimes Q_1 \os{(n-1)}{...} \otimes R \cong Q_1 \otimes P_1 \os{(n-1)}{...} \otimes S$. 
    \end{enumerate}
    \item For $\mc{P}'$, using the labelling of the minimal projections in (\ref{eq: arbAfingrapheven}), we have that in the arrow case, $P_k\cong P_1^{\otimes k}$, $Q_\ell\cong Q_1^{\otimes \ell}$, $P_1^{\otimes (n+1)} \cong Q_1^{\otimes n}$, and $P_1^{\otimes n} \cong Q_1^{\otimes (n+1)}$.
    \end{enumerate}

\end{lem}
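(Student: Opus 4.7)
The plan is to mimic the shaded arguments of Lemmas \ref{lem: Afinshadedtensordecomp} and \ref{lem: AfinshadePnisotoQn} almost verbatim: I induct on the rank $k$ (and symmetrically on $\ell$), driven by the fusion rule $P_k \otimes X \cong P_{k-1} \oplus P_{k+1}$ read off the principal graph, together with the strand decomposition $X = P_1 + Q_1$ and the saddle identifications collected in Lemma \ref{lem: A-fin-gen-facts}. The only new feature relative to the shaded setting is that $X$ is now self-dual, so the saddles pair different projections according to whether we are in the arrow case or the color case; the resulting formulas are then forced by matching summands in a semisimple decomposition.

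For part (i)(a), in the arrow case Lemma \ref{lem: A-fin-gen-facts} gives $P_1 \otimes Q_1 \cong \emptyset \cong Q_1 \otimes P_1$, while $P_1 \otimes P_1$ and $Q_1 \otimes Q_1$ do not collapse. Assuming inductively $P_k \cong P_1^{\otimes k}$ for some $k<n$, I would expand
\[
P_k \otimes X \;\cong\; P_1^{\otimes k} \otimes P_1 \;\oplus\; P_1^{\otimes k} \otimes Q_1 \;\cong\; P_1^{\otimes(k+1)} \;\oplus\; P_1^{\otimes(k-1)}
\]
and match against $P_{k+1} \oplus P_{k-1}$ to identify $P_{k+1} \cong P_1^{\otimes(k+1)}$ by semisimplicity; the statement for $Q_\ell$ is symmetric. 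For the top isomorphism $P_1^{\otimes n} \cong Q_1^{\otimes n}$, I apply the same recipe to the fusion rule $Q_{n-1} \otimes X \cong Q_{n-2} \oplus P_n$: the left-hand side expands to $Q_1^{\otimes n} \oplus Q_1^{\otimes(n-2)}$, and cancelling the summand $Q_{n-2} \cong Q_1^{\otimes(n-2)}$ already known by induction yields $P_n \cong Q_1^{\otimes n}$; combining with $P_n \cong P_1^{\otimes n}$ gives the claim.

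For part (i)(b), in the color case both $P_1$ and $Q_1$ are self-dual, so $P_1 \otimes P_1 \cong \emptyset \cong Q_1 \otimes Q_1$ while the mixed tensors $P_1 \otimes Q_1$ and $Q_1 \otimes P_1$ persist. The induction has the same shape, but the collapsing summand is determined by the rightmost strand of $P_k$: whichever of $P_1$ or $Q_1$ matches that strand is absorbed by a saddle to give $P_{k-1}$, and the other extends the alternating chain to $P_{k+1}$. The parity choice of $R$ and $S$ is then exactly the parity bookkeeping needed to keep the alternation consistent at each step. The top identification again comes from cancelling a known summand in the fusion rule at $Q_{n-1}$, after expanding both sides via the strand decomposition.

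For part (ii), the $\til{A}_{2n}$ principal graph replaces the single top vertex by two adjacent vertices $P_n$ and $Q_n$, giving fusion rules $P_n \otimes X \cong P_{n-1} \oplus Q_n$ and $Q_n \otimes X \cong Q_{n-1} \oplus P_n$. The interior induction runs exactly as in (i)(a), producing $P_k \cong P_1^{\otimes k}$ and $Q_\ell \cong Q_1^{\otimes \ell}$ for $k,\ell \leq n$. Expanding the first top fusion as $P_1^{\otimes(n+1)} \oplus P_1^{\otimes(n-1)}$ and cancelling the summand $P_{n-1} \cong P_1^{\otimes(n-1)}$ gives $Q_n \cong P_1^{\otimes(n+1)}$, i.e.\ $Q_1^{\otimes n} \cong P_1^{\otimes(n+1)}$; the symmetric computation at $Q_n$ produces $P_1^{\otimes n} \cong Q_1^{\otimes(n+1)}$. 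The main (and essentially only) obstacle throughout is precisely this top-of-graph step, where the cyclic closure of $\til{A}$ forces two a priori distinct inductive formulas to coincide; in every case it is resolved by the same ``cancel a known summand against the neighbouring fusion rule'' trick, which is legitimate because $\mathcal{C}_{\mc{P}}$ is semisimple with Krull--Schmidt decompositions into minimal projections.
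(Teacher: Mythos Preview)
Your proposal is correct and is essentially the argument the paper intends: the paper omits the proof of this lemma, explicitly pointing to Lemmas \ref{lem: Afinshadedtensordecomp} and \ref{lem: AfinshadePnisotoQn}, and you have faithfully reproduced that induction-plus-cancellation scheme, adapting the saddle identifications to the arrow and color cases via Lemma \ref{lem: A-fin-gen-facts} and handling the $\til{A}_{2n}$ top vertices exactly as one would expect.
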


The above  tells us the principal graphs for $\til{A}_{2n-1}$ and $\til{A}_{2n}$. For $\til{A}_{2n-1}$ in the arrow case we have principal graph 
    \begin{equation}\label{eq: arrowprincipalgraph}
        \oAfingraph
    \end{equation}
    when $n\geq 2$ and 
    \begin{equation}\label{eq: arrowprincipalgraphone}
        \oAfingraphone
    \end{equation}
    when $n=1$. In the color case the principal graph will be
    \begin{equation}\label{eq: colorprincipalgraph}
        \rbAfingraph
    \end{equation}
    when $n\geq 2$ and
    \begin{equation}\label{eq: colorprincipalgraphone}
        \rbAfingraphone
    \end{equation}
    when $n=1$. For $\til{A}_{2n}$ the principal graph will be
    \begin{equation}\label{eq: arrowprincipalgrapheven}
        \oAfingrapheven
    \end{equation}
    for all $n\in \mbb{N}$. 

We are now ready to prove the necessary relations for any unshaded subfactor planar algebra with principal graph $\til{A}_{2n-1}$. Later in  \ref{thm: A-fin-arrow-generate} we will prove that the elements listed in Definitions \ref{Afinarrowgenerators} and \ref{Afincolorgenerators}  are indeed generators of their respective planar algebra, $\mc{P}$. For now, we want to find elements and relations for the planar algebras. 

\begin{defn}[Elements in Affine $A$ Unshaded Subfactor Planar Algebras: The Arrow Case] \label{Afinarrowgenerators}
We will show in the following  that when $\strand \cong \up \oplus \down$, the $\til{A}_{2n-1}$ unshaded subfactor planar algebras have elements
\begin{equation*}
    P_1= \up , \U , \text{ and } \Uad. 
\end{equation*}    
\end{defn}

\begin{defn}[The Relations of Affine $A$ Finite Unshaded Subfactor Planar Algebras: The Arrow Case] \label{Afinarrowrelations} We will show in the following  that when $\strand \cong \up \oplus \down$ the relations of the $\til{A}_{2n-1}$ unshaded subfactor planar algebras are:
 \begin{enumerate}[label=(\roman*)]
            \item (bubble relations) $\upbubble=\downbubble =1$ 
            \item (strand decomposition) $\strand = \up + \down$ 
            \item (orientation disagreement) $\ud=0$ and $\te{ }\du=0$ 
            \item (saddle relations) $\udsaddle = \te{ }\updown$ and $\dusaddle=\te{ }\downup$ 
            \item  ($U$ unitary relations) $UU^*=\UUad=\ndown \quad$ and
            $\quad U^*U=\UadU=\nup$ 
            \item (click relations) For some $2n$th root of unity, $\omega_{2n}$: \\$\mc{F}\left(U\right)=\Uclick=\omega_{2n}\U \quad$ and 
            $\quad \mc{F}(U^*)=\te{  }\Uadclick=\omega_{2n}\Uad$.
        \end{enumerate} 
\end{defn}

\begin{defn}[Elements in Affine $A$ Unshaded Subfactor Planar Algebras: The Color Case] \label{Afincolorgenerators}
We will show in the following  that when $\strand \cong \red \oplus \blue$, the $\til{A}_{2n-1}$ unshaded subfactor planar algebras have elements
\begin{equation*}
    P_1 = \red , Q_1=\blue. \V , \text{ and } \Vad 
\end{equation*}    
where a purple strand indicates using the appropriate color when alternating between red and blue strands.
\end{defn}

\begin{defn}[The Relations of Affine $A$ Finite Unshaded Subfactor Planar Algebras: The Color Case] \label{Afincolorrelations} We will show in the following  that when $\strand \cong \red \oplus \blue$ the relations of the $\til{A}_{2n-1}$ unshaded subfactor planar algebras are given below. Note that we use purple strands to indicate the strand is either red or blue, depending on parity in a way that should be clear from context.
     \begin{enumerate}[label=(\roman*)]
            \item  (bubble relations) $\rbubble=\bbubble=1$ 
            \item (strand decomposition) $\strand = \red + \blue$ 
            \item (color disagreement) $\br=0$ 
            \item (saddle relations) $\rsaddle= \te{ }\rtwo$ and $\bsaddle =\te{ }\btwo$ 
            \item ($V$ unitary relations) $ VV^*= \VVad=\nbr\quad $ and $\quad V^*V=\VadV=\nrb$      
            \item (click relation) For some $n$th root of unity, $\tau_{n}$:
            \begin{align*}
                \mc{F}(V)=\Vclickr=\tau_n \te{ }\Vad= \tau_n \te{ } \Vclickb=\tau_n\mc{F}^{-1}(V).
            \end{align*}
            \end{enumerate}
\end{defn}

We show in the following  that the elements and relations defined above are necessarily elements and relations in an $\til{A}_{2n-1}$ unshaded subfactor planar algebra.

\begin{thm}\label{thm: A-finite-necessary-arrow} (Necessary relations for $\til{A}_{2n-1}$)
        Fix $n \in \mbb{N}$. If $\mc{P}$ is an unshaded subfactor planar algebra with principal graph $\til{A}_{2n-1}$, then one of two cases hold, which we call the arrow case and the color case. 
        \begin{enumerate}
            \item In the arrow case, $\mc{P}$ has elements given in Definition \ref{Afinarrowgenerators} with relations given in Definition \ref{Afinarrowrelations}. Further, we get the equivalence classes of minimal projections are for $n\geq 2$ is (\ref{eq: arrowprincipalgraph}) and for $n=1$ is (\ref{eq: arrowprincipalgraphone}).
            \item In the color case, $\mc{P}$ has elements given in Definition \ref{Afincolorgenerators} with relations given in Definition \ref{Afincolorrelations}. Further, we get the equivalence classes of minimal projections are for $n\geq 2$, (\ref{eq: colorprincipalgraph}), and for $n=1$, is (\ref{eq: colorprincipalgraphone}).
        \end{enumerate}
                
    \end{thm}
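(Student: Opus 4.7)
The proof is parallel in structure to that of Theorem \ref{thm: Afinshadenecessary}. By Lemma \ref{lem: A-fin-strand-decomp} (or Lemma \ref{lem: Aunshadenonep1q1} when $n=1$), the distinct minimal projections $P_1, Q_1 \in \mathcal{P}_1$ are either mutually dual (the arrow case) or each self-dual (the color case), and these possibilities are mutually exclusive, so it suffices to treat them separately. In either case, relation (i) (bubbles), relation (ii) (strand decomposition), and relation (iv) (saddles) are essentially the content of Lemma \ref{lem: A-fin-gen-facts}, while relation (iii) (orientation or color disagreement) is $P_1 Q_1 = Q_1 P_1 = 0$, coming from the non-isomorphism of distinct minimal projections. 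By Lemma \ref{lem: Afin-minproj} the labellings of the minimal projections are as displayed in (\ref{eq: arrowprincipalgraph})--(\ref{eq: colorprincipalgraphone}), so it only remains to exhibit elements $U, U^*$ (arrow) or $V, V^*$ (color) satisfying relations (v) and (vi).

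For relation (v), Lemma \ref{lem: Afin-minproj} supplies an isomorphism $P_1^{\otimes n} \cong Q_1^{\otimes n}$ in the arrow case and an isomorphism between the two length-$n$ alternating projections $P_1 \otimes Q_1 \otimes \cdots$ and $Q_1 \otimes P_1 \otimes \cdots$ in the color case. Because the hom-space between any pair of isomorphic minimal projections is one-dimensional, any such isomorphism is unique up to a nonzero scalar, and we may normalize it to be unitary. Set $U$ (respectively $V$) to be this unitary intertwiner with adjoint $U^*$ (respectively $V^*$); then the identities $UU^* = P_1^{\otimes n}$, $U^*U = Q_1^{\otimes n}$ and the analogous identities for $V, V^*$ follow immediately from unitarity together with the explicit tensor-product forms of the minimal projections.

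The substantive part is relation (vi). The strategy is to show that $\mathcal{F}(U)$ (or $\mathcal{F}(V)$) lies in the same one-dimensional hom-space as $U$ (or $V^*$), so that it must be a scalar multiple. In the arrow case, one click rotates a single strand across the box, which reverses its orientation; because $P_1$ and $Q_1$ are dual the source and target of $U$ are preserved, giving $\mathcal{F}(U) = \omega U$ for some scalar $\omega$. Using $\mathcal{F}^{2n} = \mathrm{id}$ on the $(2n,\pm)$-box space forces $\omega^{2n} = 1$, so $\omega = \omega_{2n}$ is a $2n$-th root of unity; the second half of relation (vi) then follows by taking adjoints. In the color case a single click toggles the alternation pattern of colors, so $\mathcal{F}(V)$ lies in the hom-space of $V^*$, producing $\mathcal{F}(V) = \tau_n V^*$; combining this with the analogous equation for $\mathcal{F}(V^*)$ and a spherical trace computation $\mathrm{tr}(VV^*) = \mathrm{tr}(\mathcal{F}^{-1}(V) \mathcal{F}(V^*))$, mirroring the corresponding step in Theorem \ref{thm: Afinshadenecessary}, pins down the second scalar as the inverse of the first, whence $\mathcal{F}^2(V) = \tau_n V$ and $\tau_n^n = 1$.

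The main obstacle is the careful bookkeeping of how a single click acts on the boundary data: one must verify that the rotated diagram genuinely lies in the predicted one-dimensional hom-space, which depends on the parity conventions in Lemma \ref{lem: Afin-minproj} and, in the arrow case, on the fact that rotation reverses arrows. Once that identification is secured, the root-of-unity bounds and the derivation of the remaining click identities are routine and follow the shaded template almost verbatim.
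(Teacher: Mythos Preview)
Your overall structure and the arrow case are correct and match the paper's approach. The gap is in the color case, specifically in how you obtain the precise form of the click relation $\mc{F}(V)=\tau_n V^*=\tau_n\mc{F}^{-1}(V)$.

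If you choose an arbitrary unitary $W$ between the two alternating projections, you get $\mc{F}(W)=\alpha W^*$ and $\mc{F}(W^*)=\beta W$ for some scalars $\alpha,\beta$. The adjoint identity $(\mc{F}(W))^*=\mc{F}^{-1}(W^*)$ (or equivalently the spherical trace computation you invoke) only yields $\bar\alpha=\alpha^{-1}$ and $\bar\beta=\beta^{-1}$, i.e.\ $|\alpha|=|\beta|=1$; it does \emph{not} force $\beta=\alpha^{-1}$. Your claim that the trace computation ``pins down the second scalar as the inverse of the first'' is therefore incorrect, and if it were true it would force $\tau_n=\alpha\beta=1$ in every case. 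What is true is that $\alpha\beta$ is an $n$th root of unity (from $\mc{F}^{2n}=\mathrm{id}$), but the relation in Definition~\ref{Afincolorrelations} requires more: it demands $\mc{F}^{-1}(V)=V^*$ on the nose, equivalently that the ``$\beta$'' for $V$ equals $1$.

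The paper fixes this by a rescaling that you omit: starting from $W$ as above, it sets $V=\sqrt{\alpha}\,W^*$, so that $V^*=\sqrt{\alpha^{-1}}\,W$. Then $\mc{F}^{-1}(V)=\sqrt{\alpha}\,\mc{F}^{-1}(W^*)=\sqrt{\alpha}\,\alpha^{-1}W=\sqrt{\alpha^{-1}}\,W=V^*$, and $\mc{F}(V)=\sqrt{\alpha}\,\beta W=(\alpha\beta)V^*=\tau_n V^*$. This is the missing step; once you insert it (and note that $V$ remains unitary since $|\sqrt{\alpha}|=1$), your argument goes through.
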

      \begin{proof} By Lemmas \ref{lem: A-fin-strand-decomp} and \ref{lem: Aunshadenonep1q1}, we know there are two cases of decomposing $X$: either
      \begin{align*}
          \strand\cong \up \oplus \down \te{ or } \strand \cong \red \oplus \blue.
      \end{align*}
     The bubble relations follows from all minimal projections having trace 1. The orientation/color disagreement is true because $P_1$ and $Q_1$ are nonisomorphic minimal projections so $P_1Q_1=Q_1P_1=0$. The strand decompositions follow from the equality $X=P_1+Q_1$, and saddle relations were proven in Lemma \ref{lem: A-fin-gen-facts}. 
     
      For convenience, let us distinguish the minimal projections in the arrow and color case, by reserving $P_1$ and $Q_1$ to be the minimal projections in the 1-box space for the arrow case and $\til{P}_1$, $\til{Q}_1$ to be the minimal projections in the 1-box space for the color case. Before proving the final two relations, notice that Lemma \ref{lem: Afin-minproj} gives us the equivalence classes of minimal projections shown in each of the principal graphs shown in the statement of this . By the same lemma, we have that in the arrow case
      \begin{align*}
          \up \os{(n)}{...}\up \cong \down \os{(n)}{...}\down,
      \end{align*}
      and by the definition of isomorphic elements, there must exist morphisms, say $U \in \te{Hom}\left(P_1^{\otimes n}, Q_1^{\otimes n}\right)$ and $U^*\in \te{Hom} \left(Q_1^{\otimes n}, P_1^{\otimes n} \right)$ such that $UU^*=Q_1^{\otimes n}$ and $U^*U=P_1^{\otimes n}$. Therefore there exists elements
      \begin{align*}
          U=\U \te{ and } U^*=\Uad,
      \end{align*} 
      in $\mc{P}$ where $UU^*=Q_1^{\otimes n}$ and $U^*U=P_1^{\otimes n}$, which gives our U unitary relations.
      
        For the final relation in the arrow case, notice that $\mc{F}(U) \in \te{Hom}\left(P_1^{\otimes n}, Q_1^{\otimes n}\right)$. However, as $P_1^{\otimes n} \cong Q_1^{\otimes n}$ and these elements are minimal projections, we get that $\te{dim}\left(\te{Hom}\left(P_1^{\otimes n}, Q_1^{\otimes n}\right)\right)=1$. Thus, $\mc{F}(U)=\lambda U$ for some $\lambda \in \mbb{C}$. Applying $\mc{F}$ $2n$ times we must circle back to $U$, so $\mc{F}^{2n}\left(U\right)=\lambda^{2n} U = U$. So $\lambda = \omega_{2n}$, some $2n$th root of unity. Noticing that $\mc{F}^*=\mc{F}^{-1}$ gives that taking the star of $\mc{F}(U)=\lambda U$ with $\lambda=\omega_{2n}$ is $\mc{F}^{-1}(U^*)=\omega_{2n}^{-1}U^*$. Applying $\mc{F}$ to both sides gives $U^*=\omega_{2n}^{-1} \mc{F}(U^*)$, which is the second click relation. 
     
        In the color case, there must exist morphisms, say $W \in \te{Hom}\left(\til{Q}_1 \otimes \til{P}_1 \os{(n-1)}{...}\otimes S, \til{P}_1 \otimes \til{Q}_1 \os{(n-1)}{...}\otimes R \right)$ along with $W^*\in \te{Hom}\left(\til{P}_1 \otimes \til{Q}_1 \os{(n-1)}{...}\otimes R, \til{Q}_1 \otimes \til{P}_1 \os{(n-1)}{...}\otimes S\right)$, where $R,S \in \{\til{P}_1,\til{Q}_1\}$, $R=P_1$ only if $n$ is odd, and $R\neq S$ such that $W^*W= \til{Q}_1 \otimes \til{P}_1 \os{(n-1)}{...}\otimes S$ and $WW^*=\til{P}_1 \otimes \til{Q}_1 \os{(n-1)}{...}\otimes R$. Since $\te{Hom}\left(\til{Q}_1 \otimes \til{P}_1 \os{(n-1)}{...}\otimes S, \til{P}_1 \otimes \til{Q}_1 \os{(n-1)}{...}\otimes R\right)$ is one-dimensional, there exists $\alpha,\beta  \in \mathbb{C}$ such that $\mc{F}(W)=\alpha W^*$ and $\mc{F}(W^*)=\beta W$. Taking $\mc{F}^{-1}$ of both sides and moving the scalar over, we then get $\mc{F}^{-1}(W^*)=\alpha^{-1}W$ and $\mc{F}^{-1}(W)=\beta^{-1} W^*$. On the other hand, taking the adjoint of both sides of the equalities: $\mc{F}(W)=\alpha W^*$ and $\mc{F}(W^*)=\beta W$ and noticing $\mc{F}(W)^*=\mc{F}^{-1}(W^*)$ and $\mc{F}(W^*)^*=\mc{F}^{-1}(W)$ gives that $\mc{F}^{-1}(W^*)=\overline{\alpha} W$ and $\mc{F}^{-1}(W)=\ol{\beta}W^*$. Therefore, $\alpha^{-1}=\ol{\alpha}$ and $\beta^{-1}=\ol{\beta}$. 

        We can further show that $\alpha\beta=\tau_n$, some $n$th root of unity. Notice that $\mc{F}^2(W)=(\alpha\beta) W$ and applying $\mc{F}^2$ $n$ times we must circle back to $W$, so $\mc{F}^{2n}(W)=(\alpha\beta)^nW=W$ and $\alpha\beta$ must be an $n$th root of unity. 

        Define the morphism $V$, shown below, by $V=\sqrt{\alpha} W^*$, making $V^*=\ol{\sqrt{\alpha}}W=\sqrt{\alpha^{-1}}W$, which is also shown below
        \begin{align*}
            V=\V \te{ and }
            V^*=\Vad.
        \end{align*}
     Thus $VV^*= \til{Q}_1 \otimes \til{P}_1 \os{(n-1)}{...}\otimes S$ and $V^*V=\til{P}_1 \otimes \til{Q}_1 \os{(n-1)}{...}\otimes R$, giving relation (v). Further, for the $n$th root of unity $\tau_n=\alpha\beta$, 
        \begin{align*}
            \mc{F}(V)=\sqrt{\alpha}\beta W=\alpha \beta \sqrt{\alpha^{-1}}W=\alpha \beta V^* = \tau_n V^*, \text{ and }
            \tau_n \mc{F}^{-1}(V)=\beta \sqrt{\alpha}W=\beta \alpha \sqrt{\alpha^{-1}}W=\alpha \beta V^*
        \end{align*}
        proving relation (vi). Thus we have shown that $\mc{P}$ must have all of the given elements and relations. \end{proof}

        We will show in  \ref{thm: A-fin-arrow-generate} that the elements $P_1$, $U$, and $U^*$ from the preceding  are indeed generators of $\mc{P}$. Next we tackle the $\til{A}_{2n}$. The following  is nearly identical to the arrow case of  \ref{thm: A-finite-necessary-arrow} so will be omitted. 

\begin{thm}\label{thm: A-finite-necessary-arrow-even} (Necessary relations for $\til{A}_{2n}$). Fix $n\in \mbb{N}$. If $\mc{P}'$ is an unshaded subfactor planar algebra with principal graph $\til{A}_{2n}$, then $\mc{P}'$ has elements given in Definition \ref{Afinarrowgenerators} and relations given in Definition \ref{Afinarrowrelations} except with $U$ having $n+1$ strands on the bottom and $U^*$ having $n+1$ strands on top. Further we get the equivalence classes of minimal projections are given in (\ref{eq: arrowprincipalgrapheven}).
\end{thm}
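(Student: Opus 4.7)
The plan is to mirror the arrow case of Theorem \ref{thm: A-finite-necessary-arrow}, with the strand counts of the unitary box adjusted from $(n,n)$ to $(n+1,n)$. The one new step for the $\til{A}_{2n}$ setting is showing that the color case from Lemma \ref{lem: A-fin-strand-decomp} cannot occur, so I would begin there.

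By Lemmas \ref{lem: A-fin-strand-decomp} and \ref{lem: Aunshadenonep1q1} the strand decomposes as $X\cong P_1\oplus Q_1$ and we are in either the arrow case ($\ol{P}_1=Q_1$) or the color case (both $P_1$ and $Q_1$ self-dual). If the color case held, Lemma \ref{lem: A-fin-gen-facts}(ii)(c) would force $P_1\otimes P_1\cong Q_1\otimes Q_1\cong \emptyset$, and the inductive tensor construction of Lemma \ref{lem: Afin-minproj} would produce minimal projections $P_k$ and $Q_k$ on $k$ alternating strands satisfying $P_k\otimes X\cong P_{k-1}\oplus P_{k+1}$ and the analogous rule for $Q_k$. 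Matching against the odd cycle $\til{A}_{2n}$ requires the identification $P_{n+1}\cong Q_n$ in order to close the two arms into a $(2n+1)$-cycle. However, every elementary morphism available in the color-case presentation --- the idempotents $P_1,Q_1$ with boundary $(1,1)$, the cups and caps from the saddle relations which change boundary by $\pm 2$, and any unitary box of the color form $V$ with boundary $(k,k)$ --- preserves the parity of (top count) $-$ (bottom count). A putative isomorphism $Q_n\to P_{n+1}$ would have parity difference $1$, so no such morphism can exist. This rules out the color case for $\til{A}_{2n}$.

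With the arrow case established, the remaining work is nearly verbatim the arrow argument of Theorem \ref{thm: A-finite-necessary-arrow}. Relations (i) and (iv) follow from Lemma \ref{lem: a-fin-graph-index} (trace of every minimal projection equals $1$) and Lemma \ref{lem: A-fin-gen-facts}(i)(b)--(d); relations (ii) and (iii) are immediate from $X=P_1+Q_1$ together with $P_1Q_1=Q_1P_1=0$ since $P_1,Q_1$ are nonisomorphic minimal projections. For the unitary relation (v), Lemma \ref{lem: Afin-minproj}(ii) gives $P_1^{\otimes(n+1)}\cong Q_1^{\otimes n}$, producing morphisms $U\in\te{Hom}(P_1^{\otimes(n+1)},Q_1^{\otimes n})$ and $U^*\in\te{Hom}(Q_1^{\otimes n},P_1^{\otimes(n+1)})$ with $UU^*=Q_1^{\otimes n}$ and $U^*U=P_1^{\otimes(n+1)}$; diagrammatically $U$ is a box with $n+1$ bottom strands and $n$ top strands, as required.

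For the click relation (vi), $\mc{F}(U)$ lies in $\te{Hom}(P_1^{\otimes n},Q_1^{\otimes(n+1)})$, which is one-dimensional since Lemma \ref{lem: Afin-minproj}(ii) also yields $P_1^{\otimes n}\cong Q_1^{\otimes(n+1)}$. Hence $\mc{F}(U)=\omega U$ for some scalar $\omega$; because $U$ has $2n+1$ total boundary points, $\mc{F}^{2n+1}$ acts as the identity on this hom-space, forcing $\omega^{2n+1}=1$. Taking adjoints with $\mc{F}^*=\mc{F}^{-1}$ and using $\ol{\omega}=\omega^{-1}$ for roots of unity then yields $\mc{F}(U^*)=\omega U^*$. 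The classification of equivalence classes of minimal projections displayed in (\ref{eq: arrowprincipalgrapheven}) follows directly from Lemma \ref{lem: Afin-minproj}(ii). The main obstacle throughout is the color-case elimination, where care is needed to confirm that the parity obstruction is uniform across any auxiliary generator one might try to add within the color case; once that is settled, the rest is a routine transcription of the previous theorem's arrow argument with the strand counts of $U$ and $U^*$ shifted.
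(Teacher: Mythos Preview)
The paper omits this proof, noting only that it is nearly identical to the arrow case of Theorem~\ref{thm: A-finite-necessary-arrow}; your proposal is therefore more detailed, and you correctly isolate the one genuinely new step: ruling out the color case for $\til{A}_{2n}$.

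However, your parity argument for that step is circular. You assert that every morphism of $\mc{P}'$ is built from parity-preserving pieces ($P_1$, $Q_1$, cups and caps, and a hypothetical color box $V$ with equal top and bottom counts), but no generating set for $\mc{P}'$ has been established at this stage --- we are analysing an abstract subfactor planar algebra, not one already presented by generators. Indeed, the $U$-box you go on to construct has $2n{+}1$ total boundary strands and so does \emph{not} preserve this parity; odd-parity morphisms certainly exist in $\mc{P}'$. A clean replacement: every simple object has trace $1$ and is therefore invertible, so the $2n{+}1$ isomorphism classes of simples form a group. In the color case Lemma~\ref{lem: A-fin-gen-facts}(ii)(c) gives $P_1\otimes P_1\cong\emptyset$, so $[P_1]$ has order $2$ in this group, and Lagrange's theorem contradicts the odd order $2n{+}1$.

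One smaller point: you write $\mc{F}(U)\in\te{Hom}\bigl(P_1^{\otimes n},Q_1^{\otimes(n+1)}\bigr)$ and then $\mc{F}(U)=\omega U$, but $U$ lives in the different space $\te{Hom}\bigl(P_1^{\otimes(n+1)},Q_1^{\otimes n}\bigr)$. The equation is correct once both are regarded as elements of the single one-dimensional subspace of the $(2n{+}1)$-boundary-point box space consisting of diagrams with all arrows pointing inward; phrasing it that way avoids the apparent type mismatch. The remainder of your argument (relations (i)--(vi), $\omega^{2n+1}=1$, and the principal graph (\ref{eq: arrowprincipalgrapheven})) is a correct transcription of the $\til{A}_{2n-1}$ arrow case, in line with what the paper intends.
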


From this, the following remark is immediate. 

\begin{rem}\label{thm: Afinevennocolorcase}
    In $\mc{P}'$, we cannot have that $P_1$ and $Q_1$ are self-dual.
\end{rem}

    \begin{rem}
        Just as in the shaded case, we can always oriented a strand in its entirety using strand decomposition and orientation disagreement.
    \end{rem}
    
   We will show in  \ref{thm: A-fin-arrow-generate} that elements $P_1$, $U$, and $U^*$ in the previous  generate $\mc{P}'$.
\section{Sufficient Relations for Affine \texorpdfstring{$A$}{A} Finite Unshaded Planar Algebras: The Arrow Case}

In this section we start with generators and relations for a planar algebra and prove that it generates an unshaded subfactor planar algebra and further this planar algebra has an affine $A$ principal graph. Depending on the choice of root of unity and the number of strands around the generator boxes, these planar algebras could either have $\til{A}_{2n-1}$ or $\til{A}_{2n}$ Dynkin diagram as its principal graph.

\begin{thm}\label{thm: arrow-A-finite-suff} (Sufficient relations for $\til{A}_{2n-1}$ and $\til{A}_{2n}$: Arrow case) Fix $n \in \mathbb{N}$ and a $2n$th root of unity $\omega_{2n}$ (respectively, a $2n+1$ root of unity $\omega_{2n+1}$ for $\til{A}_{2n}$). Let $\mc{P}$ be the planar algebra generated by $P_1$, $U$, $U^*$ from Definition \ref{Afinarrowgenerators} with relations from Definition \ref{Afinarrowrelations}. Let $\mc{P}'$ be defined the same as $\mc{P}$ except with $U$ having $n+1$ strands on the bottom and $U^*$ having $n+1$ strands on top. Define $P_1$ to be self-adjoint, $U^*$ to be the adjoint of $U$, and extend * anti-linearly and on diagrams. Then $\mc{P}$ (respectively, $\mc{P}'$) is an unshaded subfactor planar algebra whose principal graph is the $\til{A}_{2n-1}$ (respectively, $\til{A}_{2n}$) Dynkin diagram, given by, for $n\geq 2$ by (\ref{eq: arrowprincipalgraph}) and when $n=1$ by (\ref{eq: arrowprincipalgraphone}) (respectively, (\ref{eq: arrowprincipalgrapheven}) for $\til{A}_{2n}$.)
\end{thm}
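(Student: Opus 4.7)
The plan is to follow the four-step strategy of Theorem~\ref{thm: Afinshadesufficient}: show that (i) the adjoint is well-defined, (ii) $\mc{P}_0$ is one-dimensional, (iii) the minimal projections and fusion rules match the claimed principal graph, and (iv) $\mc{P}$ is spherical and positive definite. I treat the $\til{A}_{2n-1}$ case in detail; the $\til{A}_{2n}$ case is identical after replacing $n$ by $n+1$ at the boxes and $\omega_{2n}$ by $\omega_{2n+1}$, with the cyclic group $\mbb{Z}_{2n+1}$ in place of $\mbb{Z}_{2n}$ below.

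Well-definedness of $*$ is a direct check against each relation, noting that the click relation is self-adjoint since $\overline{\omega_{2n}} = \omega_{2n}^{-1}$ matches the factor produced on $\mc{F}^{-1}(U^*)$. For evaluability I produce both a surjection $f\colon \mc{P}_0 \to \mbb{C}$ and an evaluation algorithm as in Section~\ref{Afinsufficientshaded}. To define $f$, use strand decomposition and orientation disagreement to reduce to diagrams with fully oriented strands, then label each region by an element of $\mbb{Z}_{2n}$, with the leftmost region at the star set to $0$ and crossing an upward arrow from left to right incrementing the label by $1$ (equivalently a downward arrow decrements). This is well-defined because the diagram is planar and oriented arrows produce no contradictory loops. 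For each $U$-box set $\ell^U_j = m$ where $m$ is the label at its star, and $\ell^{U^*}_j = -m$ likewise; then define $f(D) = \omega_{2n}^{\sum \ell^U_j + \sum \ell^{U^*}_j}$ and extend linearly. Invariance under bubble, saddle, orientation-disagreement and unitary relations is routine (the star contributions cancel in any $UU^*$ or $U^*U$ block, and loops do not affect the labelling outside). For the click relation, one click shifts the star across exactly one strand, changing $\ell^U$ by $\pm 1$, so $f$ picks up a compensating factor $\omega_{2n}^{\pm 1}$ that balances against $\omega_{2n}$ on the other side. Since $f(\emptyset) = 1$, we obtain $\dim \mc{P}_0 \ge 1$.

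For the reverse bound, adapt the shaded evaluation algorithm: pick a $U$-box, follow its leftmost bottom strand to a second box $W_{\text{down}}$, use click relations (tracked by powers of $\omega_{2n}$) and planar isotopy to place $W_{\text{down}}$ directly below, and repeatedly apply the saddle relation to fuse the remaining $n-1$ connecting strand pairs (orientation disagreement forces the adjacent strands to match). The fused pair matches one side of a unitary relation and can be eliminated. Iterating reduces any closed diagram either to a scalar multiple of a disjoint union of oriented loops (evaluating to a number via the bubble relations) or to a single orphan box whose strands would have to self-connect, forcing $0$ by orientation disagreement. Hence $\dim \mc{P}_0 = 1$. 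The principal graph then follows by setting $P_k = P_1^{\otimes k}$, $Q_\ell = Q_1^{\otimes \ell}$ and showing these are nonisomorphic minimal projections for $0 \le k,\ell \le n$ via the same hom-space reduction as Lemma~\ref{lem: Afinshade-suff-pgraph}: any $f \in \te{Hom}(P_k, P_k)$ is reduced by the algorithm to a boxless form, and boundary orientations constrain it to a scalar multiple of $P_k$. The unitary $U$ closes the graph by giving $P_n \cong Q_n$, and the fusions $P_k \otimes X \cong P_{k-1} \oplus P_{k+1}$ and $P_n \otimes X \cong P_{n-1} \oplus Q_{n-1}$ follow from strand decomposition plus saddle relations as in Lemma~\ref{lem: Afin-minproj}.

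Sphericality reduces to $\dim \mc{P}_1 = 2$ with basis $\{P_1, Q_1\}$ exactly as in Lemma~\ref{lem: Afinshade-spherical}, and positive definiteness follows from the tree-basis construction of \cite{MPS10} together with $\te{tr}(P_1) = \te{tr}(Q_1) = 1$, as in Lemma~\ref{lem: Afinshade-posdef}. The main obstacle is verifying the click-relation invariance of $f$: the $\mbb{Z}_{2n}$-labelling convention must be calibrated so that a single clockwise click of the star moves it across a strand in the direction that changes the label by exactly the amount needed to produce $\omega_{2n}^{\pm 1}$, matching the scalar on the other side of the relation. Once this sign convention is pinned down, the remaining work is a clean unshaded-cyclic-group analogue of the shaded-dihedral-group argument of Theorem~\ref{thm: Afinshadesufficient}.
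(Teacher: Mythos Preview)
Your proposal is correct and follows essentially the same approach as the paper: well-definedness of $*$, the $\mbb{Z}_{2n}$-region-labelling surjection $f_U$ together with an evaluation algorithm to get $\dim\mc{P}_0=1$, the identification of $P_k=P_1^{\otimes k}$ and $Q_\ell=Q_1^{\otimes\ell}$ as the minimal projections with the correct fusion rules, and then sphericality and positive definiteness via \cite{MPS10}. The only cosmetic difference is that the paper proves nonisomorphism of the $P_k$ and $Q_\ell$ by a clean source/sink count modulo $2n$ rather than by adapting the color-pattern argument of Lemma~\ref{lem: Afinshade-suff-pgraph}, but either route works.
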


Fix $n\in \mbb{N}$, a $2n$th root of unity $\omega_{2n}$, and a $2n+1$th root of unity $\omega_{2n+1}$. Sometimes we will use $\omega$ to mean either $\omega_{2n}$ or $\omega_{2n+1}$ when the context is clear. Throughout the rest of this section, $\mc{P}$ and $\mc{P}'$ will refer to the planar algebra defined through generators and relations given in the previous theorem . We will denote the $k$-th box spaces of $\mc{P}$ and $\mc{P}'$ as $\mc{P}_k$ and $\mc{P}'_k$ respectively. While dealing with the arrow case, define
    \begin{align*}
        Q_1 = \down.
    \end{align*}

Before proving Theorem \ref{thm: arrow-A-finite-suff}, we will prove some claims about $\mc{P}$ and $\mc{P}'$. Most of the proofs we will just prove for $\mc{P}$. Adapting the proofs for $\mc{P}'$ mostly involves adjusting some $n$ to be $n+1$ but are otherwise identical. If there is more work than that to adjust, we will include it. 

As $*$ is defined to be extended anti-linearly and on diagrams, it is clear that for any diagram $T$ in the planar algebra, $T^*$ is vertically flipping, reversing the arrows, and switching $U$ and $U^*$. It is left to check that the * is well-defined by checking it respects the defining relations of the planar algebra. 
    
    \begin{lem}
        The adjoint is well-defined.
    \end{lem}
    
    \begin{proof} From the preceding paragraph, $*$ respects the bubble relations, strand decomposition, orientation disagreement, saddle relations, and the $U$ unitary relation clearly. For the click relations, notice that taking the adjoint of the first relation gives $\mc{F}(U)^*=\mc{F}^{-1}(U^*)=\omega^{-1} U^*$. Then taking $\mc{F}$ of second equality gives $U^*=\omega^{-1}\mc{F}(U)$, which gives the second click relation. Similarly, taking the adjoint of the second click relation will give the first click relation. \end{proof}

 As done for shaded planar algebras, we start by showing that the 0-box spaces are one-dimensional. Again, we use a standard fact from the theory of van Kampen diagrams which we state in the following remark.

 \begin{rem}
     For any diagram in $\mc{P}$ (respectively $\mc{P}'$) there exists a unique function from the regions of the diagram to the cyclic group $\mathbb{Z}_{2n}=\langle u | u^{2n}=1\rangle$ (respectively $\mathbb{Z}_{2n+1}$) such that the region on the left-most boundary next to the star is assigned 1 and neighboring regions are assigened elements from the group in the following way. If on the left-side from the point of view of the arrow the region is labelled $x$, then the region on the right is labelled $xu$.
 \end{rem}

 We first show $\mc{P}_0$ is at least one-dimensional by defining a surjection from $\mc{P}_0$ to $\mbb{C}$. 
 
 \begin{tcolorbox}[breakable, pad at break*=0mm]
        Define a function $f_U: \mc{P}_0 \ra \mbb{C}$ by the following algorithm. By strand decomposition and orientation disagreement it suffices to define $f_U$ on diagrams where every strand is oriented in on direction in its entirety and then extend $f_U$ linearly. Let $D\in \mc{P}_0$ be such a diagram.
      
        \begin{enumerate}
            
            \item If there are no $U$ and $U^*$s in $D$, define $f_U(D)=1$.  
            
            \item If there are $U$ or $U^*$s in $D$, enumerate all of the $U$ and $U^*$, say $U_1,...,U_r$, and $U^*_1,...,U^*_s$. (Note that $r=s$ but we do not need this fact.)
                      
            \item Define a number $\ell_j^J$ based on the following table where $w_j^J$ is the element of $\mbb{Z}_{2n}$ that corresponds to the region where $J_j$'s star lies.

                    \begin{tabular}{l | l}
            When $J=U$ &
            When $J=U^*$,\\
            \hline
            & \\
            $\bullet$ if $w_j^J=u^m$, then $\ell_j^J=-m$, & $\bullet$ if $w_j^J=u^m$, then $\ell_j^J=m$
        \end{tabular}
            
            \item Define $\ell^{D}= \left(\sum_{j=1}^s \ell_j^{U^*}-\sum_{i=1}^r \ell_i^{U}\right) (\te{mod } 2n)$. 
            \item Then define $f_U(D)=\omega_{2n}^{\ell^{D}}$ and extend $f_U$ linearly to define all of $f_U$.
        \end{enumerate} 
    \end{tcolorbox}

    For $\mc{P}'$ use $\omega_{2n+1}$ and take the length mod $2n+1$ instead. To show $f_U$ is well-defined we need to show that $f_U$ does not depend on the path chosen and is invariant under the defining relations of the planar algebra.

\begin{lem}
    $f_U$ is well defined. 
\end{lem}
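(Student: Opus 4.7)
The plan is to verify that $f_U$ respects each defining relation of $\mc{P}$; the argument for $\mc{P}'$ is identical with $\mathbb{Z}_{2n}$ replaced by $\mathbb{Z}_{2n+1}$ and $\omega_{2n}$ replaced by $\omega_{2n+1}$. First I would confirm that the region-labeling by $\mathbb{Z}_{2n}$ from the preceding remark is itself path-independent: crossing a strand contributes $u^{\pm 1}$ depending only on which side of the arrow one is on, and since $\mathbb{Z}_{2n}$ is abelian the group element reached at a given region is independent of which path is chosen. The restriction to strand-decomposed oriented diagrams is absorbed into the preprocessing step of the algorithm and requires no further check.

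Next, bubble and saddle relations contain no $U$- or $U^*$-boxes on either side, so $\ell^{D} = 0$ on both sides and each evaluates to $\omega_{2n}^{0} = 1$; the region labels outside the local disk are unchanged, so $f_U$ is preserved.

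For the $U$-unitary relations, let $D$ and $D'$ agree outside a local disk in which $D$ depicts $UU^*$ (respectively $U^*U$) and $D'$ depicts only identity strands. Because the two boxes are stacked directly on top of one another and their stars both sit to the left of the stack, they lie in the same region of $D$; if that region is labeled $u^m$ then $\ell^{U^*} + \ell^U = m + (-m) = 0$, matching the zero $\ell$-contribution from $D'$. All region labels outside the disk agree, so $f_U(D) = f_U(D')$.

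The main obstacle is the click relations, where one must carefully track how the region label of the rotating star shifts. Suppose $D$ and $D'$ agree outside a local disk, with $D$ containing $\mc{F}(U)$ and $D'$ containing $\omega_{2n} U$. Clicking the star one region clockwise crosses exactly one of the strands adjacent to the $U$-box; since that strand is oriented, the region label of the star jumps from $u^m$ to $u^{m \pm 1}$, with the sign determined by the orientation. A direct comparison using the table defining $\ell_j^U$ shows that this shift changes $\ell^D$ by exactly $+1 \pmod{2n}$, so $f_U(\mc{F}(U)) = \omega_{2n}^{+1} \cdot f_U(U)$, which is precisely $f_U(\omega_{2n} U)$. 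Outside the disk, the region labels coincide because the boundary data of $\mc{F}(U)$ and $U$ in the larger diagram is the same, so no other contributions change. The second click relation $\mc{F}(U^*) = \omega_{2n} U^*$ is verified by an identical local computation with the signs of $\ell_j^{U^*}$ reversed, or equivalently by applying the adjoint to the first click relation. This exhausts the defining relations of $\mc{P}$, so $f_U$ is well-defined.
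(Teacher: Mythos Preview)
Your proposal is correct and follows essentially the same approach as the paper: check invariance under each defining relation, observe that bubble/saddle relations have no boxes and preserve region labels, that in the unitary relations the $U$- and $U^*$-stars share a region so their $\ell$-values cancel, and that the click relation shifts the star's region by one power of $u$, changing the $\ell$-value by exactly $1$. The paper is slightly more explicit in the click step (naming the star's region as $xu^{-1}$ versus $x$), whereas you leave the sign of the shift as ``determined by the orientation''; making that determination concrete would tighten the argument, but the structure is the same.
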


    \begin{proof} To show the lemma, we must prove $f_U$ respects the defining relations of $\mc{P}$ (or $\mc{P}'$). Without loss of generality, consider $\mc{P}$. Let $D$ and $D'$ be two diagrams in $\mc{P}_0$ that are identical except in a local region. 

    Most of this proof is identical to the proof of Lemma \ref{lem: Afinshade-atleast1dim}. What's left to check is the unitary relations and the click relations. 

    Let $D$ and $D'$ be identical except in a local region $D$ is the middle equality of the $UU^*$ relation (i.e., written with two boxes) and $D'$ is the right-hand side (written with no boxes). The top and bottom boundary data is the same, so any region above or below the the local regions are labelled the same. Therefore the entire diagrams are labelled identically. Further, in the local region of $D$, $U$ and $U^*$'s $\ell$-values cancel, giving that $f_U$ is invariant under this unitary relation. The same argument works for $U^*U$. 

    Lastly, we check that $f_U$ is invariant under the click relations. We just check the first click relation. The other is the same process. Let $D$ have a local region that looks like the first diagram of the first click relation and $D'$ have a region that looks like $U$. It is clear that outside this region both $D$ and $D'$ are labelled identically. 
    
    Suppose the word at the left-side of the local region is $x$. The word associated to the $U$ box in the local region of $D$ is $xu^{-1}$. In $D'$ the word associated to the region is $x$. Thus the $\ell$-value associated to the box of $D$ is one more than of $D'$. Thus, when multiplying $D'$ by $\omega$, we get the same value of $f$ as $D$. 
    
    Hence we have shown that $f_U$ is invariant under the relations of $\mc{P}$. This concludes showing that $f_U$ is well-defined. \end{proof}

\begin{cor}\label{lem: arrow-0-box-at-least}
        $\mc{P}_0$ and $\mc{P}'_0$ are at least 1-dimensional. 
\end{cor}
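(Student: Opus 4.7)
The plan is to follow verbatim the template of the analogous shaded statement just after Lemma \ref{lem: Afinshade-atleast1dim}. Having done all the real work in showing $f_U$ is well-defined, the corollary reduces to a one-line observation: a nonzero $\mbb{C}$-linear functional on a vector space forces that vector space to have dimension at least one.

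Concretely, I would evaluate $f_U$ on the empty diagram $\emptyset \in \mc{P}_0$. Since $\emptyset$ contains no $U$ or $U^*$ boxes, step 1 of the algorithm defining $f_U$ applies, giving $f_U(\emptyset) = 1$. Because $f_U$ was shown to be well-defined and is linear (it is extended linearly over diagrams in its definition), its image contains $1 \in \mbb{C}$ and hence all of $\mbb{C}$, so $f_U$ is a surjection onto $\mbb{C}$. It follows immediately that $\te{dim}(\mc{P}_0) \geq 1$.

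For $\mc{P}'_0$ the argument is identical with the obvious substitutions: the root of unity $\omega_{2n}$ is replaced by $\omega_{2n+1}$, the group $\mbb{Z}_{2n}$ by $\mbb{Z}_{2n+1}$, and the length is read modulo $2n+1$ rather than $2n$. The preceding lemma (once the same routine verifications are carried out for $\mc{P}'$) gives well-definedness of the corresponding $f_U$, and again $f_U(\emptyset) = 1$ exhibits a surjection onto $\mbb{C}$.

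There is no real obstacle here; all of the effort has already been invested in proving $f_U$ is well-defined under the defining relations of the planar algebra. The only thing worth stating carefully is the observation that $\emptyset$ is a legitimate diagram in $\mc{P}_0$ (equivalently $\mc{P}'_0$) and that the algorithm assigns it the value $1$, so the proof will be no more than two sentences in the final write-up.
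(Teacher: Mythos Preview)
Your proposal is correct and matches the paper's own proof essentially verbatim: the paper likewise notes that the well-defined linear map $f_U$ sends $\emptyset$ to $1$, hence is a surjection onto $\mbb{C}$, forcing each $0$-box space to be at least one-dimensional.
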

    
  \begin{proof} We have well-defined functions from $\mc{P}_0$ to $\mathbb{C}$ and $\mc{P}'_0$ to $\mbb{C}$. These functions send $\emptyset$ to 1 and are linear. Therefore, the maps are surjections, which implies that the 0-box spaces are at least one-dimensional. \end{proof}
    
    Now I will show that $\mc{P}_0$ and $\mc{P}'_0$ are at most 1-dimensional, by defining an evaluation algorithm.  

    \begin{tcolorbox}[breakable, pad at break*=0mm]
        Evaluation Algorithm for $\mc{P}_0$:

        By strand decomposition and orientation disagreement it suffices to define the algorithm on diagrams where every strand is oriented in one direction in its entirety then extend linearly. Let $D\in \mc{P}_0$ be such a diagram.
        
        \begin{enumerate}
            \item If $D$ has no $U$ or $U^*$, skip to step 6. Now assume $D$ has some $U$ or $U^*$. Since $D$ is nontrivial, this $U$ or $U^*$ cannot have a cap or cup. The diagram is closed, so the strands of $U$ or $U^*$ cannot go to the boundary of $D$. Due to orientation, the strands of $U$ cannot connect to another $U$ and similarly, the strands of $U^*$ cannot connect to another $U^*$. Thus, if $D$ has a $U$, the strands of $U$ must connect to a $U^*$ and if $D$ has a $U^*$, the strands of $U^*$ must connect to a $U$. Thus we can assume $D$ has both a $U$ and $U^*$ and all the strands of $U$ connect to $U^*$s and the strands of $U^*$ all connect to $U$s. 
            \item Pick any $U$ and $U^*$ connected by at least one strand. We will inductively increase the number of strands connecting $U$ and $U^*$. If $U$ and $U^*$ are connected by only 1 strand, then use the click relation to have $U$ and $U^*$ connected by the top left strand of the left box is connected by the top right strand of the right box. So your diagram is multiplied by some factor of $\omega_{2n}$.
            \item For the strand of $U$ adjacent to the connected strand, isotope the strand so that it lies next to the strand of $U^*$ adjacent to the connected strand. Then use the saddle relation to make $U$ and $U^*$ connected by at least 2 adjacent strands.
            \item Inductively repeat the process until $U$ and $U^*$ are connected by $s < n$ adjacent strands. It is clear that we can then repeat the process of step 4 and get $U$ and $U^*$ connected by $n$ adjacent strands. 
            \item Use the $U$ and $U^*$ relation to reduce $D$ to have one less $U$ and $U^*$. Repeat steps 2-6 until no more $U$ and $U^*$ remain. 
            \item Now $D$ is a closed diagram consisting of only oriented loops multiplied by some scalars. Starting with an innermost loop, repeatedly pop the loops for a factor of 1 using that an oriented loop evaluates to 1. $D$ will then evaluate to a multiple of $\emptyset$ and thus we can evaluate on all of $\mc{P}_0$.
        \end{enumerate}
    \end{tcolorbox}

The evaluation algorithm of $\mc{P}'_0$ is nearly identical, just exchanging $\omega_{2n}$ with $\omega_{2n+1}$ and change appropriate $n$s with $n+1$. With the evaluation algorithm the following lemma is immediate.

\begin{lem}
    $\mc{P}_0$ and $\mc{P}'_0$ are at most one-dimensional. 
\end{lem}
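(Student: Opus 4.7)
The plan is to apply the evaluation algorithm just defined directly to any element of $\mc{P}_0$ (the argument for $\mc{P}'_0$ is identical with $n$ replaced by $n+1$ and $\omega_{2n}$ replaced by $\omega_{2n+1}$). By definition, any element of $\mc{P}_0$ is a finite linear combination of closed diagrams. Strand decomposition and orientation disagreement reduce each such diagram to a linear combination of closed diagrams in which every strand is oriented in a single direction along its entire length. The evaluation algorithm rewrites each such oriented closed diagram as a scalar multiple of $\emptyset$. Extending by linearity, every element of $\mc{P}_0$ equals a scalar multiple of $\emptyset$, so $\{\emptyset\}$ spans $\mc{P}_0$ and $\dim \mc{P}_0 \leq 1$.

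The first thing I would verify is that the algorithm terminates on every valid input. The outer loop (steps 2--5) ends each iteration by applying the $U$ unitary relation in step 5, which eliminates a $UU^*$ or $U^*U$ pair and thus strictly decreases the (nonnegative integer) count of $U$- and $U^*$-boxes; hence after finitely many iterations the diagram has no such boxes. Step 6 repeatedly pops innermost oriented loops, each pop strictly decreasing the finite number of closed components, so it too terminates.

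The main obstacle is to confirm that steps 2--4 can always be executed whenever a $U$ or $U^*$ is present. The key observation, already noted in step 1, is orientation: a $U$-box has all its strands pointing into it and a $U^*$-box has all its strands pointing out, so no strand of a $U$ can return to a $U$ (any $U$), and no strand of a $U^*$ can return to a $U^*$. Since the diagram is closed and has no cups or caps on the boxes, every strand of a $U$ must terminate on some $U^*$ and vice versa, so a $U$ and $U^*$ joined by at least one strand always exists. Once such a pair is chosen, one click relation (costing a factor $\omega_{2n}^{\pm 1}$) places the connecting strand in the appropriate corner, and then the saddle relations together with planar isotopy let us enlarge the bundle of connecting strands one at a time until all $n$ (or $n+1$) strands connect the two boxes, at which point the unitary relation applies. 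The delicate point is purely planar isotopy: the two connected boxes can always be arranged directly adjacent with no intervening structure between the strands to be merged, which is a standard property of planar tangles and requires no further content beyond the defining relations.
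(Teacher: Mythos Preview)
Your proposal is correct and follows the same approach as the paper, which simply declares the lemma immediate from the evaluation algorithm; you have supplied the termination and well-posedness details that the paper leaves implicit. One minor inaccuracy: positioning the connecting strand in the desired corner may require several applications of the click relation rather than a single one (picking up a power of $\omega_{2n}$), but this does not affect the argument.
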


With the well-defined surjection, $f_U$, the following corollary is immediate.
    
\begin{cor}\label{lem: P0-1-d}
        $\mc{P}_0$ and $\mc{P}'_0$ are one-dimensional. 
\end{cor}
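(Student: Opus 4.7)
The plan is to simply combine the two bounds on the dimension of $\mc{P}_0$ (and $\mc{P}_0'$) that have already been established. From Corollary \ref{lem: arrow-0-box-at-least} we have the lower bound: the well-defined linear surjection $f_U:\mc{P}_0 \to \mbb{C}$ (sending $\emptyset \mapsto 1$) forces $\dim \mc{P}_0 \geq 1$, and the analogous map handles $\mc{P}'_0$.

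For the upper bound, I would invoke the evaluation algorithm stated just before, together with the immediately preceding lemma. The algorithm shows that every diagram $D \in \mc{P}_0$ (and analogously in $\mc{P}'_0$) can be reduced, modulo the defining relations, to a scalar multiple of $\emptyset$: first use strand decomposition and orientation disagreement to reduce to diagrams whose strands are uniformly oriented; then pair up $U$ and $U^*$ boxes (at most a factor of $\omega$ is accumulated from the click relations, and the saddle relations let us merge adjacent connecting strands until $n$ (or $n{+}1$) parallel strands bridge the pair) and eliminate them via the $U$ unitary relation; finally pop the remaining oriented loops using the bubble relations. This proves $\dim \mc{P}_0 \leq 1$ and likewise for $\mc{P}'_0$.

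Since both bounds agree, the corollary follows immediately. There is no real obstacle here — all the content lives in the preceding lemmas and the evaluation algorithm; the corollary is just the conjunction of the upper and lower bounds. The proof is therefore a one-line assembly: combine the two preceding lemmas.
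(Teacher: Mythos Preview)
Your proposal is correct and matches the paper's approach exactly: the paper simply notes that with the well-defined surjection $f_U$ (giving $\dim\mc{P}_0\geq 1$) and the preceding lemma (giving $\dim\mc{P}_0\leq 1$ via the evaluation algorithm), the corollary is immediate. Your write-up actually spells out more detail than the paper does, but the logic is identical.
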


\begin{lem}\label{lem: Afinarrow-P1Q1isotoempty}
    In both $\mc{P}$ and $\mc{P}'$, $P_1 \otimes Q_1 \cong \emptyset$ and $Q_1 \otimes P_1 \cong \emptyset$.
\end{lem}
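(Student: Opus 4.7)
The plan is to adapt the argument used in part (ii) of Lemma \ref{lem: Afinshadesaddlerels}. Thanks to the saddle relations (iv) of Definition \ref{Afinarrowrelations}, we already have the equalities $\udsaddle = \updown$ (i.e.\ $P_1 \otimes Q_1$) and $\dusaddle = \downup$ (i.e.\ $Q_1 \otimes P_1$). Each of these diagrams visibly factors through the empty diagram via a single oriented arc, and our task is simply to promote this factorization to an honest isomorphism in $\mc{C}_{\mc{P}}$ (respectively $\mc{C}_{\mc{P}'}$).

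Concretely, to establish $P_1 \otimes Q_1 \cong \emptyset$, I would define a morphism $f \in \te{Hom}(\emptyset, P_1 \otimes Q_1)$ given by the single oriented cup whose endpoints agree with the boundary of $\updown$, and take $f^*$ to be its vertical flip, the corresponding oriented cap. Then $ff^*$ is, by construction, precisely the saddle diagram $\udsaddle$, which by the saddle relation equals $\te{id}_{P_1 \otimes Q_1}$. On the other hand, $f^* f$ is a single closed oriented loop, which by the bubble relation (i) evaluates to $1 = \te{id}_\emptyset$. This exhibits $f$ as the required isomorphism. The argument for $Q_1 \otimes P_1 \cong \emptyset$ is identical, with the orientations of the cup and cap reversed, and uses the second half of relation (iv) together with the second bubble relation.

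Because the bubble and saddle relations are imposed in the same way for both $\mc{P}$ and $\mc{P}'$ (only the number of strands meeting the $U$-box differs between them), this single argument covers both planar algebras simultaneously; no step interacts with the $U$-unitary or click relations. I do not anticipate a real obstacle: the only thing to verify carefully is that $f^*f$ really is a legitimate oriented bubble (so that it evaluates to $1$ rather than to the unoriented sum of traces $2 = \te{tr}(X)$), which is guaranteed once one takes the orientations of $f$ and $f^*$ to match the boundary data of $\updown$ and $\downup$ respectively.
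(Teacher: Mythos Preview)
Your proposal is correct and follows essentially the same approach as the paper: define $f$ to be the oriented cup, let $f^*$ be the corresponding cap, and verify $ff^*=P_1\otimes Q_1$ via the saddle relation and $f^*f=\emptyset$ via the bubble relation. The paper's proof is a one-line version of exactly this argument.
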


    \begin{proof} We prove the first isomorphism. The other can be found similarly. Define $f=\ocupl$, making $f^*=\ocapr$. Then $ff^*=P_1 \otimes Q_1$ by the saddle relations and $f^*f=1\cdot \emptyset$ by the bubble relations. \end{proof}
    
\begin{lem}\label{lem: AfinarrowPnisotoQn}
    For all $0\leq k \leq n$, 
    \begin{enumerate}[label=(\roman*)]
        \item in $\mc{P}$, $P_1^{\otimes (2n-k)} \cong Q_1^{\otimes k}$, and 
        \item in $\mc{P}'$, $P_1^{\otimes (2n+1-k)} \cong Q_1^{\otimes k}$.
    \end{enumerate}
\end{lem}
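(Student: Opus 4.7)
The plan is to deduce both isomorphisms from two ingredients already at hand: the $U$-unitary relations and the cancellation isomorphism $Q_1 \otimes P_1 \cong \emptyset$ from the previous lemma.

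For part (i), observe first that the $U$-unitary relations $UU^* = Q_1^{\otimes n}$ and $U^*U = P_1^{\otimes n}$ say exactly that $U \in \te{Hom}(P_1^{\otimes n}, Q_1^{\otimes n})$ is an isomorphism with inverse $U^*$. This establishes the case $k = n$, namely $P_1^{\otimes n} \cong Q_1^{\otimes n}$. For general $0 \le k \le n$, I would tensor both sides of this isomorphism on the right with $P_1^{\otimes (n-k)}$ (using the monoidal structure) to obtain
\[
    P_1^{\otimes (2n-k)} \;\cong\; Q_1^{\otimes n} \otimes P_1^{\otimes (n-k)}.
\]
Now apply the isomorphism $Q_1 \otimes P_1 \cong \emptyset$ from Lemma \ref{lem: Afinarrow-P1Q1isotoempty} exactly $n-k$ times at the interface between the $Q_1$'s and $P_1$'s, each time collapsing an interior $Q_1 \otimes P_1$ pair. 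This telescopes the right-hand side down to $Q_1^{\otimes k}$, as required.

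The proof of part (ii) for $\mc{P}'$ is essentially identical: the $U$-unitary relations now supply an isomorphism $U \in \te{Hom}(P_1^{\otimes (n+1)}, Q_1^{\otimes n})$, which handles the base case $k = n$. Tensoring on the right with $P_1^{\otimes (n-k)}$ and applying $Q_1 \otimes P_1 \cong \emptyset$ a total of $n-k$ times then produces
\[
    P_1^{\otimes (2n+1-k)} \;\cong\; Q_1^{\otimes n} \otimes P_1^{\otimes (n-k)} \;\cong\; Q_1^{\otimes k}.
\]

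There is no real obstacle here; the argument is purely a bookkeeping exercise in composing the elementary isomorphisms supplied by the saddle relations and the $U$-unitary relations. The only point that requires a brief justification, which I would include for clarity, is that tensoring with identity morphisms on either side preserves isomorphism of projections in the planar-algebra category $\mc{C}_{\mc{P}}$, so the telescoping cancellation makes sense at the level of morphisms (explicitly, the composite isomorphism is built from $\te{id}_{Q_1^{\otimes j}} \otimes f \otimes \te{id}_{P_1^{\otimes (n-k-j-1)}}$ where $f$ is the saddle isomorphism and its inverse).
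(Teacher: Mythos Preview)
Your proof is correct and follows essentially the same approach as the paper: establish $P_1^{\otimes n}\cong Q_1^{\otimes n}$ via the $U$-unitary relations, tensor on the right by $P_1^{\otimes(n-k)}$, and then cancel using Lemma~\ref{lem: Afinarrow-P1Q1isotoempty}. The paper's version is terser and omits the explicit description of the composite isomorphism that you include at the end, but the argument is the same.
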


    \begin{proof} Both planar algebras will follow the same proof, so we just prove the lemma for $\mc{P}$. Notice that $P_1^{\otimes n} \cong Q_1^{\otimes n}$, as we can let $f=U$ and $f^*=U^*$, and by the defining relations $ff^*\cong Q_1^{\otimes n}$  and $f^*f\cong P_1^{\otimes n}$. Let $k\geq 0$. Tensor $n-k$ copies of $P_1$ on the right of both sides of the  isomorphism $P_1^{\otimes n} \cong Q_1^{\otimes n}$. The left-hand side is $P_1^{\otimes (2n-k)}$ and the right-hand side, using Lemma \ref{lem: Afinarrow-P1Q1isotoempty}, is $Q_1^{\otimes k}$, which gives the desired isomorphism. \end{proof}
  
\begin{lem}\label{lem: Afin-o-principal-graph}
        The principal graph of $\mc{P}$ is (\ref{eq: arrowprincipalgraph}) when $n\geq 2$ and (\ref{eq: arrowprincipalgraphone}) when $n=1$. The principal graph of $\mc{P}'$ is (\ref{eq: arrowprincipalgrapheven}).
\end{lem}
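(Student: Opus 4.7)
The plan is to mirror the strategy of Lemma \ref{lem: Afinshade-suff-pgraph}, replacing the shaded saddle/unitary arguments with their oriented counterparts from Lemma \ref{lem: A-fin-gen-facts} and the $U$ unitary relations. Define $P_k = P_1^{\otimes k}$ and $Q_\ell = Q_1^{\otimes \ell}$ for $0\leq k,\ell \leq n$ in $\mc{P}$ (with $P_0 = Q_0 = \emptyset$), and the analogous projections for $\mc{P}'$ with the range extended appropriately. These are projections because the adjoint distributes over $\otimes$ and $P_1$, $Q_1$ are self-adjoint idempotents. By Lemma \ref{lem: Afinarrow-P1Q1isotoempty} and Lemma \ref{lem: AfinarrowPnisotoQn}, $P_n \cong Q_n$ in $\mc{P}$ (and the analogous collision at the top of $\mc{P}'$) via the isomorphism implemented by $U$ and $U^*$.

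Next I would establish minimality. Fix $k$ and take $f \in \te{Hom}(P_k, P_k)$. Using the evaluation algorithm from $\mc{P}_0$ adapted to diagrams with boundary, repeatedly cancel connected $U$/$U^*$ pairs using saddle relations and the $U$ unitary relations until $f$ is rewritten as a sum of diagrams whose $U$, $U^*$ boxes are pairwise disconnected. Each remaining box has $2n$ endpoints (or $2n{+}1$ in $\mc{P}'$); for $f$ to be nonzero, all endpoints of each box must reach the boundary. For $k < n$ there are too few boundary points, forcing $f$ to have no boxes. For $k = n$, either the boxes produce self-loops (yielding zero) or the orientation/color pattern on the boundary of $f$ forbids matching to a box's alternating boundary. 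Hence $f$ has no boxes; since the top and bottom boundary strands all point in the same direction there are no cups or caps, so $f$ is a scalar multiple of $P_k$ possibly adorned with closed loops, each of which evaluates to $1$ by the bubble relation. Thus $\te{dim}\te{Hom}(P_k, P_k) = 1$, and the same argument applies to $Q_\ell$.

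To show the $P_k$ and $Q_\ell$ are pairwise non-isomorphic for the ranges that do not collide at the top of the graph, I would analyze $f \in \te{Hom}(P_{k_1}, P_{k_2})$ or $\te{Hom}(P_k, Q_\ell)$ using the same reduction. In the first case one obtains $k_1 + k_2 < 2n$ unless $k_1 = k_2$ (so no boxes can fit) and orientations force $k_1 = k_2$; in the second the top and bottom leftmost strands have opposite orientations, incompatible with a boxless diagram, and again too few boundary points for a nontrivial box to appear. This yields exactly the identifications needed: the projections are distinct except $P_n \cong Q_n$ in $\mc{P}$ (and the two top identifications in $\mc{P}'$).

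Finally, I would verify the fusion rules by induction, which is where the only real computation lies. Using strand decomposition $X = P_1 + Q_1$ together with Lemma \ref{lem: Afinarrow-P1Q1isotoempty},
\[
P_k \otimes X \cong P_k \otimes P_1 \oplus P_k \otimes Q_1 \cong P_{k+1} \oplus P_{k-1}
\]
for $1 \leq k < n$, and similarly for $Q_\ell$. At the top of the graph, $P_n \otimes X \cong P_{n+1} \oplus P_{n-1}$ with $P_{n+1} = P_1^{\otimes(n+1)}$; using the isomorphism $P_n \cong Q_n$ in $\mc{P}$ gives $P_{n+1} \cong Q_n \otimes P_1 \cong Q_{n-1}$, producing the cycle that closes $\til{A}_{2n-1}$. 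For $\mc{P}'$ the analogous computation uses Lemma \ref{lem: AfinarrowPnisotoQn}(ii) to identify both $P_1^{\otimes(n+1)} \cong Q_n$ and $P_n \cong Q_1^{\otimes(n+1)}$, producing the $\til{A}_{2n}$ cycle. The $n=1$ case of $\mc{P}$ is separate: here $X \cong P_1 \oplus Q_1 \cong P_1 \oplus P_1$ using the isomorphism $P_1 \cong Q_1$ implemented by $U$, matching (\ref{eq: arrowprincipalgraphone}). The main obstacle is carefully tracking orientations in the minimality argument to ensure that no stray nonzero morphism sneaks into a hom-space that is supposed to be zero; everything else reduces to repeated applications of the evaluation algorithm already developed.
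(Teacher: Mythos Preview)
Your proposal is correct and reaches the same conclusion, but it takes a different route from the paper's own proof. You transplant the shaded argument from Lemma \ref{lem: Afinshade-suff-pgraph} essentially verbatim, using box reduction and boundary-endpoint counting. The paper instead exploits structure specific to the oriented setting. For minimality of $P_k$, rather than reducing boxes inside $f\in\te{Hom}(P_k,P_k)$, the paper iteratively bends the rightmost bottom strand of $f$ around to meet the rightmost top strand and applies a saddle relation; after $k$ such moves one obtains $\te{tr}(f)\otimes P_k$, which is a scalar multiple of $P_k$ by Lemma \ref{lem: P0-1-d} with no case analysis needed. For non-isomorphism of distinct $P_k$, $Q_\ell$, the paper uses a global source/sink count: a $U$-box contributes $2n$ sinks, a $U^*$-box contributes $2n$ sources, and equating total sources to total sinks yields congruences such as $k+\ell\equiv 0\pmod{2n}$ with no solution in the required range. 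Your local endpoint-counting achieves the same end; the paper's argument is cleaner because it handles an arbitrary number of boxes in one stroke without first running the reduction.

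One imprecision worth fixing: in your $k=n$ minimality case you invoke ``a box's alternating boundary,'' but in the arrow case the boundary of a $U$-box is not alternating --- all $2n$ endpoints are sinks (and all of $U^*$ are sources). The actual obstruction to a single residual box is that its $2n$ sinks would each need a source to terminate at, while the boundary of $f$ supplies only $n$ sources; this forces a self-connection on the box, hence zero by orientation disagreement. Your conclusion stands, but the justification should be orientation-counting rather than alternation. The fusion-rule verification at the end matches the paper's.
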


    \begin{proof} Without loss of generality, we prove the result for $\mc{P}$. Define $P_k= P_1^{\otimes k}$ for $1\leq k \leq n$ and $Q_\ell=Q_1^{\otimes \ell}$ for $1\leq \ell \leq n-1$. First I show that for all $1\leq k \leq n$, $\te{dim}(\te{Hom}\left(P_k, P_k\right))=1$ and for all $1\leq \ell \leq n-1$, $\te{dim}\left(\te{Hom}\left(Q_\ell, Q_\ell\right)\right)=1$. For any $P_k$, pick $f \in \te{Hom}(P_k,P_k)$. Diagrammatically we have:

    \centerline{
   \begin{tikzpicture}[baseline={([yshift=-.8ex]current bounding box.center)}, decoration={markings, mark=at position 0.6 with {\arrow{<}}}, fixedpt/.style={star, fill, minimum size=0.5mm},scale=0.7]
   \node [fixedpt, scale=0.5] (g) at (-0.2,0.5) {};
  \draw[very thick] (0,0) rectangle (2,1) node[pos=0.5] {$f$};
  \draw[thick, postaction={decorate}] (0.2,2) -- (0.2,1);
  \draw[thick, postaction={decorate}] (1.8,2) -- (1.8, 1);
  \draw[thick, postaction={decorate}](1.8,0) -- (1.8, -1);
  \draw[thick, postaction={decorate}](0.2,0) -- (0.2, -1);
    \node (a) at (0.5, -0.5) {};
    \node (b) at (1.5,-0.5) {};
    \path (a) -- node[auto=false]{\ldots} (b);
    \node (c) at (1, -0.85) {\textup{($k$)}};
    \node (d) at (0.5, 1.5) {};
    \node (e) at (1.5, 1.5) {};
    \path (d) -- node[auto=false]{\ldots} (e);
    \node (f) at (1, 1.85) {\textup{($k$)}};
    \end{tikzpicture}.}

    \noindent Then isotope a string to be adjacent to an oppositely-oriented one and use the saddle relation in the following manner:

    \centerline{
    \begin{tikzpicture}[baseline={([yshift=-.8ex]current bounding box.center)}, decoration={markings, mark=at position 0.6 with {\arrow{<}}}, fixedpt/.style={star, fill, minimum size=0.5mm},scale=0.7]
   \node [fixedpt, scale=0.5] (g) at (-0.2,0.5) {};
  \draw[very thick] (0,0) rectangle (2,1) node[pos=0.5] {$f$};
  \draw[thick, postaction={decorate}] (0.2,2) -- (0.2,1);
  \draw[thick, postaction={decorate}] (1.8,2) -- (1.8, 1);
  \draw[thick, postaction={decorate}](1.6,0) -- (1.6, -1);
  \draw[thick, postaction={decorate}](0.2,0) -- (0.2, -1);
  \draw[thick, postaction={decorate}] plot[smooth,tension=.8]
  coordinates{(1.8,0)  (2.2,-0.8) (2.3,1.6) (2.6,1.8) (2.6,-1)};
    \node (a) at (0.5, -0.5) {};
    \node (b) at (1.5,-0.5) {};
    \path (a) -- node[auto=false]{\ldots} (b);
    \node (c) at (0.9, -0.85) {\textup{\small{($k-1$)}}};
    \node (d) at (0.5, 1.5) {};
    \node (e) at (1.5, 1.5) {};
    \path (d) -- node[auto=false]{\ldots} (e);
    \node (f) at (1, 1.85) {\textup{($k$)}};
\end{tikzpicture}
$\mapsto$
\begin{tikzpicture}[baseline={([yshift=-.8ex]current bounding box.center)}, decoration={markings, mark=at position 0.6 with {\arrow{<}}}, fixedpt/.style={star, fill, minimum size=0.5mm},scale=0.7]
   \node [fixedpt, scale=0.5] (g) at (-0.2,0.5) {};
  \draw[very thick] (0,0) rectangle (2,1) node[pos=0.5] {$f$};
  \draw[thick, postaction={decorate}] (0.2,2) -- (0.2,1);
  \draw[thick, postaction={decorate}] (1.6,2) -- (1.6, 1);
  \draw[thick, postaction={decorate}](1.6,0) -- (1.6, -1);
  \draw[thick, postaction={decorate}](0.2,0) -- (0.2, -1);
  \draw[thick, postaction={decorate}] plot[smooth,tension=.8]
  coordinates{(1.8,0)  (2.2,-0.8) (2.3,1.6) (1.8,1)};
  \draw[thick, postaction={decorate}] (2.6,2) -- (2.6,-1);
    \node (a) at (0.5, -0.5) {};
    \node (b) at (1.5,-0.5) {};
    \path (a) -- node[auto=false]{\ldots} (b);
    \node (c) at (0.9, -0.85) {\textup{\small{($k-1$)}}};
    \node (d) at (0.5, 1.5) {};
    \node (e) at (1.5, 1.5) {};
    \path (d) -- node[auto=false]{\ldots} (e);
    \node (f) at (0.9, 1.85) {\textup{\small{($k-1$)}}};
\end{tikzpicture}\te{ }.}

    \noindent Continuing in this fashion we get $\te{tr}(f) \otimes P_k$. 

    As we can evaluate any closed diagram by Lemma \ref{lem: P0-1-d}, we get that there exists some scalar $\la \in \mbb{C}$, such that this diagram is equivalent to $\lambda P_k$. Thus $\te{dim}(\te{Hom}(P_k,P_k)\leq 1$ for all $1\leq k \leq n$. Since tr$(P_k) = 1$ this gives that $\te{dim}(\te{Hom}(P_k,P_k))=1$ for all $1\leq k \leq n$. A similar argument can be done to show $\te{dim}(\te{Hom}(Q_\ell,Q_\ell))=1$ for all $1\leq \ell \leq n-1$. 
    
    Next we want to show that $P_k$ and $Q_\ell$ are projections for all $1\leq k \leq n$, $1 \leq \ell \leq n-1$. Indeed, multiplication is vertical stacking, so $P_k^2=P_k$ for all $1\leq k \leq n$ and $Q_\ell^2=Q_\ell$ for all $1\leq \ell \leq n-1$ is clear. Recall that we define $P_1^*=P_1$ which gives that $Q_1^*=Q_1$. We then obtain that $P_k^*=P_k$ and $Q_\ell^*=Q_\ell$ for all $1\leq k \leq n$ and $1 \leq \ell \leq n-1$ and these elements are all projections. Using the previous paragraph, these projections will all be minimal. 
    
    Now we show that these projections are nonisomorphic by showing there are no nontrivial morphism between any of two distinct minimal projections we have defined. We break this into cases dependent on the orientations of the distinct projections. 
    
    \ind \textit{Case 1:} Suppose $P_k$ and $Q_\ell$ ($1\leq k\leq n$, $1\leq \ell \leq n-1$) are two projections for distinct vertices and let $X \in \te{Hom}(Q_\ell, P_k)$. Use the strand decomposition and orientation disagreement to rewrite $X$ as a summand of morphisms from $P_k$ to $Q_\ell$ with all oriented strands and without loss of generality, no closed components. Let $X'$ be some such summand. Diagrammatrically we have: 
    \begin{equation}\label{eq: Xmorph}
        \morphPxQ.
    \end{equation}
     For $X'$ suppose there are $k_1$ $U$s and $k_2$ $U^*$s. On the boundary of a $U$-box there are $2n$ sinks and on the boundary of a $U^*$-box there are $2n$ sources. Let $m$ be the number of strands in $X'$, which means there are $m$ total sinks either on the boundary of $X'$ or inside $X'$ (and thus on a $U$-box) and $m$ total sources (which must happen on a $U^*$-box). As $P_k$ has $k$ sinks on the boundary of $X'$ and $Q_\ell$ has $\ell$ sinks on the boundary of $X'$, $m=k+\ell+2k_1n=2k_2n$. Therefore, $(k+\ell) \equiv 0 (\te{mod } 2n)$. However, $1\leq k \leq n$ and $1\leq \ell \leq n-1$, which gives there is no solution and thus no nonzero $X'$. So $\te{dim}(\te{Hom}(Q_\ell, P_k))=\te{dim}(\te{Hom}(P_k, Q_\ell))=0$ for all $1 \leq k \leq n$ and $1\leq \ell \leq n-1$. 
     
    \textit{Case 2:} Suppose $P_k$ and $P_\ell$ are two projections for distinct vertices and let $1 \leq \ell < k \leq n$. Let $X \in \te{Hom}(P_\ell, P_k)$. Again, use the strand decomposition and that the 0-box space is 1-dimensional to rewrite $X$ as a summand of morphisms from $P_k$ to $P_\ell$ with all oriented strands and no closed components. Let $X'$ be some such summand. Diagrammatically we have the same as (\ref{eq: Xmorph}) except the orientation of the arrows at the top are reversed. 
    
    From $P_k$ there are $k$ sinks on the boundary of $X'$ and from $P_\ell$ there are $\ell$ sources on the boundary of $X'$. Let $m$ be the number of strands in $X'$, which is also the number of sources and sinks. Let $X'$ have $k_1$ $U$-boxes and $k_2$ $U^*$-boxes. Then $m=k+2k_1n=\ell+2k_2n$, so $k-\ell \equiv 0 (\te{ mod} 2n)$. As $1\leq \ell < k \leq n$, we again get that there is no solution and no nonzero $X'$. Therefore, $\te{dim}(\te{Hom}(P_k, P_\ell))=0$ for distinct $k$ and $\ell$ from 1 to $n$. 

    The next case is to show $\te{dim}(\te{Hom}(Q_k, Q_\ell))=0$ for distinct $k$ and $\ell$ from 1 to $n-1$. However, this argument is nearly identical to Case 2, and thus is omitted. 
    
    The final case is to show $\te{dim}(\te{Hom}(P,\emptyset))=0$ for all $P \in \{P_k, Q_\ell: 1 \leq k \leq n, 1 \leq \ell \leq n-1\}$. Without loss of generality suppose $P=P_k$ for some fixed $k\in \{1,...,n\}$ and let $f \in \te{Hom}(\emptyset, P_k)$. Suppose there are $m$ strands in $f$, $k_1$ $U$-boxes, and $k_2$ $U^*$-boxes. Then $m$ is the number of sinks in $f$ which is $2nk_1$ and it also equals the number of sources in $f$ which is $2nk_2+k$. Therefore $k \equiv 0 (\te{ mod} 2n)$. There is no such $k$ such that this is true, and so $\te{dim}(\te{Hom}(P_k,\emptyset))=0$ as desired. 
    
    Therefore, $\te{dim}(\te{Hom}(P,Q))=0$ for all distinct minimal projections $P,Q \in \{P_k, Q_\ell: 1 \leq k \leq n, 1 \leq \ell \leq n-1\}$. Thus all our minimal projections defined are nonisomorphic. 

    Finally, what's left to show the tensor decomposition. The first relation $X\cong \emptyset \otimes X \cong P_1 \oplus Q_1$ is given by the strand decomposition. Fix $k\in \mathbb{N}$ where $0<k<n-1$. Then using Lemma \ref{lem: Afinarrow-P1Q1isotoempty},
    \begin{align*}
        P_k \otimes X = P_1^{\otimes k} \otimes X \cong P_1^{\otimes k} \otimes \left(P_1 \oplus Q_1 \right) \cong \left(P_1^{\otimes (k+1)}\right) \oplus \left(P_1^{\otimes k} \otimes Q_1\right) \cong \left(P_1^{\otimes (k+1)}\right) \oplus \left(P_1^{\otimes (k-1)}\right) = P_{k+1} \oplus P_{k-1}
    \end{align*}
    Similarly, $Q_\ell \otimes X \cong Q_{\ell+1}\oplus Q_{\ell-1}$. 
    
    For the last vertex, using Lemma \ref{lem: AfinarrowPnisotoQn}, 
    \begin{align*}
        P_n \otimes X = P_1^{\otimes n} \otimes X &\cong P_1^{\otimes n} \otimes \left(P_1 \oplus Q_1\right) \cong \left(P_1^{\otimes (n+1)} \right) \oplus \left(P_1^{\otimes n} \otimes Q_1\right)
    \cong \left(Q_1^{\otimes n} \otimes P_1 \right) \oplus \left(P_1^{\otimes (n-1)}\right) \cong \left(Q_1^{\otimes (n-1)} \right) \oplus \left(P_1^{\otimes (n-1)}\right)
    \end{align*}
    which equals $P_{n-1} \oplus Q_{n-1}$. So $\til{A}_{2n-1}$ indeed is the principal graph representing the tensor decomposition. \end{proof}

Next, we need to show that our generators and relations are sufficient to ensure sphericality. However this proof is essentially done in Lemma \ref{lem: Afinshade-spherical} so will be omitted.

\begin{lem} \label{lem:A-fin-o-spherical}
    $\mc{P}$ and $\mc{P}'$ are spherical.
\end{lem}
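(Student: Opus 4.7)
The plan is to mirror the argument from Lemma \ref{lem: Afinshade-spherical}, reducing sphericality to the statement that $\mc{P}_1$ (respectively $\mc{P}'_1$) is spanned by $\{P_1,Q_1\}$. Once we know this, sphericality on all of $\mc{P}_{1}$ follows by linearity from sphericality on the two spanning minimal projections, which is immediate from the bubble relations.

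The first step is to establish the inclusion $\text{span}\{P_1,Q_1\}\subseteq \mc{P}_1$, which is trivial since $P_1$ and $Q_1$ are defined as elements of $\mc{P}_1$. For the reverse inclusion, take any $T\in\mc{P}_1$. By the strand decomposition $\strand = \up + \down$, any strand bounding $T$ on the top or bottom can be decomposed as $P_1+Q_1$, so $T$ can be written as a sum of morphisms in $\text{Hom}(R,S)$ where $R,S\in\{P_1,Q_1\}$. By Lemma \ref{lem: Afin-o-principal-graph} (applied to the two endpoint vertices of the principal graph), $P_1$ and $Q_1$ are nonisomorphic minimal projections in $\mc{C}_{\mc{P}}$, so $\dim\text{Hom}(P_1,Q_1)=\dim\text{Hom}(Q_1,P_1)=0$ while $\dim\text{Hom}(P_1,P_1)=\dim\text{Hom}(Q_1,Q_1)=1$. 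Hence every such summand is a scalar multiple of either $P_1$ or $Q_1$, which gives $T\in\text{span}\{P_1,Q_1\}$.

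Next I verify sphericality on the spanning set. The bubble relations from Definition \ref{Afinarrowrelations} (or from Lemma \ref{lem: A-fin-gen-facts} in the color case) give $\upbubble=\downbubble=1$, i.e.\ the right traces of $P_1$ and $Q_1$ are both $1$. The same relations, read with the loop on the other side (or equivalently, obtained by applying the adjoint, which just reverses orientations and uses that $P_1^*=P_1$, $Q_1^*=Q_1$), show that the left traces of $P_1$ and $Q_1$ are also both $1$. Thus $\text{tr}_{1}^{L}(P_1)=\text{tr}_{1}^{R}(P_1)$ and $\text{tr}_{1}^{L}(Q_1)=\text{tr}_{1}^{R}(Q_1)$. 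Since both traces are linear maps on $\mc{P}_1$ and agree on the basis $\{P_1,Q_1\}$, they agree on all of $\mc{P}_1$, proving sphericality of $\mc{P}$.

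The argument for $\mc{P}'$ is identical in the arrow case: strand decomposition again reduces a general element of $\mc{P}'_1$ to a combination of morphisms between $P_1$ and $Q_1$, and the principal graph for $\til{A}_{2n}$ from Lemma \ref{lem: Afin-o-principal-graph} still shows that $P_1$ and $Q_1$ are nonisomorphic minimal projections, so the same dimension count gives $\mc{P}'_1=\text{span}\{P_1,Q_1\}$ and the bubble relations finish the proof. No step is particularly delicate here; the only point to be careful about is ensuring that the reduction to $\text{span}\{P_1,Q_1\}$ really does use only the strand decomposition (which is immediate) rather than any deeper evaluation, so that no hidden appeal to sphericality enters the argument.
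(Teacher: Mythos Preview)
Your proof is correct and follows essentially the same route as the paper, which explicitly defers to Lemma \ref{lem: Afinshade-spherical}: reduce to showing $\mc{P}_1=\operatorname{span}\{P_1,Q_1\}$ via strand decomposition and the hom-space dimensions, then conclude by linearity. Your explicit check that the bubble relations give $\operatorname{tr}^L=\operatorname{tr}^R$ on $P_1,Q_1$ is a nice addition that the paper leaves implicit. One small caveat, shared with the paper's own argument: when $n=1$ in the $\til{A}_{2n-1}$ case, $P_1$ and $Q_1$ are isomorphic, so $\mc{P}_1$ is actually $4$-dimensional with additional basis elements $U,U^*$; sphericality still holds there since any closed diagram with a single $U$ or $U^*$ box vanishes (its strand must connect to itself), giving $\operatorname{tr}^L(U)=\operatorname{tr}^R(U)=0$.
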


To conclude showing that $\mc{P}$ and $\mc{P}'$ are $\til{A}_{2n-1}$ unshaded subfactor planar algebra we have one final requirement; we need to show they are positive definite.

\begin{lem} \label{lem: A-fin-o-unitary}
    $\mc{P}$ and $\mc{P}'$ are positive definite.
\end{lem}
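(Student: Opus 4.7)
The plan is to mirror the argument used for the shaded case in Lemma \ref{lem: Afinshade-posdef}, invoking the tree-basis construction of \cite{MPS10} and adapting it to the unshaded (arrow-oriented) setting. The only genuine inputs of that construction are (a) semisimplicity of the category of projections of the planar algebra, (b) a complete list of minimal projections together with the fusion rules that describe how tensoring with $X$ decomposes each one, and (c) that every minimal projection has positive trace. All three ingredients are already in hand: semisimplicity follows from Lemma \ref{lem: Afin-o-principal-graph} together with the one-dimensionality of the endomorphism algebras of the $P_k$ and $Q_\ell$ proved in that lemma; the fusion rules are encoded in the affine $A$ principal graph itself; and the traces of $P_1$ and $Q_1$ are $1$ by the bubble relations, so since every minimal projection is a tensor product of $P_1$'s and $Q_1$'s its trace is the product of these, hence equal to $1>0$.

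First I would fix a box space $\mc{P}_k$ (resp.\ $\mc{P}'_k$) and identify it, via the half-braiding between 2-morphisms and hom spaces from $\emptyset$ to tensor products of $X$, with $\te{Hom}(\emptyset, X^{\otimes 2k})$. The MPS construction then builds a basis of tree diagrams whose leaves are labelled by the generator $X$ (we do \emph{not} need $X$ itself to be a minimal projection; MPS only use that $X$ decomposes as a direct sum of minimal projections, which for us is precisely the strand decomposition $X=P_1+Q_1$) and whose internal edges are labelled by minimal projections appearing on a path from the root in the Bratteli diagram of the principal graph. Pairs of such trees glued along their leaves give an explicit orthogonal set in $\mc{P}_k$, and by the Frobenius reciprocity argument of \cite[Thm.\ 4.18]{MPS10} this set is in fact a basis.

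The next step is the positivity calculation: the inner product of two such basis elements evaluates, by the usual trace manipulations, to a scalar multiple of the trace of a minimal projection appearing as an internal label, and so is strictly positive. Since the basis is orthogonal with strictly positive self-inner-products, the form $\langle f,g\rangle=\te{tr}(f^*g)$ is positive definite on $\mc{P}_k$ and on $\mc{P}'_k$. The argument is uniform in the arrow case versus the color case since in both cases the only data entering are the fusion rules and the traces of the minimal projections, which agree. The case $\mc{P}'$ (principal graph $\til{A}_{2n}$) is identical, only replacing $n$ by $n+1$ where appropriate in the tree labels.

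The only step that requires genuine care is checking that no obstruction specific to the unshaded oriented setting appears: in particular, that the MPS tree basis only uses the structural facts listed above and not any feature special to shaded planar algebras. This is straightforward because their argument is carried out at the level of the additive envelope $\mc{C}_\mc{P}$, which we have already identified; but it is the only place one must be attentive. Given this, the lemma follows and the combination of Lemmas \ref{lem: P0-1-d}, \ref{lem: Afin-o-principal-graph}, \ref{lem:A-fin-o-spherical}, and this positive-definiteness statement completes the proof of Theorem \ref{thm: arrow-A-finite-suff}.
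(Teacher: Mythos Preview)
Your proposal is correct and takes essentially the same approach as the paper: invoke the MPS tree-basis construction from \cite{MPS10} (as in Lemma~\ref{lem: Afinshade-posdef}) and verify that all minimal projections have positive trace, which follows since each is a tensor product of $P_1$'s and $Q_1$'s and hence has trace $1$. The paper's proof is simply a terser version of what you wrote; your additional remarks about semisimplicity, the identification of box spaces with $\te{Hom}(\emptyset,X^{\otimes 2k})$, and the caveat about the unshaded setting are all reasonable elaborations but not strictly necessary.
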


\begin{proof} As done in Lemma \ref{lem: Afinshade-basis} we use the explicit positive definite basis given in \cite{MPS10}. The only thing left to verify is that he minimal projections have positive trace. It is easy to see that all minimal projections have trace 1, so both planar algebras are positive definite. \end{proof}

\begin{proof}[Proof of Theorem \ref{thm: arrow-A-finite-suff}] All of the work has already been done. Fix an $n\in \mbb{N}$. From Lemma \ref{lem: P0-1-d}, we get that the space of closed diagrams is 1-dimensional. Lemma \ref{lem:A-fin-o-spherical} gives that $\mc{P}$ is spherical and Lemma \ref{lem: Afin-o-principal-graph} gives that the principal graph of $\mc{P}$ is the $\til{A}_{2n-1}$ Dynkin diagram. Finally, we get that $\mc{P}$ is positive definite from Lemma \ref{lem: A-fin-o-unitary}, which completes the proof. \end{proof}

\begin{thm} \label{thm: A-fin-arrow-generate}
    The elements $P_1$, $U$, and $U^*$ from Definition \ref{Afinarrowgenerators} generate $\mc{P}$ in the arrow case of Theorem \ref{thm: A-finite-necessary-arrow}. The elements $P_1$, $U$, and $U^*$ from Definition \ref{Afinarrowgenerators} except with $U$ having $n+1$ strands on the bottom and $U^*$ have $n+1$ strands on top generate $\mc{P}'$ from Theorem \ref{thm: A-finite-necessary-arrow-even}. 
\end{thm}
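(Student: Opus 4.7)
The plan is to mirror the argument already used in Theorem~\ref{thm: Afinshade-generators} for the shaded case, but now in the unshaded arrow setting. Let $\mc{P}$ denote the planar algebra constructed from the generators and relations in Theorem~\ref{thm: A-finite-necessary-arrow}, and let $\mc{Q}$ denote an arbitrary unshaded subfactor planar algebra with principal graph $\til{A}_{2n-1}$ for which Theorem~\ref{thm: A-finite-necessary-arrow} provides the elements $P_1, U, U^*$ and the full list of relations. The goal is to show that these three elements generate all of $\mc{Q}$ as a planar algebra, not merely that $\mc{Q}$ satisfies the listed relations.

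First I would invoke Theorem~\ref{thm: arrow-A-finite-suff} in the forward direction: the planar algebra $\mc{P}$ built from $P_1, U, U^*$ subject to the arrow-case relations is an unshaded subfactor planar algebra with principal graph $\til{A}_{2n-1}$, and by Theorem~\ref{thm: A-finite-necessary-arrow} its set of generators is precisely $\{P_1, U, U^*\}$. Next, I would observe that because $\mc{Q}$ is a subfactor planar algebra with the same principal graph as $\mc{P}$ (and identical trace values on minimal projections: every minimal projection has trace $1$ by Lemma~\ref{lem: a-fin-graph-index}), the two planar algebras agree on all of the combinatorial data that enters the construction of a positive definite basis for their box spaces.

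The key input is the explicit positive definite basis of each box space given in \cite{MPS10}, used already in Lemmas~\ref{lem: Afinshade-posdef} and~\ref{lem: A-fin-o-unitary}. That construction is purely a function of the principal graph and the traces of the minimal projections; it does not depend on any additional structure of the planar algebra. Consequently, for every $k \geq 0$, the basis produces vector spaces of the same dimension in $\mc{P}_k$ and in $\mc{Q}_k$, so $\dim(\mc{P}_k)=\dim(\mc{Q}_k)$. Since $P_1, U, U^*$ already generate a subspace of $\mc{Q}$ isomorphic to $\mc{P}$ (via the universal property of the presentation in Theorem~\ref{thm: arrow-A-finite-suff}), the equality of dimensions forces this inclusion to be an equality in every box space; hence $\{P_1, U, U^*\}$ generates $\mc{Q}$.

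For $\mc{P}'$ with principal graph $\til{A}_{2n}$, the argument is identical, replacing Theorem~\ref{thm: A-finite-necessary-arrow} with Theorem~\ref{thm: A-finite-necessary-arrow-even} and adjusting the number of strands around $U$ and $U^*$ from $n$ to $n+1$; the positive definite basis argument is unchanged because the principal graph and the trace-$1$ property of minimal projections are again the only inputs. The main conceptual point (and the only place real work could hide) is verifying that the \cite{MPS10} basis construction genuinely produces the same dimensions on both sides, but this was already addressed in the shaded analogue and adapts verbatim to the unshaded arrow case, so no new obstacle arises.
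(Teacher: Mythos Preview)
Your proposal is correct and follows essentially the same approach as the paper: invoke the sufficiency theorem to build a planar algebra generated by $P_1, U, U^*$, then use the \cite{MPS10} positive definite basis (which depends only on the principal graph and traces of minimal projections) to conclude that the box-space dimensions agree with those of the arbitrary $\mc{Q}$, forcing the generating set to suffice. The paper's own proof simply says ``Nearly identical to the proof of Theorem~\ref{thm: Afinshade-generators},'' which is precisely the argument you have spelled out.
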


\begin{proof} Nearly identical to the proof of Theorem \ref{thm: Afinshade-generators}. \end{proof}


\section{Sufficient Relations for Affine \texorpdfstring{$A$}{A} Finite Unshaded Planar Algebras: The Color Case}

As done with the arrow case, we can find that the color relations given in Theorem \ref{thm: A-finite-necessary-arrow} also are sufficient for defining the $\til{A}_{2n-1}$ unshaded subfactor planar algebra. Recall that there is no $\til{A}_{2n}$ unshaded subfactor planar algebra. 

\begin{thm} \label{thm: A-fin-c-sufficient} (Sufficient relations for $\til{A}_{2n-1}$: Color case) Fix $n \in \mbb{N}$ and an $n$th root of unity $\tau_n$. Let $\mc{P}$ be the planar algebra generated by $P_1,Q_1, V,$ and $V^*$ from Definition \ref{Afincolorgenerators} with relations from Definition \ref{Afincolorrelations}. Define $P_1$ and $Q_1$ to be self-adjoint, $V^*$ to the be adjoint of $V$ and extend * anti-linearly and on diagrams. Then $\mc{P}$ is an unshaded subfactor planar algebra whose principal graph is given by (\ref{eq: colorprincipalgraph}) when $n\geq 2$ and (\ref{eq: colorprincipalgraphone}) when $n=1$.
\end{thm}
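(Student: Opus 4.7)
The plan is to follow the template established by Theorem \ref{thm: arrow-A-finite-suff} for the arrow case, with the cyclic group $\mbb{Z}_{2n}$ replaced by $\mbb{Z}_n$ and arrows replaced by red/blue colorings. First I would verify that the adjoint $*$ is well-defined: relations (i)-(v) are each manifestly invariant under vertical flipping and swapping $V \leftrightarrow V^*$, and taking the adjoint of the click relation $\mc{F}(V) = \tau_n V^* = \tau_n \mc{F}^{-1}(V)$ produces $\mc{F}^{-1}(V^*) = \ol{\tau_n} V = \ol{\tau_n}\mc{F}(V^*)$; since $\tau_n$ is a root of unity, $\ol{\tau_n} = \tau_n^{-1}$, and multiplying by $\tau_n$ recovers the original relation.

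Next I would show $\mc{P}_0$ is one-dimensional by a two-sided argument. For the lower bound, define a map $f_V : \mc{P}_0 \to \mbb{C}$ using the van Kampen labelling of regions by elements of $\mbb{Z}_n = \langle z \mid z^n = 1 \rangle$ (adjacent regions separated by a strand are labelled $x$ and $xz$, where the orientation of the step is determined by whether the separating strand is red or blue and whether we cross into or out of a given side). Assign each $V$ (resp.\ $V^*$) box an integer $\ell$ by reading off the group element at its starred region, total these up, and set $f_V(D) = \tau_n^{\ell^D}$. Checking that this is invariant under saddle, bubble, color-disagreement, unitary, and click relations is routine — the only non-trivial case is the click relation, where moving the star of $V$ one region clockwise shifts its $\ell$-value by exactly $1$, which is compensated by the factor $\tau_n$ on the right-hand side. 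Since $f_V(\emptyset) = 1$, this gives $\dim \mc{P}_0 \geq 1$. For the upper bound, I would write down an evaluation algorithm for closed diagrams in $\mc{P}_0$: after using strand decomposition and color disagreement to assume every strand is monochromatic, pick any $V$ or $V^*$ box; checkerboard-like alternation of colors forces its strands to land on another box $W_{\te{down}}$ of opposite type; use the click relation (accumulating factors of $\tau_n$) to align the connected strands in leftmost position, then saddle relations to successively fuse the adjacent strands so that all $n$ boundary points are connected, then apply the $V$ unitary relation to remove both boxes. Iterating reduces to a diagram of closed monochromatic loops, each of which evaluates to $1$ by the bubble relations.

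Having established $\dim \mc{P}_0 = 1$, I would produce the minimal projections $P_k, Q_\ell$ as alternating tensor products $P_1 \otimes Q_1 \otimes \cdots$ and $Q_1 \otimes P_1 \otimes \cdots$ (for $1 \leq k, \ell \leq n-1$) and $P_n := P_1 \otimes Q_1 \otimes \cdots \otimes R$, $Q_n := Q_1 \otimes P_1 \otimes \cdots \otimes S$, following Lemma \ref{lem: Afin-minproj}. Paralleling Lemma \ref{lem: saddles-are-minimal} and Lemma \ref{lem: Afinshade-suff-pgraph}, I would show each is a minimal projection by arguing any $f \in \te{Hom}(P_k, P_k)$ can be reduced via the evaluation algorithm to have no connected $V$-$V^*$ pairs; then boundary-counting (the boundary has $k + k < 2n$ points when $k < n$) and color parity considerations force $f$ to contain no boxes at all, leaving only scalar multiples of $P_k$. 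Nonisomorphism for distinct pairs is established by the same case analysis as in the shaded/arrow cases: morphisms between projections starting with different colors or with different lengths cannot exist without creating boxes or caps/cups that force the morphism to be zero. The unitary $V$ then furnishes the isomorphism $P_n \cong Q_n$, producing the final edge of the affine diagram, and the tensor decompositions $P_k \otimes X \cong P_{k-1} \oplus P_{k+1}$ follow directly from the strand decomposition $X = P_1 + Q_1$ combined with the saddle relations.

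The hard part will be the boundary/parity analysis inside the evaluation algorithm and the minimality proof — specifically, making sure that when I fuse $V_{\te{up}}$ with $V_{\te{down}}$, the colors of the adjacent strands genuinely match so that the saddle relation is legitimately applicable (color disagreement would otherwise kill the diagram prematurely, or worse, mask a real obstruction). This requires tracking the alternating color pattern carefully through each click, since clicking a $V$-box cyclically permutes its strand colors in a way that depends on the parity of $n$. Once these bookkeeping details are checked, sphericality follows as in Lemma \ref{lem: Afinshade-spherical} by noting $\mc{P}_1 = \te{span}\{P_1, Q_1\}$, and positive definiteness follows as in Lemma \ref{lem: Afinshade-posdef} from the Morrison-Peters-Snyder tree basis combined with the observation that every minimal projection has trace $1$.
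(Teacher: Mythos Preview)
Your overall strategy is correct and mirrors the paper's approach closely: verify $*$ is well-defined, establish $\dim\mc{P}_0 = 1$ via a surjection $f_V:\mc{P}_0\to\mbb{C}$ together with an evaluation algorithm, then deduce the principal graph, sphericality, and positive definiteness by the same arguments as in the shaded and arrow cases.

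There is, however, a real gap in your construction of $f_V$. You propose labelling regions by $\mbb{Z}_n=\langle z\mid z^n=1\rangle$, with adjacent regions related by multiplication by $z$. But in the color case the strands are \emph{unoriented}, so the generator attached to crossing a strand must be an involution. Your phrase ``the orientation of the step is determined by whether the separating strand is red or blue and whether we cross into or out of a given side'' does not resolve this, because an unoriented strand has no intrinsic side. Concretely: traverse a small closed path that crosses a single red strand twice (once out, once back). For the region labelling to be well-defined this product must be trivial, but your rule gives $z^2$, forcing $z^2=1$, which fails for $n\geq 3$. In the arrow case this problem does not arise precisely because the orientation of the strand tells you which crossing direction is $u$ and which is $u^{-1}$; that mechanism is absent here.

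The correct group is the dihedral group $D_n=\langle r,b\mid r^2=b^2=(rb)^n=1\rangle$: crossing a red strand multiplies by the involution $r$, crossing a blue strand by $b$, and the relation $(rb)^n=1$ is exactly what the $2n$ alternating strands around a $V$-box impose. The paper in fact takes a shortcut: it observes that the shaded-case function $f$ from Section~\ref{Afinsufficientshaded} never used the shading, and reuses it verbatim as $f_V$ (with the roles of $r$ and $b$ swapped). The $\ell$-value of each box is then read off from the $D_n$ label at its star via the table given there; note this $D_n\to\mbb{Z}$ assignment is not a group homomorphism, so one genuinely needs the full $D_n$ label rather than any cyclic quotient. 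Once $\mbb{Z}_n$ is replaced by $D_n$ in this step, the rest of your outline goes through as written.
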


Fix $n\in \mbb{N}$ and an $n$th root of unity, $\tau_n$. Throughout the rest of this section, $\mc{P}$ will denote the planar algebra defined by generators and relations in the above theorem. Denote the $k$-th box space as $\mc{P}_k$. Our strategy is similar as in the arrow case. Before proving Theorem \ref{thm: A-fin-c-sufficient}, we will prove some claims about $\mc{P}$. 

As the adjoint is defined to be extended anti-linearly and on diagrams, it is clear that for any diagram $T\in \mc{P}$ that for any diagram $T$, $T^*$ is obtained by vertically flipping and switching $V$ and $V^*$. 

\begin{lem}
    The adjoint is well-defined.
\end{lem}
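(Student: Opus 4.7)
The plan is to verify that the involution $*$ descends to a well-defined operation on $\mc{P}$ by showing it preserves each of the defining relations (i)--(vi) from Definition \ref{Afincolorrelations}. By construction, applying $*$ to a diagram amounts to a vertical flip together with interchanging the boxes $V$ and $V^*$ (with $P_1$ and $Q_1$ invariant), and extending antilinearly. So on either side of any relation $*$ produces a valid diagram, and the only issue is whether the two sides remain equal.

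For relations (i)--(iv), the bubble, strand decomposition, color disagreement, and saddle equations are all built from pictures (closed loops, single vertical strands, and cup--cap configurations paired with their flips) that are individually symmetric under vertical reflection, so applying $*$ gives back the same equation. For the unitary relations (v), I would use $(AB)^* = B^*A^*$ and $V^{**}=V$ to note that $(VV^*)^* = VV^*$ and $(V^*V)^* = V^*V$; the right-hand sides are tensor products of self-adjoint projections represented by vertical strands, hence self-adjoint. So (v) is also fixed by $*$.

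The substantive case is the click relation (vi). The key fact I would invoke is that the Fourier transform interacts with adjoint via $\mc{F}(A)^* = \mc{F}^{-1}(A^*)$, since vertical reflection reverses the sense of the click rotation. Taking the adjoint of $\mc{F}(V) = \tau_n V^*$ and using that $\overline{\tau_n} = \tau_n^{-1}$ (as $\tau_n$ is a root of unity) gives $\mc{F}^{-1}(V^*) = \tau_n^{-1}V$, equivalently $V = \tau_n \mc{F}^{-1}(V^*)$; applying $\mc{F}$ to both sides recovers the first equality of (vi), so it is preserved. Likewise, the adjoint of $\mc{F}(V) = \tau_n \mc{F}^{-1}(V)$ rearranges to $\mc{F}(V^*) = \tau_n \mc{F}^{-1}(V^*)$, which is the same click identity now applied to $V^*$; this follows from (vi) itself by applying $\mc{F}$ twice to extract $\mc{F}(V^*)=V$ and $\mc{F}^{-1}(V^*)=\tau_n^{-1}V$, and then comparing.

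The main obstacle is bookkeeping in (vi): one has to track the interplay between the conjugate $\overline{\tau_n}$, the identity $\mc{F}^* = \mc{F}^{-1}$, and the swap $V \leftrightarrow V^*$ carefully enough to confirm that the adjoint equations are actually consequences of (vi) rather than new independent relations. The other five relations are essentially immediate from the diagrammatic symmetry of their defining pictures.
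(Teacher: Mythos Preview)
Your proposal is correct and follows essentially the same approach as the paper: relations (i)--(v) are dispatched by symmetry, and for the click relation (vi) you derive the auxiliary identities $\mc{F}(V^*)=V$ and $\mc{F}^{-1}(V^*)=\tau_n^{-1}V$ from (vi) and then verify that the adjoint of (vi), namely $\mc{F}^{-1}(V^*)=\tau_n^{-1}V=\tau_n^{-1}\mc{F}(V^*)$, is exactly the combination of these. The paper organizes the same computation by first recording those two consequences of (vi) and then matching them against the adjoint equation.
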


    \begin{proof} From the preceding paragraph, * clearly respects the bubble relations, strand decomposition, color disagreement, saddle relations,  and the $V$ unitary relations. 

    Before proving that the adjoint respect the click relation we show other equalities that must hold from the click relation. For the click relation: $\mc{F}(V)=\tau_n V^*=\tau_n \mc{F}^{-1}(V)$, taking $\mc{F}^{-1}$ of the first equality gives $V=\tau_n \mc{F}^{-1}(V^*)$ and taking $\mc{F}$ of the second equality gives $\tau_n \mc{F}(V^*) = \tau_n V$. Then we can see that taking the adjoint of the click relation gives $\mc{F}^{-1}(V^*)=\tau_n^{-1}V=\tau_n^{-1}\mc{F}(V^*)$
    which is indeed true in the planar algebra. Thus the adjoint is well-defined. \end{proof} 
    
    To tackle showing that $\te{dim}(\mc{P}_0)=1$, we first show $\mc{P}_0$ is at least one-dimensional by defining a surjection from $\mc{P}_0$ into $\mbb{C}$ then show it is at most one-dimensional by giving an evaluation algorithm. Fortunately, our map is nearly identical to the function $f:\mc{P}_0\to \mbb{C}$ given in section 
    \ref{Afinsufficientshaded}. Recall that $f$ never took into consideration the shading of the diagrams. With this in mind, we define $f_V$. 
    
\begin{tcolorbox}[breakable, pad at break*=0mm]
Define $f_V:\mc{P}_0 \ra \mbb{C}$ by plugging $D\in \mc{P}_0$ into $f$ but recording red strands as $b$ and blue strands as $r$.
\end{tcolorbox}

\begin{lem}
    $f_V$ is a well-defined function. 
\end{lem}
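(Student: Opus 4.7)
The plan is to verify that $f_V$ is invariant under each of the defining relations of the color case listed in Definition \ref{Afincolorrelations}: the bubble relations, strand decomposition, color disagreement, the saddle relations, the $V$ unitary relations, and the click relation. Since $f_V$ is obtained from the shaded function $f$ of Section \ref{Afinsufficientshaded} by swapping the roles of red and blue in the dihedral labeling (i.e., red strands contribute $b$ and blue strands contribute $r$ in $D_n$), the strategy is to parallel the argument of Lemma \ref{lem: Afinshade-atleast1dim} and exploit its case analysis.

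First, I would observe that invariance under strand decomposition and color disagreement holds by construction, since $f_V$ is defined only on monochromatic diagrams and extended linearly. The bubble relations are invariant because inserting or deleting a monochromatic closed loop does not alter the dihedral labeling of any region. For the saddle relations, the local $D_n$-labelings of regions on the two sides of each relation agree (as the boundary data is identical and no box stars are introduced or removed), so $f_V$ is unchanged.

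For the $V$ unitary relations $VV^* = \nbr$ and $V^*V = \nrb$, I would argue that the stars of the adjacent $V$ and $V^*$ boxes on the left-hand side lie in the same region, so by the table defining $\ell_j^J$, their $\ell$-values are negatives of one another and cancel in the sum $\ell^D$. The labeling of all regions outside the local neighborhood is determined by the top and bottom boundary data of the neighborhood, which agrees on both sides, so the remaining contributions to $\ell^D$ are identical.

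The main obstacle is the click relation $\mc{F}(V) = \tau_n V^* = \tau_n \mc{F}^{-1}(V)$, which packages three diagrams into one chain of equalities. I would split this into verifying (a) $\mc{F}(V)$ and $\tau_n V^*$ give the same value, and (b) $\tau_n V^*$ and $\tau_n \mc{F}^{-1}(V)$ give the same value. In both parts, regions outside the local neighborhood are labeled identically because the boundary strands (and their colors) match across the three diagrams. What must be checked is that the $\ell$-contribution of the single box inside the neighborhood shifts by exactly $-1$ (or $+1$) when the star is clicked one region forward (resp.\ backward), so that the accompanying factor of $\tau_n$ compensates. This is a case analysis on the dihedral element $x$ labeling the leftmost region of the neighborhood, with the four cases $x = (rb)^m$, $x = b(rb)^m$, $x = (br)^m$, $x = r(br)^m$, following exactly the template used for the click relations in Lemma \ref{lem: Afinshade-atleast1dim}. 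Once all six relations are verified, $f_V$ is well-defined.
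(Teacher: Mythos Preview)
Your proposal is correct and follows essentially the same approach as the paper: invariance under the bubble, strand decomposition, color disagreement, saddle, and unitary relations is reduced to the reasoning of Lemma~\ref{lem: Afinshade-atleast1dim}, and the click relation is handled by the same four-case analysis on the dihedral element $x$ labeling the leftmost region. The paper organizes the click check by comparing three diagrams $D$, $D'$, $D''$ carrying $V^*$, $\mc{F}^{-1}(V)$, and $\mc{F}(V)$ respectively, which is exactly your split into (a) and (b).
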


    \begin{proof} We need to verify that $f_V$ is invariant under the relations of $\mc{P}$. The bubble relations, color disagreement, saddle relations, and unitary relations follow the same reasoning as Lemma \ref{lem: Afinshade-atleast1dim}. 

    Finally, we check that $f_V$ respects the click relation. Let $D$, $D'$, and $D''$ be identical closed diagrams except locally $D$ has a $V^*$ where $D'$ has a $\mc{F}^{-1}(V)$ and $D''$ has a $\mc{F}(V)$. It is clear that the regions outside of this local neighborhood will all have the same labellings.

    Suppose the group element on the left-hand side of the neighborhood is $x$. We break this up into cases.

    \textit{Case 1}: Suppose $x=(rb)^m$ for $m \in \mbb{Z}$. Then the corresponding $\ell$-value for the $V^*$ in $D$ will be $-m$. For $D'$, the region of $V$'s star will be the group element $xr=r(br)^m$, giving an $\ell$-value of $-m$. In $D''$, the region will associated to $V$ will be labelled $r(br)^{m-1}$ and thus the $\ell$-value of $V$ will be of $-(m-1)$.

    \textit{Case 2}: Suppose $x=(br)^m$. Then the corresponding $\ell$-value for the $V^*$ in $D$ will be $m$. In $D'$ the $V$-box's star's region will now be labelled $b(rb)^{m-1}$. So the $V$-box will have $\ell$-value of $m$. Now consider the $V$-box in $D''$. The region associated to the $V$'s star will be labelled $b(rb)^m$, so the corresponding $\ell$-value will be $m+1$. 

    \textit{Case 3}: Suppose $x=r(br)^m$. Then the corresponding $\ell$-value for $D$ will be $m$. In $D'$ the $V$-box's star's region will be labelled $(rb)^m$, giving an $\ell$-value of $m$. For $D''$, the $V$-box's star will be labelled $(rb)^{m+1}$, giving an $\ell$-value of $m+1$. 

    \textit{Case 4}: Suppose $x=b(rb)^m$. For $D$ the corresponding $\ell$-value for $V^*$ will be $-(m+1)$. In $D'$ the region is now labelled $(br)^{m+1}$, giving an $\ell$-value of $-(m+1)$.  Now for $D''$ the region for $V$'s star will be labelled $(br)^m$, which will give that $V$-box an $\ell$-value of $-m$. 

    This concludes showing that $f_V$ is well-defined. \end{proof}

\begin{lem} \label{lem: A-fin-c-at-least-1dim}
    $\mc{P}_0$ is at least one-dimensional.
\end{lem}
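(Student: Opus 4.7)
The plan is to mirror the argument used in the corresponding corollary after Lemma \ref{lem: Afinshade-atleast1dim} for the shaded case. Since the preceding lemma establishes that $f_V : \mc{P}_0 \to \mbb{C}$ is a well-defined $\mbb{C}$-linear map, it suffices to exhibit a single diagram in $\mc{P}_0$ on which $f_V$ takes a nonzero value.

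First I would examine the empty diagram $\emptyset \in \mc{P}_0$. By the definition of $f_V$ (inherited from step 1 of the definition of $f$), a diagram with no $V$ or $V^*$ boxes is assigned the value $1$, so $f_V(\emptyset) = 1 \neq 0$. Consequently, $f_V$ is a surjective linear map from $\mc{P}_0$ onto the one-dimensional space $\mbb{C}$.

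From surjectivity onto $\mbb{C}$, we conclude $\dim(\mc{P}_0) \geq \dim(\mbb{C}) = 1$, which gives the lemma. The entire proof is therefore a one-line corollary of the well-definedness lemma, and there is no genuine obstacle: the real content lay in the verification that $f_V$ respects the saddle, unitary, and click relations, which has already been carried out. I would simply write: ``Since $f_V$ is well-defined and $f_V(\emptyset) = \tau_n^0 = 1$, the map $f_V$ is a surjection onto $\mbb{C}$, so $\mc{P}_0$ is at least one-dimensional.'' This matches the style of the analogous corollary in the shaded setting and sets up the subsequent evaluation-algorithm argument that will establish the matching upper bound $\dim(\mc{P}_0) \leq 1$.
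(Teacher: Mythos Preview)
Your proposal is correct and essentially identical to the paper's own proof: the paper also observes that $f_V$ is well-defined by the preceding lemma, that $f_V(\emptyset)=1$, and hence that $f_V$ is a surjection onto $\mbb{C}$, giving $\dim(\mc{P}_0)\geq 1$.
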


    \begin{proof} By the preceding lemma, $f_V$ is well-defined. Further, $f_V(\emptyset)=1$ and is a linear map, $f_V$ is surjective. So $\mc{P}_0$ is at least 1-dimensional. \end{proof}

    To show that $\mc{P}_0$ is at most one-dimensional, we can define an evaluation algorithm. Again, noticing that the evaluation algorithm in the shaded planar algebra case did not depend on shading, we can just follow the steps of the evaluation algorithm in section \ref{Afinsufficientshaded}. That is:
    
   \begin{tcolorbox}[breakable, pad at break*=0mm]
        The Evaluation Algorithm for $\mc{P}_0$:
        
        Follow the steps of the evaluation algorithm of the shaded planar algebra in section \ref{Afinsufficientshaded}, ignoring any reference to $U$ or $U^*$-boxes and shading. 
    \end{tcolorbox}

\begin{lem} \label{lem: A-fin-c-1d}
    $\mc{P}_0$ is one-dimensional.
\end{lem}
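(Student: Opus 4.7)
The plan is to combine the two bounds that have already been set up. From Lemma \ref{lem: A-fin-c-at-least-1dim} we have a well-defined surjection $f_V:\mc{P}_0\to\mbb{C}$ with $f_V(\emptyset)=1$, giving $\dim\mc{P}_0\geq 1$. So the remaining task is to show $\dim\mc{P}_0\leq 1$, which I would obtain directly from the evaluation algorithm just defined.

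More precisely, I would argue as follows. Let $D\in\mc{P}_0$ be arbitrary. Using strand decomposition together with color disagreement, I rewrite $D$ as a linear combination of closed diagrams in which every strand is monochromatic (either entirely red or entirely blue). For each such summand, I apply the evaluation algorithm: at each stage I pick a $V$ or $V^*$ box, use a click relation to align its leftmost strand with a neighbor (paying only a scalar factor of $\tau_n$), use the saddle relations to zip up the remaining strands into $n$ adjacent parallel strands between two adjoint boxes, and then apply a $V$ unitary relation to eliminate both boxes. Since every box must be reachable from every other box (any isolated box in a closed diagram would have some strand closing on itself, and using the saddle and bubble relations together with color disagreement this forces the summand to be $0$), the algorithm terminates with a scalar multiple of a purely monochromatic closed diagram, which collapses via the bubble relations to a scalar multiple of $\emptyset$. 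Therefore every element of $\mc{P}_0$ is a scalar multiple of $\emptyset$, so $\dim\mc{P}_0\leq 1$.

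Combining the two inequalities gives $\dim\mc{P}_0=1$. The main obstacle in a fully rigorous write-up would be to confirm that the evaluation algorithm really terminates on every monochromatic summand without ambiguity, in particular that the ``pick a $V$ or $V^*$ box and find a partner'' step is always available until the diagram is box-free; but this is exactly the content transported across from the shaded evaluation algorithm in Section \ref{Afinsufficientshaded}, with the ``checkerboard shading'' obstruction replaced by the color alternation around each $V$/$V^*$ box, so no new combinatorial input is required.
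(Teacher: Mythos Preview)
Your proposal is correct and follows essentially the same approach as the paper: invoke Lemma \ref{lem: A-fin-c-at-least-1dim} for the lower bound and the evaluation algorithm (ported from Section \ref{Afinsufficientshaded} with shading dropped) for the upper bound. The paper's own proof is in fact even terser than yours, simply citing those two ingredients and concluding; your elaboration of how the algorithm runs is faithful to what is being imported.
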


    \begin{proof} From Lemma \ref{lem: A-fin-c-at-least-1dim}, we know $\te{dim}(\mc{P}_0)$ is at least one-dimensional. As we have an evaluation algorithm, we get that every element $D \in \mc{P}_0$ can be rewritten as a scalar multiple of $\emptyset$, therefore, $\mc{P}_0$ is at most one-dimensional, giving the result. \end{proof}

    We want to then show that the $\til{A}_{2n-1}$ Dynkin diagram is the correct principal graph for this planar algebra. However this result follows from a similar process to Lemmas \ref{lem: Afinshadesaddlerels} and \ref{lem: Afinshade-suff-pgraph} which gives:

\begin{lem} \label{lem: A-fin-c-graph}
    The principal graph of $\mc{P}$ is given by the Dynkin diagrams in the right-hand column of Theorem \ref{thm: A-finite-necessary-arrow}. 
\end{lem}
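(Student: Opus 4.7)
The plan is to mirror the strategy used in Lemma \ref{lem: Afinshade-suff-pgraph} (the shaded case), adapted to the color-case relations: define candidate representatives for every vertex, verify minimality via the evaluation algorithm, verify distinctness via a parity/length argument on boundary colors, and finally read off the fusion rules.

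For $n \geq 2$, I would define $P_k = P_1 \otimes Q_1 \otimes \os{(k-1)}{...} \otimes R_k$ and $Q_\ell = Q_1 \otimes P_1 \otimes \os{(\ell-1)}{...} \otimes S_\ell$ for $1 \leq k \leq n$ and $1 \leq \ell \leq n-1$, where the rightmost strand $R_k, S_\ell \in \{P_1, Q_1\}$ is chosen according to parity so that the colors alternate. Also set $P_0 = Q_0 = \emptyset$. Self-adjointness of $P_1, Q_1$ and the fact that $\otimes$ distributes over the adjoint imply each $P_k, Q_\ell$ is a projection. To prove minimality, I would fix $f \in \te{Hom}(P_k, P_k)$ and argue as in Lemma \ref{lem: Afinshade-suff-pgraph}: after applying the evaluation algorithm we may assume $f$ has no closed components and no pair of $V, V^*$ boxes connected by a strand. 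Each uncancelled $V$-box has $n$ legs of alternating color, but the boundary of $f$ has only $2k \leq 2n$ colored endpoints of a fixed alternating pattern, and any box whose legs land on the boundary would force a color mismatch (or, when $k<n$, force some leg to close on itself, making $f=0$). Hence $f$ is box-free, and because the top and bottom of $f$ already match the projection $P_k$, the only nonzero possibility is a scalar multiple of $P_k$; Lemma \ref{lem: A-fin-c-1d} and positivity of the trace then give $\dim \te{Hom}(P_k, P_k) = 1$. The same argument handles $Q_\ell$.

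For nonisomorphism, I would take a nonzero $f \in \te{Hom}(P_{k_1}, P_{k_2})$ (or $\te{Hom}(P_k, Q_\ell)$, etc.), again assume no closed components and no cancellable $V$-pair, and split into cases based on whether the leftmost boundary strands on top and bottom are the same color. If they differ (the $P_k$-vs-$Q_\ell$ case), then any box-free $f$ is ruled out by alternation of colors, and any remaining $V$-box requires $k+\ell \geq 2n$, which forces $k = \ell = n$ and hence contradicts $\ell \leq n-1$. If they agree (the $P_{k_1}$-vs-$P_{k_2}$ case), the analogous count gives $k_1 - k_2 \equiv 0 \pmod{2n}$, forcing $k_1 = k_2$. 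Extending the definitions to $Q_n = Q_1 \otimes P_1 \otimes \os{(n-1)}{...} \otimes R_3$ (with $R_3 \neq R_n$), the relations $VV^* = P_n$ and $V^*V = Q_n$ exhibit nonzero morphisms between $P_n$ and $Q_n$, so $P_n \cong Q_n$. With the classes of minimal projections pinned down, the fusion rules $P_k \otimes X \cong P_{k-1} \oplus P_{k+1}$ and $Q_\ell \otimes X \cong Q_{\ell-1} \oplus Q_{\ell+1}$ for interior vertices, and $P_n \otimes X \cong P_{n-1} \oplus Q_{n-1}$ at the identified endpoint, follow by induction exactly as in Lemma \ref{lem: Afinshade-suff-pgraph} using $X = P_1 + Q_1$, distributivity of $\otimes$ over $+$, and the saddle isomorphisms $P_1 \otimes P_1 \cong P_1$ and $P_1 \otimes Q_1 \cong \emptyset$ (and symmetrically for $Q_1$).

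The case $n=1$ is essentially immediate from the strand decomposition: $X = P_1 + Q_1$ with $P_1 \cong Q_1$ (witnessed by $V$), giving $X \cong P_1 \oplus P_1$, which is exactly the labelling in (\ref{eq: colorprincipalgraphone}). The main obstacle is the parity/length bookkeeping in the nonisomorphism step: making sure that after invoking the evaluation algorithm every surviving diagram really is forced to be a scalar multiple of one of the listed projections, and in particular that $V$-boxes cannot sneak in to produce spurious morphisms between projections of the wrong length. Once that counting is done carefully, using that every $V$ and $V^*$ has exactly $n$ alternating legs and that the boundaries of $P_k$ and $Q_\ell$ have a rigid alternating color pattern, everything else reduces to the argument already recorded in the shaded case.
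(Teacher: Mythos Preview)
Your approach is exactly the one the paper intends: the paper's proof of this lemma simply says to follow Lemmas \ref{lem: Afinshadesaddlerels} and \ref{lem: Afinshade-suff-pgraph}, and your outline does precisely that. The structure (define alternating-color projections, prove minimality via the evaluation algorithm, prove pairwise nonisomorphism by a boundary-counting argument, exhibit $P_n\cong Q_n$ via $V$, then read off fusion) is correct.

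There is, however, a genuine slip in your final paragraph that would break the fusion computation if taken literally. You write ``the saddle isomorphisms $P_1\otimes P_1\cong P_1$ and $P_1\otimes Q_1\cong\emptyset$.'' In the color case the saddle relations give $P_1\otimes P_1\cong\emptyset$ and $Q_1\otimes Q_1\cong\emptyset$ (since $P_1,Q_1$ are self-dual), while $P_1\otimes Q_1$ is by definition $P_2$, not $\emptyset$. With your stated isomorphisms one would get $P_1\otimes X\cong P_1\oplus\emptyset$ rather than the correct $P_1\otimes X\cong\emptyset\oplus P_2$. Since your definition of $P_k$ as an alternating tensor is correct, this appears to be a transcription error rather than a conceptual one, but it must be fixed. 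Two smaller points: you have $VV^*$ and $V^*V$ swapped relative to Definition \ref{Afincolorrelations} (harmless for showing $P_n\cong Q_n$), and the ``$k_1-k_2\equiv 0\pmod{2n}$'' phrasing in the same-color nonisomorphism step is borrowed from the arrow case; in the color case the actual mechanism (as in Lemma \ref{lem: Afinshade-suff-pgraph}) is that $k_1+k_2<2n$ forces no boxes, and then alternating boundary colors forbid cups and caps, forcing through-strands and $k_1=k_2$.
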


Following the proof of Lemma \ref{lem: Afinshade-spherical}, we get

\begin{lem} \label{lem: A-fin-c-spherical}
    $\mc{P}$ is spherical.
\end{lem}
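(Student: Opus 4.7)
The plan is to mirror the proof of Lemma \ref{lem: Afinshade-spherical} from the shaded setting. Sphericality of $\mc{P}$ amounts to the statement that the left and right traces agree on every 1-box, so it suffices to exhibit a spanning set for $\mc{P}_1$ on which sphericality is already visible. I will show that $\mc{P}_1 = \te{span}\{P_1, Q_1\}$.

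First I would argue the inclusion $\te{span}\{P_1,Q_1\} \subseteq \mc{P}_1$, which is immediate since $P_1, Q_1 \in \mc{P}_1$ by construction. For the reverse inclusion, let $T \in \mc{P}_1$. By Lemma \ref{lem: A-fin-c-graph}, the principal graph of $\mc{P}$ is the relevant $\til{A}_{2n-1}$ Dynkin diagram, and in particular $P_1$ and $Q_1$ are nonisomorphic minimal projections in $\mc{P}_1$ (either self-dual or, in the $n=1$ case, isomorphic as internal projections but still distinct as elements of the 1-box algebra). Writing $T$ as a linear combination of diagrams in $\te{Hom}(P_1,P_1), \te{Hom}(Q_1,Q_1), \te{Hom}(P_1,Q_1),$ and $\te{Hom}(Q_1,P_1)$, the cross hom-spaces vanish because the projections are nonisomorphic, and each of the two diagonal hom-spaces is one-dimensional and spanned by $P_1$ and $Q_1$ respectively. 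Hence $T \in \te{span}\{P_1,Q_1\}$, giving $\mc{P}_1 = \te{span}\{P_1, Q_1\}$.

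With this description, sphericality reduces to checking that $\te{tr}^L = \te{tr}^R$ on the two generators $P_1$ and $Q_1$. Both left and right traces close off the single strand of $P_1$ (resp.\ $Q_1$) into a red (resp.\ blue) bubble, which equals $1$ by the bubble relations in Definition \ref{Afincolorrelations}(i). Therefore $\te{tr}^L(P_1) = \te{tr}^R(P_1) = 1$ and likewise for $Q_1$, so by linearity the two traces agree on all of $\mc{P}_1$.

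The only mildly subtle point is the case $n=1$, where $P_1 \cong Q_1$ as projections so the argument ``nonisomorphic minimal projections imply vanishing cross hom-spaces'' must be handled differently; there $\mc{P}_1$ is $4$-dimensional (isomorphic to $M_2(\mbb{C})$ as in Lemma \ref{lem: Aunshadenonep1q1}), but sphericality still follows because the left and right traces of every element of $\mc{P}_1$ are computed by closing off its single input/output strand, which by the bubble and saddle relations only depends on the closed-diagram value, and $\mc{P}_0$ is one-dimensional by Lemma \ref{lem: A-fin-c-1d}. I expect no substantive obstacle beyond keeping track of this $n=1$ bookkeeping, exactly as in the shaded proof.
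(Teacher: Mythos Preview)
Your approach matches the paper's: reduce sphericality to the description $\mc{P}_1 = \te{span}\{P_1, Q_1\}$ via the hom-space decomposition, exactly as in Lemma \ref{lem: Afinshade-spherical}, and then observe both traces agree on $P_1$ and $Q_1$ by the bubble relations.

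You are in fact more careful than the paper in flagging the $n=1$ case, where $P_1 \cong Q_1$ and $\mc{P}_1$ is genuinely $4$-dimensional, so $\te{span}\{P_1,Q_1\}$ is a proper subspace. Your proposed fix there, however, is not quite an argument: knowing $\mc{P}_0$ is one-dimensional tells you both traces land in $\mbb{C}\cdot\emptyset$, but not that they land on the \emph{same} scalar. The clean way to finish is to note that for $n=1$ the two extra basis elements of $\mc{P}_1$ are $V$ and $V^*$, each of which has a red strand on one side and a blue strand on the other; closing such an element on either the left or the right forces a color disagreement along the closing strand, so $\te{tr}^L(V)=\te{tr}^R(V)=0$ and likewise for $V^*$. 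With that, sphericality on the full $4$-dimensional $\mc{P}_1$ follows.
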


\begin{lem} \label{lem: A-fin-c-posdef}
    $\mc{P}$ is positive definite.
\end{lem}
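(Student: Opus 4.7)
The plan is to follow verbatim the strategy already used in the shaded case (Lemma \ref{lem: Afinshade-posdef}) and in the arrow case (Lemma \ref{lem: A-fin-o-unitary}), since the argument there is purely categorical and does not actually use shading or orientation in any essential way. Concretely, I would invoke the construction of Morrison, Peters, and Snyder \cite{MPS10}, which produces an explicit orthogonal basis of each box space of a planar algebra whose category of projections is semisimple, built out of ``tree diagrams'' indexed by paths in the principal graph. The construction only requires (a) semisimplicity of the category $\mc{C}_\mc{P}$ of projections, which we have already established through the principal graph computation in Lemma \ref{lem: A-fin-c-graph}, and (b) that every minimal projection have strictly positive trace, since positivity of the inner product on a tree-basis element reduces to a product of traces of minimal projections along the tree.

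First I would note that the minimal projections of $\mc{P}$ are exactly the projections appearing in the principal graph of Lemma \ref{lem: A-fin-c-graph}, namely $\emptyset$, the $P_k$ for $1\le k\le n$, and the $Q_\ell$ for $1\le \ell\le n-1$, each of which is (by construction) a tensor product of copies of $P_1$ and $Q_1$. Second, I would verify that each such projection has trace $1$: by the bubble relations $\te{tr}(P_1)=\te{tr}(Q_1)=1$, and since the trace is multiplicative on tensor products of projections that are proved to be minimal (via the saddle relations collapsing adjacent $P_1\otimes Q_1$ and $Q_1\otimes P_1$ pairs to $\emptyset$, exactly as in Lemma \ref{lem: A-fin-gen-facts}(ii)(c)), each $P_k$ and each $Q_\ell$ has trace $1$ as well. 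In particular every minimal projection has positive trace.

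Third, I would appeal to Theorem 4.18 of \cite{MPS10}, which then gives that the tree-diagram basis for $\te{Hom}(\emptyset, X^{\otimes k})$ (equivalently $\mc{P}_k$) is a positive orthogonal basis, so the induced inner product $\langle f, g\rangle = \te{tr}(f^*g)$ is positive definite on each $\mc{P}_k$. This is exactly the same reduction carried out in Lemma \ref{lem: Afinshade-posdef} and Lemma \ref{lem: A-fin-o-unitary}, and no new idea is required in the color case.

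The only potential obstacle is a bookkeeping one: in \cite{MPS10} the basis is described in terms of tensor powers of a single self-dual generating object, whereas here $X=P_1+Q_1$ is already a decomposition into two minimal summands and the ``arrow/color'' structure on strands determines which tensor words represent which minimal projection. However, the MPS construction is robust to this: one simply indexes the trees by paths in the principal graph of Lemma \ref{lem: A-fin-c-graph} (equivalently, by admissible words in $\{P_1,Q_1\}$), and the argument goes through unchanged. Once this identification is written down, the proof is immediate.
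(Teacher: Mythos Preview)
Your proposal is correct and follows essentially the same approach as the paper: invoke the Morrison--Peters--Snyder tree basis from \cite{MPS10} and verify that every minimal projection has positive trace (here, trace $1$). The paper's own proof is a one-line reference to the earlier positive-definiteness lemmas with exactly this check, so your write-up is simply a more detailed version of the same argument.
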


\begin{proof} This is nearly the same as the proof as Lemma \ref{lem: A-fin-c-posdef}. The only thing left to check is that the minimal projections have positive trace. However, we know every minimal projection has trace equal to 1, so $\mc{P}$ is indeed positive definite. \end{proof}

\begin{proof}[Proof of Theorem \ref{thm: A-fin-c-sufficient}] All of the work has already been done. Fix $n\in \mathbb{N}$. From Lemma \ref{lem: A-fin-c-1d} we get that the space of closed diagrams is 1-dimensional. Lemma \ref{lem: A-fin-c-spherical} gives that $\mc{P}$ is spherical and Lemma \ref{lem: A-fin-c-graph} gives that the principal graph of $\mc{P}$ is the $\til{A}_{2n-1}$ Dynkin diagram. Finally, we get that $\mc{P}$ is positive definite from Lemma \ref{lem: A-fin-c-posdef}, which completes the proof. \end{proof}

\begin{thm}
    The elements $P_1, Q_1, V,$ and $V^*$ from Definition \ref{Afincolorgenerators} generate $\mc{P}$ in the color case of Theorem \ref{thm: A-finite-necessary-arrow}. 
\end{thm}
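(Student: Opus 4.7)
The plan is to mirror the argument used to prove Theorem \ref{thm: Afinshade-generators} and Theorem \ref{thm: A-fin-arrow-generate}. Let $\mc{Q}$ be the subfactor planar algebra with principal graph $\til{A}_{2n-1}$ in the color case of Theorem \ref{thm: A-finite-necessary-arrow}, which contains the elements $P_1,Q_1,V,V^*$ satisfying the relations of Definition \ref{Afincolorrelations} for some $n$th root of unity $\tau_n$. Let $\mc{P}$ be the planar algebra built purely from the generators-and-relations presentation (using the same $\tau_n$), which by Theorem \ref{thm: A-fin-c-sufficient} is also a subfactor planar algebra with principal graph $\til{A}_{2n-1}$.

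The key step is to compare the dimensions of box spaces. By construction, every diagram in $\mc{P}$ maps (via the universal property of the generators-and-relations planar algebra) to a diagram in $\mc{Q}$ built from $P_1,Q_1,V,V^*$. So $P_1,Q_1,V,V^*$ generate a planar subalgebra of $\mc{Q}$ whose $k$th box space has dimension at most $\dim(\mc{P}_k)$. To promote this to equality, I invoke the explicit positive-definite basis of \cite{MPS10} that was already used in Lemma \ref{lem: Afinshade-posdef} and Lemma \ref{lem: A-fin-o-unitary}: this basis for $\te{Hom}(\emptyset, X^{\otimes k})$ depends only on the principal graph and the traces of the minimal projections. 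Since $\mc{P}$ and $\mc{Q}$ share the same principal graph $\til{A}_{2n-1}$ (with the color labelling of (\ref{eq: colorprincipalgraph}) or (\ref{eq: colorprincipalgraphone})) and each has all minimal projections of trace $1$, we obtain $\dim(\mc{P}_k)=\dim(\mc{Q}_k)$ for every $k\in \mbb{Z}_{\geq 0}$.

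Combining these two facts, the planar subalgebra of $\mc{Q}$ generated by $P_1,Q_1,V,V^*$ has the same dimension as $\mc{Q}$ in every box space, so it equals $\mc{Q}$. Hence $P_1,Q_1,V,V^*$ generate $\mc{Q}$, which is the assertion of the theorem.

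The main potential obstacle is verifying that the Morrison–Peters–Snyder basis construction goes through in the unshaded, color case as it did in the shaded setting of Lemma \ref{lem: Afinshade-posdef}. This is routine: the construction in \cite{MPS10} only requires semisimplicity of the category of projections together with knowledge of the principal graph and the traces of the minimal projections, both of which we have; the color/shading data plays no essential role beyond determining the alternating boundary pattern of the tree basis elements. Given this, the dimension count is automatic and the proof is complete with essentially no new computation.
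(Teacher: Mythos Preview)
Your proposal is correct and follows essentially the same approach as the paper's own proof, which simply says to follow Theorem \ref{thm: Afinshade-generators}: compare the generators-and-relations planar algebra $\mc{P}$ with the given subfactor planar algebra $\mc{Q}$, and use the Morrison--Peters--Snyder basis (depending only on the principal graph and traces of minimal projections) to deduce $\dim(\mc{P}_k)=\dim(\mc{Q}_k)$ for all $k$.
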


\begin{proof} Follow Theorem \ref{thm: Afinshade-generators}. \end{proof}

\section{Distinctness and Fulfillment of Presentations for Affine \texorpdfstring{$A$}{A} Finite Subfactor Planar Algebras}

The arrow case with generators $\up$, $U$, and $U^*$ with relations given in Theorem \ref{thm: A-finite-necessary-arrow} give $2n$ presentations  of $\til{A}_{2n-1}$ unshaded subfactor planar algebras as $\omega_{2n}$ is a $2n$th root of unity. The color case with generators $\red$, $\blue$, $V$, and $V^*$ with relations given in Theorem \ref{thm: A-finite-necessary-arrow} give an additional $n$ presentations as $\tau_n$ is an $n$th root of unity. It is quite natural to ask if any of these are isomorphic and if there are any more presentations left to be found. For convenience lets call 

\begin{align*}
    P_1=\up, Q_1=\down, P'_1=\red, \te{ and } Q'_1=\blue
\end{align*}

\begin{thm}\label{thm: Afinexactly3ncases}
    There are exactly $3n$ nonisomorphic unshaded subfactor planar algebras with principal graph $\til{A}_{2n-1}$.
\end{thm}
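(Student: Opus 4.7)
The plan is to combine the structure already established with a nonisomorphism argument that refines Lemma~\ref{lem: Afinshade-noniso}. By Theorems~\ref{thm: A-finite-necessary-arrow} and~\ref{thm: A-fin-arrow-generate}, every unshaded subfactor planar algebra with principal graph $\til{A}_{2n-1}$ arises from either the arrow or the color presentation, with the listed generators. Theorems~\ref{thm: arrow-A-finite-suff} and~\ref{thm: A-fin-c-sufficient} confirm that each of the $2n$ choices of $\omega_{2n}$ (arrow) and each of the $n$ choices of $\tau_n$ (color) actually produces a valid subfactor planar algebra with the prescribed principal graph. This gives at most $3n$ isomorphism classes, so the remaining task is to show that all $3n$ presentations are mutually nonisomorphic.

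First I would handle the arrow--arrow comparison. Let $\mc{P}$ and $\mc{Q}$ be arrow presentations with roots $\omega_{2n}$ and $\til{\omega}_{2n}$ respectively, and suppose $\theta : \mc{P} \to \mc{Q}$ is a planar algebra isomorphism. As in Lemma~\ref{lem: Afinshade-noniso}, $\theta$ sends $P_1$ to either $\til{P}_1$ or $\til{Q}_1$, and by one-dimensionality of the relevant hom-space we must have $\theta(U) = \rho \til{U}$ (or, in the swapped case, $\rho \til{U}^*$) for some nonzero scalar $\rho$. Applying the click tangle to both sides and comparing with $\mc{F}(U) = \omega_{2n} U$ on the source and the analogous relation on the target forces $\omega_{2n} = \til{\omega}_{2n}$. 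The color--color comparison is parallel but more delicate: using $\theta(V) = \rho \til{V}$ (or $\rho \til{V}^*$), together with both of the equations packaged in the click relation $\mc{F}(V) = \tau_n V^*$ and $\mc{F}(V^*) = V$ (which falls out of the proof of Theorem~\ref{thm: A-finite-necessary-arrow}), one derives $|\rho|^2 = 1$ and $\overline{\rho} = \rho$, hence $\rho = \pm 1$, from which the two click equations pin down $\tau_n = \til{\tau}_n$.

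The third and most conceptual step is to rule out any isomorphism between an arrow-case and a color-case presentation. Any planar algebra isomorphism commutes with the click tangle, and since $\mc{F}^2$ restricted to $\mc{P}_1$ is the dual map, such an isomorphism must intertwine the duality on the two minimal projections of the $1$-box space. In the arrow case, Lemma~\ref{lem: A-fin-strand-decomp} together with our choice of representatives gives $\ol{P_1} = Q_1 \neq P_1$, so the two minimal projections of $\mc{P}_1$ are swapped by the dual. In the color case, by contrast, both $P_1$ and $Q_1$ are self-dual. A bijection between two-element sets cannot simultaneously intertwine a nontrivial involution on one side and the identity on the other, so no arrow-case planar algebra is isomorphic to any color-case planar algebra.

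The main obstacle is the scalar bookkeeping in the color--color step, where one must carefully chase both matchings $(P_1,Q_1) \mapsto (\til{P}_1,\til{Q}_1)$ and $(P_1,Q_1) \mapsto (\til{Q}_1,\til{P}_1)$ through both halves of the click relation; the arrow--arrow and arrow--versus--color steps are comparatively short, with the latter reducing to the observation about duals on $\mc{P}_1$. Combining the three steps with the presentation result yields exactly $3n$ nonisomorphic classes, proving the theorem.
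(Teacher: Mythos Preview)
Your outline matches the paper's proof almost exactly: the paper also splits the argument into arrow--arrow (Lemma~\ref{lem: A-fin-in-case-distinct}(i)), color--color (Lemma~\ref{lem: A-fin-in-case-distinct}(ii)), and arrow--versus--color (Lemma~\ref{lem: A-fin-different-cases-distinct}), and your arrow--versus--color observation about the dual map on $\mc{P}_1$ is precisely the paper's argument.

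The one place to be careful is your color--color step. Your claim that one derives $|\rho|^2=1$ and $\overline{\rho}=\rho$ appears to rely on $\theta$ commuting with the adjoint, but the paper's definition of a planar algebra isomorphism only demands compatibility with tangles, not with $*$. Fortunately this detour is unnecessary: writing $\theta(V)=\rho_1\til{V}$ and $\theta(V^*)=\rho_2\til{V}^*$ (or the swapped versions), the relation $\mc{F}(V^*)=V$ immediately forces $\rho_1=\rho_2$ (respectively $\rho_1=\tau_n\rho_2$), and then $\mc{F}(V)=\tau_n V^*$ gives $\tau_n=\til{\tau}_n$ directly, with no need to pin down $\rho$. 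This is what the paper means by ``nearly identical'' to the arrow case, so once you drop the $\rho=\pm 1$ claim your argument is complete and agrees with the paper's.
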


First, we will prove that none of the arrow case presentations are isomorphic to the color case presentations. Then we will prove that the $2n$ and $n$ presentations in the respective presentations are distinct. 

\begin{lem} \label{lem: A-fin-different-cases-distinct}
Fix $n \in \mathbb{N}$. All of the $2n$ planar algebras given by presentations in the arrow case for $\til{A}_{2n-1}$ are nonisomorphic to each of the $n$ planar algebras given by presentations in the color case for $\til{A}_{2n-1}$.
\end{lem}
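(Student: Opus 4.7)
The plan is to derive a contradiction from the behavior of any putative planar algebra isomorphism on the 1-box space, using the fact that the two cases are distinguished by how the duality (rotation) tangle acts on the minimal projections in $\mc{P}_1$.

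Suppose for contradiction that $\theta: \mc{P} \to \mc{Q}$ is a planar algebra isomorphism, where $\mc{P}$ is one of the arrow-case planar algebras for $\til{A}_{2n-1}$ (with generators $P_1, U, U^*$ and root of unity $\omega_{2n}$) and $\mc{Q}$ is one of the color-case planar algebras (with generators $P'_1, Q'_1, V, V^*$ and root of unity $\tau_n$). By definition of planar algebra homomorphism, the map $\theta_1: \mc{P}_1 \to \mc{Q}_1$ is a linear bijection that intertwines the action of every planar tangle; in particular, it sends minimal projections to minimal projections. Since $\{P_1, Q_1\}$ and $\{P'_1, Q'_1\}$ are the complete sets of nonisomorphic minimal projections in the respective 1-box spaces (by Lemmas \ref{lem: Afin-minproj} and \ref{lem: Afinshade-basis} adapted to the unshaded setting), $\theta_1$ restricts to a bijection $\{P_1, Q_1\} \to \{P'_1, Q'_1\}$.

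Next I invoke the rotation tangle $T$ of type $1$, which implements the dual map $\ol{\cdot}: \mc{P}_1 \to \mc{P}_1$ (rotation by $\pi$). Because $\theta$ intertwines every tangle action, we have
\begin{equation*}
\theta_1(\ol{X}) = \ol{\theta_1(X)} \quad \text{for all } X \in \mc{P}_1.
\end{equation*}
In the arrow case, Lemma \ref{lem: A-fin-strand-decomp} together with the diagrammatic conventions established in Section 5 gives $\ol{P_1} = Q_1$ and $\ol{Q_1} = P_1$ (rotating an up-arrow yields a down-arrow). In the color case, $\ol{P'_1} = P'_1$ and $\ol{Q'_1} = Q'_1$, since red and blue strands are self-dual.

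Applying the intertwining identity to $X = P_1$ yields
\begin{equation*}
\theta_1(Q_1) = \theta_1(\ol{P_1}) = \ol{\theta_1(P_1)} = \theta_1(P_1),
\end{equation*}
where the last equality uses that $\theta_1(P_1) \in \{P'_1, Q'_1\}$ and both of these are self-dual in $\mc{Q}$. This contradicts the injectivity of $\theta_1$, since $P_1 \neq Q_1$ as nonisomorphic minimal projections. Hence no such isomorphism can exist, and the $2n$ arrow-case planar algebras are pairwise nonisomorphic to the $n$ color-case planar algebras. The only genuine content is spotting that the duality tangle is the right invariant to exploit; once that is identified, the contradiction is immediate, so I do not anticipate any serious technical obstacle in this particular lemma.
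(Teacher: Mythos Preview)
Your proof is correct and essentially identical to the paper's own argument: both suppose an isomorphism $\theta$, note that $\theta_1$ must send $\{P_1,Q_1\}$ bijectively to $\{P'_1,Q'_1\}$, and then use that the 1-click (equivalently, rotation-by-$\pi$) tangle on the 1-box space swaps $P_1$ and $Q_1$ in the arrow case but fixes $P'_1$ and $Q'_1$ in the color case, forcing $\theta_1(P_1)=\theta_1(Q_1)$, a contradiction. One small remark: your justification ``$P_1\neq Q_1$ as nonisomorphic minimal projections'' is literally false when $n=1$ (there $P_1\cong Q_1$), but $P_1\neq Q_1$ still holds because they are orthogonal rank-one projections summing to $X$, so the argument goes through unchanged.
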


    \begin{proof} Let $\mc{P}$ be a presentation of an arrow case with chosen $2n$th root of unity $\omega_{2n}$ for $\til{A}_{2n-1}
    $ and $\mc{Q}$  be a presentation of the color case with chosen $n$th root of unity $\tau_n$. Suppose for contradiction that there exists an isomorphism $\theta:\mc{P} \ra \mc{Q}$. Recalling the definition of planar algebra isomorphism, there exists a bijective map $\theta_1:\mc{P}_1\ra \mc{Q}_1$. A planar algebra isomorphism preserves minimal projections so either $\theta_1(P'_1)=P_1$ and $\theta_1(Q_1)=Q'_1$ or $\theta_1(P_1)=Q'_1$ and $\theta(Q_1)=P'_1$.

     Let $T$ be the click tangle in the 1-box space. Then $Z_{T}\left(\theta_1\left(P_1\right)\right)=\theta_1(P_1)$ since in $\mc{Q}$ both $P'_1$ and $Q'_1$ are self-dual. As $\theta_1$ is an isomorphism this is equal to $\theta_1\left(Z_{T}\left(P_1\right)\right)=\theta_1\left(Q_1\right)$. Therefore $\theta_1\left(P_1\right)=\theta_1\left(Q_1\right)$, which is a contradiction. Hence there is no isomorphism between $\mc{P}$ and $\mc{Q}$. \end{proof}

     \begin{lem} \label{lem: A-fin-in-case-distinct}
 Fix $n \in \mbb{N}$.
     \begin{enumerate}[label=(\roman*)]
         \item Let $\mc{P}$ and $\mc{Q}$ be planar algebras given by generators from Definition \ref{Afinarrowgenerators} and relations from Definition \ref{Afinarrowrelations} using $2n$th roots of unity $\omega_1$ and $\omega_2$ respectively. If $\omega_1\neq \omega_2$ then $\mc{P}$ and $\mc{Q}$ are nonisomorphic. 
        \item Let $\mc{P}$ and $\mc{Q}$ be planar algebras given by generators from Definition \ref{Afincolorgenerators} with relations from Definition \ref{Afinarrowrelations} using $n$th roots of unity $\tau_1$ and $\tau_2$ respectively. If $\tau_1\neq \tau_2$ then $\mc{P}$ and $\mc{Q}$ are nonisomorphic. 
    \end{enumerate}
     \end{lem}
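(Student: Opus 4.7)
The plan is to mirror the argument used for the shaded case in Lemma \ref{lem: Afinshade-noniso}. In each part I assume for contradiction that there is a planar algebra isomorphism $\theta : \mc{P} \to \mc{Q}$. Since $\theta_1$ is a bijection between $\mc{P}_1$ and $\mc{Q}_1$ that must send minimal projections to minimal projections, the image of $P_1$ lands in one of two possibilities: either $\theta_1(P_1)=\tilde{P}_1$ (so also $\theta_1(Q_1)=\tilde{Q}_1$), or the ``swapped'' case. My task in each sub-case is to determine $\theta$ on the generator box ($U$ in the arrow case, $V$ in the color case) up to a scalar, then run this scalar through the click relation and read off a constraint between the two roots of unity. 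I also use the fact, which follows from the adjoint being induced by the vertical-reflection tangle, that any planar algebra homomorphism commutes with $*$.

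For the arrow case, let $\tilde{U},\tilde{U}^*$ denote the generator boxes of $\mc{Q}$. In the unswapped sub-case $\theta(U)$ lies in the one-dimensional space $\te{Hom}(\tilde{Q}_1^{\otimes n},\tilde{P}_1^{\otimes n})$, so $\theta(U)=\rho\tilde{U}$ for some nonzero $\rho$; in the swapped sub-case the domain and codomain flip, so $\theta(U)=\rho\tilde{U}^*$. In either case $W:=\theta(U)/\rho\in\{\tilde{U},\tilde{U}^*\}$ satisfies $\mc{F}(W)=\omega_2 W$. Applying $\mc{F}$ to both sides of $\theta(U)=\rho W$ and using $\mc{F}\circ\theta=\theta\circ\mc{F}$ yields $\rho\,\omega_2 W=\theta(\mc{F}(U))=\theta(\omega_1 U)=\omega_1\rho W$, so $\omega_1=\omega_2$, a contradiction.

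The color case is analogous but needs a second equation to pin down $\tau$ rather than just $|\tau|$. In each sub-case I again write $\theta(V)=\rho W$ with $W\in\{\tilde{V},\tilde{V}^*\}$ determined by boundary data, and $\theta(V^*)=\bar{\rho} W^*$ by compatibility with $*$. Applying the first half of the click relation $\mc{F}(V)=\tau_1 V^*$ in $\mc{P}$ and its analogue in $\mc{Q}$ produces one equation relating $\rho$, $\bar{\rho}$, $\tau_1$, and $\tau_2$. I then exploit the companion equality $V^*=\mc{F}^{-1}(V)$, equivalently $\mc{F}(V^*)=V$, which is also encoded in the original click relation. Applying this to $\theta(V^*)=\bar{\rho}W^*$ gives a second equation, and combining the two eliminates $\rho$ to force $\tau_1=\tau_2$.

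The main obstacle is the bookkeeping in the color case: I need to check that in the swapped sub-case the system consisting of the two click relations really is non-degenerate, so that it determines the ratio $\tau_1/\tau_2$ rather than only its modulus. Verifying this amounts to tracking how $W$ and $W^*$ get swapped when $\theta_1$ exchanges $P_1$ and $Q_1$, and noting that this swap introduces exactly the right mismatch between ``$\rho$'' and ``$\bar{\rho}$'' factors for the two equations to be independent. Once this is established, both cases collapse to the same conclusion and the lemma follows.
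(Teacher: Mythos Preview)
Your proof is correct and follows essentially the same approach as the paper: assume an isomorphism, pin down the image of the box generator up to a scalar using one-dimensionality of the relevant hom-space, and then push the click relation through $\theta$. The paper's treatment of the color case is terser (it just says the argument is ``nearly identical'' to the arrow case), so your more explicit handling of the two click equations $\mc{F}(V)=\tau V^*$ and $\mc{F}(V^*)=V$ is a welcome expansion.

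One minor point: your claim that $\theta$ commutes with $*$ (used to write $\theta(V^*)=\bar{\rho}\,W^*$) is not obviously justified by the paper's definition of planar algebra isomorphism, which only requires compatibility with \emph{tangles}; the adjoint is anti-linear, not a tangle operation. Fortunately your argument does not actually need this: you could just as well write $\theta(V^*)=\rho'\,W^*$ for an independent scalar $\rho'$, and your two click equations still combine to force $\tau_1=\tau_2$ in both the unswapped and swapped sub-cases. So the slip is harmless, but you should either drop the $*$-compatibility claim or replace $\bar{\rho}$ with an arbitrary $\rho'$ throughout.
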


     \begin{proof} For sake of contradiction let $\theta:\mc{P} \ra \mc{Q}$ be an isomorphism where $\mc{P}$ and $\mc{Q}$ are planar algebra with presentations given in the arrow case with chosen distinct $2n$th roots of unity $\omega_1$ and $\omega_2$ respectively. Call the $U$-box of $\mc{P}$ $U$ and the $U$-box of $\mc{Q}$ $U'$. Call the distinct minimal projections in the 1-box space of $\mc{P}$ and $\mc{Q}$ both $P_1$ and $Q_1$. However, it should be clear from context which planar algebra these projections lie in. By definition of planar algebra isomorphism, $\theta_1:\mc{P}_1 \ra \mc{Q}_1$ is a bijective map between the 1-box spaces. As planar algebra isomorphisms preserve minimal projections, either $\theta_1(P_1)=P_1$ and $\theta_1(Q_1)=Q_1$ or $\theta_1(P_1)=Q_1$ and $\theta_1(Q_1)=P_1$.

     A planar algebra isomorphism also preserves the tensor product. So when $\theta_1(P_1)=P_1$, $\theta(U)\in \te{Hom}\left(P_1^{\otimes n}, Q_1^{\otimes n}\right)$. Therefore $\theta(U)$ is a multiple of $U'$ in $\mc{Q}$. When $\theta_1(P_1)=Q_1$, we get that $\theta(U)$ is a multiple of $U'^*$.

    We then conclude that $\omega_1=\omega_2$. In the case where $\theta_1(P_1)=P_1$ we get that there exists a $\lambda \in \mathbb{C}$ such that $\theta(U)=\lambda U'$. Let $T=\mc{F}$ be the click tangle in the $n$-box space.  Then $Z_{T}(\theta(U))=Z_{T}(\lambda U')=\lambda Z_{T}(U')=\lambda \omega_2 U'$ which is the same as $\theta(Z_{T}(U))=\theta(\omega_1 U)=\omega_1\theta(U)=\omega_1 \lambda U'$. This gives that $\omega_1=\omega_2$, contradicting that $\omega_1$ and $\omega_2$ are distinct. The same process gives a contradiction when $\theta_1(P_1)=Q_1$. Thus the claim follows for part (i). 
     
     By replacing $P_1$ with $P'_1$ and $Q_1$ with $Q'_1$, the claim is nearly identical for part (ii). \end{proof}

    We can now conclude exactly how many such unshaded subfactor planar algebras there are. Recall that our definition of these planar algebra have no shading, so this result is not in disagreement to the result of Popa in \cite{Pop94}. 

     \begin{proof}[Proof of Theorem \ref{thm: Afinexactly3ncases}] By Lemma \ref{lem: A-fin-different-cases-distinct} none of the arrow cases are isomorphic to the color cases and by Lemma \ref{lem: A-fin-in-case-distinct} the $2n$ presentations found in the arrow case and the $n$ presentations found in the color case are distinct. Thus we have at least $3n$ nonisomorphic unshaded subfactor planar algebras with principal graph $\til{A}_{2n-1}$. 

     By Theorem \ref{thm: A-finite-necessary-arrow} and Theorem \ref{thm: A-fin-arrow-generate} if we have another unshaded subfactor planar algebra with principal graph $\til{A}_{2n-1}$ then it must have either the arrow case presentation or the color case presentation so there are no further presentations. Thus there are exactly $3n$ nonisomorphic unshaded subfactor planar algebras with principal graph $\til{A}_{2n-1}$. \end{proof}

By following the proof with minor adjustments of Theorem \ref{thm: Afinexactly3ncases} we get the following theorem.

\begin{thm}
    There are exactly $2n+1$ nonisomorphic unshaded subfactor planar algebras with principal graph $\til{A}_{2n}$.
\end{thm}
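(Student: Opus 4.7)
The plan is to mirror the structure of the proof of Theorem \ref{thm: Afinexactly3ncases}, but using the fact that in the $\til{A}_{2n}$ setting only the arrow case arises. First, by Theorem \ref{thm: A-finite-necessary-arrow-even} any unshaded subfactor planar algebra $\mc{P}'$ with principal graph $\til{A}_{2n}$ must be generated by $P_1$, $U$, and $U^*$ (with $n+1$ strands on top and bottom of $U$ and $U^*$) subject to the analogs of the relations in Definition \ref{Afinarrowrelations}, and the click relation involves some $(2n+1)$th root of unity $\omega_{2n+1}$. By Remark \ref{thm: Afinevennocolorcase}, the color case cannot occur for $\til{A}_{2n}$, so no further presentations beyond these arrow-case presentations are possible. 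Hence every such planar algebra arises from one of the $2n+1$ choices of $\omega_{2n+1}$.

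Next, I would invoke Theorem \ref{thm: arrow-A-finite-suff} (applied to $\mc{P}'$) to conclude that for each of the $2n+1$ choices of $\omega_{2n+1}$, the resulting generators-and-relations presentation does define an unshaded subfactor planar algebra with principal graph $\til{A}_{2n}$. This immediately gives at least $2n+1$ candidate nonisomorphic planar algebras, provided we show they are pairwise distinct.

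To show distinctness, I would adapt the argument of Lemma \ref{lem: A-fin-in-case-distinct}(i) with essentially no change. Suppose $\mc{P}'$ and $\mc{Q}'$ are two such planar algebras built using distinct $(2n+1)$th roots of unity $\omega_1$ and $\omega_2$, and assume for contradiction that there is a planar algebra isomorphism $\theta: \mc{P}' \to \mc{Q}'$. Since a planar algebra isomorphism preserves minimal projections and $\mc{P}'_1, \mc{Q}'_1$ each have exactly two distinct minimal projections $P_1, Q_1$, either $\theta_1$ fixes these labels or swaps them. Since $\theta$ preserves tensor products, $\theta(U)$ lies in the one-dimensional hom-space $\te{Hom}(P_1^{\otimes(n+1)}, Q_1^{\otimes n})$ or $\te{Hom}(Q_1^{\otimes(n+1)}, P_1^{\otimes n})$ in $\mc{Q}'$, so $\theta(U) = \lambda U'$ or $\theta(U) = \lambda (U')^*$ for some nonzero $\lambda \in \mbb{C}$. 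Applying the click tangle $\mc{F}$ in the $(2n+1)$-box space and using compatibility of $\theta$ with tangle actions gives $\omega_1 \lambda = \lambda \omega_2$ (or the analogous equation in the swap case), forcing $\omega_1 = \omega_2$, a contradiction.

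The main obstacle, if any, is just bookkeeping: one must verify that the swap case $\theta_1(P_1) = Q_1$ is compatible with the click relation in the same way as in Lemma \ref{lem: A-fin-in-case-distinct}(i), and confirm that the dimension computation $\te{dim}\,\te{Hom}(P_1^{\otimes(n+1)}, Q_1^{\otimes n}) = 1$ holds, which follows from the fusion rules established in Lemma \ref{lem: Afin-minproj}(ii) via the isomorphism $P_1^{\otimes(n+1)} \cong Q_1^{\otimes n}$. Putting these together yields exactly $2n+1$ nonisomorphic unshaded subfactor planar algebras with principal graph $\til{A}_{2n}$, completing the proof.
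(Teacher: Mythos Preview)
Your proposal is correct and takes essentially the same approach as the paper, which simply says to follow the proof of Theorem \ref{thm: Afinexactly3ncases} with minor adjustments; you have carried out exactly those adjustments by invoking Theorem \ref{thm: A-finite-necessary-arrow-even}, Remark \ref{thm: Afinevennocolorcase}, Theorem \ref{thm: arrow-A-finite-suff}, and an adaptation of Lemma \ref{lem: A-fin-in-case-distinct}(i). One small caution on your bookkeeping remark: in the swap case $\theta_1(P_1)=Q_1$, the image $\theta(U)$ lands in $\te{Hom}(Q_1^{\otimes(n+1)},P_1^{\otimes n})$, which is not literally generated by $(U')^*$ but rather by a rotation of it, so the scalar comparison goes through after one additional application of the click relation; this is the same sort of wrinkle the paper itself elides in the $\til{A}_{2n-1}$ argument.
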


\section{Equivalence to Cyclic Pointed Fusion Categories}

    Adopt the notation given for the the arrow cases of the $\til{A}_{2n}$ and $\til{A}_{2n-1}$ unshaded subfactor planar algebras. These categories' objects are tensor products of $Q_1$ and $P_1$. Since, for $\mc{P}$, $Q_1^{\otimes (2n-1)}\cong P_1^{\otimes n}\otimes Q_1^{\otimes (n-1)}\cong P_1$, it is enough to say that $\mc{C}_{\mc{P}}$ has objects generated by $Q_1$ (and similarly for $\mc{P}')$. Let $\mc{C}_{m, \ol{\zeta}}$ be the corresponding category that when $m=2n$ is $\mc{P}$ with principal graph $\til{A}_{2n-1}$ and when $m=2n+1$ is $\mc{P}'$ with principal graph $\til{A}_{2n}$, with $m$th chosen root of unity $\ol{\zeta}$. In this section, we dive further into tensor categories. For more details see \cite{ENO05, EGNO15}.
    
    Recall that a \textit{fusion} category, $\mc{C}$, over $\mbb{C}$, is a rigid, semisimple, $\mbb{C}$-linear, abelian category with only finitely many isomorphism classes of simple objects, and the unit object $\mbb{1}$ is simple. An object in a monoidal category, $\mc{C}$, is \textit{invertible} if its evaluation and coevaluation maps are isomorphisms and a monoidal category $\mc{C}$ is \textit{pointed} if every simple object in $\mc{C}$ is invertible. 

\begin{ex} Let $G$ be a finite group and $\omega:G\times G\times G \ra \mbb{C}^\times$ be a 3-cocycle. That is, $\omega$ satisfies 
\begin{align*}
    \omega(g_1g_2,g_3,g_4)\omega(g_1,g_2, g_3g_4)=\omega(g_1,g_2,g_3)\omega(g_1,g_2g_3,g_4)\omega(g_2,g_3,g_4)
\end{align*}
for all $g_1,g_2,g_3, g_4 \in G$. Define $\text{Vec}_G^\omega$ as the category of finite-dimensional vector spaces over $\mbb{C}$ graded by $G$ with associator defined by $\omega$. That is, the simple objects are vector spaces labeled by elements of $G$, $V_g$, (so there is only one object in each isomorphism class) the tensor product is given by $V_g \otimes V_h=V_{gh}$, and the associativity isomorphism is determined by
\begin{align*}
    \alpha_{V_g, V_h, V_k}=\omega(g,h,k)\te{id}_{V_{ghk}}:(V_g\otimes V_h)\otimes V_k \to V_g \otimes (V_h\otimes V_k)
\end{align*}
where $g,h,k \in G$. Every pointed fusion category over $\mbb{C}$ has the form $\te{Vec}_G^\omega$ for some finite group $G$ and 3-cocycle $\omega$ \cite{ENO05}. 
\end{ex}
    
    A category is \textit{strict} if the associator and both left and right unitors are the identity. For any nontrivial 3-cocycle $\omega$, $\te{Vec}_G^\omega$ is not strict. The Mac Lane Strictness Theorem \cite{ML98} states that any monoidal category is monoidally equivalent to a strict monoidal category, so we can find some strict monoidal category equivalent to $\te{Vec}_G^\omega$. In fact, for all $m \in \mbb{N}$ and associated roots of unity $\ol{\zeta}$, $\mc{C}_{m,\ol{\zeta}}$ can easily be seen to be a strict fusion category over $\mbb{C}$. The goal of this section then is to prove the following theorem.

    \begin{thm}\label{thm: monoidallyequivtoVecGomega}
        $\mc{C}_{m, \ol{\zeta}}$ is monoidally equivalent to $\te{Vec}_{\mbb{Z}_m}^\omega$ for some 3-cocycle $\omega$. 
    \end{thm}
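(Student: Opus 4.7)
The plan is to construct an explicit monoidal functor $F : \te{Vec}_{\mbb{Z}_m}^\omega \to \mc{C}_{m,\ol{\zeta}}$ realizing the equivalence, with a specific 3-cocycle $\omega$ extracted from the click relation. First I would verify that $\mc{C}_{m,\ol{\zeta}}$ is pointed: the saddle and bubble relations give $Q_1 \otimes P_1 \cong \emptyset \cong P_1 \otimes Q_1$, so $Q_1$ is invertible with dual $P_1$, and every simple object is a tensor power of $Q_1$, hence invertible. The principal graph computations in Lemma \ref{lem: Afin-minproj} together with $U : Q_1^{\otimes n} \to P_1^{\otimes n}$ (and the analogous map when $m=2n+1$) show that the set of isomorphism classes of simples is generated by $[Q_1]$ subject to the single relation $[Q_1]^{\,m} = [\emptyset]$, so the fusion ring is exactly $\mbb{Z}[\mbb{Z}_m]$.

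Next I would define $F$ on objects by $F(V_k) = Q_1^{\otimes (k\!\!\mod m)}$ (with $F(V_0) = \emptyset$), extended to direct sums via the additive envelope; on morphisms, since all nontrivial morphisms between distinct simples of $\te{Vec}_{\mbb{Z}_m}^\omega$ are zero, it suffices to send $\te{id}_{V_k}$ to $\te{id}_{F(V_k)}$. The tensorator $J_{V_a, V_b} : F(V_a) \otimes F(V_b) \to F(V_{a+b})$ is chosen to be the identity when $a+b < m$, and when $a+b \geq m$ it is the iterated composition built from cups $Q_1 \otimes P_1 \to \emptyset$ and the morphism $U^*: P_1^{\otimes n} \to Q_1^{\otimes n}$ (or its $\til{A}_{2n}$ analogue) that converts the ``overshoot" $Q_1^{\otimes m}$ into $\emptyset$. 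The key observation is that this identification is the unique (up to scalar) map between these two objects, so $J$ is well-defined up to scalar at each pair, and a specific choice is fixed by the unitary and saddle relations.

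The 3-cocycle $\omega$ is then forced by demanding the pentagon square in the definition of monoidal functor to commute: $\omega(a,b,c)$ measures the difference between applying $J$ first to $(a,b)$ and then to $(a+b,c)$ versus first to $(b,c)$ and then to $(a,b+c)$. Because the only place a nontrivial scalar can arise is when some intermediate factor of $Q_1^{\otimes m}$ is collapsed via a $U$ or $U^*$ that must first be rotated past other strands, the click relation $\mc{F}(U) = \ol{\zeta}\, U$ pays one factor of $\ol{\zeta}$ for each wrap-around; tracking how many wraps occur on each side of the pentagon gives the explicit formula
\[
\omega(a,b,c) \;=\; \ol{\zeta}^{\,a \cdot \lfloor (b+c)/m \rfloor}
\]
(or a normalization thereof), which is a standard 3-cocycle representative for $H^3(\mbb{Z}_m, \mbb{C}^\times) \cong \mbb{Z}_m$.

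Finally I would check that $F$ is full, faithful, and essentially surjective. Essential surjectivity is immediate from the description of simples; faithfulness and fullness follow because both categories have one-dimensional hom spaces between simples ($\emptyset$ to $\emptyset$) and zero otherwise, and $F$ is a nonzero linear map there. The main obstacle I anticipate is the diagrammatic bookkeeping for $J$ and the verification of the pentagon: one must carefully isotope the $U$ and $U^*$ boxes past each other (using sphericality from Lemma \ref{lem:A-fin-o-spherical} and the click relation) and confirm that the accumulated phase matches $\omega$ as defined, including handling of the two parity cases $m=2n$ and $m=2n+1$ uniformly. Once this is verified, Theorem \ref{thm: monoidallyequivtoVecGomega} follows, and as a by-product the $m$ distinct values of $\ol{\zeta}$ correspond to the $m$ cohomology classes in $H^3(\mbb{Z}_m, \mbb{C}^\times)$, matching the count of nonisomorphic presentations established in Theorems \ref{thm: Afinexactly3ncases}.
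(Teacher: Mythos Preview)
Your approach is sound in outline but takes a genuinely different route from the paper. You propose to build the monoidal functor directly from the non-strict category $\te{Vec}_{\mbb{Z}_m}^\omega$ into $\mc{C}_{m,\ol{\zeta}}$, constructing the tensorator $J$ by hand and then extracting the 3-cocycle $\omega$ from the pentagon axiom via the click relation. The paper instead goes through an intermediate strict diagrammatic category $\mc{D}_{m,\zeta}$ due to Czenky, already known to be monoidally equivalent to $\te{Vec}_{\mbb{Z}_m}^\zeta$: it defines a functor $\mc{A}:\mc{D}_{m,\zeta}\to\langle Q_1\rangle$ sending Czenky's generators $\mc{U},\mc{U}^*$ to the diagrams $\til{U},\til{U}^*$, checks well-definedness against Czenky's three relations, and verifies full faithfulness by a dimension count on hom-spaces (Lemma~\ref{lem: dimHomis1or0}). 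The pentagon computation you flag as the ``main obstacle'' is thereby completely avoided, having been absorbed into Czenky's prior result.

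What each approach buys: the paper's route is shorter and requires no cocycle bookkeeping, at the cost of depending on an external reference for the hard step. Your route is self-contained and makes the cocycle explicit (your formula $\ol{\zeta}^{\,a\lfloor (b+c)/m\rfloor}$ is indeed, up to normalization, the standard representative), which is conceptually satisfying and makes the correspondence $\ol{\zeta}\leftrightarrow[\omega]\in H^3(\mbb{Z}_m,\mbb{C}^\times)$ transparent. The trade-off is that the pentagon verification you anticipate really is delicate: you must track how many times a $U$ or $U^*$ box is clicked past strands in each of the two ways of associating $Q_1^{\otimes a}\otimes Q_1^{\otimes b}\otimes Q_1^{\otimes c}$, and the case analysis on whether $a+b$, $b+c$, and $a+b+c$ each exceed $m$ (or $2m$) is where errors creep in. If you pursue your route, that verification needs to be written out in full rather than asserted; as it stands the proposal is a correct strategy but not yet a complete proof.
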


    Through the proof of this theorem we will see how in instead of a nontrivial associator, $\omega$, of $\te{Vec}_{\mbb{Z}_m}^\omega$, we have an on-the-nose association, at the expense of more than one object in each isomorphism class of simples. 
    
    Czenky showed in \cite{Cze24} that the pointed fusion categories for the cyclic group $G=\mbb{Z}_m$ have a nice classification based on $m$ roots of unity, which is denoted $\te{Vec}_{\mbb{Z}_m}^\zeta$ for chosen root of unity $\zeta$. She then gives a diagrammatic description of $\te{Vec}_{\mbb{Z}_m}^\zeta$. Essentially, we prove Theorem \ref{thm: monoidallyequivtoVecGomega} by finding a functor between $\te{Vec}_{\mbb{Z}_m}^\zeta$ and $\mc{C}_{m, \ol{\zeta}}$ which gives an equivalence of categories. Note that the choice of $\zeta$ for $\te{Vec}_{\mbb{Z}_m}^\zeta$ and $\ol{\zeta}$ for $\mc{C}_{m,\ol{\zeta}}$ is intentional as we will see that for any $m$th root of unity $\zeta$, $\te{Vec}_{\mbb{Z}_m}^\zeta$ to monoidally equivalent to $\mc{C}_{m,\ol{\zeta}}$ where now $\ol{\zeta}$ indicates complex conjugation of $\zeta$. 

    Fix $m\in \mbb{Z}$ and an $m$th root of unity $\zeta$. For this $\zeta$ there is a 3-cocycle $\omega_\zeta:\mbb{Z}_m\times \mbb{Z}_m \times \mbb{Z}_m \to \mbb{C}^\times$ defined by, for all $i,j,k \in \mbb{Z}_m$,
    \begin{align*}
        \omega_\zeta(i,j,k)=\zeta^{\frac{i(j+k-[j+k])}{m}}
    \end{align*}
    where $[\cdot]$ indicates the value taken modulo $m$. Not only does every $m$th root of unity determine a 3-cocycle $\omega_\zeta$, but every 3-cocycle (modulo coboundaries) is of this form, so $\te{Vec}_{\mbb{Z}_m}^\zeta$ is defined to be $\te{Vec}_{\mbb{Z}_m}^{\omega_\zeta}$\cite{Cze24}. In section 5 of the same paper, Czenky shows that $\te{Vec}_{\mbb{Z}_m}^\zeta$ can be given the diagrammatic description
    \begin{equation*}
        \textit{Objects: } \agusobj\te{ } \os{(k)}{ ... }\te{ } \agusobj \te{ ($k$ dots)}
    \end{equation*}
    and
    \textit{Morphisms: }
    \begin{equation}
        \te{id}_1=\agusmorphid, \mc{U}=\agusmorphU, \mc{U}^*=\agusmorphUad
    \end{equation}
    with  \textit{Relations: } 
    \begin{enumerate}[label=(\roman*)]
        \item $\mc{U}^*\mc{U}=\agusUadU = \te{id}_0$
        \item $\mc{U}\mc{U}^*=\agusUUad = \agusmorphid \os{(m)}{...} \agusmorphid $
        \item $\agusitensorUlhs=\zeta \agusitensorUrhs$
    \end{enumerate} 
    with tensor given by horizontal concatenation and composition of morphisms is vertical stacking. In fact $\mc{U}$, $\mc{U}^*$ form a generating set for the morphisms. When there is no confusion, we will call the object $\agusobj\te{ } \os{(k)}{ ... }\te{ } \agusobj$, $k$. Call this diagrammatic category $\mc{D}_{m,\zeta}$. We show $\mc{D}_{m,\zeta}$ is monoidally equivalent to the monoidal subcategory $\langle Q_1 \rangle$ of $\mc{C}_{m,\ol{\zeta}}$ generated by $Q_1$ whose additive envelope is equivalent to $\mc{C}_{m,\ol{\zeta}}$. 

    We are going to map $\mc{U}$ and $\mc{U}^*$ by a functor to special diagrams in our planar algebra defined below:
    \begin{equation}\label{eq: tilUandtilUaddefined}
        \til{U}=\tilU, \te{ and } \til{U}^*=\tilUad
    \end{equation}
    where these have $m$ strands on the top or bottom respectively. Notice that could have equivalently defined $\mc{P}$ and $\mc{P}'$ to have generators $Q_1$, $\til{U}$, and $\til{U}^*$ instead of $P_1$, $U$, and $U^*$. We will redefine our categories $\mc{C}_\mc{P}$ and $\mc{C}_{\mc{P}'}$ using $\til{U}$, $\til{U}^*$, and $Q_1$ as generating morphisms.
    
    \begin{tcolorbox}[breakable, pad at break*=0mm]
        Define a functor $\mc{A}:\mc{D}_{m, \zeta}\to \langle Q_1 \rangle$ on generating objects and morphisms by
    \begin{align*}
        \agusobj &\mapsto Q_1\\
        \agusmorphid &\mapsto \down\\
        \mc{U} &\mapsto \til{U}\\
        \mc{U}^* &\mapsto \til{U}^*
    \end{align*}
        then extend linearly and monoidally on morphisms. 
    \end{tcolorbox}

    Recall from the background material that $\mc{A}$ will yield an equivalence of categories if it is full, faithful, and essentially surjective. We begin by ensuring $\mc{A}$ is a well-defined functor. 

    \begin{lem}\label{lem: agusmapwelldefined}
        $\mc{A}$ is well-defined.
    \end{lem}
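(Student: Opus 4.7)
The plan is to verify that $\mc{A}$ respects the three defining relations of $\mc{D}_{m,\zeta}$, after which the functor extends unambiguously by linearity, monoidality, and composition. Write $m=2n$ in the $\til{A}_{2n-1}$ case and $m=2n+1$ in the $\til{A}_{2n}$ case, so that $\til{U}$ and $\til{U}^*$ are morphisms $\emptyset \to Q_1^{\otimes m}$ and $Q_1^{\otimes m}\to \emptyset$ respectively, each built from a single $U$- or $U^*$-box together with caps that convert its $P_1$-boundary into $Q_1$-boundary via the identification $\ol{P}_1=Q_1$ valid in the arrow case.

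For relation (i), $\mc{U}^*\mc{U}=\te{id}_0$, I would stack $\til{U}^*$ on top of $\til{U}$, isotope the two interior boxes so that they sit directly on top of each other, and slide the caps so that the inner faces of $U^*$ and $U$ meet along $n$ (respectively $n+1$) adjacent $P_1$-strands. The unitary relation $U^*U=P_1^{\otimes n}$ then replaces the pair of boxes by straight $P_1$-strands, after which the saddle relations together with the bubble relation close off the remaining nested loops to give $\emptyset$. Relation (ii), $\mc{U}\mc{U}^*=\te{id}_1^{\otimes m}$, is dual: the analogous compose-and-collapse argument now uses $UU^*=Q_1^{\otimes n}$ and repeated saddle moves to rearrange the surviving $Q_1$-arcs into $m$ parallel copies of $Q_1$, which is precisely $\mc{A}(\te{id}_1^{\otimes m})$.

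The essential content is relation (iii). The plan is to interpret $\mc{A}(\agusitensorUlhs)$ as the diagram one obtains by threading an extra $Q_1$-strand around one side of $\til{U}$; pushing that strand past $\til{U}$ to the other side is, after using the saddle relations to resolve the interaction with the caps, exactly a single application of the click tangle to the underlying $U$-box. The click relation of the planar algebra therefore produces the scalar $\omega_{2n}$ (respectively $\omega_{2n+1}$) times $\mc{A}(\agusitensorUrhs)$. Since $\mc{C}_{m,\ol{\zeta}}$ is defined with click parameter $\ol{\zeta}$ and since $\ol{\zeta}^{-1}=\zeta$ for a root of unity, the resulting scalar matches the factor $\zeta$ demanded by relation (iii) once the orientation of the click is accounted for.

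I expect this last step to be the main obstacle, since it requires carefully tracking orientations and the direction of the click: whether pushing a $Q_1$-strand past $\til{U}$ corresponds to $\mc{F}$ or $\mc{F}^{-1}$ applied to the embedded $U$-box, and whether the caps built into $\til{U}$ contribute any additional phases as they are rotated. Once these bookkeeping points are settled, using that the saddle relations carry no scalar and that closed loops evaluate to $1$ in the unshaded setting, the verification of relation (iii) reduces to a single invocation of the click relation and the scalar $\zeta$ appears with the correct sign, completing the check that $\mc{A}$ is well-defined.
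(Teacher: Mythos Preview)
Your proposal is correct and follows essentially the same strategy as the paper's proof: verify each of the three relations by direct computation, with (i) and (ii) reducing to the unitary relations plus saddle and bubble moves, and (iii) reducing to a single application of the click relation together with $\ol{\zeta}^{-1}=\zeta$. Your worry about extra phases from the caps in $\til{U}$ is unfounded: the paper handles (iii) in one line by writing $Q_1\otimes\til{U}=\ol{\zeta}^{-1}\,Q_1\otimes\mc{F}^{-1}(\til{U})$ and then applying a single saddle to slide the $Q_1$ to the right, confirming that the caps contribute nothing beyond the one click factor.
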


    \begin{proof} To show the lemma, we need to prove that $\mc{A}$ respects the defining relations of $\mc{D}_{m,\zeta}$. The left-hand side of relation (i) for $\mc{D}_{m,\zeta}$, $\mc{U}^*\mc{U}$ maps by $\mc{A}$ to $\til{U}^*\til{U}$ which equals $\te{tr}(U^*U)=\emptyset$. As $\mc{A}(\te{id}_0)=\emptyset$, $\mc{A}$ respects relation (i). $\mc{F}$ maps $\mc{U}\mc{U^*}$ to $\til{U}\til{U}^*$. In between $U$ and $U^*$ there are oppositely oriented strands. Using the saddle relations $n$ (or $n+1$, if $m$ is even) times we obtain $\til{U}\til{U}^*=UU^*\otimes Q_1 \otimes ... \otimes Q_1$, which equals $Q_1^{\otimes m}$. This is the image of $\te{id}_1 ^{\otimes m}$, so $\mc{F}$ respects relation (ii). The image of the left-hand side of relation (iii) is $Q_1\otimes \til{U}$. This equals to $\ol{\zeta}^{-1} Q_1 \otimes \mc{F}^{-1}(\til{U})$. Then on the left-side of the diagram, there are oppositely oriented strands. Using the saddle relation we get this is equal to $\ol{\zeta}^{-1}\til{U}\otimes Q_1=\zeta \til{U}\otimes Q_1$, which is the image of the right-hand side of relation (iii). Therefore $\til{A}$ is a well-defined functor. \end{proof}

    \begin{lem}\label{lem: agusmapsurj}
        $\mc{A}$ is essentially surjective on objects.
    \end{lem}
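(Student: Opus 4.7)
The plan is to observe that this lemma is essentially immediate from the definitions of the source and target categories. The functor $\mc{A}$ is defined monoidally, so once we fix its value on the generating object $\agusobj$, its value on every object of $\mc{D}_{m,\zeta}$ is forced. Concretely, the objects of $\mc{D}_{m,\zeta}$ are the $k$-fold tensor products $\agusobj \otimes \os{(k)}{\cdots} \otimes \agusobj$ for $k \in \mbb{Z}_{\geq 0}$ (with the $k=0$ case being the monoidal unit), and under $\mc{A}$ these map strictly to $Q_1^{\otimes k}$, with the unit going to $\emptyset$.

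On the target side, $\langle Q_1 \rangle$ is by the definition recalled in the background the monoidal subcategory of $\mc{C}_{m,\ol{\zeta}}$ whose objects are exactly $Q_1^{\otimes n}$ for $n \in \mbb{N}$ together with the unit. Thus every object of $\langle Q_1 \rangle$ equals (and in particular is isomorphic to) the image under $\mc{A}$ of the corresponding object in $\mc{D}_{m,\zeta}$, so $\mc{A}$ is essentially surjective. There is really no obstacle here; the genuine work is deferred to the subsequent lemmas showing that $\mc{A}$ is full and faithful, where one must leverage the planar algebra relations to identify hom-spaces on the two sides.
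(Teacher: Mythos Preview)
Your proposal is correct and takes essentially the same approach as the paper, which simply notes that since $Q_1$ is the generating object of $\langle Q_1\rangle$ and is the image of the generating object of $\mc{D}_{m,\zeta}$, the claim is immediate. Your version spells out the object-level bijection a bit more explicitly, but the content is identical.
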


    \begin{proof} As $Q_1$ is the generating object in $\mc{C}_{m,\ol{\zeta}}$ and the image of the generating object in $\mc{D}_{m,\zeta}$, the proof is immediate. \end{proof}

    To show that $\mc{A}$ is full and faithful, we need to check that the induced map 
    \begin{align*}
        \til{A}: \te{Hom}_{\mc{D}_{m,\zeta}}(k,\ell)\to \te{Hom}_{\mc{C}_{m,\ol{\zeta}}}\left(Q_1^{\otimes k}, Q_1^{\otimes \ell}\right)
    \end{align*}
    is bijective for all $k,\ell \in \mbb{Z}_{\geq 0}$. Duality allows a diagram in $\te{Hom}_{\mc{D}_{m,\zeta}}(k,\ell)$ to be turned into a diagram in $\te{Hom}_{\mc{D}_{m,\zeta}}(k+\ell,0)$. Furthermore the bending of the diagram is linear and bijective, so we can equivalently prove that the induced map
    \begin{align*}
        \til{A}: \te{Hom}_{\mc{D}_{m,\zeta}}(k,0)\to \te{Hom}_{\mc{C}_{m,\ol{\zeta}}}\left(Q_1^{\otimes k},\emptyset\right)
    \end{align*}
    is bijective for all $k\in \mbb{N}$.

    The hom-spaces $\te{Hom}_{\mc{D}_{m,\zeta}}(k,0)$ are one-dimensional for every $k$ divisible by $m$ and is zero otherwise \cite{Cze24}. If these are the dimensions of the hom-spaces $\te{Hom}_{\mc{C}_{m,\ol{\zeta}}}\left(Q_1^{\otimes k},\emptyset\right)$ then we only need to show $\til{A}$ is full.

    \begin{lem}\label{lem: dimHomis1or0}
        The dimension of $\te{Hom}_{\mc{C}_{m,\ol{\zeta}}}\left(Q_1^{\otimes k},\emptyset\right)$ is 1 if $k$ is divisible by $m$ and is zero otherwise.
    \end{lem}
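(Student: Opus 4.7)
The plan is to reduce the claim to the earlier classification of minimal projections and the observation that $\emptyset$ is itself a minimal projection (the one-dimensionality of $\mc{P}_0$ and $\mc{P}'_0$ from Corollary \ref{lem: P0-1-d}). The key fact to establish first is that $Q_1^{\otimes m}\cong \emptyset$ in both cases. For $\mc{P}$ ($m=2n$), Lemma \ref{lem: AfinarrowPnisotoQn} gives $P_1^{\otimes n}\cong Q_1^{\otimes n}$, so $Q_1^{\otimes 2n}\cong P_1^{\otimes n}\otimes Q_1^{\otimes n}$, and then $n$ applications of $P_1\otimes Q_1\cong \emptyset$ from Lemma \ref{lem: Afinarrow-P1Q1isotoempty} collapse this to $\emptyset$. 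The $\mc{P}'$ case ($m=2n+1$) is the same kind of computation using $P_1^{\otimes(n+1)}\cong Q_1^{\otimes n}$ and $Q_1\otimes P_1\cong \emptyset$.

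Next I would argue that for every $k\in \mbb{Z}_{\geq 0}$, the object $Q_1^{\otimes k}$ is isomorphic to some minimal projection. Writing $k=qm+r$ with $0\le r<m$, the previous step gives $Q_1^{\otimes k}\cong Q_1^{\otimes r}$. For $0\le r<m$ the object $Q_1^{\otimes r}$ is exactly one of the minimal projections listed in the principal graphs of Lemma \ref{lem: Afin-o-principal-graph} (and of Theorem \ref{thm: A-finite-necessary-arrow-even} for $\mc{P}'$): when $r\le n$ it is directly $Q_r$ (or $\emptyset$ when $r=0$), and when $r>n$ one applies Lemma \ref{lem: AfinarrowPnisotoQn} repeatedly to rewrite it as some $P_\ell$. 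In particular $Q_1^{\otimes k}\cong \emptyset$ if and only if $r=0$, i.e.\ $m\mid k$.

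Finally, invoke the nonisomorphism of the minimal projections established in the proof of Lemma \ref{lem: Afin-o-principal-graph}: the hom-space between any two nonisomorphic minimal projections is $0$, and between a minimal projection and itself is $1$-dimensional. Since $\emptyset$ is a minimal projection and $Q_1^{\otimes k}$ is isomorphic to a minimal projection for every $k$, it follows that
\[
\dim\te{Hom}_{\mc{C}_{m,\ol{\zeta}}}\!\left(Q_1^{\otimes k},\emptyset\right)=\begin{cases}1 & m\mid k,\\ 0 & \text{otherwise},\end{cases}
\]
which is the statement of the lemma. The only substantive step is the first one (the isomorphism $Q_1^{\otimes m}\cong \emptyset$), and even that follows directly from two lemmas already proved, so I expect this proof to be short and essentially formal.
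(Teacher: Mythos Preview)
Your proposal is correct and takes essentially the same approach as the paper: both arguments reduce $k$ modulo $m$ via the isomorphism $Q_1^{\otimes m}\cong\emptyset$ (obtained from Lemmas \ref{lem: AfinarrowPnisotoQn} and \ref{lem: Afinarrow-P1Q1isotoempty}), identify $Q_1^{\otimes r}$ for $0\le r<m$ with one of the minimal projections on the principal graph, and then invoke the fact that $\te{Hom}$ between nonisomorphic minimal projections vanishes while $\te{Hom}$ from a minimal projection to itself is one-dimensional. The only cosmetic difference is that the paper splits into the ranges $k=0$, $0<k<n$, $n\le k<m$, $k\ge m$ explicitly rather than writing $k=qm+r$ first; your phrasing ``applies Lemma \ref{lem: AfinarrowPnisotoQn} repeatedly'' for $r>n$ is slightly loose (a single application of the symmetric version $Q_1^{\otimes(m-k)}\cong P_1^{\otimes k}$ suffices), but this does not affect correctness.
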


    \begin{proof} Without loss of generality, suppose $m=2n+1$. The result when $k=0$ was proven in Lemma \ref{lem: P0-1-d}. If $0<k<n$ then $Q_1^{\otimes k}$ is a minimal projection so the hom-space is zero-dimensional. Suppose $n\leq k <2n$. Lemma \ref{lem: AfinarrowPnisotoQn} gives that $Q_1^{\otimes k}\cong P_1^{\otimes (2n-k)}$, which will be a minimal projection not isomorphic to $\emptyset$, giving again that the hom-space is zero-dimensional. If $k\geq 2n$, use the isomorphism $Q_1^{\otimes (2n)}\cong \emptyset$ and the previous cases of $k$ to get the result. \end{proof}
    
    \begin{lem}\label{lem: agusmapfullyfaithful}
        $\mc{A}$ is full and faithful. 
    \end{lem}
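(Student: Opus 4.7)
The plan is to use the already-established reduction to morphism spaces of the form $\te{Hom}_{\mc{D}_{m,\zeta}}(k,0)\to \te{Hom}_{\mc{C}_{m,\ol{\zeta}}}(Q_1^{\otimes k},\emptyset)$, combined with the dimension count from Lemma \ref{lem: dimHomis1or0} and the fact (cited from Czenky) that $\te{Hom}_{\mc{D}_{m,\zeta}}(k,0)$ is one-dimensional when $m\mid k$ and zero otherwise. Since source and target have equal (finite) dimensions for every $k$, the induced map $\til{A}$ will be bijective as soon as it is surjective; equivalently, it suffices to check that $\til{A}$ sends a nonzero generator of $\te{Hom}_{\mc{D}_{m,\zeta}}(k,0)$ to a nonzero element of $\te{Hom}_{\mc{C}_{m,\ol{\zeta}}}(Q_1^{\otimes k},\emptyset)$ whenever $m\mid k$.

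First I would dispatch the trivial case: if $m\nmid k$, both hom-spaces are zero, so $\til{A}$ is automatically a bijection. Next, for $k=m$, I would identify a generator of $\te{Hom}_{\mc{D}_{m,\zeta}}(m,0)$ as the morphism obtained by bending $\mc{U}^*$ (via duality) so that all $m$ strands point downward; under $\mc{A}$ this generator is sent to $\til{U}^*$, viewed as an element of $\te{Hom}_{\mc{C}_{m,\ol{\zeta}}}(Q_1^{\otimes m},\emptyset)$. The key step is to verify $\mc{A}(\mc{U}^*)=\til{U}^*\neq 0$ in the planar algebra. For this I would use positive-definiteness of $\mc{P}$ (or $\mc{P}'$) from Lemma \ref{lem: A-fin-o-unitary}: compute
\[
\langle \til{U}^*,\til{U}^*\rangle \;=\; \te{tr}(\til{U}\til{U}^*),
\]
and show by repeated application of the saddle relations that this trace equals $\te{tr}(Q_1^{\otimes m})=1$, which is strictly positive. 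Hence $\til{U}^*$ is nonzero, so $\til{A}$ restricts to an isomorphism on the $k=m$ hom-space.

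For general $k=jm$, I would identify a generator of $\te{Hom}_{\mc{D}_{m,\zeta}}(jm,0)$ as the $j$-fold horizontal tensor product $(\mc{U}^*)^{\otimes j}$ (again bent so that all strands are inputs), and observe that under $\mc{A}$ this becomes $(\til{U}^*)^{\otimes j}$, the corresponding horizontal concatenation of caps in the planar algebra. The same inner-product computation applies: $\langle (\til{U}^*)^{\otimes j},(\til{U}^*)^{\otimes j}\rangle = \te{tr}((\til{U}\til{U}^*)^{\otimes j}) = \te{tr}(Q_1^{\otimes jm})=1$, so the image is nonzero, and again full and faithful follow by dimension.

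The place I expect genuine work to be required is verifying that the chosen elements really do generate $\te{Hom}_{\mc{D}_{m,\zeta}}(k,0)$; this should be immediate from Czenky's description (every diagram reduces via the defining relations of $\mc{D}_{m,\zeta}$ to a scalar multiple of the prescribed generator), but one must be careful that the scalar obtained under evaluation matches on both sides. The rest — nonvanishing of $\te{tr}((\til{U}\til{U}^*)^{\otimes j})$ — is a straightforward consequence of the saddle and $U$-unitary relations together with the bubble relations and is where Lemmas \ref{lem: Afin-minproj} and \ref{lem: AfinarrowPnisotoQn} do the remaining bookkeeping.
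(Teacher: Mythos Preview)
Your proposal is correct and follows essentially the same approach as the paper: reduce to $\te{Hom}(k,0)$ via duality, invoke the equal-dimension count from Czenky and Lemma~\ref{lem: dimHomis1or0}, and for $k=jm$ verify nonvanishing of the image by computing the inner product of $(\til{U}^*)^{\otimes j}$ (equivalently $\til{U}^{\otimes j}$) with itself as $\te{tr}(Q_1^{\otimes jm})=1$. The only cosmetic difference is that you work with $\mc{U}^*$ where the paper uses $\mc{U}$, which is immaterial by duality.
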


    \begin{proof} Consider the induced map  $\til{A}: \te{Hom}_{\mc{D}_{m,\zeta}}(k,0)\to \te{Hom}_{\mc{C}_{m,\ol{\zeta}}}\left(Q_1^{\otimes k},\emptyset\right)$ for any $k\in \mbb{Z}_{\geq 0}$. By Lemma \ref{lem: dimHomis1or0} we only need to show that $\mc{A}$ is full. The dimension of both hom-spaces are zero when $k$ is not divisible by $m$, so we only need to consider when $k$ is divisble by $m$. In this case, the hom-spaces are both one-dimensional, so it is enough to check the image is nontrivial. Let $k=2n\ell$ for some $\ell \in \mbb{Z}_{\geq 0}$. When $\ell=0$, $\te{id}_0$ maps to $\emptyset$, which is nontrivial as it has nonzero trace. When $\ell>0$ then $\mc{U}^{\otimes \ell}$ maps to $\til{U}^{\otimes \ell}$. Taking the inner product of $\til{U}^{\otimes \ell}$ with itself is 1 and thus the image is nontrivial. \end{proof}

    \begin{proof}[Proof of Theorem \ref{thm: monoidallyequivtoVecGomega}] $\mc{D}_{m,\ol{\zeta}}$ is monoidally equivalent to $\langle Q_1 \rangle$ by Lemmas \ref{lem: agusmapwelldefined}, \ref{lem: agusmapsurj}, and \ref{lem: agusmapfullyfaithful}. The additive envelope of $\mc{D}_{m,\ol{\zeta}}$ is monoidally equivalent to $\te{Vec}_{\mbb{Z}_m}^\zeta$ and the additive envelope of $\langle Q_1\rangle$ is $\mc{C}_{m, \ol{\zeta}}$, giving the result. \end{proof}

\section{Equivalence to a Representation Category of a Finite Subgroup of \texorpdfstring{$SU(2)$}{SU(2)}}\label{repsu2section}

In a similar vein to the previous section, we show that our categories $\mc{C}_{m,1}$ are equivalent to some diagrammatic category associated to a finite subgroup of $SU(2)$. Recall that $SU(2)$ is the group of $2\times 2$ matrices with entries in $\mbb{C}$ of unitary matrices with determinant 1. The Temperley-Lieb planar algebra of index 4, $\mc{TL}$, is equivalent to the category of finite-dimensional representations of $SU(2)$, $\te{Rep}(SU(2))$.  Further,  let $V=\mbb{C}^2$ be the defining representation of $SU(2)$ and $X$ a strand in $\mc{TL}$. Then we get an isomorphism of the hom-space $\te{Hom}(X^{\otimes k}, X^{\otimes \ell})$ with the space of $SU(2)$-invariant homomorphisms $\te{Hom}_{SU(2)}(V^{\otimes k}, V^{\otimes \ell})$. 

Finite subgroups of $SU(2)$ were classified by Klein \cite{Kle78}. McKay \cite{McK80} showed that there is a one-to-one correspondence between the finite subgroups of $SU(2)$ and the simply-laced affine Dynkin diagrams. Fix a primitive $m$th root of unity $\zeta$. Let $C_m^\zeta$ be the subgroup of $SU(2)$ generated by the matrix
\begin{align*}
    \begin{pmatrix} \zeta & 0 \\ 0 & \zeta^{-1} \end{pmatrix}.
\end{align*}
Define $\langle V \rangle$ to be the $\mbb{C}$-linear monoidal full subcategory generated by $V$. The representation categories of the finite subgroups of $SU(2)$ are not strict, but, as done with Temperley-Lieb, our goal is to find equivalent strict monoidal categories with nice diagrammatics. To that end, we prove the following theorem:

\begin{thm}\label{thm: ryanequivalence}
    $\mc{C}_{m,1}$ is monoidally equivalent to $\te{Rep}(C_m^\zeta)$. 
\end{thm}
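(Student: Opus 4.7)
The plan mirrors Theorem \ref{thm: monoidallyequivtoVecGomega}: I would construct a monoidal functor $G : \mc{C}_{m,1} \to \te{Rep}(C_m^\zeta)$ and check it is essentially surjective, full, and faithful. Since $\mc{C}_{m,1}$ is the additive envelope of $\langle Q_1 \rangle$ and $\te{Rep}(C_m^\zeta)$ is the additive envelope of $\langle V \rangle$ (by character decomposition every simple $L_{\zeta^j}$ appears as a summand of some $V^{\otimes k}$), it suffices to define $G$ on these generating subcategories.

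Write $V = \mbb{C} v_+ \oplus \mbb{C} v_-$, where a generator of $C_m^\zeta$ acts by $v_\pm \mapsto \zeta^{\pm 1} v_\pm$, so $V \cong L_\zeta \oplus L_{\zeta^{-1}}$ as $C_m^\zeta$-modules. Declare $G(X) = V$; send $P_1$ and $Q_1$ to the orthogonal projections of $V$ onto the weight lines $\mbb{C} v_+$ and $\mbb{C} v_-$; and send cups, caps, and the empty diagram to the standard $SU(2)$-equivariant structure on $V$, restricted to $C_m^\zeta$. For the remaining generator, recall that $U$ has source $P_1^{\otimes a}$ and target $Q_1^{\otimes n}$ with $a = n$ when $m = 2n$ and $a = n+1$ when $m = 2n+1$. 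In either case $\zeta^a = \zeta^{-n}$, so $G(P_1^{\otimes a}) = L_{\zeta^a}$ and $G(Q_1^{\otimes n}) = L_{\zeta^{-n}}$ are the same one-dimensional character and admit a canonical unit-norm $C_m^\zeta$-intertwiner $v_+^{\otimes a} \mapsto v_-^{\otimes n}$. Take $G(U)$ to be this intertwiner and $G(U^*)$ to be its adjoint.

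Next I would verify that $G$ respects the relations of Definition \ref{Afinarrowrelations}. The bubble, strand decomposition, orientation disagreement, saddle, and unitary relations hold on the nose from the construction and the orthonormality of $\{v_+, v_-\}$. The delicate check is the click relation $\mc{F}(U) = U$: computing $\mc{F}(G(U))$ via the pivotal structure on $\te{Rep}(C_m^\zeta)$ amounts to bending an outer strand through an equivariant cup and cap, which carries a weight vector past the duality pairing and picks up eigenvalues of the form $\zeta^{\pm 1}$. Because the source and target of $G(U)$ lie in characters that agree modulo $m$, these eigenvalue factors telescope to $1$, so $G(U)$ is genuinely click-invariant, matching $\omega = 1$.

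Essential surjectivity was noted above, and passing to additive envelopes recovers all of $\te{Rep}(C_m^\zeta)$. For full faithfulness, duality reduces the problem to showing that the induced map
\begin{align*}
\widetilde{G} : \te{Hom}_{\mc{C}_{m,1}}(Q_1^{\otimes k}, \emptyset) \longrightarrow \te{Hom}_{C_m^\zeta}(V^{\otimes k}, \mbb{C})
\end{align*}
is a bijection for every $k \geq 0$. The right-hand side has dimension equal to the multiplicity of the trivial character in $V^{\otimes k}$, namely the sum of $\binom{k}{j}$ over $j$ with $k - 2j \equiv 0 \pmod{m}$; the left-hand side has the same dimension via the Bratteli diagram associated to the principal graphs of Lemma \ref{lem: Afin-o-principal-graph}. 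The main obstacle I anticipate is producing, for each $C_m^\zeta$-invariant tensor in $V^{\otimes k}$, an explicit planar preimage under $\widetilde{G}$. The natural approach is induction on $k$ using the evaluation algorithm of Section \ref{Afinsufficientshaded} to normalize any such diagram into a canonical form of nested cups together with $U$ and $U^*$ insertions; once this span is established, the matching of dimensions above forces $\widetilde{G}$ to be bijective.
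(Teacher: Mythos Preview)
Your overall strategy is sound but differs from the paper's in a meaningful way, and there is one genuine confusion in your full-faithfulness argument that would need to be fixed.

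\textbf{Comparison with the paper.} The paper does not build a functor directly into $\te{Rep}(C_m^\zeta)$. Instead it quotes Reynolds' diagrammatic category $\mc{B}_{m,\zeta}$, which Reynolds has already shown is monoidally equivalent to $\langle V\rangle\subset\te{Rep}(C_m^\zeta)$, and then defines a functor $\mc{R}:\mc{B}_{m,\zeta}\to\langle X\rangle$ between the two diagrammatic presentations. Well-definedness is a relation-by-relation check, and full faithfulness is obtained by transporting Reynolds' explicit basis $\{d_\epsilon^\delta\}$ across $\mc{R}$ and verifying the images are nonzero and land in distinct graded pieces. This buys the paper a purely diagrammatic proof: no character computations in $\te{Rep}(C_m^\zeta)$ are ever performed. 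Your approach, by contrast, would reprove the Reynolds equivalence from scratch on the representation-theory side, which is more self-contained but duplicates work already in the literature.

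\textbf{The gap.} In your full-faithfulness step you write the induced map as
\[
\widetilde G:\te{Hom}_{\mc{C}_{m,1}}(Q_1^{\otimes k},\emptyset)\longrightarrow \te{Hom}_{C_m^\zeta}(V^{\otimes k},\mbb{C}),
\]
but these are not the hom-spaces that $G$ matches up. If $G(Q_1)$ is the projection onto the line $\mbb{C}v_-\cong L_{\zeta^{-1}}$, then as an object $G(Q_1^{\otimes k})\cong L_{\zeta^{-k}}$, not $V^{\otimes k}$, and the target hom-space is $0$ or $1$-dimensional according to whether $m\mid k$ --- not the binomial sum you wrote. The binomial count $\sum_{k-2j\equiv 0\,(m)}\binom{k}{j}$ is the dimension of $\te{Hom}_{C_m^\zeta}(V^{\otimes k},\mbb{C})$, which is the image of $\te{Hom}_{\langle X\rangle}(X^{\otimes k},\emptyset)$, not of $\te{Hom}(Q_1^{\otimes k},\emptyset)$. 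In short, you have conflated the generating subcategory $\langle Q_1\rangle$ (used in the $\te{Vec}_{\mbb{Z}_m}^\omega$ section) with $\langle X\rangle$ (which is what you actually need here, since $G(X)=V$). Once you replace $Q_1^{\otimes k}$ by $X^{\otimes k}$ throughout, your dimension count becomes correct and the argument can be completed; but as written the two sides of your bijection do not correspond under $G$. A smaller point: $\te{Rep}(C_m^\zeta)$ is the Cauchy (idempotent-plus-additive) completion of $\langle V\rangle$, not merely the additive envelope, since the simples arise only after splitting idempotents in $V^{\otimes k}$.
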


We proceed similarly as in the previous section. Reynolds showed in \cite{Rey23} that $\langle V \rangle$ is monoidally equivalent to the following diagrammatic category, which we will call $\mc{B}_{m,\zeta}$. It has the following diagrammatic presentation:
   \begin{equation*}
        \textit{Objects: } [k]=\ryanobj\te{ } \os{(k)}{ ... }\te{ } \ryanobj \te{ ($k$ stars)}
    \end{equation*}
    and \textit{Morphisms: }
    \begin{equation}
        \ryanmorphplus, \ryanmorphminus, \ryanmorphcapminusplus, \ryanmorphcapplusminus, \ryanmorphcupminusplus, \ryanmorphcupplusminus, \ryanmorphncapplus, \ryanmorphncapminus, \ryanmorphncupplus, \te{ and }\ryanmorphncupminus
    \end{equation}
    with composition of diagrams being vertical stacking  and tensor product is horizontal concatenation. If a $+$ and $-$ are matched anywhere, we obtain the 0 morphism and we have
    
    \textit{Relations: } 
    \begin{enumerate}[label=(\roman*)]
    \begin{paracol}{2}
        \item $\ryanmorphcupplusminus \ryanmorphplus = \ryanmorphplus \ryanmorphcupplusminus, \ryanmorphcupminusplus \ryanmorphminus= \ryanmorphminus \ryanmorphcupplusminus$
    \switchcolumn
       \setcounter{enumi}{1} \item $\ryanmorphcapplusminus\ryanmorphplus=\ryanmorphplus \ryanmorphcapminusplus, \ryanmorphcapminusplus \ryanmorphminus=\ryanmorphminus\ryanmorphcapplusminus$
    \switchcolumn
        \setcounter{enumi}{2} \item $\ryanrelcupcapplusminus=1$, $\ryanrelcupcapminusplus=1$
    \switchcolumn
        \setcounter{enumi}{3}\item  $\ryanmorphncupcapplus=1=\ryanmorphncupcapminus$
    \switchcolumn
        \setcounter{enumi}{4}\item $\ryanmorphncapminus \ryanmorphminus = \ryanmorphminus \ryanmorphncapminus$
    \switchcolumn
       \setcounter{enumi}{5} \item $\ryanmorphncapplus\ryanmorphplus=\ryanmorphplus \ryanmorphncapplus$
    \switchcolumn
        \setcounter{enumi}{6}\item $\ryanmorphncapplus\ryanmorphncapminus=\ryantwonplusminus$
        \switchcolumn
        \setcounter{enumi}{8}\item $\ryanmorphplus \ryanmorphminus =  \ryanrelsaddleplusminus$, $\ryanmorphminus \ryanmorphplus=\ryanrelsaddleminusplus$
    \switchcolumn
       \setcounter{enumi}{7} \item $\ryanmorphncapminus\ryanmorphncapplus=\ryantwonminusplus$
    \end{paracol}
\end{enumerate}
    
   We define a functor that will give the equivalence of categories in the following way. Recall the definition of $\til{U}$ and $\til{U}^*$ from (\ref{eq: tilUandtilUaddefined}). Define $\til{U}'$ and $\til{U}^{*'}$ to be flipping $\til{U}$ and $\til{U}^*$ over a horizontal axis, respectively.
     
   \begin{tcolorbox}[breakable, pad at break*=0mm]
        Define a functor $\mc{R}:\mc{B}_{m, \zeta}\to \langle X \rangle$ on generating objects by $\ryanobj \mapsto X$ and generating morphisms by
        \begin{align*}
          &\ryanmorphplus \mapsto \up  &\ryanmorphminus \mapsto \down\hspace{1.25cm} & & \\
         &\ryanmorphcapminusplus \mapsto \ocapl & \ryanmorphcapplusminus \mapsto\ocapr\hspace{0.35cm}
          &\hspace*{1cm}\ryanmorphcupminusplus\mapsto \ocupr &\ryanmorphcupplusminus \mapsto \ocupl\hspace*{0.29cm}\\
          &\ryanmorphncapplus \mapsto \til{U}^{*'} \hspace{-0.2cm}&\ryanmorphncapminus \mapsto \til{U}^*  & \hspace{1cm}\ryanmorphncupplus \mapsto \til{U}'
          & \ryanmorphncupminus \mapsto \til{U}
        \end{align*}
        then extend linearly and monoidally on morphisms. 
    \end{tcolorbox}

\begin{lem}
    $\mc{R}$ is a well-defined functor.
\end{lem}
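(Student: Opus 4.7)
The plan is to verify that $\mc{R}$ sends each of the defining relations (i)--(ix) of $\mc{B}_{m,\zeta}$ to a valid identity in $\langle X \rangle$, keeping in mind that in $\mc{C}_{m,1}$ the chosen root of unity is $\omega=1$, so the click relations simplify to $\mc{F}(U)=U$ and $\mc{F}(U^*)=U^*$. Because the target lives in a planar algebra where diagrams are considered up to isotopy, a large portion of the bookkeeping is geometric rather than algebraic.

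First I would dispatch the isotopy-type relations (i), (ii), (v), and (vi): each asserts that an oriented strand may be slid past a cup/cap or past the side of an $\til{U}^{(*)}$-box, and the images under $\mc{R}$ are pairs of diagrams differing only by an ambient planar isotopy of the underlying tangle. Next, the oriented-bubble relations (iii) map, after identifying the images of the single-orientation cup/cap, directly to $\upbubble=\downbubble=1$ from Definition \ref{Afinarrowrelations}; the oriented-saddle relation (viii) maps to the saddle relations of the same definition.

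The substantive content lies in relations (iv), (vii), and (ix). For (iv), I would check that $\mc{R}$ sends $\ryanmorphncupcapplus$ (respectively $\ryanmorphncupcapminus$) to a closed diagram obtained by stacking $\til{U}^{(*)}$ atop $\til{U}^{(*)'}$; reading this closure as a trace and applying the $U$-unitary relation $U^*U = P_1^{\otimes n}$ (or $UU^*=Q_1^{\otimes n}$), followed by $n$ bubble pops, reduces it to $\te{tr}(P_1^{\otimes n})=1$. For (vii) and (ix), the images of $\ryanmorphncapplus\ryanmorphncapminus$ and $\ryanmorphncapminus\ryanmorphncapplus$ are compositions of two $\til{U}^{(*)}$-boxes whose top/bottom boundaries match an alternating-orientation identity on $2n$ points. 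I would click the relevant boxes into the orientation in which the unitary relation is directly applicable---which costs a factor of $\omega=1$ in $\mc{C}_{m,1}$ and hence nothing---and then apply $UU^*=$ identity-on-$n$-strands to collapse the image to the alternating $2n$-strand identity on the right-hand side.

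The hard part will be relations (vii) and (ix), where I must carefully track how $\til{U}^{(*)'}$ (flipping over a horizontal axis) relates to $\til{U}^{(*)}$ after clicking, and confirm that the trivial click scalar in $\mc{C}_{m,1}$ really does let the unitary relation be invoked in the needed orientation without picking up an extra root-of-unity factor. This is precisely the place where the restriction to $\ol{\zeta}=1$ enters; for general $\ol{\zeta}$ the functor would not be well-defined without extra normalizations, which explains why the theorem is stated for $\mc{C}_{m,1}$. Once these verifications are complete, $\mc{R}$ respects composition and tensor product by construction, completing the proof that $\mc{R}$ is a well-defined monoidal functor.
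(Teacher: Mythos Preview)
Your overall strategy---check each defining relation---is exactly the paper's. Your treatment of (iii), (iv), and the saddle relation is correct, and your handling of the big relations (vii)/(viii) via ``click, then apply the unitary relation'' is a legitimate alternative to the paper's ``repeated saddle, then apply the unitary relation.''

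There is, however, a genuine gap in your dismissal of (i), (ii), (v), (vi) as ``ambient planar isotopy of the underlying tangle.'' For (i) and (ii), the images of the two sides have \emph{different} strand pairings (e.g.\ top points $1$--$2$ joined by a cup with $3$ running through, versus $2$--$3$ joined with $1$ running through); these are distinct Temperley--Lieb tangles and are not isotopic rel boundary. One must invoke the saddle relation to reconnect oppositely oriented adjacent strands, which is what the paper does. For (v) and (vi), the images are $\til{U}^*\otimes Q_1$ versus $Q_1\otimes \til{U}^*$; the $U^*$-box carries a star, and passing the $Q_1$ strand from one side of the box to the other is not an isotopy---it is exactly a click. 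The paper handles this by applying a saddle and then the (inverse) click relation, and \emph{this} is where the hypothesis $\omega=1$ is actually consumed in the paper's argument.

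So the dependence on $\omega=1$ that you correctly anticipate does appear, but in (v)/(vi) rather than only in (vii)/(viii). Your route through (vii)/(viii) also uses $\omega=1$ (via clicking), so the conclusion survives; but you should replace the ``isotopy'' claim for (i), (ii), (v), (vi) with an explicit appeal to the saddle relation and, for (v)/(vi), the click relation.
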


\begin{proof} We need to check that $\mc{R}$ respects the defining relations. Using saddle relations we see that relations (i) and (ii) are respected. Relation (iii) are the bubble relations. To see $\mc{R}$ respects relation (iv), notice that image of the right-hand side is $Q_1 \otimes \til{U}^*$. The top left-most strand of $\til{U}^*$ and $Q_1$ have opposite orientation, so applying the saddle relation, then the inverse of the click relation gives $\til{U}^*\otimes Q_1$. The same idea applies to relation (v). Relation (vi) are exactly the saddle relations. Relation (vii) is taking the trace of $UU^*$ and $U^*U$. For relation (viii), the image of the left-hand side is $\til{U}'\otimes \til{U}^*$. The rightmost strand of $\til{U}'$ and the leftmost strand of $\til{U}^*$ have opposite orientation. The saddle relation then gives an equivalent diagram with a strand of $\til{U}'$ and $\til{U}^*$ connected. Applying this $n$ or $n+1$ times depending on the parity of $m$, we then can apply the $U$ unitary relations to get the image of the lefthand side of the relation. Relation (ix) is done the same way, so $\mc{R}$ is well-defined. \end{proof}

The next lemma is immediate as the two categories $\langle V \rangle$ and $\langle X \rangle$ each have one generating object and the image of $\ryanobj$ is $X$. 

\begin{lem}
    $\mc{R}$ is essentially surjective on objects.
\end{lem}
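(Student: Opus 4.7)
The plan is to verify essential surjectivity directly from how the two categories are generated, and in fact establish the stronger statement that $\mc{R}$ is literally surjective on objects. Every object of $\langle X \rangle$ is, by the definition of the monoidal subcategory generated by $X$, of the form $X^{\otimes k}$ for some $k \in \mbb{Z}_{\geq 0}$, where $k = 0$ corresponds to the unit $\emptyset$. Every object of $\mc{B}_{m,\zeta}$ is of the form $[k]$, namely the $k$-fold tensor product of the sole generating object $\ryanobj$.

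Since $\mc{R}$ is defined to be monoidal and sends the generator $\ryanobj$ to $X$, monoidality forces $\mc{R}([k]) = X^{\otimes k}$ for every $k \geq 1$. The base case $k = 0$ is handled automatically: a monoidal functor must send the unit to the unit, so $[0]$ maps to $\emptyset$. Thus, given any object $Y = X^{\otimes k}$ in $\langle X \rangle$, I can exhibit $[k]$ as an on-the-nose preimage, which certainly suffices for essential surjectivity (no isomorphism is even required).

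There is no real obstacle here; the lemma is a formal consequence of the fact that both categories are monoidally generated by a single object and the functor $\mc{R}$ was specified on that generator to match. I would write this up in one short paragraph, citing the preceding well-definedness lemma to guarantee that the assignment on objects is compatible with the full data of a functor. The only stylistic choice is whether to emphasize that the statement is really surjectivity on objects rather than merely essential surjectivity; either framing gives an equally short proof.
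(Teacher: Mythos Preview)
Your proposal is correct and matches the paper's own reasoning: the paper simply notes that both $\mc{B}_{m,\zeta}$ and $\langle X \rangle$ are monoidally generated by a single object and that $\mc{R}$ sends the generator $\ryanobj$ to $X$, which is exactly your argument. Your observation that this yields literal surjectivity (not just essential surjectivity) is a fine additional remark but not needed.
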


Corollary 3.6 of \cite{Rey23} gives a basis of $\te{Hom}_{\mc{B}_{m,\zeta}}([k],[\ell])$. Let $\epsilon$ and $\delta$ be two vectors with entries $+$ or $-$ indicating the bottom and top boundary information of a morphism. Let $|\epsilon_+|$ be the number of $+$ labellings of $\epsilon$ and similarly define $|\epsilon_{-}|$, $|\delta_{+}|$, and $|\delta_{-}|$. Define $|\epsilon|$ and $|\delta|$ to be the absolute value of the difference in labeling. In Lemma 3.3, Reynolds proves that $|\epsilon|\cong |\delta|(\te{mod } m)$. Reynolds also proves that any two morphisms with the same labelings are equivalent, so we can call this unique morphism $d_{\epsilon}^\delta$. Then the collection of these morphisms over the possible label sets form a basis of $\te{Hom}_{\mc{B}_{m,\zeta}}([k],[\ell])$. 

We can follow the same process as in \cite{Rey23} by noticing that the $+$ that appear in coordinates of $\delta$ correspond to arrows pointing in the direction of $+$ and in $\epsilon$ correspond to arrows pointing away from $+$. The vice versa is true for $-$. Letting now $|\epsilon_+|$ corresponding to the number of points on the bottom boundary being the sources, $|\epsilon_{-}|$ being the number of sources on the bottom boundary being the sinks, $|\delta_{+}|$ being the number of points on the top boundary that are sinks and $|\delta_{-}|$ being the number of points on the top boundary being the sources, we also have $|\epsilon|\equiv |\delta| (\te{mod } m)$. Further, using the exact same reasoning as his Theorem 3.5, we can conclude that any two diagrams with the same labellings (now labellings being ``source" or ``sink") are equivalent. It is also clear that this unique morphism will be the image of $\mc{R}(d_{\epsilon}^{\delta})$. Thus the set of $\mc{R}(d_{\epsilon}^\delta)$ where the length of $\epsilon$ is $k$ and $\delta$ is $\ell$ form a spanning set of $\te{Hom}_{\langle X \rangle }\left(X^{\otimes k},X^{\otimes \ell}\right)$.

\begin{lem}
    $\mc{R}$ is full and faithful.
\end{lem}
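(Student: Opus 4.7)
The plan is to deduce fullness and faithfulness from the preceding spanning statement together with a dimension count. The paragraph immediately preceding the statement has already established that $\{\mc{R}(d_\epsilon^\delta)\}$ spans $\te{Hom}_{\langle X \rangle}(X^{\otimes k}, X^{\otimes \ell})$ as $(\epsilon,\delta)$ ranges over admissible label pairs with $|\epsilon|=k$, $|\delta|=\ell$. Since $\{d_\epsilon^\delta\}$ is a basis of $\te{Hom}_{\mc{B}_{m,\zeta}}([k],[\ell])$ by Reynolds' Corollary 3.6, this spanning statement is exactly fullness of the induced linear map on these hom-spaces, so only faithfulness requires work.

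For faithfulness it suffices to show that the images $\{\mc{R}(d_\epsilon^\delta)\}$ remain linearly independent in the target. Because these images already span and the source has a basis of the same indexing set, faithfulness is equivalent to the two hom-spaces having the same dimension. On the planar-algebra side, the strand decomposition $X=P_1+Q_1$ distributes to give $X^{\otimes k}=\bigoplus_{s\in\{P_1,Q_1\}^k} s$; each sequence $s$ is a minimal projection, and via $P_1\otimes Q_1\cong \emptyset$ together with Lemma \ref{lem: AfinarrowPnisotoQn} its isomorphism class is determined by the signed count (number of $P_1$'s minus number of $Q_1$'s) modulo $m$. Consequently
\begin{equation*}
\dim \te{Hom}_{\langle X \rangle}(X^{\otimes k}, X^{\otimes \ell}) = \sum_{a \in \mbb{Z}_m} N_a^{(k)}\, N_a^{(\ell)},
\end{equation*}
where $N_a^{(j)}$ counts sequences in $\{P_1,Q_1\}^j$ whose signed count equals $a\pmod m$. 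Reynolds' admissibility count for pairs $(\epsilon,\delta)$ yields the same formula under the correspondence $P_1\leftrightarrow +$, $Q_1\leftrightarrow -$, so the two dimensions agree and the surjection $\mc{R}$ is automatically an isomorphism on hom-spaces.

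The main obstacle I foresee is unpacking Reynolds' admissibility condition, stated above via the absolute-value identity $|\epsilon|\equiv|\delta|\pmod m$, into the signed matching condition $|\epsilon_+|-|\epsilon_-|\equiv |\delta_+|-|\delta_-|\pmod m$ that is forced by the planar-algebra fusion rules. Once one checks that Reynolds' index set can equivalently be described via the signed difference modulo $m$, the two counts align term by term, and both fullness and faithfulness follow from the standard fact that a surjection of finite-dimensional vector spaces of equal dimension is an isomorphism.
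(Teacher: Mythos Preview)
Your approach is correct in outline but takes a different route from the paper, and the obstacle you flag is genuine and left unresolved.

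The paper does not count dimensions. Instead, it argues linear independence of the images directly: each $\mc{R}(d_\epsilon^\delta)$ is shown to be nonzero by writing it as a product $fg$ and checking that $\te{tr}(f^*fgg^*)=1$ via the unitary and bubble relations; then, since the strand decomposition $X=P_1+Q_1$ grades $\te{Hom}_{\langle X\rangle}(X^{\otimes k},X^{\otimes\ell})$ as $\bigoplus_{\epsilon,\delta}\te{Hom}(s_\epsilon,s_\delta)$, distinct label pairs $(\epsilon,\delta)$ send $d_\epsilon^\delta$ into distinct summands. Nonzero elements lying in distinct direct summands are automatically linearly independent, so $\{\mc{R}(d_\epsilon^\delta)\}$ is a basis and faithfulness follows. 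This argument never needs to know exactly which pairs $(\epsilon,\delta)$ are admissible in Reynolds' category---only that the basis elements have the stated boundary labels.

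Your dimension count, by contrast, hinges on matching Reynolds' admissibility condition to the signed condition $|\epsilon_+|-|\epsilon_-|\equiv|\delta_+|-|\delta_-|\pmod m$. You identify this as an obstacle and then assume it away. That is a real gap as written: the paper summarizes Reynolds' condition via an absolute value, and you have not shown the two descriptions coincide. One clean way to close it without unpacking Reynolds' combinatorics is to invoke Reynolds' own equivalence $\mc{B}_{m,\zeta}\simeq\langle V\rangle$ and compute $\dim\te{Hom}_{C_m^\zeta}(V^{\otimes k},V^{\otimes\ell})$ from $V\cong\mbb{C}_\zeta\oplus\mbb{C}_{\zeta^{-1}}$, which gives exactly $\sum_a N_a^{(k)}N_a^{(\ell)}$. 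But the paper's grading argument is shorter and entirely internal to the diagrammatics already set up.
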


\begin{proof} As $\mc{R}(d_{\epsilon}^\delta)$ is a spanning set for $\te{Hom}_{\langle X \rangle}(X^{\otimes k}, X^{\otimes \ell})$, we know that $\mc{R}$ is full. Further, each $\mc{R}(d_\epsilon^\delta)$ can be split in half horizontally by construction, i.e., $\mc{R}(d_\epsilon^\delta)$ can be written as the product of two morphisms say $fg$. Then $f^*fgg^*$ is a product where all caps are multiplied by cups and all $U$ multiplied by some $U^*$, which all evaluate to 1. So $f^*fgg^*$ can be seen to be a tensor product of $P_1$ and $Q_1$. The trace of this morphism is 1 so $fg=\mc{R}(d_\epsilon^\delta)$ is nonzero. Further, if $\epsilon,\epsilon',\delta, \delta'$ are labellings of boundary data and either $\epsilon\neq \epsilon'$ or $\delta \neq \delta'$ then $\mc{R}(d_\epsilon^\delta)$ and $\mc{R}(d_{\epsilon'}^{\delta'})$ live in two different hom-spaces in the planar algebra, which is graded by the boundary data. Thus the collection of $\mc{R}(d_\epsilon^\delta)$ where $\epsilon$ is boundary data of length $k$ and $\delta$ is boundary data of length $\ell$ form a basis of $\te{Hom}_{\langle X \rangle}\left(X^{\otimes k},X^{\otimes \ell}\right)$. Therefore $\mc{R}$ is faithful.  
\end{proof}

\begin{proof}[Proof of Theorem \ref{thm: ryanequivalence}] $\langle X \rangle$ is monoidally equivalent to $\mc{B}_{m,\zeta}$ which in \cite{Rey23} is shown to be monoidally equivalent to $\langle V \rangle$. Taking the Cauchy completion gives the result. \end{proof}
\section{The Affine \texorpdfstring{$A$}{A} Infinity Subfactor Planar Algebra} 

To conclude the affine $A$ type planar algebra presentations, we consider those with the type $\til{A}_\infty$ Dynkin diagram. Let $\mc{P}$ be a subfactor planar algebra with principal and dual principal graph the $\til{A}_\infty$ Dynkin diagram. Let $\mc{C}_{\mc{P}}$ be the 2-category created from $\mc{P}$. Define $X$, $Y$, $\emptyset_{+}$, and $\emptyset_{-}$ as in section \ref{necessaryshadedsection}. With labels $P_i, P_i', Q_i$, and $Q_i'$ denoting representatives of equivalence classes for $1 \leq i$, the principal and dual principal graphs are:

\begin{align*}
   \Gamma_{+}: \arbAinfgraphpos \text{ and } \Gamma_{-}: \arbAinfgraphneg  
\end{align*}

Infinite graphs may correspond to multiple indices so we will take that the corresponding subfactor planar algebras has index 4 as a given.

\begin{lem}\label{lem: Ainftraceofminprojs}
    The trace of any minimal projection in $\mc{C}_{\mc{P}}$ is 1. 
\end{lem}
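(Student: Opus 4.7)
The plan is to mimic the Perron--Frobenius argument from Lemma \ref{lem: a-fin-graph-index}, but to account for the fact that the graphs $\Gamma_\pm$ are now infinite. Since $\mc{P}$ has index $4$, we have $\delta=2$, and the trace formula \eqref{eq: thetraceformula} applied to any minimal projection $P$ with two neighbors in its principal graph reads
\[
2\,\te{tr}(P) \;=\; \sum_{Q \in N(P)} \te{tr}(Q).
\]
The graph $\til{A}_\infty$ is the doubly-infinite line: one ray carries $\emptyset_+ , P_1, P_2, \ldots$ and the other carries $\emptyset_+ , Q_1, Q_2, \ldots$, with $\emptyset_+$ as the central vertex (and analogously for $\Gamma_-$). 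So I would index the vertices of $\Gamma_+$ by $n \in \mbb{Z}$, placing $\emptyset_+$ at $n=0$, $P_k$ at $n=k>0$, and $Q_k$ at $n=-k<0$, and set $t_n := \te{tr}(\text{vertex at position } n)$.

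The trace formula then becomes the linear recurrence
\[
t_{n+1} - 2 t_n + t_{n-1} = 0 \qquad (n \in \mbb{Z}),
\]
whose general solution is $t_n = \alpha + \beta n$ for constants $\alpha, \beta \in \mbb{R}$. Since traces of nonzero projections in a subfactor planar algebra are strictly positive, we must have $\alpha + \beta n > 0$ for \emph{every} $n \in \mbb{Z}$. This is impossible unless $\beta = 0$: if $\beta \neq 0$, then taking $n$ large with sign opposite to $\beta$ makes $\alpha + \beta n$ negative. Thus $t_n = \alpha$ is constant. Finally, because $\mc{P}_{0,+}$ is one-dimensional with $\te{tr}(\emptyset_+) = 1$, we get $\alpha = 1$, so every minimal projection on $\Gamma_+$ has trace $1$. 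The identical argument applied to $\Gamma_-$ (using $\te{tr}(\emptyset_-)=1$) handles the remaining minimal projections.

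The only real subtlety, and the step I would be most careful about, is the positivity input: for a finite principal graph Perron--Frobenius simultaneously delivers uniqueness \emph{and} the correct eigenvalue, whereas for the infinite $\til{A}_\infty$ the recurrence alone admits the one-parameter family $\alpha + \beta n$, and I must use the two-sided infiniteness of the graph together with positivity of traces to kill the linear mode $\beta n$. Once that is in place the argument is essentially automatic, with the normalization at $\emptyset_\pm$ pinning down $\alpha=1$.
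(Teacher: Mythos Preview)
Your proof is correct and follows essentially the same approach as the paper: both use the trace formula with $\delta=2$ to obtain a discrete harmonic condition along the doubly-infinite path, then invoke positivity of traces on the infinite graph to rule out the non-constant solutions, and finally normalize at $\emptyset_\pm$. Your explicit solution $t_n=\alpha+\beta n$ is a slightly slicker packaging of the paper's arithmetic-mean/monotonicity argument, but the mathematical content is identical.
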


    \begin{proof} First we prove by induction that for all $k\geq 0$, $\te{tr}(P_k)+\te{tr}(Q_k)=2$, where we define $P_0=Q_0=\emptyset_{+}$. The base cases follow from $\te{tr}(\emptyset_{+})=1$ and from the index being 4 since, $\te{tr}(X)=2=\te{tr}(P_1)+\te{tr}(Q_1)$. Assume the claim is true up to some $k\in \mbb{N}$. The principal graph gives that $2\te{tr}(P_k)=\te{tr}(P_{k-1})+\te{tr}(P_{k+1})$ and $2\te{tr}(Q_k)=\te{tr}(Q_{k-1})+\te{tr}(Q_{k+1})$. Adding these two equations together and using the inductive hypothesis gives $\te{tr}(P_{k+1})+\te{tr}(Q_{k+1})=2$. Therefore, for all $k\geq 0$,  $\te{tr}(P_k)+\te{tr}(Q_k)=2$. The same process shows the analogous result for the dual principal graph. 
    
    Now suppose that $\te{tr}(P_1)\neq 1$, which also implies $\te{tr}(Q_1)\neq 1$. Either $P_1$ or $Q_1$ will have trace less than one by the preceding paragraph, so without loss of generality assume $\te{tr}(P_1)<1$. We then show that $(\te{tr}(P_k))_{k\in \mbb{N}}$ is a strictly decreasing sequence. By the principal graph, $\te{tr}(P_1)=\f{1+\te{tr}(P_2)}{2}$. That is, $\te{tr}(P_1)$ is the arithmetic mean of 1 and $\te{tr}(P_2)$. Since $\te{tr}(P_1)<1$, this means $\te{tr}(P_2)<\te{tr}(P_1)$, as desired. Assume up to some $k$, $\te{tr}(P_{k+1})<\te{tr}(P_k)$. The trace of $P_{k+1}$ is the arithmetic mean of the trace of $P_k$ and the trace of $P_{k+2}$, so its value lies in between the two values. Since $\te{tr}(P_{k+1})<\te{tr}(P_k)$, this gives that $\te{tr}(P_{k+2})<\te{tr}(P_{k+1})$.  Thus $(\te{tr}(P_k))_{k\in \mbb{N}}$ is a strictly decreasing sequence. As the graph is infinite, this means there is some $N \in \mbb{N}$ where $\te{tr}(P_N)<0$. However, since the inner product is positive definite and $P_N$ is a minimal projection, we reach a contradiction. Therefore, $\te{tr}(P_1)=1$ and $\te{tr}(Q_1)=1$. By shifting the starting term of the infinite sequence in the above argument, $\te{tr}(P_k)=\te{tr}(Q_k)=1$. The same argument applies for the dual principal graph. Thus the trace of all minimal projections is 1. \end{proof} 

    Next, we can follow the same process as done for the $\til{A}_{2n-1}$ subfactor planar algebras. As we can just ignore any lemma or part of a proof involving isomorphism $P_n\cong Q_n$, these proofs are easier adaptations of the finite affine $A$ case and will thus be omitted. We then get the following theorems.

\begin{thm}\label{thm: Ainfshadednecessary} (Necessary relations for $\til{A}_{\infty}$)
    If $\mc{P}$ is a subfactor planar algebra with principal and dual principal graph the $\til{A}_\infty$ subfactor planar algebra then $\mc{P}$ has diagrammatic elements $P_1$ and $Q_1$ from Definition \ref{eq: elementsofaffineAfiniteshaded} with relations (i) through (iv) of Definition \ref{relationsofAffineAfinite}. Further, we get the equivalence classes of minimal projections are:
    \begin{equation*}
        \Gamma_{+} \Ainfshadegraphpos \text{ and } \Gamma_{-} \Ainfshadegraphneg
    \end{equation*}
        
\end{thm}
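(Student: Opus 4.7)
The plan is to follow the same skeleton as the proof of Theorem \ref{thm: Afinshadenecessary} for $\til{A}_{2n-1}$, but observing that the absence of any ``wrap-around'' isomorphism of the form $P_n\cong Q_n$ makes the argument strictly simpler: no generators $U,U^*,V,V^*$ are needed and no click relations have to be produced. All the data required is contained in the four relations (i)--(iv) of Definition \ref{relationsofAffineAfinite}.

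First, I would appeal directly to Lemma \ref{lem: Ainftraceofminprojs}, which already establishes that every minimal projection has trace $1$. Combined with the standing assumption that $[M:N]=4$, this immediately yields the bubble relations (i). Next, I would reproduce verbatim the uniqueness-and-existence argument of Lemma \ref{lem: Afinshadeuniquerep} for $\mc{P}_{1,+}$ and $\mc{P}_{1,-}$: the proof there is entirely local, relying only on the fact that $\smallwsquare \otimes X \cong P_1 \oplus Q_1$ on the principal graph and that $P_1$, $Q_1$ are nonisomorphic minimal projections. This is equally true for $\til{A}_\infty$. This gives canonical representatives $P_1, Q_1 \in \mc{P}_{1,+}$ and $P_1', Q_1' \in \mc{P}_{1,-}$, together with $X = P_1 + Q_1$ and $Y = P_1' + Q_1'$, which is precisely the strand decomposition (ii). Relation (iii), color disagreement, is then immediate from $P_1Q_1 = Q_1P_1 = 0$, which holds because $P_1$ and $Q_1$ are nonisomorphic minimal projections.

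For the saddle relations (iv), I would reuse Lemma \ref{lem: Afinshadesaddlerels} without change. The positive-definiteness computation showing that $\langle D - P_1\otimes \ol{P}_1,\, D - P_1\otimes \ol{P}_1\rangle = 0$ uses only that $\te{tr}(P_1)=\te{tr}(\ol P_1)=1$, which holds in $\mc{P}$. The proof that $P_1\otimes \ol{P}_1 \cong \emptyset_+$ is similarly local. Once these relations are in hand I would use the principal-graph symmetry to choose $\ol{P}_1 = P_1'$ and $\ol{Q}_1 = Q_1'$, exactly as in the finite case, so that the planar-algebra notation from \eqref{eq: AfinshadelabelsP1Q1} is well-defined.

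Finally, to identify the claimed representatives of the minimal projections on the infinite principal graphs, I would apply Lemma \ref{lem: Afinshadedtensordecomp} inductively: starting from $P_1 \otimes Y \cong \emptyset_+ \oplus P_2$, and using $Y = \ol{P}_1 + \ol{Q}_1$ together with $P_1\otimes \ol{P}_1 \cong \emptyset_+$, one obtains $P_2 \cong P_1\otimes \ol{Q}_1$, and so on. The key point is that this induction never terminates in the infinite case, so there is no final step requiring a nontrivial isomorphism between two distinct sequences of projections --- the induction simply continues, producing the representatives $P_k \cong P_1\otimes \ol Q_1\otimes\cdots$ and $Q_k \cong Q_1\otimes \ol P_1\otimes\cdots$ for all $k\geq 1$, with dual descriptions for $P_k', Q_k'$. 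This gives the labelling $\Gamma_\pm$ displayed in the theorem statement. There is no genuine obstacle: the finite-case proof was already local, and the cleanest way to present this result is as a remark that the proofs of Lemmas \ref{lem: a-fin-graph-index}, \ref{lem: Afinshadeuniquerep}, \ref{lem: Afinshadesaddlerels}, and \ref{lem: Afinshadedtensordecomp} apply with only cosmetic changes, with Lemma \ref{lem: Ainftraceofminprojs} replacing Lemma \ref{lem: a-fin-graph-index} as the source of the trace values.
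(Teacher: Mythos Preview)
Your proposal is correct and matches the paper's approach exactly: the paper omits the proof entirely, stating only that one follows the $\til{A}_{2n-1}$ argument while ignoring anything involving the isomorphism $P_n\cong Q_n$, and you have spelled out precisely which lemmas carry over and noted that Lemma \ref{lem: Ainftraceofminprojs} replaces the Perron--Frobenius argument of Lemma \ref{lem: a-fin-graph-index}.
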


\begin{thm} (Sufficient relations for $\til{A}_\infty$)
    Let $\mc{P}$ be the planar algebra with generator $P_1$ from Definition \ref{eq: elementsofaffineAfiniteshaded} with relations (i) through (iv) given in Definition \ref{relationsofAffineAfinite}. Define $P_1$ and $Q_1$ to be self-adjoint, then extend the * operation anti-linearly and on diagrams. Then $\mc{P}$ is a subfactor planar algebra whose principal graph an dual principal graph are the $\til{A}_\infty$ ones given in Theorem \ref{thm: Ainfshadednecessary}.     
\end{thm}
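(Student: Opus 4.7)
The plan is to adapt the strategy of Theorem~\ref{thm: Afinshadesufficient} with considerable simplification, since now the only generators are $P_1$ and $Q_1$ — there are no $U, U^*, V, V^*$ boxes and hence no unitary relations (v) or click relations (vi) to track. First, I will check that the adjoint is well-defined by verifying each of relations (i)--(iv) is preserved under vertical reflection composed with starring; this is immediate because $P_1, Q_1$ are declared self-adjoint and each relation is symmetric under the operation.

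Second, I will establish that $\mc{P}_{0,\pm}$ is one-dimensional. No $D_n$-valued region labelling is needed here: since a closed diagram contains no boxes, strand decomposition together with color disagreement reduces any closed diagram to a sum of closed diagrams whose strands are each entirely red or entirely blue, and then repeated application of the bubble relations (i) collapses all loops, producing a scalar multiple of $\emptyset_\pm$. This gives an at-most-one-dimensional bound; coupled with the trivially well-defined ``evaluate every diagram to $1$'' functional (which respects all four relations on the nose), we conclude $\dim \mc{P}_{0,\pm} = 1$.

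Third, I will compute the principal and dual principal graphs. Define $P_k = P_1 \otimes \overline{Q}_1 \otimes P_1 \otimes \cdots$ and $Q_\ell = Q_1 \otimes \overline{P}_1 \otimes Q_1 \otimes \cdots$ with appropriate alternation as in Lemma~\ref{lem: Afinshadedtensordecomp}, for every $k,\ell \geq 1$, and analogously on the dual side. That each is a projection is immediate from the definition of the adjoint. For minimality I adapt Lemma~\ref{lem: Afinshade-suff-pgraph} with the box-reduction step deleted: any $f \in \mathrm{Hom}(P_k, P_k)$ contains no boxes (since $P_1, Q_1$ are the only generators and $f$ has alternating red/blue boundary), so up to absorbing closed components via one-dimensionality of $\mc{P}_0$, $f$ is forced to be a scalar multiple of $P_k$. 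Pairwise non-isomorphism of $\{P_k, Q_\ell\}_{k,\ell \geq 0}$ follows from identical boundary-color parity arguments. Finally the fusion rules $P_k \otimes X^\pm \cong P_{k-1} \oplus P_{k+1}$ and $Q_\ell \otimes X^\pm \cong Q_{\ell-1} \oplus Q_{\ell+1}$ follow from strand decomposition combined with the saddle isomorphisms $P_1 \otimes \overline{P}_1 \cong \emptyset_+$ etc., exactly as in the finite case.

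Fourth, sphericality reduces — as in Lemma~\ref{lem: Afinshade-spherical} — to showing $\mc{P}_{1,\pm} = \mathrm{span}\{P_1, Q_1\}$ (respectively $\{\overline{P}_1, \overline{Q}_1\}$), which is immediate from the minimal-projection analysis above. Positive definiteness uses the explicit tree basis of~\cite{MPS10}, which applies because the projection category is semisimple and all minimal projections have trace $1$ by Lemma~\ref{lem: Ainftraceofminprojs}. I expect the main conceptual point — and the only genuine divergence from the finite case — to be the observation that the two infinite legs of $\til{A}_\infty$ never meet: in the $\til{A}_{2n-1}$ setting, closure of the cycle was forced precisely by the presence of the generators $U$ and $V$, and without those, the $P$-leg and $Q$-leg remain disjoint infinite chains of pairwise non-isomorphic projections. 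The remaining checks are straightforward combinatorial adaptations of arguments already given for the finite case.
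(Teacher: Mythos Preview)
Your proposal is correct and follows essentially the same approach as the paper, which simply states that the proof is an easier adaptation of the finite $\til{A}_{2n-1}$ case obtained by ignoring everything involving the isomorphism $P_n\cong Q_n$ and the $U,U^*,V,V^*$ boxes. One minor point: for positive definiteness you cite Lemma~\ref{lem: Ainftraceofminprojs}, but that lemma is proved for a pre-existing subfactor planar algebra using positive definiteness, so invoking it here would be circular; instead, the trace-$1$ fact for the generators-and-relations planar algebra follows directly from the bubble relations (i), since each $P_k,Q_\ell$ is a tensor product of strands each having trace $1$.
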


\begin{thm}
    The elements $P_1$ and $Q_1$ from Theorem \ref{thm: Ainfshadednecessary} generate the planar algebra $\mc{P}$ in that theorem.
\end{thm}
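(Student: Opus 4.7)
The plan is to follow verbatim the dimension-counting strategy used in Theorem \ref{thm: Afinshade-generators} for the finite affine $A$ case, adapted to an infinite principal graph. Let $\mc{P}$ denote the subfactor planar algebra built from generators $P_1, Q_1$ and relations (i)--(iv) via the preceding sufficient relations theorem, so that by construction $\{P_1, Q_1\}$ is a generating set for $\mc{P}$. Let $\mc{Q}$ be an arbitrary subfactor planar algebra with principal and dual principal graph $\til{A}_\infty$ as in Theorem \ref{thm: Ainfshadednecessary}; the goal is to show that the elements $P_1, Q_1$ identified in $\mc{Q}$ also generate $\mc{Q}$.

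The main input is the explicit positive definite tree basis for each box space constructed in \cite{MPS10}. That construction is row-by-row on the Bratteli diagram, so it applies without modification to $\til{A}_\infty$: the basis of $\mc{P}_{k,\pm}$ or $\mc{Q}_{k,\pm}$ is indexed by pairs of paths through the finite truncation of the Bratteli diagram at depth $k$, and depends only on the principal graph together with the traces of the minimal projections appearing there. Since both $\mc{P}$ and $\mc{Q}$ have principal graph $\til{A}_\infty$ and, by Lemma \ref{lem: Ainftraceofminprojs}, every minimal projection has trace $1$, the tree bases in the two planar algebras have matching cardinalities, giving $\dim \mc{P}_{k,\pm} = \dim \mc{Q}_{k,\pm}$ for every $k \geq 0$.

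To conclude, the relations (i)--(iv) are satisfied by $P_1, Q_1 \in \mc{Q}$ by Theorem \ref{thm: Ainfshadednecessary}, so by the universal property of the generators-and-relations presentation there is a planar algebra homomorphism $\theta : \mc{P} \to \mc{Q}$ matching the named generators. Its image is the sub-planar-algebra $\mc{Q}' \subseteq \mc{Q}$ generated by $P_1$ and $Q_1$, and the tree basis elements of $\mc{Q}_{k,\pm}$ are built from minimal projections that are explicit iterated tensor products of $P_1, \ol{P}_1, Q_1, \ol{Q}_1$ (via the straightforward $\til{A}_\infty$-analogue of Lemma \ref{lem: Afinshadedtensordecomp}), so every basis vector lies in $\mc{Q}'$. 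Hence $\mc{Q}' = \mc{Q}$, completing the proof.

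The main obstacle on paper is justifying that the \cite{MPS10} construction survives passage to an infinite Bratteli diagram. In fact this is not a genuine issue: each $\mc{P}_{k,\pm}$ is still finite-dimensional because only finitely many vertices appear at depth $\leq k$, and positive definiteness of the inner product there depends only on positivity of the traces of the finitely many minimal projections involved, which we have. Relative to the finite case, the argument is actually \emph{simpler}, as there is no wrap-around identification $P_n \cong Q_n$ to mediate and hence no need for extra unitary generators $U, V$.
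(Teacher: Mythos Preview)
Your proposal is correct and follows essentially the same approach as the paper, which simply invokes the argument of Theorem \ref{thm: Afinshade-generators} verbatim (the paper omits the proof, referring all of the $\til{A}_\infty$ statements back to the finite case ``with less steps''). You supply more detail than the paper does: the universal-property homomorphism $\theta:\mc{P}\to\mc{Q}$, the observation that the \cite{MPS10} tree basis is built from projections that are literal tensor products of $P_1,\ol{P}_1,Q_1,\ol{Q}_1$ together with Temperley--Lieb caps and cups, and the remark that only the finite depth-$k$ truncation of the Bratteli diagram enters so the infinite graph causes no difficulty---all of which makes the dimension-count conclusion cleaner than in the original.
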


The following theorem, which is a known result by Popa \cite{Pop94}, is then direct from the planar algebra presentation. 

\begin{thm}
    There is exactly one subfactor planar algebra with principal graph $\til{A}_\infty$ up to isomorphism. 
\end{thm}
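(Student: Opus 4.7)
The plan is to mimic the strategy of Theorem \ref{thm: Afinshade-nnoniso}, but to note that in the $\til{A}_\infty$ case the presentation carries no free parameter, so uniqueness becomes automatic. First I would invoke the preceding necessary relations theorem (Theorem \ref{thm: Ainfshadednecessary}): any subfactor planar algebra $\mc{Q}$ with principal and dual principal graph $\til{A}_\infty$ must contain elements $P_1, Q_1 \in \mc{Q}_{1,+}$ (with duals $P_1', Q_1' \in \mc{Q}_{1,-}$) satisfying the bubble, strand decomposition, color disagreement, and saddle relations (i)--(iv) of Definition \ref{relationsofAffineAfinite}. Unlike the finite $\til{A}_{2n-1}$ case, there are no minimal projections $P_n \cong Q_n$ fusing back together, so no $U, U^*, V, V^*$ generators appear and in particular no root of unity $\sigma_n$ is needed in the presentation.

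Next I would use the sufficient relations theorem stated immediately after to produce the planar algebra $\mc{P}$ abstractly generated by $P_1, Q_1$ modulo relations (i)--(iv). This gives a subfactor planar algebra with principal and dual principal graph $\til{A}_\infty$ that is, a priori, a candidate model. Given another such $\mc{Q}$, I would then define a planar algebra homomorphism $\theta : \mc{P} \to \mc{Q}$ by sending the generators $P_1, Q_1 \in \mc{P}$ to the generators $P_1, Q_1 \in \mc{Q}$ guaranteed by the necessary relations theorem. Well-definedness is automatic because the defining relations of $\mc{P}$ are among those known to hold in $\mc{Q}$.

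To upgrade $\theta$ to an isomorphism I would run the same dimension-counting argument used in Theorem \ref{thm: Afinshade-generators}. By Lemma \ref{lem: Ainftraceofminprojs} all minimal projections have trace $1$ in both $\mc{P}$ and $\mc{Q}$, and both planar algebras have the same principal graph. The explicit positive-definite tree bases of \cite{MPS10} used earlier depend only on the principal graph and the traces of minimal projections, so $\dim \mc{P}_{k,\pm} = \dim \mc{Q}_{k,\pm}$ for every $k$. Since the preceding theorem asserts that $P_1, Q_1$ generate the whole planar algebra, $\theta$ is surjective at every box space and hence bijective, giving $\mc{P} \cong \mc{Q}$.

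The only non-routine point to check is that the rigidification really is complete, that is, that no further hidden scalar choice sneaks in through the action of some planar tangle on $P_1$ or $Q_1$. This should follow from the uniqueness of the representatives of $[P_1]$ and $[Q_1]$ in $\mc{P}_{1,+}$ (the infinite analogue of Lemma \ref{lem: Afinshadeuniquerep}), together with the fact that duality forces $\ol{P}_1 = P_1'$ and $\ol{Q}_1 = Q_1'$ up to a symmetry of the principal graph that can be absorbed into the labelling. So the main obstacle is just verifying that these uniqueness lemmas carry over verbatim from the $\til{A}_{2n-1}$ setting, which the author has already noted is routine.
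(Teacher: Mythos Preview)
Your proposal is correct and follows essentially the same approach as the paper. The paper simply remarks that the result is ``direct from the planar algebra presentation,'' since unlike the $\til{A}_{2n-1}$ case there is no root-of-unity parameter in the presentation; you have spelled out the details of this observation (necessary relations plus generation plus dimension counting via the \cite{MPS10} bases) in exactly the way the paper's finite-case argument for Theorem~\ref{thm: Afinshade-nnoniso} would specialize.
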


We now mimic the unshaded $\til{A}_{2n-1}$ planar algebra section for unshaded $\til{A}_\infty$ planar algebras. From now on, let $\mc{P}$ be an unshaded subfactor planar algebra with principal graph the $\til{A}_{\infty}$ Dynkin diagram. Define
\begin{align*}
    X=\strand.
\end{align*}

As before, we take it as fact that $\mc{P}$ has index 4. By Lemma \ref{lem: Ainftraceofminprojs} the trace of any minimal projection in $\mc{C}_{\mc{P}}$ is 1. 

The proofs of the following theorems follow the exact formulation as the finite $\til{A}$ case with less steps due to the infinite graph. In particular, we know that there will be exactly one representative of $P_1$ and $Q_1$ in $\mc{P}_1$ and they will either be dual to themselves or dual to each other. 

\begin{thm}\label{thm: Ainfnecessary} (Necessary relations for $\til{A}_\infty$, unshaded) If $\mc{P}$ is an unshaded subfactor planar algebra with principal graph $\til{A}_\infty$ then one of two cases hold, which we call the arrow case and the color case.

\begin{enumerate}
    \item In the arrow case, $\mc{P}$ has elements $P_1$  given in Definition \ref{Afinarrowgenerators} with relations (i) through (iv) given in Definition \ref{Afinarrowrelations}. Further, we get the equivalence class of minimal projections are \begin{equation*}
            \oAinfgraph
        \end{equation*} 
    \item In the color case, $\mc{P}$ has elements $P_1$ and $Q_1$ given in Definition \ref{Afincolorgenerators} with relations (iv) through (iv) given in Definition \ref{Afincolorrelations}. Further, we get the equivalence classes of minimal projections are
    \begin{equation*}
            \rbAinfgraph.
        \end{equation*}
\end{enumerate}
\end{thm}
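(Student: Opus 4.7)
The plan is to mirror the proof of Theorem \ref{thm: A-finite-necessary-arrow} with the simplification that, because the principal graph of $\mc{P}$ is infinite, there is no ``wrap-around'' isomorphism $P_1^{\otimes n}\cong Q_1^{\otimes n}$ to encode, and hence no $U$-boxes, $V$-boxes, or click relations are required. Only relations (i)--(iv) of the respective definitions need to be produced, plus the description of the minimal projections.

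First I would establish the $1$-box decomposition. Following the argument of Lemma \ref{lem: Afinshadeuniquerep}, since $X\cong P_1\oplus Q_1$ at the level of equivalence classes in $\mc{C}_\mc{P}$, there are unique representatives $P_1,Q_1\in\mc{P}_1$ with $X=P_1+Q_1$ and $P_1Q_1=Q_1P_1=0$. The Bratteli diagram dimension count adapted as in Lemma \ref{lem: Afinshade-basis} shows $\{P_1,Q_1\}$ is a basis of $\mc{P}_1$. Next I invoke the self-duality of $X$: since $\ol{P}_1$ and $\ol{Q}_1$ are again distinct minimal projections in $\mc{P}_1$ summing to $X$, uniqueness of the decomposition forces either $\ol{P}_1=P_1$, $\ol{Q}_1=Q_1$ (the color case) or $\ol{P}_1=Q_1$, $\ol{Q}_1=P_1$ (the arrow case), exactly as in Lemma \ref{lem: A-fin-strand-decomp}. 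Adopt the diagrammatic notation of Definitions \ref{Afinarrowgenerators} or \ref{Afincolorgenerators} accordingly.

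Then I would verify relations (i)--(iv) in each case. The bubble relations (i) follow from $\te{tr}(P_1)=\te{tr}(Q_1)=1$, which is Lemma \ref{lem: Ainftraceofminprojs}. The strand decomposition (ii) is the identity $X=P_1+Q_1$ just established. The orientation/color disagreement (iii) follows from $P_1$ and $Q_1$ being nonisomorphic minimal projections (so $P_1Q_1=Q_1P_1=0$). The saddle relations (iv) are derived by the positive-definite inner product argument of Lemma \ref{lem: Afinshadesaddlerels}, where one shows that the difference between the saddle diagram and the appropriate tensor product of projections has zero self inner product and hence vanishes; this calculation needs only the trace values and the one-dimensionality of $\mc{P}_0$, both of which hold in the infinite setting.

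Finally, I would identify the equivalence classes of minimal projections by induction on the principal graph, mirroring Lemma \ref{lem: Afin-minproj}. Using the strand decompositions together with the saddle isomorphisms $P_1\otimes\ol{P}_1\cong\emptyset_+$, $Q_1\otimes\ol{Q}_1\cong\emptyset_+$ (or their arrow/color analogues), an easy induction produces $P_k\cong P_1^{\otimes k}$ and $Q_k\cong Q_1^{\otimes k}$ in the arrow case, and the alternating tensor products in the color case. Because the graph is infinite there is no closure step and no further identification is needed; all projections so produced are nonisomorphic by the same hom-space counting used in the arrow and color subfactor arguments earlier. The only mild obstacle is verifying that the inductive minimal-projection construction does not collide---that is, that the chain of $P_k$'s really is infinite with no coincidences---and this is immediate from the principal-graph structure together with the fact that distinct $P_k$'s have homs of dimension zero, an argument already in place in the finite case that transfers verbatim.
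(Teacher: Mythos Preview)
Your proposal is correct and follows essentially the same approach as the paper, which does not give a separate proof but instead states that the argument follows the finite $\til{A}$ case verbatim with fewer steps, since there is no wrap-around isomorphism and hence no $U$- or $V$-boxes or click relations to produce. Your outline fills in exactly those details, invoking the same lemmas (uniqueness of $P_1,Q_1$ in $\mc{P}_1$, the duality dichotomy of Lemma \ref{lem: A-fin-strand-decomp}, the trace and inner-product computations of Lemmas \ref{lem: Ainftraceofminprojs} and \ref{lem: A-fin-gen-facts}, and the inductive identification of minimal projections as in Lemma \ref{lem: Afin-minproj}) that the paper implicitly relies on.
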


\begin{thm} (Sufficient relations for $\til{A}_\infty$, unshaded: Arrow case)
    Let $\mc{P}$ be the planar algebra generated by $P_1$ from Definition \ref{Afinarrowgenerators} with relations from Definition \ref{Afinarrowrelations}. Define $P_1$ to be self-adjoint and extend * anti-linearly and on diagrams. The $\mc{P}$ is an unshaded subfactor planar algebra who principal graph is given in the arrow case of Theorem \ref{thm: Ainfnecessary}.
\end{thm}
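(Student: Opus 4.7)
The plan is to mirror the proof of Theorem \ref{thm: arrow-A-finite-suff} in the arrow case, but to exploit the fact that with no $U$, $U^*$ generators (and thus no unitary or click relations to worry about) every argument simplifies considerably. As before, I would first verify that the adjoint is well-defined: $*$ trivially respects the bubble, strand decomposition, orientation disagreement, and saddle relations since these are all preserved by vertical reflection and arrow reversal.

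Next I would show $\mc{P}_0$ is one-dimensional by giving an evaluation algorithm. By strand decomposition and orientation disagreement, it suffices to evaluate closed diagrams consisting of coherently oriented loops. Since there are no generator boxes, every strand in such a diagram must close into a loop, and each loop can be popped for a factor of $1$ via the bubble relations. Hence every diagram reduces to a scalar multiple of $\emptyset$, so $\dim(\mc{P}_0) \le 1$. For the lower bound, define $f: \mc{P}_0 \to \mbb{C}$ by sending every diagram to $1$ (a trivial constant, since there is no root of unity to track in the absence of $U$-boxes); this is manifestly invariant under all four relations and $f(\emptyset) = 1$, so $\mc{P}_0$ is exactly one-dimensional.

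The key step is verifying the principal graph. Set $P_k = P_1^{\otimes k}$ and $Q_\ell = Q_1^{\otimes \ell}$, where $Q_1 = \down$ is obtained from $X = P_1 + Q_1$. Lemma \ref{lem: A-fin-gen-facts} gives $P_1 \otimes Q_1 \cong \emptyset \cong Q_1 \otimes P_1$, so $P_k \otimes X \cong P_{k-1} \oplus P_{k+1}$ (and similarly for $Q_\ell$) by the same induction as in Lemma \ref{lem: Afin-o-principal-graph}. To show these are pairwise nonisomorphic minimal projections, I would argue as in that lemma using a strand-counting parity obstruction: any morphism $f \in \te{Hom}(P_k, P_\ell)$ (without closed components) must have all its strands terminating on the boundary, since there are no boxes to absorb loose ends. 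Counting sources and sinks on the boundary forces $k = \ell$ (and similarly between $P$ and $Q$ rules out cross morphisms), so these hom-spaces have dimension $1$ on the diagonal and $0$ off of it. The main simplification relative to the finite case is that there is no wrap-around isomorphism $P_n \cong Q_n$ to establish, since the graph is genuinely infinite.

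Finally, sphericality follows because $\mc{P}_1 = \te{span}\{P_1, Q_1\}$ by the same argument as Lemma \ref{lem:A-fin-o-spherical}, and positive definiteness follows verbatim from Lemma \ref{lem: A-fin-o-unitary}, using the MPS tree basis from \cite{MPS10} together with the fact that every minimal projection has trace $1$ (Lemma \ref{lem: Ainftraceofminprojs}). The hardest step will be the strand-counting nonisomorphism argument, but since no $U$-boxes can soak up extraneous strands, it is strictly easier than the finite case; I expect no essentially new obstacle beyond what has already been handled.
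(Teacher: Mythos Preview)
Your proposal is correct and follows essentially the same approach as the paper, which simply states that the proofs ``follow the exact formulation as the finite $\til{A}$ case with less steps due to the infinite graph'' and gives no further detail. Your outline is in fact more explicit than what the paper provides. One small remark: your citation of Lemma \ref{lem: Ainftraceofminprojs} for the trace-$1$ fact is slightly misplaced, since that lemma is stated in the necessity direction (assuming the principal graph); in the sufficient direction the traces of $P_k$ and $Q_\ell$ are $1$ directly from the bubble relations, exactly as in Lemma \ref{lem: A-fin-o-unitary}.
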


\begin{thm}
    The element $P_1$ from Definition \ref{Afinarrowgenerators} generates the planar algebra $\mc{P}$ from Theorem \ref{thm: Ainfnecessary} in the arrow case. 
\end{thm}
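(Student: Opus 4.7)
The plan is to mirror the strategy of Theorem \ref{thm: A-fin-arrow-generate}. Let $\mc{Q}$ denote the unshaded subfactor planar algebra from the arrow case of Theorem \ref{thm: Ainfnecessary}, so that $\mc{Q}$ contains the element $P_1$ from Definition \ref{Afinarrowgenerators} satisfying relations (i)--(iv) of Definition \ref{Afinarrowrelations}. Let $\mc{P} \subseteq \mc{Q}$ be the sub-planar-algebra generated by $P_1$ under the planar operad action. The goal is to show $\mc{P} = \mc{Q}$.

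First I would invoke the sufficient relations theorem stated immediately before this generation theorem: the abstract planar algebra presented by the generator $P_1$ and relations (i)--(iv) of Definition \ref{Afinarrowrelations} is itself an unshaded subfactor planar algebra with principal graph $\til{A}_\infty$. Since $P_1 \in \mc{Q}$ satisfies these relations and the planar operad action is closed in $\mc{Q}$, the subalgebra $\mc{P}$ realizes this abstract presentation, and is therefore a subfactor planar algebra with principal graph $\til{A}_\infty$ and all minimal projections of trace $1$ (by Lemma \ref{lem: Ainftraceofminprojs} applied to $\mc{P}$). Both $\mc{P}$ and $\mc{Q}$ thus share the same principal graph and the same traces on minimal projections.

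Next I would appeal to the explicit positive-definite tree basis of the box spaces constructed in \cite{MPS10}, which was already used in Lemmas \ref{lem: Afinshade-posdef} and \ref{lem: A-fin-o-unitary}. The combinatorics of this basis depends only on the principal graph and on the traces of minimal projections, so $\dim(\mc{P}_k) = \dim(\mc{Q}_k)$ for every $k \in \mathbb{Z}_{\geq 0}$. Combined with the inclusion $\mc{P} \subseteq \mc{Q}$, this dimension match on each (finite-dimensional) box space forces $\mc{P} = \mc{Q}$, so $P_1$ generates $\mc{Q}$.

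The main potential obstacle is that the principal graph is infinite, which might raise a worry about infinite-dimensional subtleties; however, this is not an issue because each individual box space $\mc{Q}_k$ is determined by the portion of the Bratteli diagram of depth at most $k$, and therefore remains finite-dimensional. Consequently the MPS10 basis argument applies box-space by box-space exactly as in the finite affine case, and the proof goes through verbatim with $\til{A}_{2n-1}$ replaced by $\til{A}_\infty$ and with the $U$, $U^*$ generators simply absent from the generating set.
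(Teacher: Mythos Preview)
Your proposal is correct and follows essentially the same approach as the paper. The paper does not write out a separate proof for this statement; it simply remarks that the $\til{A}_\infty$ results follow the finite $\til{A}_{2n-1}$ arguments verbatim with fewer steps, and in particular the generation theorem is handled exactly as in Theorems \ref{thm: Afinshade-generators} and \ref{thm: A-fin-arrow-generate}: invoke the sufficient-relations theorem to identify the abstractly presented planar algebra, then use the \cite{MPS10} tree basis (which depends only on the principal graph and the traces of minimal projections) to match box-space dimensions and conclude that the generator fills out every $\mc{Q}_k$. Your extra remark that the infinite principal graph causes no trouble because each $\mc{Q}_k$ is still finite-dimensional is a welcome clarification that the paper leaves implicit.
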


\begin{thm} (Sufficient relations for $\til{A}_\infty$, unshaded: Color Case)
    Let $\mc{P}$ be the planar algebra generated by $P_1$ and $Q_1$ from Definition \ref{Afincolorgenerators} with relations (i) through (iv) from Definition \ref{Afincolorrelations}. Define $P_1$ and $Q_1$ to be self-adjoint and extend * anti-linearly and on diagrams. Then $\mc{P}$ is an unshaded subfactor planar algebra whose principal graph is given in the color case of Theorem \ref{thm: Ainfnecessary}. 
\end{thm}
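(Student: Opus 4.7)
The plan is to adapt the proof of Theorem \ref{thm: A-fin-c-sufficient} to the infinite setting. Because this theorem omits the $V, V^{*}$ generators and their associated unitary and click relations, the argument is actually simpler than in the finite case: there are no boxes to move around or pair off. First I would verify that the adjoint extends well-definedly. This reduces to the observation that each of the four relations (bubble, strand decomposition, color disagreement, saddle) is symmetric under vertical flip as a formal diagrammatic identity, so starring sends each relation back to itself.

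Next I would show $\mc{P}_0$ is one-dimensional. Since no box generators exist, strand decomposition together with color disagreement reduces any closed diagram to a disjoint union of red and blue non-intersecting loops. Starting from an innermost loop, the bubble relations collapse each loop to $1$, so any closed diagram evaluates to a scalar multiple of $\emptyset$; this gives $\dim \mc{P}_0 \leq 1$. To see $\mc{P}_0$ is nonzero, I would define the constant map $f:\mc{P}_0 \to \mbb{C}$ sending any such reduced diagram to $1$, check that $f$ is invariant under relations (i)--(iv) (immediate since none of them change the total number of loop-components modulo what the bubble relations predict), and observe $f(\emptyset)=1$.

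With the 0-box space pinned down, I would identify the minimal projections. Define $P_k = P_1 \otimes Q_1 \otimes \os{(k-1)}{\cdots} \otimes R$ and $Q_\ell = Q_1 \otimes P_1 \otimes \os{(\ell-1)}{\cdots} \otimes S$ with $R,S \in \{P_1, Q_1\}$ chosen by parity, together with $P_0 = Q_0 = \emptyset$. Each is a projection since $*$ distributes over $\otimes$ and $P_1, Q_1$ are self-adjoint. Following the box-free portion of Lemma \ref{lem: Afinshade-suff-pgraph}, any morphism between two such projections can be assumed to have no closed components; the alternating red/blue pattern on the boundary combined with color disagreement rules out every cap or cup, forcing the morphism to be either a scalar multiple of the identity (when the source and target agree) or zero. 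Thus each $P_k, Q_\ell$ is a minimal projection and all are pairwise nonisomorphic. The fusion rules $P_k \otimes X \cong P_{k-1} \oplus P_{k+1}$ and $Q_\ell \otimes X \cong Q_{\ell-1} \oplus Q_{\ell+1}$ for $k, \ell \geq 1$ then follow from the strand decomposition $X = P_1 + Q_1$ together with the saddle-induced isomorphisms $R \otimes R \cong \emptyset$ for $R \in \{P_1, Q_1\}$, reproducing precisely the $\til{A}_\infty$ Dynkin diagram asserted in Theorem \ref{thm: Ainfnecessary}.

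The remaining subfactor-planar-algebra axioms follow from the finite-case templates verbatim: sphericality reduces to showing $\mc{P}_1 = \te{span}\{P_1, Q_1\}$ exactly as in Lemma \ref{lem: Afinshade-spherical}, and positive-definiteness comes from the tree-diagram basis of \cite{MPS10} via the argument in Lemma \ref{lem: A-fin-c-posdef}, using that every minimal projection constructed above has trace $1$. The main point of care, rather than a genuine obstacle, is uniformity in $k$ and $\ell$: minimality and pairwise non-isomorphism must be established for all nonnegative integers simultaneously. Since the relations involve no boxes, the boundary-counting arguments are uniform in $k,\ell$ and the induction from the finite case transfers with no essential modification.
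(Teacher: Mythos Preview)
Your proposal is correct and mirrors the paper's own treatment: the paper explicitly states that the $\til{A}_\infty$ theorems follow the finite-case formulation with fewer steps, and your outline is precisely that adaptation of Theorem \ref{thm: A-fin-c-sufficient} with the $V,V^*$ material excised. The simplifications you note (no box-pairing in the evaluation algorithm, the surjection $f$ becoming constant, and the minimal-projection argument reducing to the box-free portion of Lemma \ref{lem: Afinshade-suff-pgraph}) are exactly what the paper has in mind.
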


\begin{thm}
    The elements $P_1$ and $Q_1$ from Definition \ref{Afincolorgenerators} generate the subfactor planar algebra $\mc{P}$ in the color case of Theorem \ref{thm: Ainfnecessary}. 
\end{thm}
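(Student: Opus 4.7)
The plan is to follow the same template used for Theorem \ref{thm: Afinshade-generators} and for the analogous generation statements in the unshaded affine $A$ finite cases, adjusted for the infinite principal graph. Let $\mc{P}$ denote the planar algebra generated by $P_1$ and $Q_1$ subject to color-case relations (i)--(iv) of Definition \ref{Afincolorrelations}; by the sufficient-relations theorem established just before, $\mc{P}$ is an unshaded subfactor planar algebra with principal graph $\til{A}_\infty$, in which all minimal projections have trace $1$. Let $\mc{Q}$ denote the color-case subfactor planar algebra of Theorem \ref{thm: Ainfnecessary}, which by that theorem contains distinguished elements $P_1, Q_1$ realizing exactly the same generators-and-relations presentation.

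To show that $P_1$ and $Q_1$ generate $\mc{Q}$, I would verify $\te{dim}(\mc{P}_k) = \te{dim}(\mc{Q}_k)$ for every $k \geq 0$. As in Lemmas \ref{lem: Afinshade-posdef}, \ref{lem: A-fin-o-unitary}, and \ref{lem: A-fin-c-posdef}, both planar algebras admit the explicit positive-definite tree basis of \cite{MPS10}, whose combinatorics depends only on the principal and dual principal graphs together with the traces of the minimal projections. Since $\mc{P}$ and $\mc{Q}$ share the graph $\til{A}_\infty$, and since Lemma \ref{lem: Ainftraceofminprojs} forces every minimal projection in either to have trace $1$, the tree bases coincide combinatorially and the dimensions agree at every level. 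As in the paper's earlier generation theorems, this dimension equality promotes generation of $\mc{P}$ by $\{P_1, Q_1\}$ to generation of $\mc{Q}$.

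The only place the infinite principal graph could plausibly cause trouble is in ensuring the \cite{MPS10} basis still applies level-by-level. But each box space $\mc{P}_k$ (respectively $\mc{Q}_k$) is determined by the first $k+1$ rows of the Bratteli diagram, which are finite regardless of the global infinitude of the graph, so the tree-basis construction goes through without modification. With that observation in hand, there is no essential obstacle beyond the routine transcription of the finite-affine argument to the infinite graph, and the proof reduces to a one-line appeal to the sufficient-relations theorem together with the \cite{MPS10} dimension count.
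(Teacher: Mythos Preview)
Your proposal is correct and follows essentially the same approach as the paper: the paper does not spell out a separate proof here but simply indicates that one follows the template of Theorem \ref{thm: Afinshade-generators}, comparing box-space dimensions via the \cite{MPS10} tree basis, exactly as you do. Your additional remark that only finitely many rows of the Bratteli diagram enter at each level is a useful clarification for the infinite-graph setting that the paper leaves implicit.
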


\begin{thm}
    There are exactly two nonisomorphic unshaded subfactor planar algebras with principal graph $\til{A}_\infty$.
\end{thm}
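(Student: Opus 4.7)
The plan is to mimic the argument from Theorem \ref{thm: Afinexactly3ncases} in the much simpler infinite setting. First I would invoke Theorem \ref{thm: Ainfnecessary}: any unshaded subfactor planar algebra with principal graph $\til{A}_{\infty}$ must fall into either the arrow case or the color case, and in each case the presentation is determined entirely by relations (i)--(iv) from the corresponding definition (Definitions \ref{Afinarrowgenerators}/\ref{Afinarrowrelations} for arrow, and \ref{Afincolorgenerators}/\ref{Afincolorrelations} for color). Crucially, there are no $U$, $U^*$, $V$, or $V^*$ boxes in the infinite case (since there is no isomorphism $P_n \cong Q_n$ forcing their existence), so there is no accompanying root-of-unity parameter and hence no family of presentations within each case. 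This already shows that there are at most two nonisomorphic planar algebras of this type.

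It therefore suffices to show the arrow case and the color case are nonisomorphic. The key invariant is the action of the Fourier (click) tangle $\mc{F}$ on the 1-box space $\mc{P}_1$, which must be preserved by any planar algebra isomorphism. In the color case, $\ol{P}_1 = P_1$ and $\ol{Q}_1 = Q_1$, so $\mc{F}$ fixes each of the two minimal projections in $\mc{P}_1$. In the arrow case, $\ol{P}_1 = Q_1$ and $\ol{Q}_1 = P_1$, so $\mc{F}$ swaps the two minimal projections. If $\theta\colon \mc{P} \to \mc{Q}$ were an isomorphism from an arrow-case planar algebra to a color-case planar algebra, then $\theta_1$ would carry minimal projections to minimal projections, and the equality $\theta_1(Z_{\mc{F}}(P_1)) = Z_{\mc{F}}(\theta_1(P_1))$ would force $\theta_1(Q_1) = \theta_1(P_1)$, contradicting injectivity. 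This is exactly the argument of Lemma \ref{lem: A-fin-different-cases-distinct}.

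Combining existence (from Theorem \ref{thm: Ainfnecessary}, together with the sufficiency theorems immediately preceding this statement that realize both cases as genuine unshaded subfactor planar algebras) with the uniqueness within each case and the nonisomorphism between the two cases, one concludes there are exactly two nonisomorphic unshaded subfactor planar algebras with principal graph $\til{A}_{\infty}$. The main (and essentially only) conceptual point is the realization that, in contrast to the finite affine $A$ case, the infinite principal graph has no distinguished pair of nonisomorphic minimal projections becoming isomorphic at a finite depth, which is what eliminates the root-of-unity parameter and collapses infinitely many potential presentations down to just two.
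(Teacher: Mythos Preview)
Your proposal is correct and follows essentially the same approach as the paper, which omits the proof and directs the reader to adapt the finite-case argument (Theorem \ref{thm: Afinexactly3ncases} and Lemma \ref{lem: A-fin-different-cases-distinct}). The only minor omission is an explicit appeal to the generators theorems for the $\til{A}_\infty$ case (that $P_1$, respectively $P_1,Q_1$, actually generate), which is what justifies the claim that each case yields a unique planar algebra rather than merely containing a unique sub-planar-algebra; but this is clearly what you intend when you say the presentation is ``determined entirely by relations (i)--(iv).''
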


\begin{rem}
    By Popa's \cite{Pop94} classification, there are additionally type $\til{D}$ and type $\til{E}$ index 4 subfactor planar algebras. Work forthcoming by the author will display similar presentations of these subfactor planar algebras. In particular, this will complete the Kuperberg program for index 4 subfactors.
\end{rem}

\bibliographystyle{alpha}
\bibliography{Cite}

\end{document}